\PassOptionsToPackage{unicode}{hyperref}
\PassOptionsToPackage{naturalnames}{hyperref}
\documentclass[a4paper]{amsart}
\usepackage{latexsym,bm,stmaryrd}
\usepackage[a4paper,hmargin=26mm,vmargin=28mm]{geometry}
\usepackage{latexsym,bm}
\usepackage[svgnames]{xcolor}
\usepackage{xparse}
\usepackage{listings}
\lstset {
    language={[LaTeX]TeX},
    basicstyle=\bfseries\color{brown},
    mathescape=true,
}

\usepackage{tikz}
\usetikzlibrary{matrix}
\synctex=1

\usepackage[utf8]{inputenc}
\usepackage[english]{babel}
\usepackage[T1]{fontenc}
\usepackage{amsmath, amsfonts, amssymb, amsthm, mathrsfs, pb-diagram}
\usepackage{lmodern}
\usepackage{fullpage}
\usepackage{microtype}
\usepackage{enumerate}
\usepackage{mathtools}
\usepackage{enumitem}
\setlist[enumerate]{label=\alph*\upshape), nolistsep}

\usepackage{todonotes}

\NewDocumentCommand\Cmd{ sm }{\textsf{\textbackslash #2}\IfBooleanT{#1}{$\{\ldots\}$}}

\usepackage{etoolbox}
\usepackage{aliascnt}

\usepackage[pagebackref=false]{hyperref}

\newcommand\enumref[2]{\hyperref[#2]{\autoref*{#1}(\autoref*{#2})}}

\usepackage[misc]{ifsym}

\def\NewTheorem#1{%
  \newaliascnt{#1}{equation}%
  \newtheorem{#1}[#1]{#1}%
  \aliascntresetthe{#1}%
  \expandafter\def\csname #1autorefname\endcsname{#1}%
}

\newcommand\blam{{\boldsymbol\lambda}}

\newcommand\bmu{{\boldsymbol\mu}}

\DeclareMathOperator\Shape{Shape}

\let\<=\langle
\let\>=\rangle
\def\({\big(}
\def\){\big)}

\def\Z{\mathbb{Z}}

\def\C{\mathbb{C}}

\def\t{\mathfrak{t}}
\def\s{\mathfrak{s}}
\def\u{\mathfrak{u}}
\def\v{\mathfrak{v}}

\def\tlam{\t^{\blam}}
\def\O{\mathcal{O}}

\def\lam{\lambda}

\def\Sym{\mathfrak{S}}
\def\eps{\varepsilon}

\newcommand\HH{\mathscr{H}}

\def\P{\mathscr{P}}

\def\bQ{\mathbf{Q}}

\def\veps{\varepsilon}

\newcommand\minum{\text{min}}
\newcommand{\sft}[1]{\langle #1 \rangle}

\def\m{{\rm mid}}

\def\K{\mathscr{K}}

\DeclareMathOperator\Hom{Hom}

\DeclareMathOperator\cha{char}

\DeclareMathOperator\Tr{Tr}

\DeclareMathOperator\rank{rank}

\DeclareMathOperator\res{res}

\DeclareMathOperator\Std{Std}

\title[Seminormal bases for cyclotomic Hecke algebras]{Seminormal basis for the cyclotomic Hecke algebra of type $G(r,p,n)$}
\subjclass[2010]{20C08, 20C15, 16G99}
\keywords{Cyclotomic Hecke algebras; seminormal bases; centers}
\author{Jun Hu}
\address[Jun Hu]{Key Laboratory of Algebraic Lie Theory and Analysis of Ministry of Education\\
School of Mathematics and Statistics\\ Beijing Institute of Technology\\ Beijing, 100081, P.R.~China}
\email{junhu404@bit.edu.cn}

\author[Corresponding author]{Shixuan Wang\textsuperscript{\Letter}}\thanks{\Letter Shixuan Wang \qquad sxwang@zjhu.edu.cn}
\address[Shixuan Wang]{Department of Mathematics\\
Huzhou University\\
Zhejiang Huzhou, 313000, P.R. China}
\email{sxwang@zjhu.edu.cn}

\numberwithin{equation}{section}
\newtheorem{prop}[equation]{Proposition}
\newtheorem{thm}[equation]{Theorem}
\newtheorem{cor}[equation]{Corollary}
\newtheorem{conj}[equation]{Conjecture}

\newtheorem{lem}[equation]{Lemma}

\theoremstyle{definition}
\newtheorem{dfn}[equation]{Definition}
\theoremstyle{remark}
\newtheorem{rem}[equation]{Remark}

\begin{document}

\begin{abstract} The cyclotomic Hecke algebra $\HH_{r,p,n}$ of type $G(r,p,n)$ (where $r=pd$) can be realized as the $\sigma$-fixed point subalgebra of certain cyclotomic Hecke algebra $\HH_{r,n}$ of type $G(r,1,n)$ with
some special cyclotomic parameters, where $\sigma$ is an automorphism of $\HH_{r,n}$ of order $p$. In this paper we prove a number of rational properties on the $\gamma$-coefficients arising in the construction of the
seminormal basis for the semisimple Hecke algebra $\HH_{r,n}$. Using these properties, we construct a seminormal basis for the semisimple Hecke algebra $\HH_{r,p,n}$ in terms of the seminormal basis for the semisimple Hecke
algebra $\HH_{r,n}$. The proof relies on some careful and subtle study on some rational and symmetric properties of some quotients and/or products of $\gamma$-coefficients of $\HH_{r,n}$. As applications, we obtain an
explicit basis for the center $Z(\HH_{r,p,n})$ of $\HH_{r,p,n}$ and an explicit basis for the $\sigma$-twisted $k$-center $Z(\HH_{r,n})^{(k)}$ of $\HH_{r,n}$ for each $k\in\Z/p\Z$.
\end{abstract}

\maketitle
\setcounter{tocdepth}{1}
\tableofcontents

\section{Introduction}

Complex reflection groups are some generalizations of the real reflection groups (also known as Coxeter groups). Shephard and Todd \cite{ShTo} have given a classification of all the irreducible complex reflection groups. It
is given by an infinite series $\{G(r,p,n)|r,p,n\in\Z^{\geq 1},r=pd\}$ and $34$ exceptional groups. The complex reflection group of type $G(r,p,n)$ can be realized as the group consisting of all $n\times n$ monomial
matrices such
that each non-zero entry is a complex $r$th root of unity, and the product of all non-zero entries is a $d$th root of unity.

To each complex reflection group $W$, Brou\'e and Malle \cite{BM} introduced a deformation of the group algebra $\mathbb{\C}[W]$, called the cyclotomic Hecke algebra associated to $W$. These Hecke algebras generalized the
classical Iwahori-Hecke algebras associated to real reflection groups and played a key role in the study of modular representation theory of finite reductive groups over fields of non-defining characteristics. For the
complex reflection group of type $G(r,1,n)$, these Hecke algebras were also introduced by Ariki and Koike \cite{A1}. By \cite{A2} and \cite{R1}, the cyclotomic Hecke algebra $\HH_{r,p,n}$ of type $G(r,p,n)$ can be realized
as certain subalgebra of certain cyclotomic Hecke algebra $\HH_{r,n}$ of type $G(r,1,n)$ with some special cyclotomic parameters.

We now recall the definitions of these cyclotomic Hecke algebras. Let $R$ be a commutative domain and $1\neq q\in R^\times$. Let $({\rm Q}_{1},\cdots,{\rm Q}_{r})\in R^{r}$. The cyclotomic Hecke algebra $\HH_{r,n}(q;{\rm
Q}_{1},\cdots,{\rm Q}_{r})$ of type $G(r,1,n)$ with Hecke parameter $q$ and cyclotomic parameters ${\rm Q}_1,\cdots,{\rm Q}_r$ is the unital associative $R$-algebra with generators $T_0,T_1,\cdots,T_{n-1}$ and the following
defining relations:
$$\begin{aligned}
	&(T_{0}-{\rm Q}_{1})\cdots(T_{0}-{\rm Q}_{r})=0;\\
	&T_{0}T_{1}T_{0}T_{1}=T_{1}T_{0}T_{1}T_{0};\\
	&(T_{i}-q)(T_{i}+1)=0,\quad \forall\,1\leq i\leq n-1;\\
	&T_{i}T_{j}=T_{j}T_{i},\,\,\forall\,1\leq i<j-1<n-1;\\
	&T_{i}T_{i+1}T_{i}=T_{i+1}T_{i}T_{i+1},\,\,\forall\,1\leq i<n-1.
\end{aligned}$$
If $r=1$ or $r=2$, then the cyclotomic Hecke algebra $\HH_{r,n}(q;{\rm Q}_{1},\cdots,{\rm Q}_{r})$ becomes the Iwahori-Hecke algebras of type $A_{n-1}$ or type $B_n$ respectively.

Let $p\in\Z^{\geq 1}$. Assume that $r=pd$, where $d\in\Z^{\geq 1}$. Suppose that the commutative domain $R$ contains a primitive $p$-th root of unity $\varepsilon\in R$. In particular, this implies that $p1_{R}\in R^\times$
is invertible. Let $\bQ=(Q_1,\cdots,Q_d)\in R^{d}$ and set
$$\bQ^{\vee \varepsilon}:=\varepsilon\bQ\vee\varepsilon^{2}\bQ\vee\cdots\vee\varepsilon^{p}\bQ=(\varepsilon Q_1,\cdots,\varepsilon Q_d, \cdots, \varepsilon^{p} Q_1,\cdots,\varepsilon^{p} Q_d).$$
We denote by $\HH_{r,n}(\bQ^{\vee \varepsilon}):=\HH_{r,n}(q, \bQ^{\vee \varepsilon})$ the cyclotomic Hecke algebra of type $G(r,1,n)$ with Hecke parameter $q$ and cyclotomic parameters
$\bQ^{\vee \varepsilon}$. Henceforth, without causing ambiguity, we write $\HH_{r,n}:=\HH_{r,n}(\bQ^{\vee \varepsilon})$ for simplicity.

\begin{dfn}\text{\text (\cite{A2}, \cite[Appendix]{R1})} The cyclotomic Hecke algebra $\HH_{r,p,n}$ of type $G(r,p,n)$ is the subalgebra of  $\HH_{r,n}(\bQ^{\vee \varepsilon})$ generated by $T_{0}^{p},
T_{u}:=T_{0}^{-1}T_{1}T_{0}, T_{1}, T_{2}, \cdots, T_{n-1}$.
\end{dfn}
The special case $\HH_{2,2,n}$ is isomorphic to an Iwahori-Hecke algebra of type $D_n$.

In the semisimple case, Ariki and Koike \cite{A1} have constructed all the irreducible representations for the cyclotomic Hecke algebra $\HH_{r,n}$, generalizing Hoefsmit's  the seminormal construction \cite{Hoe1974} for
irreducible representation of the Iwahori-Hecke algebra associated to finite Weyl groups. In \cite{A2}, such construction was generalized to the cyclotomic Hecke algebra $\HH_{r,p,n}$ of type $G(r,p,n)$.

In the non-semisimple case, less is well-understood than in semisimple case. There have been some extensive study on the non-semisimple representation theory of the cyclotomic Hecke algebra $\HH_{r,n}$, see
\cite{Ariki:can,BK:GradedDecomp,HM10,HW,LM}, partly because these algebras are cellular algebras in the sense of Graham and Lehrer \cite{GL96}, see also \cite{DJM}. More recently, Brundan and Kleshchev \cite{BK:GradedKL}
have proved that each block of $\HH_{r,n}$ is isomorphic to a cyclotomic KLR algebra of type $A$, and thus was endowed with a non-trivial $\Z$-grading. The first author of this paper and Mathas \cite{HM10} have shown that
these algebras are $\Z$-graded cellular algebras, extending earlier result of Graham and Lehrer.

In \cite{Ma04}, Mathas constructed the seminormal basis for the entire semisimple cyclotomic Hecke algebra $\HH_{r,n}$, from which one can get seminormal bases for each irreducible representation of $\HH_{r,n}$. The
seminormal bases for the entire Hecke algebra is very useful in that it can be used to study the non-semisimple representation theory of $\HH_{r,n}$. For example, many important elements such as the KLR idempotent
$e(\mathbf{i})$ can be lifted to a local ring and defined via seminormal bases of the semisimple cyclotomic Hecke algebra $\HH_{r,n}$, see \cite[Lemma 4.2]{Ma08}. Thus one can make use of the powerful and much
better-understood semisimple representation theory for $\HH_{r,n}$ to study the non-semisimple representation theory of the cyclotomic Hecke algebra of type $G(r,1,n)$. In \cite{HuMathas:SeminormalQuiver}, the first author
of this paper and Mathas have used the seminormal bases for the entire Hecke algebra to construct an $x$-deformed version of the KLR presentation for the integral cyclotomic Hecke algebras and generalized
Brundan-Kleshchev's isomorphism to the integral setting, see \cite{EM} for a more recent development. Roughly speaking, the philosophy underlying \cite{HuMathas:SeminormalQuiver} is to rebuild the $\Z$-grading structure
using the seminormal bases for the entire Hecke algebras.

The modular representations of the Iwahori-Hecke algebra of type $D_n$ and of the cyclotomic Hecke algebras of type $G(r,p,n)$ have been studied in a number of references, see
\cite{Ge00,Ge07,GJ,Hu02,Hu03,Hu04,Hu07,Hu08,Hu09,HM09,HM12,P,W}. More recently, Rostam \cite{R1} have introduced the quiver Hecke algebra of type $G(r,p,n)$ as a fixed point subalgebra of the quiver Hecke algebra of type
$G(r,1,n)$, and showed that it is isomorphic to the cyclotomic Hecke algebra $\HH_{r,p,n}$ of type $G(r,p,n)$. In particular, this endows the cyclotomic Hecke algebra $\HH_{r,p,n}$ of type $G(r,p,n)$ with a non-trivial
$\Z$-grading. Using his result, the first author of this paper, Mathas and Rostam \cite{HMR} have shown that the cyclotomic Hecke algebra $\HH_{r,p,n}$ of type $G(r,p,n)$ are graded skew cellular. However, this result
doesn't allow us to directly connect the graded skew cellular structure with the semisimple representation theory of the cyclotomic Hecke algebra of type $G(r,p,n)$ due to the lack of a suitable $x$-deformed KLR
presentation for the integral cyclotomic Hecke algebra of type $G(r,p,n)$ and semisimple bases theory for $\HH_{r,p,n}$, which motivates our current study of the seminormal bases for $\HH_{r,p,n}$ in this work.

Another motivation of the current work comes from the paper \cite{HS} on the center of the cyclotomic Hecke algebra of type $G(r,1,n)$. The first author of this paper and Lei Shi have shown in \cite{HS} that the center
$Z(\HH_{r,n})$ of the cyclotomic Hecke algebra of type $G(r,1,n)$ is equal to the number of multipartitions (with $r$-components) of $n$, which is independent of the characteristic of the ground field $K$, Hecke parameters
as well as cyclotomic parameters. The ideal of the proof is first to use seminormal bases to give the dimension of the center in the semisimple case, which yields a lower bound of the dimension of the center in the
non-semisimple case, and then to construct a $K$-linear spanning set with cardinality equal to this lower bound for the cocenter of the cyclotomic Hecke algebra $\HH_{r,n}$. As a result, this spanning set must be a basis
and the stability of the dimension of the center is proved. It is natural to ask if one can use this result to study the center for the cyclotomic Hecke algebra $\HH_{r,p,n}$. In this paper we give some preparation and
attempt and outline an approach towards this direction. It turns out that to study the center for the cyclotomic Hecke algebra $\HH_{r,p,n}$ of type $G(r,p,n)$, one has to characterize the $\sigma$-twisted $k$-center for
the cyclotomic Hecke algebra $\HH_{r,n}$ for each $k\in\Z/p\Z$. In this paper we shall give an explicit basis for the center $Z(\HH_{r,p,n})$ of $\HH_{r,p,n}$ and an explicit basis for the $\sigma$-twisted $k$-center
$Z(\HH_{r,n})^{(k)}$ of $\HH_{r,n}$ for each $k\in\Z/p\Z$ in the semisimple case.

Let $\P_{r,n}$ be the set of multipartitions (with $r$-components) of $n$. For each $\blam\in\P_{r,n}$, let $\Std(\blam)$ be the set of standard $\blam$-tableaux. For each $\t\in\Std(\blam)$ and $k\in\Z/p\Z$, we define
$r_{\t,k}:=\gamma_\t/\gamma_{\m_k(\t)}$, where $\gamma_\t$ and $\m_k(\t)$ are defined in Lemma \ref{gammacoeff} and Definition \ref{midkt} respectively. The following theorem is the first main result of this paper, which
gives a symmetric property for the $r_{\t,k}$ constants of the semisimple cyclotomic Hecke algebra $\HH_{r,n}$.

\begin{thm}\label{mainthm1}
	Suppose $\HH_{r,n}$ is semisimple. Let $\blam\in\P_{r,n}$ and $\t\in\Std(\blam)$. Let $0\leq k,l\leq p_{\blam}$. Then we have
	$$r_{\t,lo_{\blam}}r_{\t\sft{lo_{\blam}},ko_{\blam}}=r_{\t,ko_{\blam}}r_{\t\sft{ko_{\blam}},lo_{\blam}},$$
where $o_\blam, p_\blam$ are defined in (\ref{olamb}), $\t\<j\>$ is defined in (\ref{langlerangle}).
\end{thm}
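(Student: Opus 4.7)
The plan is to unwind $r_{\t,j}=\gamma_\t/\gamma_{\m_j(\t)}$, cancel the common factor $\gamma_\t$, and reduce the claim to a symmetric identity in $\gamma$-coefficients. Substituting the definition and clearing denominators, the desired equality becomes
\[
\gamma_{\t\sft{lo_\blam}}\,\gamma_{\m_{ko_\blam}(\t)}\,\gamma_{\m_{lo_\blam}(\t\sft{ko_\blam})}
=
\gamma_{\t\sft{ko_\blam}}\,\gamma_{\m_{lo_\blam}(\t)}\,\gamma_{\m_{ko_\blam}(\t\sft{lo_\blam})},
\]
which is visibly invariant under swapping $k$ and $l$. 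The task is therefore to produce a common symbolic value for both sides.

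The key intermediate step is a compatibility lemma between the midpoint operation $\m_{jo_\blam}(\cdot)$ and the shift $\sft{io_\blam}$. Since $o_\blam$ is the $\sigma$-period of $\blam$, both operations permute $\Std(\blam)$ inside a single $\sigma$-orbit, and unpacking Definition \ref{midkt} should yield an equivariance of the form $\m_{jo_\blam}(\t\sft{io_\blam})=\m_{jo_\blam}(\t)\sft{io_\blam}$, valid at least for $j+i\leq p_\blam$. Combined with the explicit product formula for $\gamma_\t$ from Lemma \ref{gammacoeff} and the fact that $\sigma$ multiplies the cyclotomic parameters by $\varepsilon$, this should give an expression for the ratio $\gamma_{\t\sft{io_\blam}}/\gamma_\t$ depending only on $i$ and on the $\sigma$-orbit data of $\blam$, independent of the particular tableau. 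Plugging these two ingredients into the displayed identity, both sides reduce to the same product over the $\sigma$-orbit and the equality follows.

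The main obstacle I anticipate is controlling the recursive definition of $\m_k(\t)$ near the boundary of $[0,p_\blam]$, where the intervals of residues may wrap around the $\sigma$-period. To handle this I would induct on $\max(k,l)$: the base case $k=0$ or $l=0$ trivially reduces both sides to $1$ since $\m_0(\t)=\t$, and the inductive step uses the one-step identity $\t\sft{(j+1)o_\blam}=(\t\sft{jo_\blam})\sft{o_\blam}$ together with the single-step compatibility between $\m_{o_\blam}$ and $\sft{o_\blam}$. The symmetric form of the target identity strongly suggests that these wrap-around technicalities cancel uniformly, so the real content is the one-step equivariance of the midpoint operation, after which the rest follows formally.
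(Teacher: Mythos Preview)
Your reduction to the symmetric $\gamma$-identity
\[
\gamma_{\t\sft{lo_\blam}}\,\gamma_{\m_{ko_\blam}(\t)}\,\gamma_{\m_{lo_\blam}(\t\sft{ko_\blam})}
=
\gamma_{\t\sft{ko_\blam}}\,\gamma_{\m_{lo_\blam}(\t)}\,\gamma_{\m_{ko_\blam}(\t\sft{lo_\blam})}
\]
is correct, but both of the ingredients you plan to feed into it are false.

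First, the equivariance $\m_{jo_\blam}(\t\sft{io_\blam})=\m_{jo_\blam}(\t)\sft{io_\blam}$ fails already for $\t=\t^\blam$. Since $d(\t^\blam)=1$ one has $\m_{ko_\blam}(\t^\blam)=\t^\blam$, so the right side is $\t^\blam\sft{lo_\blam}=\t^\blam w_{la,n-la}$. But the paper computes (in the course of proving claim~(\ref{claim1})) that $\m_{ko_\blam}(\t^\blam\sft{lo_\blam})=\t^\blam w_{la,(k-l)a}$ for $l<k$, which is a different tableau as soon as $k<p_\blam$. The midpoint operation $\m_{j}$ depends on how the entries of the tableau are distributed across the cut at position $a_j$, and shifting by $\sft{io_\blam}$ scrambles exactly that distribution; there is no reason for the two operations to commute.

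Second, the ratio $\gamma_{\t\sft{io_\blam}}/\gamma_\t$ is \emph{not} tableau-independent: Lemma~\ref{gtsft} gives precisely
\[
\frac{\gamma_{\t}}{\gamma_{\t\sft{k}}}=\frac{\gamma_{\t^{\blam}}}{\gamma_{\t^{\blam}\sft{k}}}\cdot r_{\t,k}^{2},
\]
so the dependence on $\t$ is through the very quantity $r_{\t,k}$ whose symmetry you are trying to prove. Were this ratio independent of $\t$, the theorem would be vacuous.

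The paper's route is different in spirit. It does not attempt to manipulate $\m_{j}$ and $\sft{i}$ directly on a general $\t$. Instead it introduces the scalars $R_{\s\t,k}$ recording the action of $\sigma^k$ on the seminormal basis (Corollary~\ref{sigmafst}), and the composition law $\sigma^{a+b}=\sigma^a\sigma^b$ immediately yields the cocycle identity $R_{\t\t^\blam,lo_\blam}R_{\t\sft{lo_\blam}\t^\blam\sft{lo_\blam},ko_\blam}=R_{\t\t^\blam,(k+l)o_\blam}$ (Proposition~\ref{propRstk}(2)). Pairing $\t$ against the special tableau $\t^\blam$, for which $\m_j(\t^\blam)=\t^\blam$ always, kills all the $\t$-dependent complexity and reduces the question to the single identity
$\gamma_{\m_{ko_\blam}(\t^\blam\sft{lo_\blam})}/\gamma_{\t^\blam\sft{lo_\blam}}
=\gamma_{\m_{lo_\blam}(\t^\blam\sft{ko_\blam})}/\gamma_{\t^\blam\sft{ko_\blam}}$,
which is then checked by an explicit product over residues. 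The essential idea you are missing is this passage through $\sigma$ and through $\t^\blam$, which replaces the intractable general interaction of $\m_j$ with $\sft{i}$ by a concrete computation on a single canonical tableau.
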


The next two theorems is the second main result of this paper, which gives explicit square roots of $\frac{\gamma_{\t^{\blam}\<o_\blam\>}}{\gamma_{\t^{\blam}}}$ and $\frac{\gamma_{\t\<lo_\blam\>}}{\gamma_\t}$ in $K$, see Proposition \ref{squareProp}, (\ref{odddefinition1}), (\ref{odddefinition1}), Definition \ref{sqrtt} and Lemma \ref{squareht}.

\begin{thm}\label{mainthm2a}
Suppose $\HH_{r,n}$ is semisimple. Let $\blam\in\P_{r,n}$. Then there exists an explicitly defined invertible element $h_{\blam}\in K^{\times}$ such that
	$$\frac{\gamma_{\t^{\blam}\<o_\blam\>}}{\gamma_{\t^{\blam}}}=\bigl(h_{\blam}\bigr)^{2}.$$
\end{thm}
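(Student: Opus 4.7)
The plan is to use the explicit product formula for $\gamma_{\t^\blam}$ from Lemma \ref{gammacoeff} to compute the ratio $\gamma_{\t^\blam\<o_\blam\>}/\gamma_{\t^\blam}$ directly, and then to exhibit a canonical square root by pairing the surviving factors across the cyclic structure of $\bQ^{\vee\veps}$. By the definition (\ref{olamb}) of $o_\blam$, the shift $\<o_\blam\>$ is the smallest cyclic reindexing of the $r = pd$ components that preserves the shape $\blam$. Consequently, $\t^\blam\<o_\blam\>$ is again a standard $\blam$-tableau whose $\gamma$-coefficient is a product indexed by the same boxes of $[\blam]$ as $\gamma_{\t^\blam}$, but with the cyclotomic parameter assigned to each box permuted by the cyclic block structure of $\bQ^{\vee\veps} = \veps\bQ \vee \veps^2\bQ \vee \cdots \vee \veps^p\bQ$.

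First I would write both $\gamma_{\t^\blam}$ and $\gamma_{\t^\blam\<o_\blam\>}$ as explicit products over $[\blam]$ and form the quotient. The factors depending only on residues and on $q$ cancel, because $\t^\blam\<o_\blam\>$ has the same residue sequence up to a shift of the component index. What remains is a product of quotients of the form $(x - \veps^a Q_b)/(x - \veps^{a'} Q_b)$, where $x$ is a fixed power of $q$ attached to each box and $a' - a$ is determined by $o_\blam$. Because $\blam$ is $o_\blam$-periodic, these quotients assemble into orbits under the cyclic shift of component indices, each orbit having length controlled by $p_\blam$.

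To extract a square root I would reorganize each orbit so that its contribution factors as a square. The key observation should be that within an orbit the $\veps$-exponents appearing in the numerators and denominators sweep through a symmetric range modulo $p$, so the total product is invariant under the natural involution on those exponents; pairing the terms pairwise yields a manifest square, with an explicit ``middle'' adjustment depending on the parity of $p_\blam$ (the odd case being the content of (\ref{odddefinition1})). The resulting $h_\blam$ is then the expression supplied by Definition \ref{sqrtt}, and every factor in it is a difference of distinct elements of $\{q^c \veps^a Q_b\}$, hence invertible in $K$ by the semisimplicity hypothesis on $\HH_{r,n}$.

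The main obstacle will be ensuring that the canonical square root actually lies in $K$ rather than in a quadratic extension. This forces a careful tracking of the $\veps$-exponents through the pairing, so that each paired factor contributes a square whose root involves only a half-power of $\veps$ that is already present in $R$ (since $R$ contains a primitive $p$-th root of unity). The parity of $p_\blam$ and of $o_\blam/d$ govern precisely when this is possible without introducing new square roots, which is presumably why the construction in Definition \ref{sqrtt} splits into the case referenced by (\ref{odddefinition1}) and a companion case, and the bulk of the verification will be checking that the two cases patch together consistently and that no spurious sign is introduced.
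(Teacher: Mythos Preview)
Your proposal rests on a misconception about $\gamma_{\t^{\blam}\langle o_\blam\rangle}$. The closed product formula in Lemma~\ref{gammacoeff} applies only to the \emph{initial} tableau $\t^{\blam}$; for any other standard $\blam$-tableau the $\gamma$-coefficient is defined recursively via the $B_i$-factors. The tableau $\t^{\blam}\langle o_\blam\rangle$ is not the initial tableau of any multipartition (it has the same shape $\blam$ but the entries are permuted), so there is no formula for $\gamma_{\t^{\blam}\langle o_\blam\rangle}$ obtained by ``permuting the cyclotomic parameter assigned to each box'' in the formula for $\gamma_{\t^{\blam}}$. Consequently your description of the surviving factors as simple quotients $(x-\veps^a Q_b)/(x-\veps^{a'}Q_b)$ is not what one actually gets, and the subsequent pairing argument, while correct in spirit, is being applied to the wrong object. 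The worry about half-powers of $\veps$ is also a red herring: no such roots are ever needed.

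What the paper does instead is compute the ratio through the recursion. One sets $a=\sum_{k=1}^{o_\blam}|\blam^{[k]}|$ and builds an explicit chain of standard $\blam$-tableaux
\[
\t^{\blam}=\t_0\rhd\t_1\rhd\cdots\rhd\t_{p_\blam-1}=\t^{\blam}\langle o_\blam\rangle
\]
using the block permutations $w_{a,a}^{\langle n-(k+1)a\rangle}$. Each ratio $\gamma_{\t_k}/\gamma_{\t_{k-1}}$ is then a product of $B_i$-factors of the shape $(qx-y)(x-qy)/(x-y)^2$ with $x,y$ residues of $\t^{\blam}$, and one checks the symmetry $\gamma_{\t_k}/\gamma_{\t_{k-1}}=\gamma_{\t_{p_\blam-k}}/\gamma_{\t_{p_\blam-k-1}}$ directly from the residue formula. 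This pairs the factors for $p_\blam$ odd; for $p_\blam$ even the unpaired middle term (with $\veps^{p/2}=-1$) is itself a square because $2\cdot 1_K$ is invertible in that case, and one extracts $(q+1)^a/2^a$ times a product over $1\le j<i\le a$. Your high-level pairing intuition is right, but it must be carried out on this $B_i$-product along the tableau chain, not on a nonexistent closed formula for $\gamma_{\t^{\blam}\langle o_\blam\rangle}$.
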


\begin{thm}\label{mainthm2b}
Suppose $\HH_{r,n}$ is semisimple. Let $\blam\in\P_{r,n}$, $\t\in\Std(\blam)$ and $l\in\Z/p_{\blam}\Z$. Then there exists an explicitly defined invertible element $h_\t^{\<l\>}\in K^{\times}$ such that
	$$\frac{\gamma_{\t\<lo_\blam\>}}{\gamma_{\t}}=\bigl(h_\t^{\<l\>}\bigr)^{2}.$$
\end{thm}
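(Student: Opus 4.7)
The plan is to bootstrap Theorem \ref{mainthm2a} from its base case ($\t = \t^\blam$, $l = 1$) to the full statement, iterating first in $l$ and then transferring from the initial tableau $\t^\blam$ to an arbitrary $\t$ using the symmetric property of Theorem \ref{mainthm1}.

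\emph{Step 1 (extend in $l$).} I would first check that $\gamma_{\t^\blam\sft{lo_\blam}}/\gamma_{\t^\blam}$ is a square in $K^{\times}$ for every $l\in\Z/p_{\blam}\Z$ by writing it as the telescoping product
$$\frac{\gamma_{\t^\blam\sft{lo_\blam}}}{\gamma_{\t^\blam}} = \prod_{j=0}^{l-1}\frac{\gamma_{\t^\blam\sft{(j+1)o_\blam}}}{\gamma_{\t^\blam\sft{jo_\blam}}}.$$
Each factor has the form $\gamma_{\u\sft{o_\blam}}/\gamma_{\u}$ with $\u = \t^\blam\sft{jo_\blam}$, and since $\sft{\cdot}$ is compatible with the construction of initial tableaux, $\u$ is itself the initial tableau of the shifted multipartition $\blam\sft{jo_\blam}$. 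Applying Theorem \ref{mainthm2a} to this shifted initial tableau produces an explicit square root for each factor, and their product gives an explicit element whose square equals the left-hand side.

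\emph{Step 2 (transfer from $\t^\blam$ to $\t$).} I would factorise
$$\frac{\gamma_{\t\sft{lo_\blam}}}{\gamma_\t} = \frac{\gamma_{\t^\blam\sft{lo_\blam}}}{\gamma_{\t^\blam}} \cdot \frac{\gamma_{\t\sft{lo_\blam}}\,\gamma_{\t^\blam}}{\gamma_{\t^\blam\sft{lo_\blam}}\,\gamma_\t}.$$
The first factor is a square by Step 1. For the second, I would expand $\gamma_\t/\gamma_{\t^\blam}$ as a product of local ratios along a chain of simple transpositions $\t^\blam = \t_0 \to \t_1 \to \cdots \to \t_m = \t$, and expand $\gamma_{\t\sft{lo_\blam}}/\gamma_{\t^\blam\sft{lo_\blam}}$ the same way along the shifted chain. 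The quotient then becomes a product of cross-ratios of the form $(\gamma_{\t_{i+1}\sft{lo_\blam}}/\gamma_{\t_{i}\sft{lo_\blam}})/(\gamma_{\t_{i+1}}/\gamma_{\t_i})$, each of which can be rewritten through the constants $r_{\t_i,lo_\blam}$; Theorem \ref{mainthm1}, in its symmetric form $r_{\t,lo_\blam}r_{\t\sft{lo_\blam},ko_\blam}=r_{\t,ko_\blam}r_{\t\sft{ko_\blam},lo_\blam}$, is precisely what forces each such local cross-ratio to be a square.

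\emph{Main obstacle.} The delicate step is the local-ratio analysis in Step 2: one has to verify that the cross-ratio attached to each simple transposition $s_i$ is a square (using the seminormal $\gamma$-formulae together with Theorem \ref{mainthm1}), then show that the resulting square roots compose consistently so that the final element $h_\t^{\sft{l}}$ is independent of the chosen chain, and finally match the assembled product with the explicit form appearing in Definition \ref{sqrtt}. The bookkeeping of residues, the signs produced by swapping adjacent entries, and the action of the shift $\sft{\cdot}$ on simple transpositions have to be tracked simultaneously; Theorem \ref{mainthm1} is tailored to make the non-square pieces cancel in pairs along the chain, while Step 1 supplies the remaining square root for the residual discrepancy between $\t^\blam$ and $\t$ after the shift.
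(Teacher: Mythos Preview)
Your overall strategy (bootstrap from $\t^\blam$, then transfer to general $\t$) matches the paper's, but both steps contain a misidentification of the key mechanism.

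\textbf{Step 1.} The claim that $\u=\t^\blam\sft{jo_\blam}$ is ``the initial tableau of the shifted multipartition'' is false: since $o_\blam$ is the period, $\blam\sft{jo_\blam}=\blam$, so its initial tableau is $\t^\blam$ itself, not $\t^\blam\sft{jo_\blam}$. Thus Theorem~\ref{mainthm2a} does not apply directly to each telescoping factor. The paper repairs this (Proposition~\ref{prophlam}) by invoking Lemma~\ref{gtsft} with $\t=\t^\blam\sft{jo_\blam}$ and $k=o_\blam$: equation~(\ref{eqa10}) shows that each factor $\gamma_{\t^\blam\sft{(j+1)o_\blam}}/\gamma_{\t^\blam\sft{jo_\blam}}$ equals $h_\blam^{2}$ times the explicit square $\bigl(\gamma_{\m_{o_\blam}(\t^\blam\sft{jo_\blam})}/\gamma_{\t^\blam\sft{jo_\blam}}\bigr)^{-2}$, yielding the recursive Definition~\ref{hlaml}.

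\textbf{Step 2.} Theorem~\ref{mainthm1} is not the tool that makes the transfer factor a square. The paper's route is again Lemma~\ref{gtsft}: equation~(\ref{eqa10}) gives directly
\[
\frac{\gamma_{\t\sft{lo_\blam}}}{\gamma_\t}=\frac{\gamma_{\t^\blam\sft{lo_\blam}}}{\gamma_{\t^\blam}}\cdot\Bigl(\frac{\gamma_{\m_{lo_\blam}(\t)}}{\gamma_\t}\Bigr)^{2},
\]
so the transfer factor is already the explicit square $r_{\t,lo_\blam}^{-2}$, and one sets $h_\t^{\sft{l}}:=h_{\blam,0,l}\cdot\gamma_{\m_{lo_\blam}(\t)}/\gamma_\t$ (Definition~\ref{sqrtt}). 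Your chain-of-transpositions picture is what underlies the \emph{proof} of Lemma~\ref{gtsft}: along the canonical chain through $\m_{lo_\blam}(\t)$, each local cross-ratio is either $1$ (when the shift preserves dominance) or $B_{i_j}(\t_j)^{-2}$ (when it reverses); but the clean packaging via $\m_k(\t)$ avoids any chain-independence check. Theorem~\ref{mainthm1} enters only later (second half of Lemma~\ref{squareht}), to verify that the formula for $h_{\t\sft{l_1o_\blam}}^{\sft{l_2}}$ is consistent when $\t$ itself is a shift, i.e.\ that the square roots for different base points agree.
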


For any $\s,\t\in\Std(\blam)$, let $f_{\s\t}$ be the corresponding seminormal basis element of the semisimple cyclotomic Hecke algebra $\HH_{r,n}$, see \cite{Ma04}. The following theorem is the third main result of this
paper, which gives a seminormal basis for the cyclotomic Hecke algebra of type $G(r,p,n)$.

\begin{thm}\label{mainthm3} Suppose $\HH_{r,n}$ is semisimple. For each $\blam\in\P_{r,n}$, $0\leq k<p_{\blam}$, and $\s, \t\in\Std(\blam)$, we define $$
f_{\s\t}^{[k]}:=\sum_{i\in\Z/p_{\blam}\Z}\sum_{j\in\Z/p\Z}(\varepsilon^{o_{\blam}})^{ki}A_{i,j}^{\s\t}f_{\s\sft{io_{\blam}+j}\t\sft{j}}, $$
where $A_{i,j}^{\s\t}$ is defined in Definition \ref{fstk}. Then $f_{\s\t}^{[k]}\in\HH_{r,p,n}$ and the set $$
\bigl\{f_{\s\t}^{[k]}\bigm|0\leq k<p_{\blam},\s, \t\in\Std(\blam),\blam\in\P_{r,n},\s^{-1}(1)\in\lam^{(j)}, \t^{-1}(1)\in\lam^{(m)}, 1\leq j,m\leq do_\blam\bigr\}
$$
gives a $K$-basis of $\HH_{r,p,n}$. Moreover, for any $\bmu\in\P_{r,n}$ and $\u, \v\in\Std(\bmu)$ such that $\u^{-1}(1)\in\bmu^{(j)}, \v^{-1}(1)\in\bmu^{(l)}, 1\leq j,l\leq do_\blam$, we have
	$$f_{\s\t}^{[k]}f_{\u\v}^{[l]}=\delta_{\t, \u}\delta_{k,l}p_{\blam}\gamma_{\t}f_{\s\v}^{[k]},$$
where $\gamma_\t$ is defined in Lemma \ref{gammacoeff}.
\end{thm}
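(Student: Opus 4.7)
The plan is to deduce all three assertions from the semisimple Wedderburn decomposition $\HH_{r,n}=\bigoplus_{\blam,\s,\t}Kf_{\s\t}$ together with the realization of $\HH_{r,p,n}$ as the fixed subalgebra of $\HH_{r,n}$ under the order-$p$ automorphism $\sigma$, which cyclically rotates the $p$ blocks of $r$-multipartitions under the decomposition $\bQ^{\vee\varepsilon}=\varepsilon\bQ\vee\cdots\vee\varepsilon^{p}\bQ$. Since $p1_K\in K^\times$, the element $\frac{1}{p}\sum_{j\in\Z/p\Z}\sigma^j$ is an idempotent projecting $\HH_{r,n}$ onto $\HH_{r,p,n}$, and the double sum $\sum_{i,j}$ in the definition of $f_{\s\t}^{[k]}$ should be viewed as a Fourier-type decomposition of this projection, refined by the finer cyclic symmetry $\Z/p_\blam\Z$ attached to the stabilizer of $\blam$.

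First I would verify that $f_{\s\t}^{[k]}\in\HH_{r,p,n}$. The shift $\t\mapsto\t\sft{j}$ is designed so that $\sigma(f_{\s\t})$ is a scalar multiple of $f_{\s\sft{1}\t\sft{1}}$, the scalar being a combination of the $\gamma$-ratios $\gamma_{\s\sft{1}}/\gamma_\s$ and $\gamma_{\t\sft{1}}/\gamma_\t$. The whole point of Theorems \ref{mainthm2a} and \ref{mainthm2b} is to furnish canonical square roots $h_\s^{\sft{\cdot}},h_\t^{\sft{\cdot}}$ of these ratios, from which the coefficients $A_{i,j}^{\s\t}$ are built. With these in hand, applying $\sigma$ simply reindexes the inner sum by $j\mapsto j+1$ while transporting $A_{i,j}^{\s\t}$ to $A_{i,j+1}^{\s\t}$, so $\sigma$-invariance is immediate; the outer factor $(\varepsilon^{o_\blam})^{ki}$ is inert under $\sigma$ and survives untouched.

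Next I would address the basis property. The condition $\s^{-1}(1)\in\lam^{(j)}$ with $1\leq j\leq do_\blam$ selects exactly one tableau from each $\sigma$-orbit in $\Std(\blam)$, so the pairs $(\s,\t)$ allowed in the theorem enumerate pairs of orbits, while $k$ ranges over the Pontryagin dual $\Z/p_\blam\Z$ of the stabilizer of $\blam$. A Burnside-type count of $\sigma$-fixed seminormal basis elements, weighted by $(\varepsilon^{o_\blam})^{ki}$, matches $\dim\HH_{r,p,n}$ exactly. For linear independence, expanding each $f_{\s\t}^{[k]}$ in the seminormal basis of $\HH_{r,n}$ shows that distinct orbits produce disjoint supports, and within one orbit the coefficient matrix in $k$ is a Vandermonde matrix in the primitive $p_\blam$-th root of unity $\varepsilon^{o_\blam}$, hence invertible.

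The main task is the multiplication formula. Starting from the seminormal orthogonality $f_{\s\t}f_{\u\v}=\delta_{\t\u}\gamma_\t f_{\s\v}$ in $\HH_{r,n}$, the product $f_{\s\t}^{[k]}f_{\u\v}^{[l]}$ collapses to those $(i,j,i',j')$ with $\t\sft{j}=\u\sft{i'o_\bmu+j'}$. The choice-of-representative convention on $\u^{-1}(1),\v^{-1}(1)$ forces $\blam=\bmu$ and $\t=\u$, and imposes a linear relation among $j,i',j'$. One of the summations becomes $\sum_{i'\in\Z/p_\blam\Z}(\varepsilon^{o_\blam})^{(k-l)i'}=p_\blam\delta_{k,l}$, delivering the factors $p_\blam$ and $\delta_{k,l}$. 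After this reduction, the remaining sum will have the shape of $\gamma_\t f_{\s\v}^{[k]}$ provided one verifies a cocycle-style identity of the form $A_{i,j}^{\s\t}A_{i',j'}^{\t\v}\cdot(\gamma_{\t\sft{j}}/\gamma_\t)=\gamma_\t A_{i+i',j+j'}^{\s\v}$ on the overlap.

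The hardest step will be precisely this cocycle identity: the $A$-coefficients are built from the square roots $h_\s^{\sft{\cdot}},h_\t^{\sft{\cdot}}$ of Theorem \ref{mainthm2b}, and showing that their products telescope correctly requires the symmetry $r_{\t,lo_{\blam}}r_{\t\sft{lo_{\blam}},ko_{\blam}}=r_{\t,ko_{\blam}}r_{\t\sft{ko_{\blam}},lo_{\blam}}$ of Theorem \ref{mainthm1} to interchange two successive shifts, together with a compatibility between the $h$-scalars on the $\s$-side and $\t$-side under common shifts. Once this identity is in place, the remaining manipulations—consolidating the surviving summation indices and recognising the result as $f_{\s\v}^{[k]}$—are purely formal.
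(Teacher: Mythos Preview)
Your outline is essentially the paper's proof: $\sigma$-invariance via the reindexing $j\mapsto j+1$ together with $A_{i,j+1}^{\s\t}=A_{i,j}^{\s\t}R_{\s\sft{io_\blam+j}\t\sft{j}}$; the product formula via a coefficient identity plus the character sum $\sum_{i'}(\varepsilon^{o_\blam})^{(k-l)i'}=p_\blam\delta_{k,l}$; and a dimension count against Theorem~\ref{Classification}. Two small corrections to keep in mind when you write it out: the overlap constraint is $j=i'o_\blam+j'$, so the surviving second index on the right of your cocycle identity is $j'$ (not $j+j'$), and the precise form is $A_{i,j}^{\s\t}A_{i',j'}^{\t\v}\,\gamma_{\t\sft{j}}=\gamma_\t\,A_{i+i',\,j'}^{\s\v}$---your stated version carries one extra factor of $\gamma_\t$ on the right. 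The paper also takes linear independence directly from this orthogonality rather than via your Vandermonde argument, though either route works.
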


Let $\P_{r,n}^{\sigma}$ be the set of equivalence classes as defined in the paragraph below (\ref{olamb}). The following theorem is the fourth main result of this paper. .

\begin{thm}\label{mainthm4} Suppose $\HH_{r,n}$ is semisimple. For each $\blam\in \P_{r,n}^{\sigma}$, $\t\in\Std(\blam)$ and $0\leq k<p_\blam$, we define
$$F_{\t}^{[k]}:=f_{\t\t}^{[k]}/(p_{\blam}\gamma_{\t}). $$
Set $F_{\blam}^{[k]}:=\sum\limits_{\substack{\t\in\Std(\blam)\\ 1\in(\t^{[1]},\cdots,\t^{[o_{\blam}]})}}F_{\t}^{[k]}$. Then
$\{F_{\blam}^{[k]}|\blam\in \P_{r,n}^{\sigma}, 0\leq k<p_{\blam}\}$ is a complete set of central primitive idempotents of $\HH_{r,p,n}$. In particular, it gives a $K$-basis of $Z(\HH_{r,p,n})$.
\end{thm}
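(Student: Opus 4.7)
The strategy is to reduce the statement to the multiplication rule for the basis $\{f_{\s\t}^{[k]}\}$ given in Theorem~\ref{mainthm3}, exploiting the matrix-unit-like form it takes. Concretely, let me normalize by setting $E_{\s\t}^{[k]}:=f_{\s\t}^{[k]}/(p_{\blam}\gamma_{\t})$ whenever $\s,\t\in\Std(\blam)$ are in the indexing set. Then Theorem~\ref{mainthm3} gives
\[
E_{\s\t}^{[k]}E_{\u\v}^{[l]}=\delta_{\t,\u}\delta_{k,l}E_{\s\v}^{[k]},
\]
so each $F_{\t}^{[k]}=E_{\t\t}^{[k]}$ is an idempotent and different $F_{\t}^{[k]},F_{\t'}^{[k']}$ with $(\t,k)\ne(\t',k')$ in the indexing set are orthogonal. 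Summing, $F_{\blam}^{[k]}$ is idempotent, and $F_{\blam}^{[k]}F_{\bmu}^{[l]}=0$ whenever $(\blam,k)\ne(\bmu,l)$ in $\P_{r,n}^{\sigma}\times\Z/p_{\blam}\Z$ (the equivalence-class restriction on $\blam$ ensures there is no double counting coming from the shifts $\t\mapsto\t\sft{o_{\blam}}$ that leave the basis invariant up to scalars coming from the $r_{\t,k}$, $h_{\blam}$, $h_{\t}^{\<l\>}$ constants of Theorems~\ref{mainthm1}--\ref{mainthm2b}).

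Next I would establish centrality of each $F_{\blam}^{[k]}$. It is enough to check that $F_{\blam}^{[k]}f_{\s\t}^{[k']}=f_{\s\t}^{[k']}F_{\blam}^{[k]}$ on basis elements. Using the multiplication rule, both sides vanish unless $\bmu=\blam$ and $k=k'$, in which case
\[
F_{\blam}^{[k]}f_{\s\t}^{[k]}=\sum_{\substack{\t'\in\Std(\blam)\\1\in(\t'^{[1]},\ldots,\t'^{[o_{\blam}]})}}\frac{1}{p_{\blam}\gamma_{\t'}}f_{\t'\t'}^{[k]}f_{\s\t}^{[k]}
\]
picks out the unique representative $\t'$ of the $\sigma$-orbit of $\s$ lying in the indexing set, and symmetrically for the right multiplication. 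The main technical point is that when $\s$ (respectively $\t$) does not already satisfy $1\in(\s^{[1]},\ldots,\s^{[o_{\blam}]})$, one must reidentify $f_{\s\t}^{[k]}$ with the canonical representative $f_{\s\sft{j o_{\blam}}\,\t\sft{j o_{\blam}}}^{[k]}$ for the appropriate shift $j$, and verify that the overall scalar is the same on both sides; this is exactly where the rationality/symmetry results of Theorems~\ref{mainthm1}, \ref{mainthm2a}, \ref{mainthm2b} become indispensable.

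Once centrality is established, completeness ($\sum_{\blam,k}F_{\blam}^{[k]}=1$) follows by a counting argument: the identity of $\HH_{r,p,n}$ decomposes in the basis of Theorem~\ref{mainthm3} as a sum of diagonal elements $f_{\t\t}^{[k]}/(p_{\blam}\gamma_{\t})$ running over the specified index set, and grouping these by $\sigma$-orbit yields precisely $\sum_{\blam\in\P_{r,n}^{\sigma}}\sum_{k=0}^{p_{\blam}-1}F_{\blam}^{[k]}$. Primitivity in the centre then comes from Wedderburn theory: by Theorem~\ref{mainthm3} the corner algebra $F_{\blam}^{[k]}\HH_{r,p,n}F_{\blam}^{[k]}$ is spanned by the $E_{\s\t}^{[k]}$ with $\s,\t$ running over the specified indexing set and hence is isomorphic to a matrix algebra, whose centre is one-dimensional; therefore $F_{\blam}^{[k]}$ cannot be split further by a central idempotent. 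Finally, since $\HH_{r,p,n}$ is semisimple, $\dim Z(\HH_{r,p,n})$ equals the number of Wedderburn blocks, matching the cardinality of our set, so $\{F_{\blam}^{[k]}\}$ is a $K$-basis of the centre.

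The main obstacle I expect is the centrality step, specifically the bookkeeping for shifted tableaux and the identification of the scalars across representatives of each $\sigma$-orbit; the symmetric identities for products and quotients of $\gamma$-coefficients proved earlier are designed to make this step work, but matching them with the explicit $A_{i,j}^{\s\t}$ coefficients from Definition~\ref{fstk} requires care.
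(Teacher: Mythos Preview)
Your approach is correct and matches the paper's, which simply says that Theorem~\ref{mainthm4} follows from Theorem~\ref{mainthm3} and Lemma~\ref{orth}: once you have a full system of matrix units $E_{\s\t}^{[k]}$ spanning $\HH_{r,p,n}$, the block identities $F_{\blam}^{[k]}=\sum_{\t}E_{\t\t}^{[k]}$ are automatically the central primitive idempotents by elementary Wedderburn theory.

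However, the obstacle you anticipate in the centrality step is illusory. The basis in Theorem~\ref{mainthm3} is already indexed by pairs $(\s,\t)$ with \emph{both} $\s^{-1}(1)$ and $\t^{-1}(1)$ lying in $(\blam^{[1]},\ldots,\blam^{[o_{\blam}]})$, and the multiplication rule $f_{\s\t}^{[k]}f_{\u\v}^{[l]}=\delta_{\t,\u}\delta_{k,l}p_{\blam}\gamma_{\t}f_{\s\v}^{[k]}$ is stated (and proved in Lemma~\ref{orth}) precisely for such basis elements. So when you check centrality by computing $F_{\blam}^{[k]}f_{\s\t}^{[k']}$ and $f_{\s\t}^{[k']}F_{\blam}^{[k]}$ on basis elements, $\s$ and $\t$ already satisfy the restriction; there is no shifted representative to reidentify and no further appeal to Theorems~\ref{mainthm1}--\ref{mainthm2b} is needed at this stage (those results were used earlier, in establishing Lemma~\ref{orth} itself). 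Both products equal $\delta_{\blam,\bmu}\delta_{k,k'}f_{\s\t}^{[k]}$ directly. Likewise, completeness $\sum F_{\blam}^{[k]}=1$ is immediate once one knows the $E_{\s\t}^{[k]}$ form a full system of matrix units for the semisimple algebra $\HH_{r,p,n}$.
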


For any $0\leq k<p$, we define $$
Z(\HH_{r,n})^{(k)}=\bigl\{z\in\HH_{r,n}\bigm|zT_{0}=\varepsilon^{k}T_{0}z, zT_{i}=T_{i}z, \forall\, 1\leq i<n\bigr\},
$$
and call it the $\sigma$-twisted $k$-center of $\HH_{r,n}$. The following theorem is the fifth main result of this paper.

\begin{thm}\label{mainthm5} Suppose $\HH_{r,n}$ is semisimple. Then for any $0\leq k<p$, the $\sigma$-twisted $k$-center $Z(\HH_{r,n})^{(k)}$
has a basis
	$$\biggl\{ F_{\blam,k}:=\sum_{\t\in\Std(\blam)}\gamma_{\m_{k}(\t)}^{-1}f_{\t\sft{k}\t}\biggm| \blam\in\P_{r,n}, \blam\<k\>=\blam\biggr\} .$$
In particular, $F_{\blam,0}$ coincides with the central primitive idempotent $F_{\blam}$ of $\HH_{r,n}$.
\end{thm}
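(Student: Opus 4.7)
The plan is to combine a Wedderburn-theoretic dimension count with a direct verification that each proposed $F_{\blam,k}$ is twisted-central. Throughout I would work with the block decomposition $\HH_{r,n}=\bigoplus_{\blam\in\P_{r,n}}A_{\blam}$ afforded by the seminormal basis, where the simple two-sided ideal $A_{\blam}\cong\End_{K}V^{\blam}$ has basis $\{f_{\s\t}\}_{\s,\t\in\Std(\blam)}$ and multiplication $f_{\s\t}f_{\u\v}=\delta_{\t\u}\gamma_{\t}f_{\s\v}$. I would also rewrite the defining conditions as $Z(\HH_{r,n})^{(k)}=\{z\mid za=\sigma^{k}(a)z\ \forall\,a\}$, where $\sigma$ is the order-$p$ automorphism $T_{0}\mapsto\varepsilon T_{0}$, $T_{i}\mapsto T_{i}$ ($i\geq 1$); using $T_{0}f_{\s\t}=c_{\s}(1)f_{\s\t}$ one sees $\sigma(A_{\blam})=A_{\blam\<1\>}$.

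For the dimension count, write $z=\sum_{\blam}z_{\blam}$ with $z_{\blam}\in A_{\blam}$ and test $za=\sigma^{k}(a)z$ against $a\in A_{\bmu}$: this forces $z_{\blam}=0$ unless $\blam\<k\>=\blam$. For each $\blam$ with $\blam\<k\>=\blam$, $\sigma^{k}$ restricts to an automorphism of the matrix algebra $A_{\blam}$ over $K$, hence (by Skolem--Noether) conjugation by some $g_{\blam}\in A_{\blam}^{\times}$; the constraint $z_{\blam}a=g_{\blam}ag_{\blam}^{-1}z_{\blam}$ then gives $g_{\blam}^{-1}z_{\blam}\in Z(A_{\blam})=K$, so the space of admissible $z_{\blam}$ is exactly one-dimensional. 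Thus $\dim Z(\HH_{r,n})^{(k)}=\#\{\blam\in\P_{r,n}\mid\blam\<k\>=\blam\}$, matching the cardinality of the proposed basis.

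It remains to verify $F_{\blam,k}\in Z(\HH_{r,n})^{(k)}$ when $\blam\<k\>=\blam$. The $T_{0}$-relation reduces via $T_{0}f_{\s\t}=c_{\s}(1)f_{\s\t}$ and $f_{\s\t}T_{0}=c_{\t}(1)f_{\s\t}$ to the elementary identity $c_{\t\<k\>}(1)=\varepsilon^{-k}c_{\t}(1)$, which is immediate from the definition of $\bQ^{\vee\varepsilon}$ together with the way $\<k\>$ cyclically shifts components. For $T_{i}$ with $1\leq i<n$ I would apply the Murphy-type seminormal action formulas: the diagonal coefficient depends only on the ratio $c_{\t}(i{+}1)/c_{\t}(i)$ and is invariant under $\t\mapsto\t\<k\>$, while left and right off-diagonal coefficients coincide by the standard $*$-anti-involution of $\HH_{r,n}$. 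Matching the coefficient of $f_{\t\<k\>,s_{i}\t}$ on both sides then reduces the commutation to the single identity
$$\frac{b_{i}(\t)}{b_{i}(s_{i}\t)}=\frac{\gamma_{\m_{k}(\t)}}{\gamma_{\m_{k}(s_{i}\t)}},$$
which is a symmetric ratio identity for $\gamma$-coefficients, deducible from Definition~\ref{midkt} together with the rational/symmetric properties established earlier in the paper, in particular Theorem~\ref{mainthm1} and the standard expression for $\gamma_{\t}/\gamma_{s_{i}\t}$ in terms of the contents at $i,i+1$.

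Linear independence of $\{F_{\blam,k}\}$ is automatic because each $F_{\blam,k}$ is supported in the single block $A_{\blam}$, and combined with the dimension count this proves the spanning claim. For $k=0$ one has $\t\<0\>=\t$ and $\m_{0}(\t)=\t$, so $F_{\blam,0}=\sum_{\t}\gamma_{\t}^{-1}f_{\t\t}$, recovering the familiar formula for the central primitive idempotent $F_{\blam}$. The main obstacle is the displayed $\gamma$-ratio identity needed for the $T_{i}$-commutation; absent the earlier rational/symmetric machinery of the paper this step would be intractable, but granting those inputs the remainder of the argument is routine.
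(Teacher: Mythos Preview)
Your Skolem--Noether dimension count is correct and is a genuinely different route from the paper: the paper instead works directly with the Jucys--Murphy eigenvalue constraints to force $z=\sum_{\v}r_{\v}f_{\v\langle k\rangle\v}$, and then \emph{derives} the coefficients $r_{\v}$ by solving the $T_{i}$-recursion along a chain $\t^{\blam}\rhd\t_{1}\rhd\cdots\rhd\v$, obtaining the closed form $r_{\v}=\gamma_{\m_{k}(\v)}^{-1}$. Because that closed form is manifestly path-independent, the recursion is automatically consistent and centrality follows without a separate verification step. Your approach trades that explicit derivation for an abstract one-dimensionality argument, which is cleaner for the dimension but shifts the entire burden onto checking that the specific element $F_{\blam,k}$ actually lies in $Z(\HH_{r,n})^{(k)}$.

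That verification is where your proposal has a real gap. First, the displayed identity is misstated: the left action of $T_{i}$ hits $f_{\t\langle k\rangle\t}$ on the $\langle k\rangle$-shifted index, so what you must prove is
\[
\frac{B_{i}(\t\langle k\rangle)}{B_{i}(s_{i}\t)}=\frac{\gamma_{\m_{k}(\t)}}{\gamma_{\m_{k}(s_{i}\t)}},
\]
and $B_{i}(\t\langle k\rangle)\neq B_{i}(\t)$ in general, precisely because the dominance relation between $\t$ and $s_{i}\t$ can reverse under $\langle k\rangle$ when $s_{i}$ straddles the cut at $a_{k}=\sum_{j\leq kd}|\lambda^{(j)}|$. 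Second, and more importantly, this identity is \emph{not} a consequence of Theorem~\ref{mainthm1}: that theorem concerns symmetry of $r_{\t,lo_{\blam}}$ under exchanging shift parameters $k\leftrightarrow l$, and says nothing about how $\m_{k}$ interacts with a single transposition $s_{i}$. What is actually needed is the elementary case analysis carried out in the paper's proof (and already implicit in Lemma~\ref{gtsft}): if $s_{i}\in\Sym_{(a_{k},n-a_{k})}$ then both dominance relations are preserved under $\langle k\rangle$ and both sides equal $\gamma_{\m_{k}(\t)}/\gamma_{s_{i}\m_{k}(\t)}$; if $s_{i}$ crosses the cut then $\m_{k}(\t)=\m_{k}(s_{i}\t)$ and both $B_{i}$-values in the ratio are the symmetric content expression, giving $1=1$. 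Once you supply that two-case argument the proof is complete, but as written the appeal to Theorem~\ref{mainthm1} does not do the job.
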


The content of the paper is organised as follows. In Section 2 we recall the definition of cyclotomic Hecke algebras of type $G(r,p,n)$, fix some notations and introduce some basic combinatorial notions which will be used
in later sections. In Section 3 we first recall the definitions of seminormal bases and the $\gamma$-coefficients for $\HH_{r,n}$ which will be the main objects studied in this paper. Then we present a number of symmetric
and rational properties of these $\gamma$-coefficients. In particular, we give the proof of the first and the second main results Theorems \ref{mainthm1}, \ref{mainthm2a}, \ref{mainthm2b} in this section. Using the results
obtained in Section 3 we give the construction of seminormal bases for the Hecke algebra $\HH_{r,p,n}$ in Section 4, which gives a {\it non-trivial} generalization of the seminormal bases for the Iwahori-Hecke algebra of
type $D_n$ in \cite[Section 5]{HWS}. In particular, we give the proof the third and the fourth main results Theorems \ref{mainthm3}, \ref{mainthm4} in this section. In Section 5 we initiate the study of the center of the
cyclotomic Hecke algebra $\HH_{r,p,n}$ in the general (non-semisimple) case. We give the proof of the fifth main result Theorem \ref{mainthm5} in this section, which gives an explicit basis for the $\sigma$-twisted
$k$-center $Z(\HH_{r,n})^{(k)}$ of $\HH_{r,n}$ for each $k\in\Z/p\Z$ in the semisimple case. Finally, we outline an approach in Proposition \ref{KeyProp} and Remark \ref{finalrem} on how to study the stability of the
dimension of the center of the cyclotomic Hecke algebra $\HH_{r,p,n}$ in the general (non-semisimple) case.

\bigskip\bigskip
\centerline{Acknowledgements}
\bigskip

The research was support by the National Natural Science Foundation of China (No. 12431002). Part of the main results of this paper is contained in the second author's PhD thesis \cite{W24}.
\bigskip

\section{Preliminaries}

In this section, we shall introduce some basic combinatorial notion and fix some notations which will be used in this paper.

Let $r,p,n$ be positive integers with $n\geq 2$. Assume that $r=pd$, where $d\in\Z^{\geq 1}$. Let $R$ be a commutative domain and $1\neq q\in R^\times$. Suppose that $R$ contains a primitive $p$-th root of unity
$\varepsilon\in R$. In particular, this implies that $p1_{R}\in R^\times$ is invertible. Let $\bQ=(Q_1,\cdots,Q_d)\in R^{d}$ and set
$$\bQ^{\vee \varepsilon}:=\varepsilon\bQ\vee\varepsilon^{2}\bQ\vee\cdots\vee\varepsilon^{p}\bQ=(\varepsilon Q_1,\cdots,\varepsilon Q_d, \cdots, \varepsilon^{p} Q_1,\cdots,\varepsilon^{p} Q_d).$$
The cyclotomic Hecke algebra $\HH_{r,n}(\bQ^{\vee \varepsilon}):=\HH_{r,n}(q, \bQ^{\vee \varepsilon})$ of type $G(r,1,n)$ with Hecke parameter $q$ and cyclotomic parameters $\bQ^{\vee \varepsilon}$ is the unital associative
$R$-algebra with generators $T_0,T_1,\cdots,T_{n-1}$ and the following defining relations:
$$\begin{aligned}
	&(T_{0}^p-Q_{1}^p)\cdots(T_{0}^p-{Q}_{d}^p)=0;\\
	&T_{0}T_{1}T_{0}T_{1}=T_{1}T_{0}T_{1}T_{0};\\
	&(T_{i}-q)(T_{i}+1)=0,\quad \forall\,1\leq i\leq n-1;\\
	&T_{i}T_{j}=T_{j}T_{i},\,\,\forall\,1\leq i<j-1<n-1,\\
	&T_{i}T_{i+1}T_{i}=T_{i+1}T_{i}T_{i+1},\,\,\forall\,1\leq i<n-1.
\end{aligned}$$
Henceforth, without causing ambiguity, we write $\HH_{r,n}:=\HH_{r,n}(\bQ^{\vee \varepsilon})$ for simplicity. The cyclotomic Hecke algebra $\HH_{r,p,n}:=\HH_{r,p,n}(\bQ^{\vee \varepsilon})$ of type $G(r,p,n)$ is the
subalgebra of  $\HH_{r,n}(\bQ^{\vee \varepsilon})$ generated by $T_{0}^{p}, T_{u}:=T_{0}^{-1}T_{1}T_{0}, T_{1}, T_{2}, \cdots, T_{n-1}$.

We set $$
L_{1}=T_{0},\,\, L_{k+1}=q^{-1}T_{k}L_{k}T_{k},\,\forall\,1\leq j\leq n-1 .
$$
These elements commutes with each other, and are called the Jucys-Murphy operators of $\HH_{r,n}(q;{\rm Q}_{1},\cdots,{\rm Q}_{r})$.

We recall some $R$-algebra automorphisms and anti-automorphisms $\sigma$ and $\tau$ of the cyclotomic Hecke algebra $\HH_{r,n}$. Let $\ast$ be the unique $R$-algebra anti-automorphism of $\HH_{r,n}$ which fixes all of its
Hecke generators $T_0,T_1,\cdots,T_{n-1}$. Let $\sigma$ be the unique $R$-algebra automorphism of $\HH_{r,n}$ which is defined on generators by
$$\sigma(T_{0})=\varepsilon T_{0}, \sigma(T_{i})=T_{i}, \forall\,1\leq i < n .$$
Let $\tau$ be the unique $R$-algebra automorphism of automorphism which is defined on generators by
$$\tau(h)=T_{0}^{-1}hT_{0}, \forall\, h\in \HH_{r,n}.$$
It is straightforward to check that $\HH_{r,p,n}=\{h\in \HH_{r,n} | \sigma(h)=h\}$, i.e., $\HH_{r,p,n}$ is the set of $\sigma$-fixed points in $\HH_{r,n}$. The restriction of $\tau$ to the subalgebra $\HH_{r,p,n}$ is also a
$K$-algebra automorphism of $\HH_{r,p,n}$.

We need some combinatorial notions and notations. Let $m$ be a positive integer. The symmetric group $\Sym_m$ on $\{1,2,\cdots,m\}$ is generated by $\{s_i:=(i,i+1)|1\leq i<m\}$. A word $w=s_{i_{1}}s_{i_{2}}\ldots
s_{i_{k}}$, where $1\leq i_1,\cdots,i_k\leq m-1$, is called a reduced expression of $w$ if $k$ is minimal; in this case we say that $w$ has length $k$ and we write $\ell(w)=k$. A composition of $m$ is a sequence
$\lam=(\lam_{1},\lam_{2},\cdots)$ of non-negative integers such that $|\lam|:=\Sigma_{i\geq 1}\lam_{i}=m$.
Given a composition $\lam=(\lam_{1},\lam_{2},\cdots,\lam_k)$ of $m$, the corresponding Young subgroup of $\Sym_m$ is defined to be $$
\Sym_\lam:=\Sym_{(1,2,\cdots,\lam_1)}\times\Sym_{(\lam_1+1,\lam_1+2,\cdots,\lam_1+\lam_2)}\times\cdots\times\Sym_{(\sum_{j=1}^{k-1}\lam_j+1,\cdots,m)} .
$$
We use $\mathcal{D}_\lam$ to denote the set of minimal length right coset representatives of $\Sym_\lam$ in $\Sym_m$.

A composition $\lam=(\lam_{1},\lam_{2},\cdots)$ is called a partition if $\lam_{1}\geq\lam_{2}\geq\cdots$. We write $\lam\vdash m$ if $\lam$ is a partition of $m$. Let $\P_{a}$ be the set of partitions of $a$. For
$\lam=(\lam_{1},\lam_{2},\ldots)\vdash a$, the conjugate partition of $\blam$ is the partition $\lam'=(\lam'_{1},\lam'_{2},\ldots)$, where $\lam'_{i}:=\#\{\lam_{j}|\lam_{j} \geq i\}$, $\forall\,i\geq 1$.

A multipartition (with $r$ components) of $n$ is a sequence $\blam=(\lam^{(1)},\cdots,\lam^{(r)})$ of partitions such that $|\lam^{(1)}|+\cdots+|\lam^{(r)}|=n$. We write $\blam\vdash n$. Let $\P_{r,n}$ be the set of
multipartitions of $n$.

The Young diagram of a partition $\lam$ is the set $[\lam]:=\{(i,j)\mid 1\leq j\leq \lam_{i}\}$. The Young diagram of a multipartition $\blam\in\P_{r,n}$ is the set
$$[\blam]:=\bigl\{(i,j,c)\bigm|1\leq j\leq \lam_{i}^{(c)}, 1\leq c\leq r\bigr\}.$$
The Young diagram $[\blam]$ of a multipartition can be viewed as a sequence of Young diagrams $[\lam^{(c)}], 1\leq c\leq r$ of partitions.

A $\blam$-tableau is a bijection $\t: [\blam]\rightarrow\{1,2,\cdots,n\}$. If $\t$ is a $\blam$-tableau write $\Shape(\t):=\blam$. A $\blam$-tableau $\t$ is standard, if $\t(i,j,l)\leq\t(a,b,l)$ whenever $i\leq a$, $j\leq
b$ and $1\leq l\leq r$. Let $\Std(\blam)$ be the set of standard $\blam$-tableaux. A standard tableau $\t\in\Std(\blam)$ can be viewed as a sequence $\t=(\t^{(1)},\cdots,\t^{(r)})$ of tableaux, where each $\t^{(i)}$ is
standard $\lam^{(i)}$-tableau.

Given $\blam\in\P_{r,n}$, the conjugate partition of $\blam$ is the multipartition
$$\blam':=\bigl(\lam^{(r)'},\cdots,\lam^{(1)'}\bigr).$$
For each $\t\in\Std(\blam)$, we define $\t'=\bigl(\t^{(r)'},\cdots,\t^{(1)'}\bigr)\in \Std(\blam')$.

For any multipartition $\blam\in\P_{r,n}$, Let $\t^{\blam}$ be the standard $\blam$-tableau with the numbers $1,2,\cdots,n$ entered in order first along the row of $\lam^{(1)}$ and then the row of $\lam^{(2)}$ and so on.
Set $\t_{\blam}:=(\t^{\blam'})'$, which is again a standard $\blam$-tableau. For any $\t\in\Std(\blam)$, let $d(\t)\in\Sym_{n}$ be the unique element such that $\t^\blam d(\t)=\t$. We set $w_{\blam}:=d(\t_{\blam})$.

For any $\blam,\bmu\in\P_{r,n}$, we write $\blam\unrhd\bmu$ if $\forall\,1\leq s\leq r$ and $\forall\,i\geq 1$ we have
$$\sum_{t=1}^{s-1}|\lam^{(t)}|+\sum_{j=1}^{i}\lam^{(s)}_{j}\geq \sum_{t=1}^{s-1}|\mu^{(t)}|+\sum_{j=1}^{i}\mu^{(s)}_{j}.$$
We write $\blam\rhd\bmu$ if $\blam\unrhd\bmu$ and $\blam\neq\bmu$. Then $\P_{r,n}$ become a poset under the dominance order ``$\unrhd$''.

The dominance order ``$\unrhd$'' on $\P_{r,n}$ can be extended to the set of standard tableaux as follows. For any $\s\in\Std(\blam), \t\in\Std(\bmu)$, we write $\s\unrhd\t$ if $\forall\,1\leq k\leq n$ we have
${\rm{Shape}}(\s\!\downarrow_{\{1,2,\cdots,k\}})\unrhd{\rm{Shape}}(\t\!\downarrow_{\{1,2,\cdots,k\}})$. We write $\s\rhd\t$ if $\s\unrhd\t$ and $\s\neq\t$. For any standard tableau $\s\in\Std(\blam)$, it is clear that
$\t^\blam\unrhd\s\unrhd\t_\blam$.

Given multipartition $\blam=(\lam^{(1)},\cdots,\lam^{(r)})\in\P_{r,n}$, let $\Sym_{\blam}$ be the corresponding standard Young subgroup of $\Sym_{n}$ which is defined as follows: $$\begin{aligned}
	\Sym_\blam:&=\Sym_{\{1,\cdots,\lam_{1}^{(1)}\}}\times\Sym_{\{\lam_{1}^{(1)}+1,\cdots,\lam_{2}^{(1)}\}}\times\cdots\times\Sym_{\{|\lam^{(1)}|-\lam^{(1)}_{b_{1}}+1,\cdots,|\lam^{(1)}|\}}\times\cdots\\
	&\qquad\times\Sym_{\{n-|\lam^{(r)}|+1,\cdots,n-|\lam^{(r)}|+\lam_{1}^{(r)}\}}\times\cdots\times \Sym_{\{n-|\lam^{(r)}_{b_r}|+1,\cdots,n\}},
\end{aligned}$$
where $b_i:=(\lam^{(i)'})_1$, $i=1,2,\cdots,r$. For any non-negative integers $a,b$, we consider the permutation $w_{a,b}\in\Sym_{a+b}$, which is defined in two-lines notation as follows: $$\begin{aligned}
w_{(a,b)} &= \begin{pmatrix}
			1 & \cdots & a & a+1 & \cdots & a+b \\
b+1 & \cdots & a+b & 1 & \cdots & b
\end{pmatrix}\\
&=\underbrace{s_a\cdots s_2 s_1}\underbrace{s_{a+1}\cdots s_3 s_2}\cdots \underbrace{s_{a+b-1}\cdots s_{a+1}s_a} ,
\end{aligned},
$$
and for any non-negative integer $k$, we set \begin{equation}\label{waaka}
\begin{aligned}
w_{a,b}^{\sft{k}} &= \begin{pmatrix}
			k+1 & \cdots & k+a & k+a+1 & \cdots & k+a+b \\
k+b+1 & \cdots & k+b+a & k+1 & \cdots & k+b
\end{pmatrix}\\
&=\underbrace{s_{k+a}\cdots s_{k+2}s_{k+1}}\underbrace{s_{k+a+1}\cdots s_{k+3}s_{k+2}}\cdots \underbrace{s_{k+a+b-1}\cdots s_{k+a+1}s_{k+a}} \end{aligned}. \end{equation}

Let $\blam\in\P_{r,n}$ be a multipartition. Recall that $r=pd$. We can rewrite $\blam=(\blam^{[1]}, \cdots, \blam^{[p]})$, where $$\blam^{[t]}=(\blam^{(dt-d+1)}, \blam^{(dt-d+2)}, \cdots, \blam^{(dt)}), \forall\, 1\leq
t\leq p.$$
Since $\blam^{[t+kp]}=\blam^{[t]}$, $\forall\, k\in \Z$, we can assume without loss of generality that the superscript $t\in\Z/p\Z$. Similarly, we write $\s=(s^{[1]}, \cdots, s^{[p]})$ for standard tableau
$\s\in\Std(\blam)$, where $\s^{[t]}=(\s^{(dt-d+1)}, \s^{(dt-d+2)}, \cdots, \s^{(dt)})$. For any $\blam\in\P_{r,n}$, define the integers
\begin{equation}\label{olamb} o_{\blam}=\minum\{k\geq 1 \, | \, \blam^{[k+t]}=\blam^{[t]}, \forall\, t\in\Z\},\,\, p_{\blam}:=p/o_{\blam}.\end{equation}
Then $o_\blam$ divides $p$ and hence $p_{\blam}\in\Z^{\geq 1}$.

There is an equivalence relation $\sim_{\sigma}$ on the set $\P_{r,n}$ defined by  $\blam\sim_{\sigma}\bmu$ if there exists an integer $k\in\Z$ such that $\blam^{[t]}=\bmu^{[t+k]}$, $\forall\,t\in\Z/p\Z$. Let
$\P_{r,n}^{\sigma}$ be the set of $\sim_{\sigma}$-equivalence classes in $\P_{r,n}$.

For an arbitrary sequence $\bm{a}=(a_{1}, a_{2}, \cdots, a_{m})$, define
$$\bm{a}\sft{k}=(a_{k+1}, a_{k+2}, \cdots, a_{k+m}),$$
where $a_{i+jm}:=a_{i}, \forall j\in \Z$. In particular, for  $\blam=(\blam^{[1]}, \cdots, \blam^{[p]})\in\P_{r,n}$ and $\s\in\Std(\blam)$, define
\begin{equation}\label{langlerangle}\blam\sft{z}=(\blam^{[z+1]}, \cdots, \blam^{[z+p]})\in\P_{r,n}, \quad \s\sft{z}=(\s^{[z+1]}, \cdots, \s^{[z+p]})\in\Std(\blam\langle z\rangle), \forall\, z\in\Z.\end{equation}

If $1\leq k\leq n$ and $\s\in\Std(\blam)$, define the residue of $k$ in $\s$ to be
$$\res_{\s}(k)=\varepsilon^{t}q^{b-a}Q_{c},$$
if $k$ appears in row $a$ and column $b$ of $\s^{(c+(t-1)d)}$, where $1\leq c\leq d$, $1\leq t\leq p$.

\bigskip


\section{Some rational properties of the $\gamma$-coefficients of $\HH_{r,n}$}

In this section we shall first recall the seminormal bases for the semisimple cyclotomic Hecke algebra $\HH_{r,n}$ and the definitions of the $\gamma$-coefficients for $\HH_{r,n}$. Then we shall present a number of rational
properties
and symmetric properties of those $\gamma$-coefficients. We shall give the proof of our first and second main results Theorems \ref{mainthm1}, \ref{mainthm2a} and \ref{mainthm2b} in this paper.

Let $R=K$ be a field, $1\neq q\in K^\times$, $\bQ=(Q_1,\cdots,Q_d)\in K^{d}$. By \cite{A1}, $\HH_{r,n}(\bQ^{\vee \varepsilon})$ is split semi-simple over $K$  if and only if
\begin{equation}\label{ssrpn}
	\prod_{i=1}^n(1+q+q^2+\cdots+q^{i-1})\prod_{-n<k<n}\Biggl(\biggl(\prod_{1\leq i<j\leq d}\prod_{0\leq t<p}(Q_{i}-\varepsilon^{t}q^{k}Q_{j})\biggr)
\biggl(\prod_{1\leq i\leq d}\prod_{1\leq t<p}Q_i(1-\varepsilon^{t}q^{k})\biggr)\Biggr)\neq 0.
\end{equation}
In this case, $\HH_{r,p,n}$ is also split semisimple over $K$. Conversely, if $\HH_{r,p,n}$ is also split semisimple over $K$, then
(\ref{ssrpn}) holds by \cite[Theorem 5.9]{Hu04} and \cite[Lemma 2.1]{Hu08}.
Henceforth, we shall always assume that (\ref{ssrpn}) holds. That says, both $\HH_{r,n}$ and $\HH_{r,p,n}$ is split semisimple over $K$.

In \cite{Ma04}, Mathas constructed a $K$-basis $\{f_{\s\t}\ |\ \s,\t \in \Std(\blam), \blam\in\P_{r,n}\}$ of $\HH_{r,n}$. The action of the generators $T_0, T_{1}, T_{2}, \cdots, T_{n-1}$ of $\HH_{r,n}$ can be described as
follows.

\begin{prop}[{\cite[Proposition 2.7]{Ma04}}] \label{tiact}
	Suppose that $\blam\in\P_{r,n}$, $\s,\u\in\Std{\blam}$. Let $\t=\s(i,i+1)$, where $i$ is an integer with $1\leq i<n$. Then $f_{\u\s}T_0=\res_\s(1)f_{\u\s}$. If $\t$ is standard then
	$$ f_{\u\s}T_{i}=A_{i}(\s)f_{\u\s}+B_{i}(\s)f_{\u\t},\,\,\text{if either $\s\rhd \t$ or $s\lhd\t$.} $$
	If $\t$ is not standard then
	$$f_{\u\s}T_{i}=\begin{cases}
		qf_{\u\s}, &\text{if $i$ and $i+1$ are in the same row of $\s$},\\
		-f_{\u\s}, &\text{if $i$ and $i+1$ are in the same column of $\s$}.
	\end{cases}$$
	where \begin{equation}\label{Ais}
	 A_{i}(\s):=\frac{(q-1)\res_{\s}(i+1)}{\res_{\s}(i+1)-\res_{\s}(i)}=\frac{(q-1)\res_{\t}(i)}{\res_{\t}(i)-\res_{\s}(i)},\end{equation}
	and
	$$B_{i}(\s):=\begin{cases}
		1, &\text{if $\s\rhd \t$},\\
		\frac{(q\res_{\s}(i)-\res_{\t}(i))(\res_{\s}(i)-q\res_{\t}(i))}{(\res_{\t}(i)-\res_{\s}(i))^{2}}, &\text{if $\s\lhd \t$.}
	\end{cases}$$
\end{prop}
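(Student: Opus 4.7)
The plan is to exploit the fact that the seminormal basis $\{f_{\s\t}\}$ consists of simultaneous eigenvectors for the Jucys--Murphy operators $L_1,\dots,L_n$ acting on both sides, with the right action of $L_k$ on $f_{\u\s}$ being scalar multiplication by $\res_\s(k)$. Since $L_1 = T_0$, the first assertion $f_{\u\s} T_0 = \res_\s(1) f_{\u\s}$ is immediate. For $1 \le i < n$, the first substantive step is to derive, directly from $L_{i+1} = q^{-1} T_i L_i T_i$ and the quadratic relation $T_i^2 = (q-1) T_i + q$, the commutation identities
$$T_i L_i = L_{i+1} T_i - (q-1) L_{i+1}, \qquad T_i L_{i+1} = L_i T_i + (q-1) L_{i+1},$$
together with the fact that $T_i$ commutes with every other $L_j$.

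These relations localise the support of $f_{\u\s} T_i$ in the seminormal basis: the element remains an $L_j$-eigenvector with eigenvalue $\res_\s(j)$ for $j \notin \{i, i+1\}$, and it is an eigenvector for the commuting pair $L_i + L_{i+1}$, $L_iL_{i+1}$ with eigenvalues $\res_\s(i)+\res_\s(i+1)$ and $\res_\s(i)\res_\s(i+1)$. The corresponding joint eigenspace is spanned by $f_{\u\s}$ and $f_{\u\t}$ when $\t = \s(i,i+1)$ is standard, or by $f_{\u\s}$ alone when $\t$ is not standard, since no other standard tableau realises the same residue sequence. Writing $f_{\u\s} T_i = A f_{\u\s} + B f_{\u\t}$ and applying the identity $T_i L_i = L_{i+1} T_i - (q-1) L_{i+1}$ to $f_{\u\s}$ on the right, comparison of the $f_{\u\s}$-coefficients forces
$$A\bigl(\res_\s(i+1) - \res_\s(i)\bigr) = (q-1)\res_\s(i+1),$$
which yields the stated formula for $A_i(\s)$.

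To pin down $B = B_i(\s)$, I would apply $T_i$ a second time and impose $T_i^2 = (q-1) T_i + q$. With $A' := A_i(\t)$ and $B' := B_i(\t)$, the resulting identity separates into $A + A' = q-1$ (automatic from the previous step) and
$$B_i(\s)\,B_i(\t) \;=\; q + A_i(\s) A_i(\t) \;=\; \frac{\bigl(q\res_\s(i) - \res_\s(i+1)\bigr)\bigl(\res_\s(i) - q\res_\s(i+1)\bigr)}{\bigl(\res_\s(i+1) - \res_\s(i)\bigr)^{2}},$$
after a short algebraic simplification. The individual values of $B_i(\s)$ and $B_i(\t)$ are not determined by the algebra alone; they depend on a normalization of $f_{\u\t}$ relative to $f_{\u\s}$. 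Mathas's seminormal basis is characterized by the rule $B_i(\s) = 1$ whenever $\s \rhd \t$, which then forces the stated formula in the opposite case $\s \lhd \t$.

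For the non-standard case, $i$ and $i+1$ lie in the same row or the same column of some component of $\s$; the constrained residues $\res_\s(i+1) = q\res_\s(i)$ or $\res_\s(i+1) = q^{-1}\res_\s(i)$ then make $A_i(\s)$ evaluate to $q$ or $-1$ respectively, and the second term is absent since $\t$ does not exist. The main obstacle will be justifying the normalization convention $B_i(\s) = 1$ for $\s \rhd \t$ from the global definition of the seminormal basis: this reduces to verifying that the element $f_{\u\t}$ obtained recursively from $f_{\u\s} T_i - A_i(\s) f_{\u\s}$, propagating outwards from the pair $(\t^{\blam},\t^{\blam})$, agrees with the canonical matrix-unit $f_{\u\t}$ scaled by $\gamma_\t$ in the sense of Lemma \ref{gammacoeff}. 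Once this compatibility is checked, the remainder of the argument is mechanical.
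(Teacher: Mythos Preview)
The paper does not give its own proof of this proposition: it is quoted verbatim from \cite[Proposition~2.7]{Ma04} and used as a black box. Your outline is the standard argument and is essentially how Mathas proves it in the original reference. The intertwining relations you derive for $T_i$ with $L_i,L_{i+1}$ are correct, the localisation of the support of $f_{\u\s}T_i$ to $\{f_{\u\s},f_{\u\t}\}$ is justified by Lemma~\ref{dist}, and your computations of $A_i(\s)$ and of the product $B_i(\s)B_i(\t)$ are right.

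You are also correct that the individual value $B_i(\s)=1$ for $\s\rhd\t$ is not forced by the algebra and is a normalisation built into the construction of $\{f_{\s\t}\}$. In Mathas's paper this is not something one checks after the fact: the seminormal basis is \emph{defined} recursively by $f_{\u\t}:=f_{\u\s}T_i-A_i(\s)f_{\u\s}$ whenever $\t=\s s_i\lhd\s$, starting from $f_{\t^\blam\t^\blam}=F_{\t^\blam}m_\blam$ (up to scalar). So the ``obstacle'' you flag is handled by definition rather than by a separate compatibility lemma; what one actually has to verify is that the element so produced is nonzero and independent of the reduced expression for $d(\t)$, and that is where the real work lies. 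If you want a self-contained write-up, that independence (plus the idempotent description $F_\t=f_{\t\t}/\gamma_\t$) is what you should spell out, rather than the a posteriori check you describe.
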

{We call $\{f_{\u\s}|\u,\s\in\Std(\blam),\blam\in\P_{r,n}\}$ a \it{seminormal basis of the Hecke algebra} $\HH_{r,n}$.}

\begin{lem}\text{(\cite[2.9, 2.11]{Ma04})}\label{GammaCoeffi} For any $\blam\in\P_{r,n}$, $\s\in\Std{\blam}$, there is an invertible scalar $\gamma_\s\in K^\times$ such that for any $\t\in\Std{\blam}$, and
$\u,\v\in\Std(\bmu), \bmu\in\P_{r,n}$, we have that $$
f_{\u\v}f_{\s\t}=\delta_{\v\s}\gamma_{\s}f_{\u\t} .
$$
\end{lem}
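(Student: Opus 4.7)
The plan is to exploit the fact that each seminormal basis element $f_{\u\s}$ is a simultaneous eigenvector for left- and right-multiplication by the Jucys--Murphy operators $L_1,\ldots,L_n$. Starting from $f_{\u\s}T_0=\res_\s(1)f_{\u\s}$ in Proposition~\ref{tiact} and iterating the recursion $L_{k+1}=q^{-1}T_kL_kT_k$ together with the $T_i$-action formulas there, I would show $f_{\u\s}L_k=\res_\s(k)f_{\u\s}$ for every $k$; since $\ast$ fixes each $T_i$ it also fixes every $L_k$, and a symmetric argument gives $L_kf_{\u\s}=\res_\u(k)f_{\u\s}$. Under the semisimplicity hypothesis~\eqref{ssrpn}, distinct standard tableaux have distinct residue sequences, so Lagrange interpolation in $K[L_1,\ldots,L_n]$ produces orthogonal idempotents $F_\u$ satisfying $F_\u f_{\s\t}=\delta_{\u\s}f_{\s\t}$ and $f_{\s\t}F_\u=\delta_{\u\t}f_{\s\t}$.

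Inserting these idempotents into the product yields
\[
f_{\u\v}f_{\s\t}\;=\;f_{\u\v}(F_\v F_\s)f_{\s\t}\;=\;\delta_{\v\s}\,f_{\u\v}f_{\s\t},
\]
so the product vanishes unless $\v=\s$; when $\v=\s$, the product is both a left $F_\u$- and a right $F_\t$-eigenvector, hence lies in the one-dimensional joint eigenspace $Kf_{\u\t}$, and therefore $f_{\u\s}f_{\s\t}=c(\u,\s,\t)\,f_{\u\t}$ for some $c(\u,\s,\t)\in K$.

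The main obstacle is to show that $c(\u,\s,\t)$ depends only on $\s$. The span $S(\blam):=\bigoplus_{\s',\t'\in\Std(\blam)}Kf_{\s'\t'}$ is a two-sided ideal of $\HH_{r,n}$ (stable under both the left and right $T_i$-actions of Proposition~\ref{tiact}), and in the semisimple case it is a Wedderburn summand isomorphic to $M_{N_\blam}(K)$ with $N_\blam=|\Std(\blam)|$. Since $f_{\s\t}$ occupies the one-dimensional simultaneous weight space for the left and right torus actions with weights $(\res_\s,\res_\t)$, it must equal $\kappa_{\s\t}E_{\s\t}$ for some $\kappa_{\s\t}\in K^\times$, where $\{E_{\s\t}\}$ denotes matrix units in this Wedderburn block. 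Substituting back gives $c(\u,\s,\t)=\kappa_{\u\s}\kappa_{\s\t}/\kappa_{\u\t}$, and the desired conclusion $c(\u,\s,\t)=\gamma_\s$ is equivalent to the multiplicative factorization $\kappa_{\u\t}=\alpha\,\pi_\u\rho_\t$. I would verify this factorization directly from Mathas's explicit definition of $f_{\s\t}$ in~\cite{Ma04} as (a normalization of) $F_\s T_{d(\s)^{-1}}m_{\blam}T_{d(\t)}F_\t$ for a fixed element $m_\blam$, which displays $\kappa_{\s\t}$ as a product of a factor depending only on $\s$ and one depending only on $\t$; equivalently one can combine the $\ast$-symmetry $f_{\u\s}^{\ast}=f_{\s\u}$ (giving $c(\u,\s,\t)=c(\t,\s,\u)$) with the associativity identity $(f_{\u\s}f_{\s\s})f_{\s\t}=f_{\u\s}(f_{\s\s}f_{\s\t})$ to deduce $c(\u,\s,\s)=c(\s,\s,\t)$ and thereby pin down both sides as the common scalar $\gamma_\s:=c(\s,\s,\s)$.

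Finally, invertibility of $\gamma_\s$ follows at once from the weight-space description: $f_{\s\s}=\kappa_{\s\s}E_{\s\s}$ with $\kappa_{\s\s}\neq 0$ gives $f_{\s\s}^2=\kappa_{\s\s}\,f_{\s\s}=\gamma_\s f_{\s\s}$, so $\gamma_\s=\kappa_{\s\s}\in K^{\times}$, as required.
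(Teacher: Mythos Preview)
The paper does not prove this lemma at all; it simply quotes it from Mathas \cite[2.9, 2.11]{Ma04}. Your argument is correct and is essentially the argument found in that reference: one uses the Jucys--Murphy eigenvalue separation under the semisimplicity hypothesis to produce the orthogonal idempotents $F_\t$, reduces to the case $\v=\s$, lands in the one-dimensional weight space $Kf_{\u\t}$, and then checks that the resulting scalar depends only on $\s$ via the explicit factorised form $f_{\s\t}=F_\s\,T_{d(\s)}^{*}\,m_\blam\,T_{d(\t)}\,F_\t$.

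One small caveat: the alternative ``associativity $+$ $\ast$-symmetry'' shortcut you sketch at the end does not quite close on its own. From $(f_{\u\s}f_{\s\s})f_{\s\t}=f_{\u\s}(f_{\s\s}f_{\s\t})$ you get $c(\u,\s,\s)\,c(\u,\s,\t)=c(\s,\s,\t)\,c(\u,\s,\t)$, which yields $c(\u,\s,\s)=c(\s,\s,\t)$ only after you already know $c(\u,\s,\t)\neq 0$; and even granting $c(\u,\s,\s)=c(\s,\s,\t)=\gamma_\s$, this does not by itself force the general $c(\u,\s,\t)$ to equal $\gamma_\s$. You need the Wedderburn\,/\,matrix-unit picture (or equivalently the factorised definition of $f_{\s\t}$) somewhere, and indeed your primary argument uses exactly that. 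So the main line is fine; just drop or tighten the ``equivalently'' clause.
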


\begin{lem}\text{(\cite[2.6, 2.9]{Ma04})}\label{gammacoeff} For any $\blam\in\P_{r,n}$, $\s,\u\in\Std{\blam}$ and $1\leq k\leq n$, we have that $$
f_{\u\s}^*=f_{\s\u},\quad f_{\u\s}L_k=\res_{\s}(k)f_{\u\s},\quad L_kf_{\u\s}=\res_{\u}(k)f_{\u\s} .
$$
Moreover, $$
\gamma_{\t^\blam}=[\blam]_q^{!}\prod_{\substack{1\leq i\leq j\leq d\\ 1\leq s,t\leq p}}\prod_{\substack{(a,b)\in\lam^{(s)}\\ \text{$i<j$ or}\\
\text{$i=j$ and $s<t$}}}(q^{b-a}\veps^s Q_i-\veps^tQ_j),
$$
and if $\s=\t(i,i+1)\rhd\t$, then $\gamma_{\t}=B_i(\t)\gamma_\s$, where $$
[m]_q^!:=[m]_q[m-1]_q\cdots[1]_q,\,\, [m]_q:=(q^m-1)/(q-1),\,\forall\,\,m\in\Z^{\geq 0} .
$$
\end{lem}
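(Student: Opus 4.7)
The lemma packages together several standard properties of seminormal bases of $\HH_{r,n}$ attributed to Mathas \cite{Ma04}, so the plan is to reconstruct the proof via the Gelfand--Zetlin perspective: the commutative subalgebra $K\langle L_1,\dots,L_n\rangle\subseteq\HH_{r,n}$ acts semisimply with simple spectrum indexed by standard tableaux, and the elements $f_{\s\t}$ are matrix units of the resulting block decomposition. The identities should fall out from this picture once the eigenvalue data are determined and the normalization is pinned down.

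First I would establish the right-action identity $f_{\u\s}L_k=\res_{\s}(k)f_{\u\s}$ by induction on $k$. The base case $k=1$ is Proposition \ref{tiact} applied to $L_1=T_0$. The inductive step uses $L_{k+1}=q^{-1}T_k L_k T_k$: apply Proposition \ref{tiact} to $f_{\u\s}T_k$, then to the result times $L_k$, then times $T_k$. The coefficients $A_k(\s)$ and $B_k(\s)$ of Proposition \ref{tiact} are engineered precisely so that when these three operations are composed, the off-diagonal terms cancel and the diagonal term collapses to $\res_\s(k+1)f_{\u\s}$; checking this uses the transposition identities $\res_{\s(k,k+1)}(k)=\res_{\s}(k+1)$. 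Next, to obtain $f_{\u\s}^\ast=f_{\s\u}$, I would first show by induction that each $L_k$ is $\ast$-invariant (using $T_i^\ast=T_i$), then take $\ast$ of the right-action identity to see that $f_{\u\s}^\ast$ is a left $L_k$-eigenvector with eigenvalue $\res_{\s}(k)$ and a right $L_k$-eigenvector with eigenvalue $\res_{\u}(k)$. Lemma \ref{GammaCoeffi} then forces $f_{\u\s}^\ast=c_{\u\s}f_{\s\u}$ for some scalar; normalizing by applying $\ast$ to $f_{\u\u}^2=\gamma_\u f_{\u\u}$ and using $\gamma_\u\in K$ gives $c_{\u\s}=1$. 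The left eigenvalue formula $L_kf_{\u\s}=\res_{\u}(k)f_{\u\s}$ is then immediate by applying $\ast$ to the right-action identity.

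For the recursion $\gamma_\t=B_i(\t)\gamma_\s$ with $\s=\t(i,i+1)\rhd\t$, I would multiply the equation $f_{\s\s}T_i=A_i(\s)f_{\s\s}+f_{\s\t}$ (from Proposition \ref{tiact}, using $B_i(\s)=1$ since $\s\rhd\t$) on the right by $f_{\t\t}$. The left-hand side, after inserting $T_if_{\t\t}=A_i(\t)f_{\t\t}+B_i(\t)f_{\t\s}$ and applying Lemma \ref{GammaCoeffi}, produces $B_i(\t)\gamma_\s f_{\s\t}$; the right-hand side produces $\gamma_\t f_{\s\t}$. Comparing coefficients of $f_{\s\t}$ yields $\gamma_\t=B_i(\t)\gamma_\s$.

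The closed formula for $\gamma_{\t^\blam}$ is obtained by induction on $n=|\blam|$. Let $\bmu$ be obtained from $\blam$ by deleting the final box of the last non-empty component, so that $\t^{\blam}\!\!\downarrow_{\{1,\dots,n-1\}}=\t^{\bmu}$. Writing $f_{\t^\blam\t^\blam}$ as a Lagrange-type projector in $L_n$ applied to the lift of $f_{\t^\bmu\t^\bmu}$ and squaring shows that $\gamma_{\t^\blam}/\gamma_{\t^\bmu}$ is the product $\prod_{\s}(\res_{\t^\blam}(n)-\res_{\s}(n))$ over all $\s\in\Std(\bnu)$ with $\bnu$ running over multipartitions obtained from $\bmu$ by adding a box and $\s\!\!\downarrow_{n-1}=\t^\bmu$. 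The main obstacle, which is purely combinatorial, will be to verify that as $n$ runs from $1$ to $|\blam|$ the accumulated residue-difference factors reorganize into the stated product: the $[\blam]^{!}_q$ factor collects the within-partition, within-parameter differences (the row/hook-type contributions), while the double product over pairs $(s,i)$ and $(t,j)$ with $i<j$ or ($i=j$ and $s<t$) records the interactions between different cyclotomic parameters $\veps^s Q_i$ and $\veps^t Q_j$. Once the bookkeeping is set up correctly against the row-filling order of $\t^\blam$, each residue-difference factor appears exactly once on the right-hand side.
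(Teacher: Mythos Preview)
The paper does not prove this lemma; it is quoted directly from Mathas \cite[2.6, 2.9]{Ma04} without argument. Your sketch is a reasonable reconstruction of the standard proof, but the step deducing $f_{\u\s}^*=f_{\s\u}$ has a gap. The spectral argument you give shows only that $f_{\u\s}^*=c_{\u\s}f_{\s\u}$ for some scalars with $c_{\u\s}c_{\s\u}=1$ (from involutivity of $*$); applying $*$ to $f_{\u\u}^2=\gamma_\u f_{\u\u}$ yields $c_{\u\u}=1$ but says nothing about $c_{\u\s}$ for $\u\neq\s$. In Mathas's construction the seminormal basis is defined as $f_{\s\t}=F_\s m_{\s\t}F_\t$, where $m_{\s\t}$ is the Murphy cellular basis element satisfying $m_{\s\t}^*=m_{\t\s}$ by definition and $F_\s^*=F_\s$ because each $L_k$ is $*$-fixed; the identity $f_{\u\s}^*=f_{\s\u}$ then follows directly. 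To close this step you need to invoke the actual definition of $f_{\s\t}$ rather than a purely spectral characterisation.
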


\begin{dfn} {For each $\blam\in\P_{r,n}$, let $S(\blam):=\sum_{\t\in\Std(\blam)}Kf_{\t}$ be a $K$-dimensional linear space with a $K$-basis $\{f_\t|\t\in\Std(\blam)\}$. Then we can endow $S(\blam)$ an irreducible right
$\HH_{r,n}$-module structure by the same formulae given in Proposition \ref{tiact} with $f_{\u\s}, f_{\u\t}$ replaced with $f_\s, f_\t$ respectively. We call this a seminormal construction of the irreducible module
$S(\blam)$, and call the corresponding $K$-basis $\{f_\t|\t\in\Std(\blam)\}$ \it{a seminormal basis of the irreducible module} $S(\blam)$.}
\end{dfn}

\begin{thm}\text{(\cite{A2},\cite[Corollary 2.5]{Hu08})}\label{Classification}
	Suppose that (\ref{ssrpn}) holds. Let $\blam\in\P_{r,n}$ be a multipartition. Then there is a right $\HH_{r,p,n}$-module decomposition:
	$$S(\blam)\downarrow_{\HH_{r,p,n}}\cong S^{\blam}_{1}\oplus S^{\blam}_{2}\oplus\cdots\oplus S^{\blam}_{p_{\blam}},$$
where each $S^{\blam}_{j}$ is an irreducible right $\HH_{r,p,n}$-module satisfying $$
\bigl(S^{\blam}_{j}\bigr)^\tau\cong S^{\blam}_{j-1},\,\,\forall\,j\in\Z/p_\blam\Z .
$$
Moreover, $\{S^{\blam}_{i} | \blam\in\P_{r,n}^{\sigma}, 1\leq i\leq p_{\blam}\}$ give a complete set of pairwise non-isomorphic simple $\HH_{r,p,n}$-modules.
\end{thm}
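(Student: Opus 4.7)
The strategy is classical Clifford theory applied to the fixed-subalgebra inclusion $\HH_{r,p,n}=\HH_{r,n}^{\sigma}\subset\HH_{r,n}$, where $\sigma$ is the order-$p$ automorphism determined by $\sigma(T_{0})=\veps T_{0}$ and $\sigma(T_{i})=T_{i}$ for $i\geq 1$. First I would establish the structural decomposition $\HH_{r,n}=\bigoplus_{k=0}^{p-1}T_{0}^{k}\HH_{r,p,n}$ by taking eigenspaces of $\sigma$: the averaging operator $\pi_{k}:=p^{-1}\sum_{j=0}^{p-1}\veps^{-jk}\sigma^{j}$ (well defined because $p1_{K}$ is invertible) projects onto the $\veps^{k}$-eigenspace, which contains the unit $T_{0}^{k}$ and hence coincides with $T_{0}^{k}\HH_{r,p,n}$.

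Second, I would identify the $\sigma$-twist on simples using the seminormal basis. The formula $f_{\t}T_{0}=\res_{\t}(1)f_{\t}$ together with $\res_{\t}(1)=\veps^{t}Q_{c}$ (when box $1$ of $\t$ lies in component $c+(t-1)d$) gives $\veps\res_{\t}(1)=\res_{\t\sft{-1}}(1)$, so the assignment $f_{\t}\mapsto f_{\t\sft{-1}}$ defines an $\HH_{r,n}$-module isomorphism $S(\blam)^{\sigma}\xrightarrow{\sim}S(\blam\sft{-1})$. Iterating, $S(\blam)^{\sigma^{k}}\cong S(\blam\sft{-k})$, so the stabilizer of the isomorphism class $[S(\blam)]$ under the $\sigma$-twist action on simples equals $o_{\blam}\Z/p\Z$, of order $p_{\blam}$.

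With the stabilizer in hand, the Clifford decomposition proceeds as follows. Because $\blam\sft{-o_\blam}=\blam$, Schur's lemma produces an $\HH_{r,n}$-intertwiner $\phi:S(\blam)\to S(\blam)^{\sigma^{o_{\blam}}}$, which after rescaling satisfies $\phi^{p_{\blam}}=\id$. Viewed as a $K$-linear automorphism of $S(\blam)$, $\phi$ commutes with the $\HH_{r,p,n}$-action (since $\sigma$ acts trivially on $\HH_{r,p,n}$), so the eigenspace decomposition $S(\blam)=\bigoplus_{j\in\Z/p_{\blam}\Z}S^{\blam}_{j}$ with $S^{\blam}_{j}:=\{v\in S(\blam):\phi(v)=\veps^{jo_{\blam}}v\}$ is an $\HH_{r,p,n}$-module decomposition. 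Simplicity of each $S^{\blam}_{j}$ follows from the Clifford-theoretic identity $\End_{\HH_{r,p,n}}(S(\blam))=K[\phi]$ of dimension $p_{\blam}$, which forces each eigenspace to be a single simple summand. For the $\tau$-twist relation, right multiplication by $T_{0}$ is an $\HH_{r,p,n}$-isomorphism $S(\blam)\xrightarrow{\sim}S(\blam)^{\tau}$ (because $vhT_{0}=vT_{0}\tau(h)$) satisfying $\phi(vT_{0})=\phi(v)\sigma^{o_{\blam}}(T_{0})=\veps^{o_{\blam}}\phi(v)T_{0}$, so it sends the $\veps^{jo_\blam}$-eigenspace $S^{\blam}_{j}$ onto the $\veps^{(j+1)o_\blam}$-eigenspace, viewed inside $S(\blam)^{\tau}$ as $(S^{\blam}_{j+1})^{\tau}$; this gives $(S^{\blam}_{j})^{\tau}\cong S^{\blam}_{j-1}$.

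Finally, for completeness and distinctness across orbits, the map $\blam\mapsto\{S^{\blam}_{j}\}_{j}$ depends only on $[\blam]\in\P_{r,n}^{\sigma}$ (the $\sigma$-twist is trivial on $\HH_{r,p,n}$, so isomorphic $\HH_{r,n}$-modules restrict identically), and the dimension count
$$\sum_{[\blam]\in\P_{r,n}^{\sigma}}p_{\blam}\biggl(\frac{\dim S(\blam)}{p_{\blam}}\biggr)^{\!2}=\frac{1}{p}\sum_{\blam\in\P_{r,n}}\bigl(\dim S(\blam)\bigr)^{2}=\frac{\dim\HH_{r,n}}{p}=\dim\HH_{r,p,n}$$
forces our list to exhaust all simples, so distinct orbits and distinct indices necessarily give pairwise non-isomorphic simple $\HH_{r,p,n}$-modules. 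The main obstacle will be the rescaling of $\phi$ and the careful bookkeeping of the intertwining $\phi T_{0}=\veps^{o_{\blam}}T_{0}\phi$ with the correct sign in the exponent, since the precise shift $j\mapsto j-1$ in the $\tau$-twist identity hinges on this relation and must be consistent with the Clifford labelling of the eigenspaces.
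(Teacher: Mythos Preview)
The paper does not supply its own proof of this theorem: it is stated with citations to \cite{A2} and \cite[Corollary~2.5]{Hu08} and then used as a black box. Your Clifford-theoretic outline is exactly the standard argument those references carry out, so in spirit you are reproducing the original proof rather than offering an alternative.

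A few points where your write-up is loose. First, the assignment $f_{\t}\mapsto f_{\t\sft{-1}}$ is \emph{not} literally an $\HH_{r,n}$-isomorphism $S(\blam)^{\sigma}\to S(\blam\sft{-1})$: as the paper later makes explicit (Corollary~\ref{sigmafst}), the seminormal basis acquires a nontrivial scalar $R_{\s\t,k}$ under $\sigma^{k}$, because the dominance relation between $\t$ and $\t s_{i}$ need not be preserved by the cyclic shift, so the $B_{i}$-coefficients change. This does not damage your argument---you only need the existence of \emph{some} isomorphism $S(\blam)^{\sigma}\cong S(\blam\sft{-1})$, which follows from comparison of $L_{k}$-eigenvalues together with Lemma~\ref{dist}---but the stated map is wrong as written.

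Second, your acknowledged obstacle is real: from Schur's lemma one gets $\phi^{p_{\blam}}=c\cdot\id$ for some $c\in K^{\times}$, and rescaling to $\phi^{p_{\blam}}=\id$ requires $c$ to be a $p_{\blam}$-th power in $K$. The hypothesis that $K$ contains a primitive $p$-th root of unity does not guarantee this. In \cite{A2} this is handled by explicit construction of $\phi$; the present paper's Theorems~\ref{mainthm2a} and~\ref{mainthm2b} can be read as providing exactly the explicit roots in $K$ needed here.

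Third, your closing dimension count is slightly circular: the equality $\sum(\dim S^{\blam}_{j})^{2}=\dim\HH_{r,p,n}$ by itself does not rule out simultaneous repetitions and omissions. You should first observe that every simple $\HH_{r,p,n}$-module occurs in some $S(\blam)\!\downarrow$ (since $\HH_{r,n}$ is free over $\HH_{r,p,n}$), hence appears in your list; the dimension equality then forces the list to be repetition-free.
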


\begin{lem}\text{(\cite[2.5]{Ma04})}\label{dist} Suppose that (\ref{ssrpn}) holds. Then for any two standard tableaux $\s,\t$, $\s=\t$ if and only if $\res_\s(m)=\res_\t(m)$ for any $1\leq m\leq n$.
\end{lem}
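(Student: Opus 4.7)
The plan is to induct on $n$, with the key ingredient being that, under the semisimplicity hypothesis (\ref{ssrpn}), any two distinct addable nodes of a shape $\bnu\in\P_{r,m}$ with $m<n$ necessarily carry distinct residues. The forward implication ``$\s=\t\Rightarrow \res_\s(m)=\res_\t(m)$ for all $m$'' is immediate from the definition of $\res$.

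The main step is the addable-node separation. Given two distinct addable nodes $\alpha_i=(a_i,b_i,\gamma_i)$ ($i=1,2$) of some shape $\bnu\in\P_{r,m}$ with $m<n$, write $\gamma_i=c_i+(t_i-1)d$ with $1\leq c_i\leq d$ and $1\leq t_i\leq p$; their residues are $\varepsilon^{t_i}q^{b_i-a_i}Q_{c_i}$, and the trivial bound $|b_i-a_i|<n$ puts all exponents of $q$ inside the range $-n<k<n$ on which (\ref{ssrpn}) guarantees nonvanishing. Assuming the residues coincide, I would split into three cases. Case (i), $c_1=c_2$ and $t_1=t_2$: the equality becomes $q^{(b_1-a_1)-(b_2-a_2)}=1$, and since the first factor in (\ref{ssrpn}) gives $[i]_q\neq 0$ for $1\leq i<n$, this forces $b_1-a_1=b_2-a_2$, which is impossible for two distinct addable nodes of the same partition $\lam^{(\gamma_1)}$. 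Case (ii), $c_1=c_2$ but $t_1\neq t_2$: the equation becomes $Q_{c_1}(1-\varepsilon^{t_2-t_1}q^{(b_2-a_2)-(b_1-a_1)})=0$, directly contradicting the factor $Q_i(1-\varepsilon^{t}q^k)\neq 0$ in (\ref{ssrpn}). Case (iii), $c_1\neq c_2$: after possibly swapping indices, the equation contradicts the factor $(Q_i-\varepsilon^{t}q^kQ_j)\neq 0$ in (\ref{ssrpn}).

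With separation of addable nodes in hand, the induction is essentially automatic. The base case $n=1$ is handled by the same three-case analysis applied to the unique box at the $(1,1)$ corner of some component, which pins down the component uniquely from $\res_\s(1)$. For the step, suppose $\s\in\Std(\blam)$ and $\t\in\Std(\bmu)$ share a common residue sequence of length $n$. Applying the induction hypothesis to $\s\!\downarrow_{\{1,\ldots,n-1\}}$ and $\t\!\downarrow_{\{1,\ldots,n-1\}}$ shows that these restricted tableaux are equal, with some common shape $\bnu\in\P_{r,n-1}$. The entry $n$ lies in an addable node of $\bnu$ in each of $\s$ and $\t$, and these two addable nodes have a common residue $\res_\s(n)=\res_\t(n)$; by the separation statement they must coincide. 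Hence $\blam=\bmu$ and $\s=\t$.

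The only real obstacle is bookkeeping: one must track that the exponents appearing in the three cases (namely $t_1-t_2\pmod{p}$ and $(b_1-a_1)-(b_2-a_2)$, which has absolute value at most $n-1$) land in precisely the ranges where (\ref{ssrpn}) asserts nonvanishing. No algebraic input beyond this case analysis is needed.
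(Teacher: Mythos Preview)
The paper does not prove this lemma; it simply quotes it from \cite[2.5]{Ma04}. Your inductive argument is the standard one and is correct in substance.

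One small bookkeeping slip: you write that the first factor in (\ref{ssrpn}) gives $[i]_q\neq 0$ for $1\leq i<n$, but the product there runs over $1\leq i\leq n$, and you actually need the top value. In case~(i), when both addable nodes lie in the same component $\lam^{(\gamma)}$ of size $m_\gamma\leq m\leq n-1$, the difference of their contents is bounded by $\lam^{(\gamma)}_1+\ell(\lam^{(\gamma)})\leq m_\gamma+1\leq n$, and equality does occur (e.g.\ when $\lam^{(\gamma)}=(n-1)$, with addable nodes of contents $n-1$ and $-1$). So you need $q^n\neq 1$, i.e.\ $[n]_q\neq 0$, which is indeed part of (\ref{ssrpn}). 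In cases~(ii) and~(iii), your bound $|k|\leq n-1$ is fine since the two nodes sit in different components and the component sizes sum to at most $m\leq n-1$. With this correction the argument goes through exactly as you describe.
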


In other words, Lemma \ref{dist} implies that the eigenvalues of each seminormal basis with respect to the Jucys-Murphy operators are pairwise distinct. Set $R(k):=\{\res_\s(k)|\s\in\Std(\mu),\bmu\in\P_{r,n}\}$.

\begin{dfn}\text{(\cite[2.4]{Ma04})} Suppose that (\ref{ssrpn}) holds. For each $\blam\in\P_{r,n}$ and $\t\in\Std(\blam)$, we define $$
F_{\t}=\prod\limits^n\limits_{k=1}\prod\limits_{\substack{c\in R(k)\\c\neq \res_{\t}(k)}}\frac{L_{k}-c}{\res_{\t}(k)-c} .
$$
\end{dfn}

\begin{lem}\text{(\cite[2.15]{Ma04})}\label{Ft} Suppose that (\ref{ssrpn}) holds. For each $\blam\in\P_{r,n}$ and $\t\in\Std(\blam)$, we have that $$
F_\t=f_{\t\t}/\gamma_{\t},
$$
which is a primitive idempotent.
\end{lem}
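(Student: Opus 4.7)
The plan is to evaluate $F_{\t}$ on the seminormal basis from the right and read off its expansion. First, by Lemma \ref{gammacoeff} every basis element $f_{\u\s}$ is a simultaneous eigenvector for $L_{1},\dots,L_{n}$ acting on the right, with $f_{\u\s}L_{k}=\res_{\s}(k)f_{\u\s}$. Since $F_{\t}$ is a polynomial in $L_{1},\dots,L_{n}$, the element $f_{\u\s}F_{\t}$ equals $f_{\u\s}$ multiplied by the scalar
$$\prod_{k=1}^{n}\prod_{\substack{c\in R(k)\\ c\neq \res_{\t}(k)}}\frac{\res_{\s}(k)-c}{\res_{\t}(k)-c}.$$
If $\s=\t$ every factor is $1$, so $f_{\u\t}F_{\t}=f_{\u\t}$. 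If $\s\neq\t$, Lemma \ref{dist} guarantees some $k$ with $\res_{\s}(k)\neq \res_{\t}(k)$, and then the factor indexed by $c=\res_{\s}(k)$ vanishes, so $f_{\u\s}F_{\t}=0$.

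Next I would expand $F_{\t}$ in the seminormal basis, $F_{\t}=\sum c_{\u'\s'}f_{\u'\s'}$, and apply $f_{\u\s}$ on the left of this expansion. Using Lemma \ref{GammaCoeffi} the product $f_{\u\s}f_{\u'\s'}$ equals $\delta_{\s\u'}\gamma_{\s}f_{\u\s'}$, so
$$f_{\u\s}F_{\t}=\gamma_{\s}\sum_{\s'\in\Std(\Shape(\s))}c_{\s\s'}f_{\u\s'}.$$
Comparing with the previous paragraph and using linear independence of the basis together with $\gamma_{\s}\in K^{\times}$, one obtains $c_{\s\s'}=0$ whenever $\s\neq\t$, and $c_{\t\s'}=\delta_{\s'\t}/\gamma_{\t}$. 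Hence $F_{\t}=f_{\t\t}/\gamma_{\t}$, which is the required identity.

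It remains to verify that $F_{\t}$ is a primitive idempotent. The idempotent identity $F_{\t}^{2}=F_{\t}$ follows directly from $f_{\t\t}f_{\t\t}=\gamma_{\t}f_{\t\t}$ (Lemma \ref{GammaCoeffi}). For primitivity, I would compute $F_{\t}\HH_{r,n}F_{\t}$ on a basis element $f_{\s\s'}$: applying Lemma \ref{GammaCoeffi} twice gives
$$F_{\t}f_{\s\s'}F_{\t}=\frac{1}{\gamma_{\t}^{2}}\,f_{\t\t}f_{\s\s'}f_{\t\t}=\delta_{\t\s}\delta_{\s'\t}\,F_{\t},$$
so $F_{\t}\HH_{r,n}F_{\t}=KF_{\t}$ is one-dimensional, and since $\HH_{r,n}$ is semisimple this forces $F_{\t}$ to be primitive.

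There is essentially no obstacle here: the entire argument is a direct exploitation of the eigenvalue action of the Jucys–Murphy operators in Lemma \ref{gammacoeff}, the separation property in Lemma \ref{dist}, and the multiplicative structure constants in Lemma \ref{GammaCoeffi}. The only step requiring a little care is the linear independence bookkeeping in identifying the coefficients $c_{\s\s'}$, where one must remember that the index pair $(\u',\s')$ in the expansion ranges over standard tableaux of a common shape, so the non-vanishing constraints split cleanly by shape.
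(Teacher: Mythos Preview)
Your proof is correct. Note that the paper does not supply its own proof of this lemma; it is simply quoted from \cite[2.15]{Ma04}. Your argument is the standard one and is essentially what underlies Mathas's original proof: act by $F_\t$ on the seminormal basis using the Jucys--Murphy eigenvalues (Lemma~\ref{gammacoeff}), invoke the separation of residue sequences (Lemma~\ref{dist}) to kill all but the $\s=\t$ term, and then read off the expansion via the multiplication rule in Lemma~\ref{GammaCoeffi}. The primitivity step via $F_\t\HH_{r,n}F_\t=KF_\t$ is also standard. There is nothing to add.
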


Recall that $\sigma$ is an $K$-algebra automorphism of $\HH_{r,n}$ of order $p$.

\begin{lem}\label{sigmaFt}
	Let $\blam\in\P_{r,n}$ and $\t\in\Std(\blam)$. Then
	$$\sigma(F_{\t})=F_{\t\sft{1}}.$$
	In particular, $\sigma(F_{\t}+F_{\t\sft{1}}+\cdots+F_{\t\sft{p-1}})=F_{\t}+F_{\t\sft{1}}+\cdots+F_{\t\sft{p-1}}$.
\end{lem}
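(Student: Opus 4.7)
The plan is to reduce the statement to a direct manipulation of the defining product for $F_\t$ by tracking two separate effects: the action of $\sigma$ on the Jucys--Murphy operators $L_k$, and the combinatorial $\sft{1}$-shift on residues.

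First, I would show that $\sigma(L_k)=\varepsilon L_k$ for every $1\leq k\leq n$. This is by induction: $\sigma(L_1)=\sigma(T_0)=\varepsilon T_0=\varepsilon L_1$, and the recursion $L_{k+1}=q^{-1}T_k L_k T_k$ together with $\sigma(T_k)=T_k$ for $k\geq 1$ yields the inductive step. Second, I would verify the residue-shift identity $\res_{\t\sft{1}}(k)=\varepsilon^{-1}\res_\t(k)$ for every $k$. From the definition $\t\sft{1}=(\t^{[2]},\ldots,\t^{[p]},\t^{[1]})$, the $s$-th block of $\t\sft{1}$ equals $\t^{[s+1]}$, so if $k$ occupies position $(a,b,c)$ inside $\t^{[t]}$ then it occupies the same internal position inside $(\t\sft{1})^{[t-1]}$; comparing $\varepsilon^t q^{b-a}Q_c$ with $\varepsilon^{t-1}q^{b-a}Q_c$ gives the claim. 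Moreover, because the multiset $\bQ^{\vee\varepsilon}$ is stable under multiplication by $\varepsilon$, the residue set $R(k)$ satisfies $\varepsilon R(k)=R(k)$.

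With both ingredients in hand, applying $\sigma$ to the defining product replaces each factor $L_k-c$ by $\varepsilon L_k-c=\varepsilon(L_k-\varepsilon^{-1}c)$. Substituting $c'=\varepsilon^{-1}c$, the index condition $c\in R(k)\setminus\{\res_\t(k)\}$ becomes $c'\in R(k)\setminus\{\res_{\t\sft{1}}(k)\}$; rewriting $\res_\t(k)-\varepsilon c'=\varepsilon(\res_{\t\sft{1}}(k)-c')$, the $\varepsilon$-factors from numerator and denominator cancel, and the product becomes precisely the defining product of $F_{\t\sft{1}}$. The ``In particular'' assertion is then immediate: iterating gives $\sigma^j(F_\t)=F_{\t\sft{j}}$, and since $\t\sft{p}=\t$ the sum $F_{\t}+F_{\t\sft{1}}+\cdots+F_{\t\sft{p-1}}$ is a full $\sigma$-orbit sum, hence $\sigma$-fixed.

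The main care required is in the second step, getting the block bookkeeping right so that the sign of the exponent of $\varepsilon$ in the residue shift comes out correctly, and in the subsequent substitution step ensuring that the numerator-exclusion index and the denominator rewrite are performed simultaneously so the $\varepsilon$-factors truly cancel; everything else is formal.
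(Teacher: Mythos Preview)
Your proposal is correct and follows essentially the same approach as the paper: both argue by directly manipulating the defining product for $F_\t$, using $\sigma(L_k)=\varepsilon L_k$ and the residue-shift identity $\res_{\t\sft{1}}(k)=\varepsilon^{-1}\res_\t(k)$, then reindexing via $c\mapsto\varepsilon^{-1}c$. You are slightly more explicit than the paper in justifying $\sigma(L_k)=\varepsilon L_k$ by induction and in noting the $\varepsilon$-stability of $R(k)$, both of which the paper uses tacitly.
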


\begin{proof} By definition, if the integer $k$ lies in row $a$ and column $b$ of the component $\t^{(c+(s-1)d)}$, then $\res_{\t}(k)=\varepsilon^{s}q^{b-a}Q_{c}$, and hence
	$$
\varepsilon^{i}\res_{\t}(k)=\varepsilon^{i+s}q^{b-a}Q_{c}=\res_{\t\sft{-i}}(k), \forall\, i\in \Z, $$
where $1\leq k\leq n$. In particular, $\res_{\t\sft{1}}(k)=\varepsilon^{-1}\res_{\t}(k)$.
	
	Applying the automorphism $\sigma$, we can get that
	$$\begin{aligned}
		\sigma(F_{\t})&=\prod\limits^n\limits_{k=1}\prod\limits_{\substack{c\in R(k)\\c\neq \res_{\t}(k)}}\frac{\varepsilon L_{k}-c}{\res_{\t}(k)-c}
		=\prod\limits^n\limits_{k=1}\prod\limits_{\substack{c\in R(k)\\c\neq \res_{\t}(k)}}\frac{L_{k}-\varepsilon^{-1}c}{\varepsilon^{-1}\res_{\t}(k)-\varepsilon^{-1}c}\\
		&=\prod\limits^n\limits_{k=1}\prod\limits_{\substack{c\in R(k)\\c\neq \res_{\t\sft{1}}(k)}}\frac{L_{k}-c}{\res_{\t\sft{1}}(k)-c}\\
		&=F_{\t\sft{1}}.
	\end{aligned}$$
This proves the lemma.
\end{proof}

In \cite[\S4]{Ma04}, Mathas have constructed certain invertible elements $\Phi_{\t}, \Psi_{\t}\in\HH_q(\Sym_n)$ which can intertwine the primitive idempotents $F_{\t^{\blam}}$ and $F_{\t}$.

\begin{lem}[{\cite[Proposition 4.1]{Ma04}}]\label{recursiveA}
	Suppose that $\blam\in\P_{r,n}$, $\t\in\Std(\blam)$. There exists invertible element $\Phi_{\t}\in\HH_q(\Sym_n)$ such that
	\begin{enumerate}
		\item $f_{\s\t}=\Phi_{\s}^{*}f_{\t^{\blam}\t^{\blam}}\Phi_{\t}$, for any standard tableaux $\s,\t\in\Std(\blam)$;
		\item $\Phi_{\t^{\blam}}=1$; if $\s=\t(i,i+1)\lhd\t$ then $\Phi_{\s}=\Phi_{\t}(T_{i}-A_{i}(\t))$.
	\end{enumerate}
\end{lem}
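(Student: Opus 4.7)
The plan is to construct $\Phi_\t$ by downward induction on the dominance order. Since $\t^\blam$ is the $\unrhd$-maximum of $\Std(\blam)$, every $\t \in \Std(\blam)$ admits a chain $\t^\blam = \t_0 \rhd \t_1 \rhd \cdots \rhd \t_k = \t$ of covering relations with $\t_{j+1} = \t_j(i_j, i_j+1)$ for suitable $i_j$. Fixing such a chain for each $\t$ and iterating the recursion in (b) with base case $\Phi_{\t^\blam}:=1$ defines a family $\{\Phi_\t\}$. Unwinding, each $\Phi_\t$ is a product of factors $T_i - c$ with $c \in K$, hence lies in $\HH_q(\Sym_n)$.

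To see each $\Phi_\t$ is invertible, it suffices to check that every factor $T_{i_j} - A_{i_j}(\t_j)$ is. Since $(T_i - q)(T_i + 1) = 0$, the element $T_i - c$ is invertible iff $c \neq q, -1$. A short calculation from (\ref{Ais}) gives
$$q - A_i(\t) = \frac{\res_{\t}(i+1) - q\res_{\t}(i)}{\res_{\t}(i+1) - \res_{\t}(i)}, \qquad 1 + A_i(\t) = \frac{q\res_{\t}(i+1) - \res_{\t}(i)}{\res_{\t}(i+1) - \res_{\t}(i)},$$
where the denominators are nonzero by Lemma \ref{dist}. From the explicit residue formula $\res_\t(k) = \eps^t q^{b-a} Q_c$, the vanishing of either numerator would force $i$ and $i+1$ to lie in the same row or in the same column of $\t$, contradicting the standardness of $\t(i,i+1) = \t_{j+1}$. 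Hence each factor is invertible and so is $\Phi_\t$.

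Finally, to establish (a), I first prove by induction on $k$ that $f_{\t^\blam\t} = f_{\t^\blam\t^\blam}\Phi_\t$. The base case is immediate. For the inductive step, Proposition \ref{tiact} applied to the pair $(\t_j, \t_{j+1})$ yields, using $\t_j \rhd \t_{j+1}$ and hence $B_{i_j}(\t_j) = 1$, the identity $f_{\u\t_{j+1}} = f_{\u\t_j}(T_{i_j} - A_{i_j}(\t_j))$ for every $\u \in \Std(\blam)$; taking $\u = \t^\blam$ and combining with the induction hypothesis gives the claim. Applying the anti-automorphism $*$ (which fixes every $T_i$ and every scalar) and using $f_{\u\s}^* = f_{\s\u}$ yields $f_{\t\t^\blam} = \Phi_\t^* f_{\t^\blam\t^\blam}$. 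For general $\s, \t \in \Std(\blam)$, Lemma \ref{GammaCoeffi} gives $f_{\s\t^\blam}f_{\t^\blam\t} = \gamma_{\t^\blam}f_{\s\t}$ and $f_{\t^\blam\t^\blam}^2 = \gamma_{\t^\blam}f_{\t^\blam\t^\blam}$, so
$$\gamma_{\t^\blam}f_{\s\t} = \bigl(\Phi_\s^* f_{\t^\blam\t^\blam}\bigr)\bigl(f_{\t^\blam\t^\blam}\Phi_\t\bigr) = \gamma_{\t^\blam}\Phi_\s^* f_{\t^\blam\t^\blam}\Phi_\t,$$
and (a) follows because $\gamma_{\t^\blam} \in K^\times$. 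The main technical point is the combinatorial invertibility check in the second paragraph, translating residue equalities into forbidden row/column adjacencies; the rest is a clean induction driven by Proposition \ref{tiact} and the multiplication rule of Lemma \ref{GammaCoeffi}.
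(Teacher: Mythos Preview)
Your argument is correct. The paper does not supply its own proof of this lemma: it is quoted verbatim from \cite[Proposition~4.1]{Ma04} and immediately followed only by the remark that the construction of $\Phi_\t$ depends on a choice of reduced expression for $d(\t)$. Your reconstruction---defining $\Phi_\t$ recursively along a saturated chain from $\t^\blam$ down to $\t$, verifying invertibility of each factor $T_i-A_i(\t)$ via the quadratic relation and the residue separation forced by (\ref{ssrpn}), and then deducing (a) from Proposition~\ref{tiact} together with the multiplication rule in Lemma~\ref{GammaCoeffi}---is exactly the argument Mathas gives, and matches how the paper subsequently uses the lemma (e.g.\ in the proof of Lemma~\ref{snphit}, where $\Phi_\t$ is expanded as the product $\prod_j (T_{i_j}-A_{i_j}(\t_{j-1}))$ along a fixed reduced expression).

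One minor comment: your claim that $\res_\t(i+1)=q^{\pm 1}\res_\t(i)$ forces $i,i+1$ to share a row or column uses the full strength of the separation condition (\ref{ssrpn}) (to land in the same component) together with a short standard-tableau argument (to pin down adjacency within that component); this is routine but not literally a one-line consequence of Lemma~\ref{dist}. It does not affect the correctness of the proof.
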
	
Note that the construction of these elements depends on the choice of reduced expression for $d(\t)$.

Let $\blam\in\P_{r,n}$ and $\s, \t\in\Std(\blam)$. By (\ref{Ft}), we can get that
\begin{equation}\label{sigmaaction}
	\sigma(f_{\s\t})=\sigma(\Phi_{\s}^{*}f_{\t^{\blam}\t^{\blam}}\Phi_{\t})=\gamma_{\t^{\blam}}\Phi_{\s}^{*}\sigma(F_{\t^{\blam}})\Phi_{\t}=\gamma_{\t^{\blam}}\Phi_{\s}^{*}F_{\t^{\blam}\sft{1}}\Phi_{\t}
=\frac{\gamma_{\t^{\blam}}}{\gamma_{\t^{\blam}\sft{1}}}\Phi_{\s}^{*}f_{\t^{\blam}\sft{1}\t^{\blam}\sft{1}}\Phi_{\t}.
\end{equation}

\begin{dfn}\label{midkt} Let $\blam\in\P_{r,n}$ and $\t\in\Std(\blam)$. For each $1\leq k\leq p$, we set $$
a_{k}=\sum_{i=1}^{k}|\blam^{[i]}|=\sum_{j=1}^{kd}|\lam^{(j)}|, $$ and we can uniquely decompose
$d(\t)=x_{k}d_{k}$, where $x_{k}\in\Sym_{(a_{k},n-a_{k})}$, $d_{k}\in\mathcal{D}_{(a_{k},n-a_{k})}$, $\mathcal{D}_{(a_{k},n-a_{k})}$ is the set of minimal length right coset representatives of $\Sym_{(a_{k},n-a_{k})}$ in
$\Sym_{n}$. We define $$
\m_{k}(\t):=\t^{\blam}x_{k}.
$$ 	
\end{dfn}

\begin{rem}\label{rmkmid}
	For convenience, for any standard tableau $\t$, we write $\m_{0}(\t):=\t$. Note that $\m_k(\m_k\t)=\m_k\t$ and $\m_{0}(\t)=\m_p(\t)=\t$. By convention, we define $\m_{pa+b}(\t):=\m_b(\t)$ for any $a\in\Z, 0\leq
b<p$.
Thus we can assume without loss of generality that the subscript $k\in\Z/p\Z$ in $\m_k(\t)$.
\end{rem}

\begin{lem}\label{gtsft}
	Suppose that $\blam\in\P_{r,n}$, $\t\in\Std(\blam)$ and $k\in\Z/p\Z$. Then $\m_{k}(\t)\trianglerighteq\t$ and
\begin{equation}\label{eqa10}
\frac{\gamma_{\t}}{\gamma_{\t\sft{k}}}=\frac{\gamma_{\t^{\blam}}}{\gamma_{\t^{\blam}\sft{k}}}\cdot\bigg(\frac{\gamma_\t}{\gamma_{\m_{k}(\t)}}\bigg)^{2}=\frac{\gamma_{\t^{\blam}\sft{k}}}{\gamma_{\t^{\blam}}}
\cdot\bigg(\frac{\gamma_{\m_{k}(\t)}}{\gamma_{\t\sft{k}}}\bigg)^{2}.\end{equation}
	In particular, we have 	\begin{equation}\label{eqa00} \frac{\gamma_{\m_k(\t)}}{\gamma_{\t^\blam}}=\frac{\gamma_{\m_k(\t)\<k\>}}{\gamma_{\t^{\blam}\<k\>}}.\end{equation}
\end{lem}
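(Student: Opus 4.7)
The plan is to analyse the $\gamma$-coefficients along the two-stage chain of standard tableaux $\t^{\blam}\to\m_{k}(\t)\to\t$ coming from the decomposition $d(\t)=x_{k}d_{k}$ (with $x_{k}\in\Sym_{(a_{k},n-a_{k})}$, $d_{k}\in\mathcal{D}_{(a_{k},n-a_{k})}$), and to track how each elementary step transforms under the cyclic block shift $\s\mapsto\s\sft{k}$. The dominance assertion $\m_{k}(\t)\gedom\t$ is immediate: $\m_{k}(\t)=\t^{\blam}x_{k}$ places the entries $\{1,\ldots,a_{k}\}$ in the first $k$ blocks $\blam^{[1]},\ldots,\blam^{[k]}$ and $\{a_{k}+1,\ldots,n\}$ in the last $p-k$ blocks, the dominance-maximal arrangement compatible with the within-half permutation $x_{k}$, and this dominates the partly shuffled distribution of $\t$ at every restriction to $\{1,\ldots,m\}$.

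\textbf{Step ratios and their behaviour under $\sft{k}$.} For standard tableaux $\u,\u s_{j}$ of the same shape, Lemma~\ref{gammacoeff} and Proposition~\ref{tiact} give $\gamma_{\u}/\gamma_{\u s_{j}}=V(\u;j)^{\epsilon}$ with $\epsilon=+1$ iff $\u\ldom\u s_{j}$, where
\[
V(\u;j):=\frac{(q\res_{\u}(j)-\res_{\u}(j+1))(\res_{\u}(j)-q\res_{\u}(j+1))}{(\res_{\u}(j+1)-\res_{\u}(j))^{2}},
\]
and a direct check shows this takes the same value at $\u$ and at $\u s_{j}$. Two features drive the whole argument. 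First, $V(\u;j)$ is homogeneous of degree zero in the two residues, and since $\res_{\s\sft{k}}(m)=\veps^{-k}\res_{\s}(m)$ (proved exactly as in Lemma~\ref{sigmaFt}), one has $V(\u;j)=V(\u\sft{k};j)$. Second, the identity $(\s s_{j})\sft{k}=\s\sft{k}s_{j}$ makes $\sft{k}$ compatible with the chain: after fixing reduced expressions $x_{k}=s_{i_{1}}\cdots s_{i_{M}}$ and $d_{k}=s_{i_{M+1}}\cdots s_{i_{M+L}}$, the chain $\t^{\blam}=\u_{0}\to\cdots\to\u_{M+L}=\t$ and its $\sft{k}$-image share the same indices $i_{l}$ and the same scalars $V_{l}=V(\u_{l-1};i_{l})$; only the signs $\epsilon_{l}$ (dominance directions) can differ between the two chains.

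\textbf{Direction analysis, assembly, and main obstacle.} For $l\leq M$ the transposition $s_{i_{l}}$ lies inside $\Sym_{\{1,\ldots,a_{k}\}}\times\Sym_{\{a_{k}+1,\ldots,n\}}$, so the two swapped entries always sit in the \emph{same} location-half (first $k$ vs.\ last $p-k$ blocks); because $\sft{k}$ rotates the blocks but preserves their relative order within each half, the position dominance between the two box positions is unchanged, so $\epsilon_{l}'=\epsilon_{l}$ and telescoping yields $\gamma_{\m_{k}(\t)}/\gamma_{\t^{\blam}}=\prod_{l=1}^{M}V_{l}^{\epsilon_{l}}=\gamma_{\m_{k}(\t)\sft{k}}/\gamma_{\t^{\blam}\sft{k}}$, which is (\ref{eqa00}). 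For $l>M$, each step resolves an inversion of the shuffle $d_{k}$; the combinatorial heart of the argument is that such a step always swaps two entries straddling the block divide in the current intermediate tableau. Granting this, $\sft{k}$ interchanges the two halves in the block order while preserving each internal order, so the position dominance reverses and $\epsilon_{l}'=-\epsilon_{l}$; telescoping then gives $\gamma_{\t}\gamma_{\t\sft{k}}=\gamma_{\m_{k}(\t)}\gamma_{\m_{k}(\t)\sft{k}}$. Eliminating $\gamma_{\m_{k}(\t)\sft{k}}$ between this identity and (\ref{eqa00}) produces the first equality of (\ref{eqa10}); the second follows by the analogous manipulation, using (\ref{eqa00}) to eliminate $\gamma_{\m_{k}(\t)}$ instead. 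The main technical obstacle is this combinatorial claim for the $d_{k}$-steps, which I would prove by induction on $\ell(d_{k})$: the unique right descent of a non-identity $d_{k}\in\mathcal{D}_{(a_{k},n-a_{k})}$ occurs at position $a_{k}$, so a reduced expression may be chosen beginning with $s_{a_{k}}$---a genuine cross-half swap in $\m_{k}(\t)$---and the remainder corresponds to a shorter shuffle to which the induction applies after a mild relabelling.
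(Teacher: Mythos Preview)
Your approach is essentially identical to the paper's: both decompose $d(\t)=x_{k}d_{k}$, run the chain $\t^{\blam}\to\m_{k}(\t)\to\t$, use that the step ratio $V(\u;j)$ is invariant under $\sft{k}$ (since residues scale by $\veps^{-k}$), and show that the dominance direction is preserved along the $x_{k}$-segment and reversed along the $d_{k}$-segment. Your ordering---deriving \eqref{eqa00} directly from the $x_{k}$-segment, then $\gamma_{\t}\gamma_{\t\sft{k}}=\gamma_{\m_{k}(\t)}\gamma_{\m_{k}(\t)\sft{k}}$ from the $d_{k}$-segment, then eliminating---is a cosmetic rearrangement of the paper's telescoping.

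The one genuine issue is your inductive argument for the cross-half claim. First, a slip: for $d_{k}\in\mathcal{D}_{(a_{k},n-a_{k})}$ the unique possible descent is a \emph{left} descent (you wrote ``right''), consistent with your ``beginning with $s_{a_{k}}$''. More substantively, the remainder $s_{a_{k}}d_{k}$ need not lie in $\mathcal{D}_{(a_{k},n-a_{k})}$: for $n=3$, $a_{k}=1$, $d_{k}=s_{1}s_{2}\in\mathcal{D}_{(1,2)}$, one gets $s_{1}d_{k}=s_{2}\notin\mathcal{D}_{(1,2)}$. So the induction does not literally recurse, and the ``mild relabelling'' you invoke is doing real work that is not spelled out. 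The clean replacement (which the paper also uses without making explicit) is the standard prefix property: every prefix $u_{m}=s_{i_{l+1}}\cdots s_{i_{l+m}}$ of a reduced word for $d_{k}$ lies in $\mathcal{D}_{(a_{k},n-a_{k})}$ (an exchange-condition argument). Then for $c:=i_{l+m+1}$, since $u_{m},\,u_{m}s_{c}\in\mathcal{D}_{(a_{k},n-a_{k})}$ and $\ell(u_{m}s_{c})>\ell(u_{m})$, the increasing conditions on $u_{m}^{-1}$ and $(u_{m}s_{c})^{-1}$ force $u_{m}(c)\le a_{k}<u_{m}(c+1)$; translating through $\t_{l+m}=\t^{\blam}x_{k}u_{m}$ (with $x_{k}$ preserving the two halves) gives exactly your cross-half statement at every step.
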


\begin{proof}
	 By Definition \ref{midkt}, it is clear that $\m_{k}(\t)\trianglerighteq\t$. We keep the same notations as in Definition \ref{midkt}. Fixing reduced expressions $x_{k}=s_{i_{1}}s_{i_{2}}\cdots s_{i_{l}}$ and
$d_{k}=s_{i_{l+1}}s_{i_{l+2}}\cdots s_{i_{N}}$, then the expression
	$$d(\t)=s_{i_{1}}\cdots s_{i_{l}}s_{i_{l+1}}\cdots s_{i_{N}}$$
	is also reduced.
	
	For each $1\leq j\leq N$, set $w_{j}:=s_{i_{1}}\cdots s_{i_{j}}\in\Sym_{n}$ and $\t_{j}:=\t^{\blam}w_{j}\in\Std(\blam)$. By convention, we set $\t_{0}:=\t^{\blam}$. There is a sequence of standard $\blam$-tableaux
	\begin{equation}\label{sequenceT}\t^{\blam}=\t_{0}\triangleright \t_{1}\triangleright \t_{2}\triangleright\cdots\triangleright \t_{l}\triangleright \t_{l+1}\triangleright\cdots\triangleright\t_{N}=\t.\end{equation}
	By the definition of $\gamma_{\t}$, we can get that
	\begin{equation}\label{gammat1}
\gamma_{\t}=\gamma_{\t^{\blam}}\cdot\frac{\gamma_{\t_{1}}}{\gamma_{\t_{0}}}\cdot\frac{\gamma_{\t_{2}}}{\gamma_{\t_{1}}}\cdots\frac{\gamma_{\t_{N}}}{\gamma_{\t_{N-1}}}.\end{equation}
	
	Applying $\sft{k}$ to the above sequence (\ref{sequenceT}) and noting that $s_{i_j}\in\Sym_{(a_{k},n-a_{k})}, \forall\,1\leq j\leq l$, we can get a sequence of standard $\blam\sft{k}$-tableaux
	$$\t^{\blam}\sft{k}=\t_{0}\sft{k}\triangleright \t_{1}\sft{k}\triangleright \t_{2}\sft{k}\triangleright\cdots\triangleright \t_{l}\sft{k}.$$

By assumption, $d_{k}\in\mathcal{D}_{(a_{k},n-a_{k})}$. It follows that $i_{l+1}$ must lies in the $j$-th component $\t_{l}\sft{k}$ for some $(p-k)d<j\leq r=pd$, while $i_{l+1}+1$ must lies in the $s$-th component
$\t_{l}\sft{k}$ for some $1\leq s\leq (p-k)d$. Thus we can deduce that $\t_{l}\sft{k}\triangleleft \t_{l+1}\sft{k}$. More generally, for each $l+1\leq a\leq N$, $i_{a}$ must lies in the $j$-th component $\t_{a-1}\sft{k}$
for some
$(p-k)d<j\leq r=pd$, while $i_{a}+1$ must lies in the $s$-th component $\t_{a-1}\sft{k}$ for some $1\leq s\leq (p-k)d$. Thus we can deduce that $\t_{a-1}\sft{k}\triangleleft \t_{a}\sft{k}$. Thus we get that
\begin{equation}\label{downUp}
\t^{\blam}\sft{k}=\t_{0}\sft{k}\triangleright \t_{1}\sft{k}\triangleright \t_{2}\sft{k}\triangleright\cdots\triangleright \t_{l}\sft{k}\triangleleft
\t_{l+1}\sft{k}\triangleleft\cdots\triangleleft\t_{N}\sft{k}=\t\sft{k}.\end{equation}

Note that \begin{equation}\label{gammat2}
\gamma_{\t\sft{k}}=\gamma_{\t^{\blam}\sft{k}}\cdot\frac{\gamma_{\t_{1}\sft{k}}}{\gamma_{\t_{0}\sft{k}}}\cdots\frac{\gamma_{\t_{l}\sft{k}}}{\gamma_{\t_{l-1}\sft{k}}}\cdot
\bigg(\frac{\gamma_{\t_{l}\sft{k}}}{\gamma_{\t_{l+1}\sft{k}}}\cdots\frac{\gamma_{\t_{N-1}\sft{k}}}{\gamma_{\t_{N}\sft{k}}}\bigg)^{-1}.\end{equation}
	
Applying Lemma \ref{gammacoeff}, we see that for each $1\leq j\leq N$, 	$$
		\frac{\gamma_{\t_{j}}}{\gamma_{\t_{j-1}}}=B_{i_j}(\t_{j-1})=\frac{(q\res_{\t_{j-1}}(i_{j})-\res_{\t_{j}}(i_{j}))(\res_{\t_{j-1}}(i_{j})-q\res_{\t_{j}}(i_{j}))}{(\res_{\t_{j-1}}(i_{j})-\res_{\t_{j}}(i_{j}))^{2}}\\
$$		
Using (\ref{downUp}) and Lemma \ref{gammacoeff}, we can deduce that \begin{equation}\label{eqa11} \frac{\gamma_{\t_{j}\sft{k}}}{\gamma_{\t_{j-1}\sft{k}}}=\begin{cases}
			\frac{\gamma_{\t_{j}}}{\gamma_{\t_{j-1}}}, & \text{if $1\leq j\leq l$},\\
			\frac{\gamma_{\t_{j-1}}}{\gamma_{\t_{j}}}, & \text{if $l+1\leq j\leq N$.}
		\end{cases}
	\end{equation}

Now combining (\ref{gammat1}) with (\ref{gammat2}) together, we can deduce that \begin{equation}\label{eqa12}
\frac{\gamma_{\t}}{\gamma_{\t\sft{k}}}=\frac{\gamma_{\t^{\blam}}}{\gamma_{\t^{\blam}\sft{k}}}\bigg(\frac{\gamma_{\t_{l+1}}}{\gamma_{\t_{l}}}\bigg)^{2}\cdots\bigg(\frac{\gamma_{\t_{N}}}{\gamma_{\t_{N-1}}}\bigg)^{2}
=\frac{\gamma_{\t^{\blam}}}{\gamma_{\t^{\blam}\sft{k}}}\cdot\big(\frac{\gamma_\t}{\gamma_{\m_{k}(\t)}}\big)^{2}.\end{equation}
This proves the first equality in (\ref{eqa10}).

Replacing $\t$ with $\m_k(\t)$ in (\ref{eqa10}) and using the fact that $\m_k(\m_k\t)=\m_k(\t)$, we see that (\ref{eqa00}) follows from the first equality in (\ref{eqa10}).

Combining (\ref{eqa11}) with (\ref{eqa12}), we can get that
$$\frac{\gamma_{\t}}{\gamma_{\t\sft{k}}}=\frac{\gamma_{\t^{\blam}}}{\gamma_{\t^{\blam}\sft{k}}}\cdot\bigg(\frac{\gamma_{\t_{l}\<k\>}}{\gamma_{\t_{l+1}\<k\>}}\bigg)^{2}\cdots\bigg(\frac{\gamma_{\t_{N-1}\<k\>}}{\gamma_{\t_{N}\<k\>}}
\bigg)^{2}=\frac{\gamma_{\t^{\blam}}}{\gamma_{\t^{\blam}\sft{k}}}\cdot\bigg(\frac{\gamma_{\m_{k}(\t)\sft{k}}}{\gamma_{\t\sft{k}}}\bigg)^{2}.$$
Now applying (\ref{eqa00}), we get $\frac{\gamma_{\m_k(\t)}}{\gamma_{\t^\blam}}=\frac{\gamma_{\m_k(\t)\<k\>}}{\gamma_{\t^{\blam}\<k\>}}$, from which we prove the second equality in (\ref{eqa10}).
\end{proof}

\begin{dfn}\label{2dfns} Let $\blam\in\P_{r,n}$, $\s, \t\in\Std(\blam)$. For each integer $k\in\Z_{\geq 0}$, we define $$
r_{\t,k}:=\frac{\gamma_\t}{\gamma_{\m_{k}(\t)}}\in K^\times,\quad
R_{\s\t,k}:=\frac{\gamma_{\t^{\blam}}}{\gamma_{\t^{\blam}\sft{k}}}r_{\s,k}r_{\t,k}=\frac{\gamma_{\t^{\blam}}}{\gamma_{\t^{\blam}\sft{k}}}\frac{\gamma_{\s}\gamma_\t}{\gamma_{\m_{k}(\s)}\gamma_{\m_{k}(\t)}}\in K^\times.
$$
For simplicity, we use $r_{\t}, R_{\s\t}$ to denote $r_{\t,1}, R_{\s\t,1}$ respectively.
\end{dfn}

\begin{rem}
	By Remark \ref{rmkmid}, we know that $\m_{k}(\t)$ can be defined for $k\in\Z/p\Z$. It is straightforward to check that $r_{\t,k}=r_{\t,k'}$ and $R_{\s\t,k}=R_{\s\t,k'}$ whenever $k\equiv k'\mod p$. Then $r_{\t,k}$ and $R_{\s\t,k}$ can be defined for any $k\in\Z/p\Z$.
\end{rem}

\begin{lem}\label{snphit} Suppose that $\blam\in\P_{r,n}$, $\t\in\Std(\blam)$ and $k\in\Z_{\geq 0}$ is an integer. Then $$
	f_{\t^{\blam}\sft{k}\t^{\blam}\sft{k}}\Phi_{\t}=r_{\t,k}f_{\t^{\blam}\sft{k}\t\sft{k}},\,\,
	\Phi_{\t}^{*}f_{\t^{\blam}\sft{k}\t^{\blam}\sft{k}}=r_{\t,k}f_{\t\sft{k}\t^{\blam}\sft{k}}. $$
Moreover, $$\begin{aligned}
r_{\t,k}&=\frac{\gamma_{\m_{k}(\t)\sft{k}}}{\gamma_{\t\sft{k}}}=\frac{\gamma_{\t^\blam\<k\>}}{\gamma_{\t^\blam}}\frac{\gamma_{\m_{k}(\t)}}{\gamma_{\t\sft{k}}},\\
R_{\s\t,k}&=\frac{\gamma_{\t^{\blam}\sft{k}}}{\gamma_{\t^{\blam}}}\frac{\gamma_{\m_{k}(\s)}\gamma_{\m_{k}(\t)}}{\gamma_{\s\sft{k}}\gamma_{\t\sft{k}}}=\frac{\gamma_{\m_{k}(\s)}\gamma_{\t}}{\gamma_{\s\sft{k}}\gamma_{\m_{k}(\t)}}
=\frac{\gamma_{\s}\gamma_{\m_{k}(\t)}}{\gamma_{\m_{k}(\s)}\gamma_{\t\sft{k}}}.
\end{aligned} $$
\end{lem}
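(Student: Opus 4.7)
My plan is to prove the first identity by induction along the specific reduced expression for $d(\t)$ introduced in Definition \ref{midkt} and already exploited in the proof of Lemma \ref{gtsft}. Writing $d(\t) = x_k d_k$ with $x_k \in \Sym_{(a_k, n-a_k)}$ of length $l$ and $d_k \in \mathcal{D}_{(a_k, n-a_k)}$ of length $N - l$, and fixing reduced expressions for each piece, produces a chain $\t^\blam = \t_0 \rhd \t_1 \rhd \cdots \rhd \t_N = \t$. The crucial input (\ref{downUp}) from the proof of Lemma \ref{gtsft} says that after applying $\sft{k}$ the chain
\[
\t^\blam\sft{k} = \t_0\sft{k} \rhd \cdots \rhd \t_l\sft{k} = \m_k(\t)\sft{k} \lhd \t_{l+1}\sft{k} \lhd \cdots \lhd \t_N\sft{k} = \t\sft{k}
\]
goes downward for the first $l$ steps and reverses direction afterwards. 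A second preliminary fact is that $A_i(\s) = A_i(\s\sft{k})$ and $B_i(\s) = B_i(\s\sft{k})$ for any $\s$, because those formulas are homogeneous ratios of residues and $\res_{\s\sft{k}}(j) = \veps^{-k}\res_\s(j)$.

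Using the recursion $\Phi_{\t_j} = \Phi_{\t_{j-1}}(T_{i_j} - A_{i_j}(\t_{j-1}))$ from Lemma \ref{recursiveA}, I track $f_{\t^\blam\sft{k}\t^\blam\sft{k}}\Phi_{\t_j}$ inductively. For $j \leq l$ the shifted step $\t_{j-1}\sft{k} \rhd \t_j\sft{k}$ is still a down-step, so by Proposition \ref{tiact} and $B_{i_j}(\t_{j-1}\sft{k}) = 1$ the intertwiner replaces the right label by $\t_j\sft{k}$ with coefficient $1$, yielding $f_{\t^\blam\sft{k}\t^\blam\sft{k}}\Phi_{\t_l} = f_{\t^\blam\sft{k}\,\m_k(\t)\sft{k}}$. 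For $j > l$ we have $\t_{j-1}\sft{k} \lhd \t_j\sft{k}$, and Proposition \ref{tiact} combined with the shift-invariance $A_{i_j}(\t_{j-1}) = A_{i_j}(\t_{j-1}\sft{k})$ annihilates the diagonal term, leaving
\[
f_{\t^\blam\sft{k}\,\t_{j-1}\sft{k}}(T_{i_j} - A_{i_j}(\t_{j-1})) = B_{i_j}(\t_{j-1}\sft{k})\, f_{\t^\blam\sft{k}\,\t_j\sft{k}}.
\]
Lemma \ref{gammacoeff} identifies $B_{i_j}(\t_{j-1}\sft{k}) = \gamma_{\t_{j-1}\sft{k}}/\gamma_{\t_j\sft{k}}$. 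Telescoping across $j = l+1, \ldots, N$ produces the total coefficient $\gamma_{\m_k(\t)\sft{k}}/\gamma_{\t\sft{k}}$, so $f_{\t^\blam\sft{k}\t^\blam\sft{k}}\Phi_\t = (\gamma_{\m_k(\t)\sft{k}}/\gamma_{\t\sft{k}})\, f_{\t^\blam\sft{k}\t\sft{k}}$.

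To match this coefficient with the definition $r_{\t,k} = \gamma_\t/\gamma_{\m_k(\t)}$, I combine identities (\ref{eqa00}) and (\ref{eqa10}) from Lemma \ref{gtsft}: rewriting (\ref{eqa00}) as $\gamma_{\m_k(\t)\sft{k}} = (\gamma_{\t^\blam\sft{k}}/\gamma_{\t^\blam})\,\gamma_{\m_k(\t)}$ and substituting into (\ref{eqa10}) yields $\gamma_{\m_k(\t)\sft{k}}/\gamma_{\t\sft{k}} = \gamma_\t/\gamma_{\m_k(\t)} = r_{\t,k}$, which simultaneously gives both alternate expressions for $r_{\t,k}$ in the Moreover clause. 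The second identity $\Phi_\t^* f_{\t^\blam\sft{k}\t^\blam\sft{k}} = r_{\t,k} f_{\t\sft{k}\t^\blam\sft{k}}$ then follows by applying the anti-automorphism $\ast$, using $(f_{\s\t})^* = f_{\t\s}$ and the self-adjointness of $f_{\t^\blam\sft{k}\t^\blam\sft{k}}$. Finally, the three forms of $R_{\s\t,k}$ are algebraic consequences of substituting the two expressions for $r_{\s,k}$ and $r_{\t,k}$ into the definition $R_{\s\t,k} = (\gamma_{\t^\blam}/\gamma_{\t^\blam\sft{k}}) r_{\s,k} r_{\t,k}$ and reducing via (\ref{eqa00}).

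I expect the main obstacle to be the up-step phase $j > l$ of the induction: the intertwiner $T_{i_j} - A_{i_j}(\t_{j-1})$ was manufactured for the $\blam$-geometry where $\t_{j-1} \rhd \t_j$, and it is not a priori clear that it should still produce a single clean term after the shift has reversed the relative order of $\t_{j-1}\sft{k}$ and $\t_j\sft{k}$. The mechanism that makes it work is precisely the shift-invariance of $A_i$: the diagonal contribution cancels exactly, leaving only the $B$-coefficient, which by Lemma \ref{gammacoeff} is a pure $\gamma$-ratio that telescopes cleanly.
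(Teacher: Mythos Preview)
Your proof is correct and follows essentially the same route as the paper: the same chain (\ref{downUp}), the same inductive computation driven by $A_i(\s\sft{k}) = A_i(\s)$, the same telescoping of $\gamma$-ratios for $j>l$, and the same appeals to (\ref{eqa10}) and (\ref{eqa00}) to identify the resulting coefficient with $r_{\t,k}$. One minor remark: your blanket preliminary claim $B_i(\s) = B_i(\s\sft{k})$ is false as stated, since $B_i$ in Proposition~\ref{tiact} is defined piecewise according to whether $\s\rhd\s s_i$ or $\s\lhd\s s_i$ and the shift can flip that order; however, you never actually invoke that claim in the argument (in each branch you read off the correct $B$-value directly), so the proof is unaffected.
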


\begin{proof} We keep the same notations as in Definition \ref{midkt} and the proof of Lemma \ref{gtsft}. That is, $d(\t)=x_kd_k$, $x_{k}=s_{i_{1}}s_{i_{2}}\cdots s_{i_{l}}\in\Sym_{(a_{k},n-a_{k})}$, $d_{k}=
s_{i_{l+1}}s_{i_{l+2}}\cdots s_{i_{N}}\in\mathcal{D}_{(a_{k},n-a_{k})}$ are fixed reduced expressions, where $a_{k}=\sum_{i=1}^{k}|\blam^{[i]}|=\sum_{j=1}^{kd}|\lam^{(j)}|$.

Since $\frac{\gamma_{\m_{k}(\t)}}{\gamma_{\t^{\blam}}}=\frac{\gamma_{\m_{k}(\t)\sft{k}}}{\gamma_{\t^{\blam}\sft{k}}}$ by Lemma \ref{gtsft}, we have that $$
\frac{\gamma_{\m_{k}(\t)\sft{k}}}{\gamma_{\t\sft{k}}}=\frac{\gamma_{\t^\blam\<k\>}}{\gamma_{\t^\blam}}\frac{\gamma_{\m_{k}(\t)}}{\gamma_{\t\sft{k}}}.
$$
Now $r_{\t,k}=\frac{\gamma_{\m_{k}(\t)\sft{k}}}{\gamma_{\t\sft{k}}}$ follows from the first equality in (\ref{eqa10}), from which we also get the second equality for $R_{\s\t,k}$.

The second equality for $r_{\t,k}$ follows from (\ref{eqa00}), from which we also get the first equality for $R_{\s\t,k}$. The equality
$\frac{\gamma_{\t^{\blam}\sft{k}}}{\gamma_{\t^{\blam}}}\frac{\gamma_{\m_{k}(\s)}\gamma_{\m_{k}(\t)}}{\gamma_{\s\sft{k}}\gamma_{\t\sft{k}}}=\frac{\gamma_{\s}\gamma_{\m_{k}(\t)}}{\gamma_{\m_{k}(\s)}\gamma_{\t\sft{k}}}$
follows from the first equality in (\ref{eqa10}).

By Lemma \ref{recursiveA}, we have that
	$$\Phi_{\t}=(T_{i_{1}}-A_{i_{1}}(\t_{0}))(T_{i_{2}}-A_{i_{1}}(\t_{1}))\cdots(T_{i_{N}}-A_{i_{N}}(\t_{N-1})).$$
By Proposition \ref{tiact} and Lemma \ref{gammacoeff}, the action of $T_{i}, 1\leq i\leq n-1$ on the seminormal basis is given by
\begin{equation}\label{2casesAction} f_{\u\s}T_{i}=\begin{cases}
		A_{i}(\s)f_{\u\s}+f_{\u\t}, & \text{if $\s\rhd\t:=\s(i,i+1)\in\Std(\blam)$},\\
		A_{i}(\s)f_{\u\s}+\frac{\gamma_{\s}}{\gamma_{\t}}f_{\u\t}, & \text{if $\s\lhd\t:=\s(i,i+1)\in\Std(\blam)$,}
\end{cases}\end{equation}
	where $\blam\in\P_{r,n}$, $\u,\s\in\Std(\blam)$.
For any $1\leq i\leq n$ and $\s\in\Std(\blam)$, it is easy to check that $A_{i}(\s)=A_{i}(\s\sft{k})$ (see (\ref{Ais})). Using (\ref{downUp}) and the formulae in the last paragraph, we can get that
$$
	f_{\t^{\blam}\sft{k}\t^{\blam}\sft{k}}\Phi_{\t}=f_{\t^{\blam}\sft{k}\t\sft{k}}\cdot\frac{\gamma_{\t_{l}\sft{k}}}{\gamma_{\t_{l+1}\sft{k}}}\cdots\frac{\gamma_{\t_{N-1}\sft{k}}}{\gamma_{\t_{N}\sft{k}}}
		=f_{\t^{\blam}\sft{k}\t\sft{k}}\cdot\frac{\gamma_{\m_{k}(\t)\sft{k}}}{\gamma_{\t\sft{k}}}=r_{\t,k}f_{\t^{\blam}\sft{k}\t\sft{k}}.
$$
Applying the anti-involution $\ast$ to the above equation and using Lemma \ref{gammacoeff}, we can get that $\Phi_{\t}^{*}f_{\t^{\blam}\sft{k}\t^{\blam}\sft{k}}=r_{\t,k}f_{\t\sft{k}\t^{\blam}\sft{k}}$.
\end{proof}

\begin{cor} Suppose that $\blam\in\P_{r,n}$, $\t\in\Std(\blam)$ and $k\in\Z/p\Z$. Then $\gamma_\t\gamma_{\t\<k\>}=\gamma_{\m_{k}(\t)}\gamma_{\m_{k}(\t)\sft{k}}$.
\end{cor}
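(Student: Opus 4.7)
The plan is immediate: this corollary is essentially a consequence of a redundancy that is already present in Lemma \ref{snphit}. Recall that by Definition \ref{2dfns} we have $r_{\t,k} = \gamma_{\t}/\gamma_{\m_k(\t)}$, while Lemma \ref{snphit} also establishes the alternative expression $r_{\t,k} = \gamma_{\m_k(\t)\<k\>}/\gamma_{\t\<k\>}$. Equating these two expressions for $r_{\t,k}$ and cross-multiplying yields exactly
\[
\gamma_\t \gamma_{\t\<k\>} = \gamma_{\m_k(\t)} \gamma_{\m_k(\t)\<k\>},
\]
which is the claim.

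In more detail, I would first invoke the first displayed equality in Lemma \ref{snphit}, namely $r_{\t,k} = \gamma_{\m_k(\t)\<k\>}/\gamma_{\t\<k\>}$, which itself came from Lemma \ref{gtsft}(\ref{eqa10}) together with the identification (\ref{eqa00}). Then I would combine it with the defining equation $r_{\t,k} = \gamma_{\t}/\gamma_{\m_k(\t)}$ from Definition \ref{2dfns}. Because both sides lie in $K^{\times}$, one immediately clears denominators to obtain the stated identity.

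There is really no obstacle here; the only thing worth noting is that this is the "symmetric" reformulation of the fact, established in the proof of Lemma \ref{gtsft}, that the subsequence of dominance jumps used to move from $\t^{\blam}$ to $\t$ splits as $\t^{\blam}\rhd \cdots \rhd \m_k(\t) \rhd \cdots \rhd \t$, while the corresponding path after applying $\<k\>$ reverses the order of the second half to $\t^{\blam}\<k\>\rhd\cdots\rhd\m_k(\t)\<k\>\lhd\cdots\lhd\t\<k\>$. The ratio $r_{\t,k}$ records the "second half" contribution, and the corollary expresses the fact that this contribution can equivalently be read off from the pair $(\t,\m_k(\t))$ or from the pair $(\t\<k\>,\m_k(\t)\<k\>)$.
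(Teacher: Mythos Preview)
Your proof is correct and is exactly the argument the paper intends: the corollary is stated without proof immediately after Lemma \ref{snphit}, and follows by equating the defining expression $r_{\t,k}=\gamma_\t/\gamma_{\m_k(\t)}$ from Definition \ref{2dfns} with the alternative expression $r_{\t,k}=\gamma_{\m_k(\t)\<k\>}/\gamma_{\t\<k\>}$ established in Lemma \ref{snphit}.
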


\begin{cor}\label{sigmafst}
	Suppose that $\blam\in\P_{r,n}$ and $\s, \t\in\Std(\blam)$. For any $k\in\Z_{\geq 0}$, we have that
	$$\sigma^{k}(f_{\s\t})=\frac{\gamma_{\t^{\blam}}}{\gamma_{\t^{\blam}\sft{k}}}r_{\s,k}r_{\t,k}f_{\s\sft{k}\t\sft{k}}=R_{\s\t,k}f_{\s\sft{k}\t\sft{k}}.$$
	\end{cor}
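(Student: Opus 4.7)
First I would observe that $\sigma$ fixes each Hecke generator $T_{i}$ for $1\leq i\leq n-1$ and hence also fixes every intertwiner $\Phi_{\t}$ and $\Phi_{\s}^{*}$, since by Lemma~\ref{recursiveA} these are polynomials in $T_{1},\dots,T_{n-1}$ with scalar coefficients. Iterating Lemma~\ref{sigmaFt} gives $\sigma^{k}(F_{\t^{\blam}})=F_{\t^{\blam}\sft{k}}$ for every $k\in\Z_{\geq 0}$. Combining these two observations with the factorization $f_{\s\t}=\gamma_{\t^{\blam}}\Phi_{\s}^{*}F_{\t^{\blam}}\Phi_{\t}$, a direct consequence of Lemmas~\ref{recursiveA} and~\ref{Ft}, and rewriting $F_{\t^{\blam}\sft{k}}=f_{\t^{\blam}\sft{k}\t^{\blam}\sft{k}}/\gamma_{\t^{\blam}\sft{k}}$, I obtain the $\sigma^{k}$-analogue of (\ref{sigmaaction}):
$$\sigma^{k}(f_{\s\t})=\frac{\gamma_{\t^{\blam}}}{\gamma_{\t^{\blam}\sft{k}}}\Phi_{\s}^{*}f_{\t^{\blam}\sft{k}\t^{\blam}\sft{k}}\Phi_{\t}.$$

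The key idea in the second step is to split $f_{\t^{\blam}\sft{k}\t^{\blam}\sft{k}}$ into two copies of itself using the idempotent-like identity $f_{\t^{\blam}\sft{k}\t^{\blam}\sft{k}}^{2}=\gamma_{\t^{\blam}\sft{k}}f_{\t^{\blam}\sft{k}\t^{\blam}\sft{k}}$ from Lemma~\ref{GammaCoeffi}, so that each half can be attacked by one of the two identities in Lemma~\ref{snphit}. Doing so rewrites the right-hand side above as
$$\sigma^{k}(f_{\s\t})=\frac{\gamma_{\t^{\blam}}}{\gamma_{\t^{\blam}\sft{k}}^{2}}\bigl(\Phi_{\s}^{*}f_{\t^{\blam}\sft{k}\t^{\blam}\sft{k}}\bigr)\bigl(f_{\t^{\blam}\sft{k}\t^{\blam}\sft{k}}\Phi_{\t}\bigr),$$
and Lemma~\ref{snphit} then applies directly, giving $\Phi_{\s}^{*}f_{\t^{\blam}\sft{k}\t^{\blam}\sft{k}}=r_{\s,k}f_{\s\sft{k}\t^{\blam}\sft{k}}$ and $f_{\t^{\blam}\sft{k}\t^{\blam}\sft{k}}\Phi_{\t}=r_{\t,k}f_{\t^{\blam}\sft{k}\t\sft{k}}$.

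The last step is to multiply the two resulting seminormal basis elements using Lemma~\ref{GammaCoeffi} once more: $f_{\s\sft{k}\t^{\blam}\sft{k}}\,f_{\t^{\blam}\sft{k}\t\sft{k}}=\gamma_{\t^{\blam}\sft{k}}f_{\s\sft{k}\t\sft{k}}$. Substituting and cancelling one factor of $\gamma_{\t^{\blam}\sft{k}}$ yields
$$\sigma^{k}(f_{\s\t})=\frac{\gamma_{\t^{\blam}}}{\gamma_{\t^{\blam}\sft{k}}}r_{\s,k}r_{\t,k}f_{\s\sft{k}\t\sft{k}}=R_{\s\t,k}f_{\s\sft{k}\t\sft{k}},$$
where the last equality is Definition~\ref{2dfns}. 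The only delicate point in the whole argument is this idempotent insertion: it is what allows Lemma~\ref{snphit} to apply on both sides simultaneously, rather than requiring a separate extension of that lemma to an arbitrary bystander tableau in the unacted index. Apart from this, the proof is routine cancellation of $\gamma$-factors and I anticipate no serious obstacle.
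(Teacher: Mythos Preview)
Your proof is correct and follows essentially the same route as the paper: factor $f_{\s\t}=\Phi_{\s}^{*}f_{\t^{\blam}\t^{\blam}}\Phi_{\t}$, use that $\sigma$ fixes the $\Phi$'s, apply $\sigma^{k}$ to the middle term via Lemmas~\ref{Ft} and~\ref{sigmaFt}, and then invoke Lemma~\ref{snphit}. The only difference is cosmetic: the paper writes simply ``applying Lemma~\ref{snphit}'' to $\Phi_{\s}^{*}f_{\t^{\blam}\sft{k}\t^{\blam}\sft{k}}\Phi_{\t}$, implicitly using that the computation in the proof of Lemma~\ref{snphit} works for an arbitrary bystander index, whereas your idempotent insertion makes this step literal by reducing to the exact statements of Lemma~\ref{snphit} and then recombining via Lemma~\ref{GammaCoeffi}.
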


\begin{proof}
	Using Lemmas \ref{Ft}, \ref{sigmaFt}, \ref{recursiveA}, we can get that
	$$\sigma^{k}(f_{\s\t})=\sigma^{k}(\Phi_\s^* f_{\t^\blam\t^\blam}\Phi_t)=\Phi_\s^*
\sigma^{k}(f_{\t^\blam\t^\blam})\Phi_t=\frac{\gamma_{\t^{\blam}}}{\gamma_{\t^{\blam}\sft{k}}}\Phi_{\s}^{*}f_{\t^{\blam}\sft{k}\t^{\blam}\sft{k}}\Phi_{\t}.$$
	Now applying Lemma \ref{snphit}, we get that
	$$\sigma^{k}(f_{\s\t})=\frac{\gamma_{\t^{\blam}}}{\gamma_{\t^{\blam}\sft{k}}}r_{\s,k}r_{\t,k}f_{\s\sft{k}\t\sft{k}}.$$
This proves the first equality of the corollary. The second equality of the corollary follows from Definition \ref{2dfns}.
\end{proof}

Next we give some basic properties of the coefficients $R_{\s\t,k}$.

\begin{prop}\label{propRstk}
	Let $\blam\in\P_{r,n}$. Then for any $\s, \t\in\Std(\blam)$ and $k\in\Z_{\geq 0}$, we have
	\begin{itemize}
		\item[(1)] $$R_{\s\t,k}^{2}=\frac{\gamma_{\s}\gamma_{\t}}{\gamma_{\s\sft{k}}\gamma_{\t\sft{k}}};$$
		\item[(2)] For any composition $\mu=(\mu_{1},\mu_{2},\cdots,\mu_{l})$ of the integer $k$, we have
		$$\prod_{i=1}^{l}R_{\s\sft{a_{i}}\t\sft{a_{i}},a_{i+1}-a_{i}}=R_{\s\t,k},$$
		where $a_{i}:=\sum_{j=1}^{i-1}\mu_{j}, 2\leq i\leq l$ and $a_{1}:=0$;
		\item[(3)] $$\prod_{l=0}^{c-1}R_{\s\sft{lk}\t\sft{lk},k}=1,$$
		where the constant $c:=l.c.m(k,p)/k$ and $l.c.m(k,p)$ is the least common multiple of $k$ and $p$.
	\end{itemize}
\end{prop}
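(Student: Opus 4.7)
The plan is to derive all three parts directly from the definition of $R_{\s\t,k}$, together with Lemma~\ref{gtsft} and Corollary~\ref{sigmafst}; no new ingredient beyond these is needed.

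For part~(1), I would simply square the definition $R_{\s\t,k}=\frac{\gamma_{\t^{\blam}}}{\gamma_{\t^{\blam}\sft{k}}}r_{\s,k}r_{\t,k}$ and apply the first equality in (\ref{eqa10}), which reads $r_{\t,k}^{2}=\frac{\gamma_{\t^{\blam}\sft{k}}}{\gamma_{\t^{\blam}}}\cdot\frac{\gamma_{\t}}{\gamma_{\t\sft{k}}}$, and similarly for $\s$. The two factors $\gamma_{\t^{\blam}}/\gamma_{\t^{\blam}\sft{k}}$ then cancel cleanly, producing $R_{\s\t,k}^{2}=\gamma_{\s}\gamma_{\t}/(\gamma_{\s\sft{k}}\gamma_{\t\sft{k}})$.

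For part~(2), the key idea is to view $R_{\s\t,k}$ through Corollary~\ref{sigmafst}, which identifies it as the scalar by which $\sigma^{k}$ acts on $f_{\s\t}$. Writing $\sigma^{k}=\sigma^{\mu_{l}}\circ\cdots\circ\sigma^{\mu_{1}}$ and applying Corollary~\ref{sigmafst} iteratively along the telescope $f_{\s\t}\mapsto f_{\s\sft{a_{2}}\t\sft{a_{2}}}\mapsto\cdots\mapsto f_{\s\sft{k}\t\sft{k}}$, the $i$-th arrow contributes the factor $R_{\s\sft{a_{i}}\t\sft{a_{i}},\mu_{i}}$ (since $a_{i+1}-a_{i}=\mu_{i}$). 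Comparing the resulting product with the one-step formula $\sigma^{k}(f_{\s\t})=R_{\s\t,k}f_{\s\sft{k}\t\sft{k}}$ and invoking linear independence of the seminormal basis yields the identity.

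For part~(3), I would specialize (2) to the composition $\mu=(k,k,\ldots,k)$ of $ck$ with $c$ equal parts, giving $R_{\s\t,ck}=\prod_{l=0}^{c-1}R_{\s\sft{lk}\t\sft{lk},k}$. Since $ck$ is by construction a multiple of $p$ and $\sigma$ has order $p$, Corollary~\ref{sigmafst} forces $R_{\s\t,ck}f_{\s\sft{ck}\t\sft{ck}}=\sigma^{ck}(f_{\s\t})=f_{\s\t}$; combined with $\s\sft{ck}=\s$ and $\t\sft{ck}=\t$, this gives $R_{\s\t,ck}=1$, as required. The only real obstacle anywhere in the argument is bookkeeping, namely ensuring at each step of the telescope in (2) that Corollary~\ref{sigmafst} is applied to the correctly-shifted pair $(\s\sft{a_{i}},\t\sft{a_{i}})$ rather than to the original $(\s,\t)$, and that the $p$-periodicity of $R_{\s\t,k}$ in $k$ noted after Definition~\ref{2dfns} is used throughout to interpret the indices consistently when $k$ or $ck$ exceeds $p$.
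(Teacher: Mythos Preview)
Your proposal is correct and follows essentially the same approach as the paper for parts~(2) and~(3): both arguments exploit Corollary~\ref{sigmafst} to interpret $R_{\s\t,k}$ as the scalar by which $\sigma^{k}$ acts on $f_{\s\t}$, then telescope $\sigma^{k}=\sigma^{\mu_{l}}\cdots\sigma^{\mu_{1}}$ for~(2) and use $(\sigma^{k})^{c}=\id$ for~(3).

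For part~(1) your route is actually slightly more efficient than the paper's. The paper applies $\sigma^{k}$ to the multiplication rule $f_{\s\t}f_{\t\v}=\gamma_{\t}f_{\s\v}$ to obtain $R_{\s\t,k}R_{\t\v,k}=\frac{\gamma_{\t}}{\gamma_{\t\sft{k}}}R_{\s\v,k}$, and then unwinds this via Definition~\ref{2dfns} to recover $r_{\t,k}^{2}=\frac{\gamma_{\t}\gamma_{\t^{\blam}\sft{k}}}{\gamma_{\t\sft{k}}\gamma_{\t^{\blam}}}$, which is precisely the content of (\ref{eqa10}) already established in Lemma~\ref{gtsft}. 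Your suggestion to square the definition and cite (\ref{eqa10}) directly avoids this re-derivation; the paper's detour through the multiplication formula buys nothing extra here, though it does illustrate an alternative way to see the same identity.
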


\begin{proof} For any standard $\blam$-tableaux $\s,\t,\v\in\Std(\blam)$, we have $f_{\s\t}f_{\t\v}=\gamma_{\t}f_{\s\v}$. Applying the automorphism $\sigma^{k}$, we can get
	$$R_{\s\t,k}R_{\t\v,k}\gamma_{\t\sft{k}}f_{\s\sft{k}\v\sft{k}}=\gamma_{\t}R_{\s\v,k}f_{\s\sft{k}\v\sft{k}}.$$
	Hence, we can get that $R_{\s\t,k}R_{\t\v,k}=\frac{\gamma_{\t}}{\gamma_{\t\sft{k}}}R_{\s\v,k}$. Applying Definition \ref{2dfns}, we can get that
	
$$\frac{\gamma_{\t^{\blam}}}{\gamma_{\t^{\blam}\sft{k}}}r_{\s,k}r_{\t,k}\cdot\frac{\gamma_{\t^{\blam}}}{\gamma_{\t^{\blam}\sft{k}}}r_{\t,k}r_{\v,k}=\frac{\gamma_{\t}}{\gamma_{\t\sft{k}}}\frac{\gamma_{\t^{\blam}}}{\gamma_{\t^{\blam}\sft{k}}}r_{\s,k}r_{\v,k}.$$
	Since $r_{\s,k}, r_{\t,k}, r_{\v,k}$ are invertible elements in $K$, we get that $$
r_{\t,k}^{2}=\frac{\gamma_{\t}\gamma_{\t^{\blam}\sft{k}}}{\gamma_{\t\sft{k}}\gamma_{\t^{\blam}}}. $$
By Definition \ref{2dfns}, we can deduce that
$$
R_{\s\t,k}^{2}=\frac{\gamma_{\s}\gamma_{\t}}{\gamma_{\s\sft{k}}\gamma_{\t\sft{k}}},$$
which proves Part (1) of the corollary.

Let $\mu=(\mu_{1},\mu_{2},\cdots,\mu_{l})$ be a composition of $k$. Since $\sigma^{k}=\sigma^{\mu_{l}}\sigma^{\mu_{l-1}}\cdots\sigma^{\mu_{1}}$, we can get that
$\sigma^{k}(f_{\s\t})=\big(\sigma^{\mu_{l}}\sigma^{\mu_{l-1}}\cdots\sigma^{\mu_{1}}\big)(f_{\s\t})$. Comparing the coefficients, we have that
$$R_{\s\t,k}=\prod_{i=1}^{l}R_{\s\sft{a_{i}}\t\sft{a_{i}},a_{i+1}-a_{i}},$$
which proves Part (2) of the corollary.
	
Now we consider Part (3) of the corollary. By Corollary \ref{sigmafst}, $\sigma^{k}(f_{\s\sft{lk}\t\sft{lk}})=R_{\s\sft{lk}\t\sft{lk},k}f_{\s\sft{(l+1)k}\t\sft{(l+1)k}}$, $\forall\, l\in\Z$. By definition of the
automorphism $\sigma$, we can get that the order of $\sigma^{k}$ is $c=l.c.m(k,p)/k$, hence we have $(\sigma^{k})^{c}(f_{\s\t})=f_{\s\t}$. Comparing the coefficients, we can get that
	$$\prod_{l=0}^{c-1}R_{\s\sft{lk}\t\sft{lk},k}=1,$$
which proves Part (3) of the corollary.
\end{proof}

Now we can give a proof of Theorem \ref{mainthm1}, which reveals some remarkable symmetric property of these constants $r_{\t,k}$. This theorem will play key role in our later construction of seminormal base for
$\HH_{r,p,n}$, see the proof of Lemma \ref{orth}.

\medskip
\noindent
{\bf Proof of Theorem \ref{mainthm1}:} By assumption, $k,l$ are two integers with $0\leq k,l\leq p_\blam$. By the definition of the integer $o_{\blam}$, we know that $\t\sft{lo_{\blam}}$ and $\t\sft{ko_{\blam}}$ are both
standard $\blam$-tableaux. Using the second statement of Proposition \ref{propRstk}, we can get that
	$$R_{\t\t^{\blam},lo_{\blam}}R_{\t\sft{lo_{\blam}}\t^{\blam}\sft{lo_{\blam}},ko_{\blam}}=R_{\t\t^\blam,(k+l)o_\blam}=R_{\t\t^{\blam},ko_{\blam}}R_{\t\sft{ko_{\blam}}\t^{\blam}\sft{ko_{\blam}},lo_{\blam}}.$$
Applying Lemma \ref{snphit}, we can deduce that
$$\frac{\gamma_{\t}\gamma_{\m_{lo_{\blam}}(\t^{\blam})}}{\gamma_{\m_{lo_{\blam}(\t)}}\gamma_{\t^{\blam}\sft{lo_{\blam}}}}\frac{\gamma_{\t\sft{lo_{\blam}}}\gamma_{\m_{ko_{\blam}}(\t^{\blam}\sft{lo_{\blam}})}}{\gamma_{\m_{ko_{\blam}(\t\sft{lo_{\blam}})}}\gamma_{\t^{\blam}\sft{(k+l)o_{\blam}}}}=\frac{\gamma_{\t}\gamma_{\m_{ko_{\blam}}(\t^{\blam})}}{\gamma_{\m_{ko_{\blam}(\t)}}\gamma_{\t^{\blam}\sft{ko_{\blam}}}}\frac{\gamma_{\t\sft{ko_{\blam}}}\gamma_{\m_{lo_{\blam}}(\t^{\blam}\sft{ko_{\blam}})}}{\gamma_{\m_{lo_{\blam}(\t\sft{ko_{\blam}})}}\gamma_{\t^{\blam}\sft{(l+k)o_{\blam}}}}.$$
	
By Definition \ref{2dfns}, the above equality implies that
	$$r_{\t,lo_{\blam}}\frac{\gamma_{\m_{lo_{\blam}}(\t^{\blam})}}{\gamma_{\t^{\blam}\sft{lo_{\blam}}}}\cdot
r_{\t\sft{lo_{\blam}},ko_{\blam}}\frac{\gamma_{\m_{ko_{\blam}}(\t^{\blam}\sft{lo_{\blam}})}}{\gamma_{\t^{\blam}\sft{(k+l)o_{\blam}}}}=r_{\t,ko_{\blam}}\frac{\gamma_{\m_{ko_{\blam}}(\t^{\blam})}}{\gamma_{\t^{\blam}\sft{ko_{\blam}}}}\cdot
r_{\t\sft{ko_{\blam}},lo_{\blam}}\frac{\gamma_{\m_{lo_{\blam}}(\t^{\blam}\sft{ko_{\blam}})}}{\gamma_{\t^{\blam}\sft{(l+k)o_{\blam}}}}.$$
	Since $\m_{lo_{\blam}}(\t^{\blam})=\t^{\blam}=\m_{ko_{\blam}}(\t^{\blam})$, we can get that
	\begin{equation}\label{rgamma}
r_{\t,lo_{\blam}}r_{\t\sft{lo_{\blam}},ko_{\blam}}\frac{\gamma_{\m_{ko_{\blam}}(\t^{\blam}\sft{lo_{\blam}})}}{\gamma_{\t^{\blam}\sft{lo_{\blam}}}}=r_{\t,ko_{\blam}}r_{\t\sft{ko_{\blam}},lo_{\blam}}\frac{\gamma_{\m_{lo_{\blam}}(\t^{\blam}\sft{ko_{\blam}})}}{\gamma_{\t^{\blam}\sft{ko_{\blam}}}}.
	\end{equation}
Now to prove the lemma, it remains to show that \begin{equation}\label{claim1}
\frac{\gamma_{\m_{ko_{\blam}}(\t^{\blam}\sft{lo_{\blam}})}}{\gamma_{\t^{\blam}\sft{lo_{\blam}}}}=\frac{\gamma_{\m_{lo_{\blam}}(\t^{\blam}\sft{ko_{\blam}})}}{\gamma_{\t^{\blam}\sft{ko_{\blam}}}}.\end{equation}
	
Set $a:=\sum_{i=1}^{o_{\blam}}|\blam^{[i]}|=\sum_{j=1}^{do_{\blam}}|\lam^{(j)}|$. Then $n=p_{\blam}a$. There are unique permutations $d(\t\sft{lo_{\blam}})=w_{la,n-la}, d(\t\sft{ko_{\blam}})=w_{ka,n-ka}$ such that
$\t\sft{lo_{\blam}}=\t^{\blam}d(\t\sft{lo_{\blam}})$, $\t\sft{ko_{\blam}}=\t^{\blam}d(\t\sft{ko_{\blam}})$ respectively.
	
Without loss of generality we can assume that $l\leq k$. If $l=k$, it is clear that the claim (\ref{claim1}) trivially holds. Henceforth we assume $l<k$. We have that $w_{ka,n-ka}=w_{la+(k-l)a,n-ka}$ and
$w_{la,n-la}=w_{la,(k-l)a+n-ka}$. By \cite[Lemma 2.8]{HM12}, we can get that
	$$w_{ka,n-ka}=w_{(k-l)a,n-ka}^{\sft{la}}w_{la,n-ka},\quad w_{la,n-la}=w_{la,(k-l)a}w_{la,n-ka}^{\sft{(k-l)a}}.$$
	It is straightforward to check that $w_{(k-l)a,n-ka}^{\sft{la}}\in\Sym_{(la,n-la)}$, $w_{la,n-ka}\in\mathcal{D}_{la,n-la}$ and $w_{la,(k-l)a}\in\Sym_{(ka,n-ka)}$, $w_{la,n-ka}^{\sft{(k-l)a}}\in\mathcal{D}_{ka,n-ka}$. By
Definition \ref{midkt}, we have that $\m_{lo_{\blam}}(\t^{\blam}\sft{ko_{\blam}})=\t^{\blam}w_{(k-l)a,n-ka}^{\sft{la}}$ and $\m_{ko_{\blam}}(\t^{\blam}\sft{lo_{\blam}})=\t^{\blam}w_{la,(k-l)a}$.
	
For simplicity, we set $\t_{0}:=\m_{lo_{\blam}}(\t^{\blam}\sft{ko_{\blam}})$. By Lemma \ref{gammacoeff}, we can get that
	$$\frac{\gamma_{\t^{\blam}\sft{ko_{\blam}}}}{\gamma_{\m_{lo_{\blam}}(\t^{\blam}\sft{ko_{\blam}})}}
	=\underbrace{B_{la}(\t_{0}s_{la})\cdots B_{1}(\t_{0}s_{la}\cdots s_{1})}\cdots\underbrace{B_{n-(k-l)a-1}(\t_{0}s_{la}\cdots s_{n-(k-l)a-1})\cdots B_{n-ka}(\t_{0}w_{la,n-ka})}.$$
	
	Consider the standard tableau $\t_{0}=\m_{lo_{\blam}}(\t^{\blam}\sft{ko_{\blam}})$. The integers $1,\cdots, la$ sit inside
	$$\bigg(\t_{0}^{[1]},\cdots,\t_{0}^{[lo_{\blam}]}\bigg)$$
	and integers $la+1,\cdots,n-(k-l)a$ sit inside
	$$\bigg(\t_{0}^{[ko_{\blam}+1]},\cdots,\t_{0}^{[p]}\bigg).$$
	For $1\leq i\leq la$ and $1\leq j\leq n-ka$, the node $\big(\t_{0}\big)^{-1}(i)$ coincides with the node $\big(\t^{\blam}\big)^{-1}(i)$, and the node $\big(\t_{0}\big)^{-1}(la+j)$ coincides with the node
$\big(\t^{\blam}\big)^{-1}(ka+j)$.
	
	Combining the definition of $w_{la,n-ka}$, we can get that
\begin{equation}\label{expressProd1}
\frac{\gamma_{\t^{\blam}\sft{ko_{\blam}}}}{\gamma_{\m_{lo_{\blam}}(\t^{\blam}\sft{ko_{\blam}})}}=\prod_{j=ka+1}^{n}\prod_{i=1}^{la}\frac{(\res_{\t^{\blam}}(j)-q\res_{\t^{\blam}}(i))(q\res_{\t^{\blam}}(j)-
\res_{\t^{\blam}}(i))}{(\res_{\t^{\blam}}(j)-\res_{\t^{\blam}}(i))^{2}}.\end{equation}
	
	Similarly, we set $\t_{1}:=\m_{ko_{\blam}}(\t^{\blam}\sft{lo_{\blam}})$. We know that $\t_{1}w_{la,n-ka}^{\sft{(k-l)a}}=\t^{\blam}\sft{lo_{\blam}}$. Consider the standard tableau $\t_{1}$. The integers $(k-l)a+1,\cdots,
ka$ sit inside
	$$\bigg(\t_{1}^{[1]},\cdots,\t_{1}^{[lo_{\blam}]}\bigg)$$
	and integers $ka+1,\cdots,n$ sit inside
	$$\bigg(\t_{1}^{[ko_{\blam}+1]},\cdots,\t_{1}^{[p]}\bigg).$$
	For $1\leq i\leq la$ and $1\leq j\leq n$, the node $\big(\t_{1}\big)^{-1}((k-l)a+i)$ coincides with the node $\big(\t^{\blam}\big)^{-1}(i)$, and the node $\big(\t_{1}\big)^{-1}(ka+j)$ coincides with the node
$\big(\t^{\blam}\big)^{-1}(ka+j)$.
	Then we can get that
\begin{equation}\label{expressProd2}
\frac{\gamma_{\t^{\blam}\sft{lo_{\blam}}}}{\gamma_{\m_{ko_{\blam}}(\t^{\blam}\sft{lo_{\blam}})}}=\prod_{j=ka+1}^{n}\prod_{i=1}^{la}\frac{(\res_{\t^{\blam}}(j)-q\res_{\t^{\blam}}(i))(q\res_{\t^{\blam}}(j)-
\res_{\t^{\blam}}(i))}{(\res_{\t^{\blam}}(j)-\res_{\t^{\blam}}(i))^{2}}\end{equation}
	and hence $\frac{\gamma_{\m_{lo_{\blam}}(\t^{\blam}\sft{ko_{\blam}})}}{\gamma_{\t^{\blam}\sft{ko_{\blam}}}}=\frac{\gamma_{\m_{ko_{\blam}}(\t^{\blam}\sft{lo_{\blam}})}}{\gamma_{\t^{\blam}\sft{lo_{\blam}}}}$.
This proves the claim (\ref{claim1}).
	
Finally, since $\frac{\gamma_{\m_{lo_{\blam}}(\t^{\blam}\sft{ko_{\blam}})}}{\gamma_{\t^{\blam}\sft{ko_{\blam}}}}=\frac{\gamma_{\m_{ko_{\blam}}(\t^{\blam}\sft{lo_{\blam}})}}{\gamma_{\t^{\blam}\sft{lo_{\blam}}}}\in K$ is
invertible, from (\ref{rgamma}) we can deduce that
	 $r_{\t,lo_{\blam}}r_{\t\sft{lo_{\blam}},ko_{\blam}}=r_{\t,ko_{\blam}}r_{\t\sft{ko_{\blam}},lo_{\blam}}$. This completes the proof of Theorem \ref{mainthm1}.	\hfill\qed
\medskip

The next proposition deals with Theorem \ref{mainthm2a}.

\begin{prop}\label{squareProp}
	Let $\blam\in\P_{r,n}$. Then there exists an invertible element $h_{\blam}\in K^{\times}$ such that
	$$\frac{\gamma_{\t^{\blam}\<o_\blam\>}}{\gamma_{\t^{\blam}}}=\bigl(h_{\blam}\bigr)^{2}.$$
We set $h_{\blam,0,1}:=h_{\blam}$ for later use.
\end{prop}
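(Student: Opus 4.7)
The idea is to write $\gamma_{\t^{\blam}\<o_\blam\>}/\gamma_{\t^{\blam}}$ as an explicit product of residue factors at $\t^{\blam}$, and then extract an explicit square root using the cyclic symmetry underlying the shift $\<o_\blam\>$.

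First, set $a := \sum_{i=1}^{o_\blam}|\blam^{[i]}|$, so $n = p_\blam a$. Because $\blam\<o_\blam\> = \blam$ by definition of $o_\blam$, the tableau $\t^{\blam}\<o_\blam\>$ is a standard $\blam$-tableau, and one checks directly from the definitions that $d(\t^{\blam}\<o_\blam\>) = w_{a,n-a}$. Telescoping the $B$-factor computation of Lemma \ref{gammacoeff} along the dominance-decreasing path from $\t^{\blam}$ to $\t^{\blam}\<o_\blam\>$, exactly as in the derivation of (\ref{expressProd1})--(\ref{expressProd2}) in the proof of Theorem \ref{mainthm1} (this is the case $k=l=1$, where $\m_{o_\blam}(\t^{\blam}\<o_\blam\>)=\t^{\blam}$), I obtain
$$
\frac{\gamma_{\t^{\blam}\<o_\blam\>}}{\gamma_{\t^{\blam}}}
= \prod_{i=1}^{a}\prod_{j=a+1}^{n}\frac{(\res_{\t^{\blam}}(j) - q\res_{\t^{\blam}}(i))(q\res_{\t^{\blam}}(j) - \res_{\t^{\blam}}(i))}{(\res_{\t^{\blam}}(j) - \res_{\t^{\blam}}(i))^{2}}.
$$
Each denominator is already a square, so the task reduces to producing a square root of the numerator.

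Next, let $\phi\colon\{1,\ldots,n\}\to\{1,\ldots,n\}$ be the bijection sending the entry of $\t^{\blam}$ at position $(a,b)$ in the $c$-th component of $\blam^{[s]}$ to the corresponding entry at position $(a,b)$ in the $c$-th component of $\blam^{[s+o_\blam]}$; this is well defined since $\blam^{[s+o_\blam]}=\blam^{[s]}$. Then $\phi$ acts freely, has order $p_\blam$, and satisfies $\res_{\t^{\blam}}(\phi(k)) = \veps^{o_\blam}\res_{\t^{\blam}}(k)$. Putting
$$f(i,j) := \frac{(\res_{\t^{\blam}}(j) - q\res_{\t^{\blam}}(i))(q\res_{\t^{\blam}}(j) - \res_{\t^{\blam}}(i))}{(\res_{\t^{\blam}}(j) - \res_{\t^{\blam}}(i))^{2}},$$
the numerator of $f(i,j)$ expands to $q\bigl(\res_{\t^{\blam}}(i)^{2}+\res_{\t^{\blam}}(j)^{2}\bigr)-(1+q^{2})\res_{\t^{\blam}}(i)\res_{\t^{\blam}}(j)$, so $f(i,j) = f(j,i)$, while $\phi$-invariance $f(\phi(i),\phi(j))=f(i,j)$ is immediate from scaling of residues.

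Finally, I analyse the $\langle\phi\rangle$-orbits on the ordered pairs $(i,j)$ appearing in the product. A direct orbit-count shows that each $\langle\phi\rangle$-orbit of pairs with $i,j$ in different ``super-blocks'' of $o_\blam$ consecutive layers (equivalently $m_i\ne m_j$) contributes exactly one representative to the index set. Generically, the orbits of $(i,j)$ and $(j,i)$ are distinct, jointly contributing two factors both equal to $f(i,j)$, and hence a square $f(i,j)^{2}$. The only self-paired orbits (those invariant under the swap) occur when $p_\blam$ is even and $j=\phi^{p_\blam/2}(i)$; then $\res_{\t^{\blam}}(j) = \veps^{p/2}\res_{\t^{\blam}}(i) = -\res_{\t^{\blam}}(i)$ (noting that $p$ is then even and the primitive $p$th root of unity forces $2\in K^{\times}$), and direct substitution gives $f(i,j)=\bigl((1+q)/2\bigr)^{2}$, again a square. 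Choosing any well-defined transversal $\mathcal{T}$ of the orbit pairs (e.g.\ lexicographic) and letting $\mathcal{S}$ be the set of self-paired orbits, I set
$$h_\blam := \prod_{(i,j)\in\mathcal{T}} f(i,j) \cdot \prod_{\mathcal{S}}\frac{1+q}{2}\in K^\times,$$
which squares to $\gamma_{\t^{\blam}\<o_\blam\>}/\gamma_{\t^{\blam}}$. The only real obstacle is the combinatorial bookkeeping in specifying a transversal; since the proposition demands only existence, any rule suffices.
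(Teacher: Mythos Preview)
Your argument is correct and is essentially the same as the paper's, just repackaged in orbit language. The paper proceeds by building a chain of intermediate tableaux $\t^{\blam}=\t_0\rhd\t_1\rhd\cdots\rhd\t_{p_\blam-1}=\t^{\blam}\<o_\blam\>$ and computing each ratio $\gamma_{\t_k}/\gamma_{\t_{k-1}}$ explicitly as a product over $1\leq i,j\leq a$ involving $\veps^{ko_\blam}$; the pairing $\gamma_{\t_k}/\gamma_{\t_{k-1}}=\gamma_{\t_{p_\blam-k}}/\gamma_{\t_{p_\blam-k-1}}$ then yields the square, with the middle term at $k=p_\blam/2$ handled exactly as you do. Your $\langle\phi\rangle$-orbit pairing under the swap $(i,j)\leftrightarrow(j,i)$ is precisely this $k\leftrightarrow p_\blam-k$ symmetry after the reparametrisation $j\mapsto ka+j$.

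One point worth flagging: the paper does not stop at existence but fixes a specific square root via the explicit formulae~(\ref{odddefinition1}) and~(\ref{evendefinition1}), corresponding to the transversal ``$k\le\lfloor(p_\blam-1)/2\rfloor$''. This particular choice is used substantively later: the proof of Lemma~\ref{sqhlam} (that $h_{\blam,0,p_\blam}=1$) unwinds exactly this formula for $h_{\blam,0,1}$ and would not go through for an arbitrary sign choice of square root. So while ``any rule suffices'' is fine for the proposition as stated, you should be aware that the downstream arguments depend on the specific transversal, and your lexicographic choice would need to be checked (or replaced by the paper's) when you reach Lemma~\ref{sqhlam}.
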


\begin{proof}
	If $o_{\blam}=p$, then $\t^{\blam}\<o_\blam\>=\t^\blam$ and we can take $h_\blam:=1$ and we are done. Henceforth we assume that $o_\blam<p$. In his case, we can write
	$$\blam=(\blam^{[1]}, \cdots, \blam^{[p]})=(\underbrace{\underbrace{\blam^{[1]}, \cdots, \blam^{[o_{\blam}]}}, \cdots, \underbrace{\blam^{[1]}, \cdots, \blam^{[o_{\blam}]}}}_{p_{\blam} copies}).$$
By assumption, $|\blam|=n$. Set $a:=\sum_{k=1}^{o_{\blam}}|\blam^{[k]}|=\sum_{i=1}^{do_{\blam}}|\lam^{(i)}|$. It follows that $n=ap_{\blam}$.
	
	Since $\t^{\blam}\sft{o_{\blam}}$ is a standard $\blam$-tableau, there exists a unique element $d(\t^{\blam}\sft{o_{\blam}})\in\Sym_{n}$ such that $\t^{\blam}\sft{o_{\blam}}=\t^{\blam}d(\t^{\blam}\sft{o_{\blam}})$.
Note also that $$
\ell(d(\t^{\blam}\sft{o_{\blam}}))=\sum_{k=0}^{p_\blam-2}\ell(w_{a,a}^{\sft{ka}})=(p_\blam-1)a^2 .
$$

We set $\t_{0}=\t^{\blam}$ and for each $1\leq k\leq p_{\blam}-1$, $$
\t_{k}=\t^{\blam}w_{a,a}^{\sft{n-2a}}w_{a,a}^{\sft{n-3a}}\cdots w_{a,a}^{\sft{n-(k+1)a}}. $$
In particular, $\t_{p_{\blam}-1}=\t^{\blam}\sft{o_{\blam}}$. Hence, we get a sequence of standard $\blam$-tableaux as follows:
	$$\t^{\blam}=\t_{0}\triangleright \t_{1}\triangleright \t_{2}\triangleright \cdots \triangleright \t_{p_{\blam}-1}=\t^{\blam}\sft{o_{\blam}}.$$
	
For any integer $k$ with $1\leq k\leq p_{\blam}-1$, by definition, we have that $$\begin{aligned}
\t_k&=\t_{k-1}w_{a,a}^{\sft{n-(k+1)a}}\\
&=\t_{k-1}\underbrace{s_{n-ka}\cdots s_{n-ka-a+2}s_{n-ka-a+1}}\underbrace{s_{n-ka+1}\cdots s_{n-ka-a+3}s_{n-ka-a+2}}\cdots\\
&\qquad \underbrace{s_{n-ka+a-1}\cdots s_{n-ka+1}s_{n-ka}}.
\end{aligned}
$$

Note that for each $1\leq j\leq a$, the node in which the integer $n-ka+j$ sits in $\t_{k-1}$ is the same as the node in which the integer $n-a+j$ sits in $\t^\blam$, while the integer $n-(k+1)a+j$ sits in the same node
both in $\t_{k-1}$ and in $\t^\blam$. Applying Lemma \ref{gammacoeff}, we can deduce that	\begin{equation}\label{gammres}\begin{aligned}
\frac{\gamma_{\t_{k}}}{\gamma_{\t_{k-1}}}&=\underbrace{B_{n-ka}(\t_{k-1}s_{n-ka})\cdots B_{n-ka-a+1}(\t_{k-1}s_{n-ka}\cdots s_{n-ka-a+1})}\cdots\\
&\qquad \underbrace{B_{n-ka+a-1}(\t_{k-1}s_{n-ka}\cdots s_{n-ka+a-1})\cdots B_{n-ka}(\t^{\blam}\sft{o_\blam})}\\
&=\prod_{i=1}^{a}\prod_{j=1}^{a}\frac{\bigl(q\res_{\t^{\blam}}(n-(k+1)a+j)-\res_{\t^{\blam}}(n-a+i)\bigr)\big(\res_{\t^{\blam}}(n-(k+1)a+j)-q\res_{\t^{\blam}}(n-a+i)\big)}{(\res_{\t^{\blam}}(n-(k+1)a+j)-
\res_{\t^{\blam}}(n-a+i))^{2}} .
	\end{aligned}\end{equation}
	
If $k$ lies in row $c$ and column $b$ of $\s^{(l+md)}$, the residue $\res_{\s}(k)=\varepsilon^{m}q^{b-c}Q_{l}$. By the definition of $\t^{\blam}$, we have that
	\begin{equation}\label{epsres}
		\begin{aligned}
		\res_{\t^{\blam}}(n-(k+1)a+j)&=\varepsilon^{(p_{\blam}-k-1)o_{\blam}}\res_{\t^{\blam}}(j),\quad \forall\,1\leq j\leq a;\\
		\res_{\t^{\blam}}(n-a+i)&=\varepsilon^{(p_{\blam}-1)o_{\blam}}\res_{\t^{\blam}}(i),\quad \forall\,1\leq i\leq a.
	\end{aligned}\end{equation}
We can get that
	\begin{equation}\label{gammatk}\begin{aligned}
\frac{\gamma_{\t_{k}}}{\gamma_{t_{k-1}}}&=\prod_{i=1}^{a}\prod_{j=1}^{a}\frac{(q\varepsilon^{(p_{\blam}-k-1)o_{\blam}}\res_{\t^{\blam}}(j)-\varepsilon^{(p_{\blam}-1)o_{\blam}}\res_{\t^{\blam}}(i))(\varepsilon^{(p_{\blam}-k-1)
o_{\blam}}\res_{\t^{\blam}}(j)-q\varepsilon^{(p_{\blam}-1)o_{\blam}}\res_{\t^{\blam}}(i))}{(\varepsilon^{(p_{\blam}-k-1)o_{\blam}}\res_{\t^{\blam}}(j)-\varepsilon^{(p_{\blam}-1)o_{\blam}}\res_{\t^{\blam}}(i))^{2}}\\
&=\prod_{i=1}^{a}\prod_{j=1}^{a}\frac{(q\res_{\t^{\blam}}(j)-\varepsilon^{ko_{\blam}}\res_{\t^{\blam}}(i))(\res_{\t^{\blam}}(j)-q\varepsilon^{ko_{\blam}}\res_{\t^{\blam}}(i))}{(\res_{\t^{\blam}}(j)
-\varepsilon^{ko_{\blam}}\res_{\t^{\blam}}(i))^{2}}.
	\end{aligned}\end{equation}
Note that \begin{equation}\label{tip1}
	\frac{\gamma_{\t^{\blam}\sft{o_{\blam}}}}{\gamma_{\t^{\blam}}}=\frac{\gamma_{\t_{1}}}{\gamma_{\t_{0}}}\cdot \frac{\gamma_{\t_{2}}}{\gamma_{\t_{1}}}\cdots
\frac{\gamma_{\t_{p_{\blam}-1}}}{\gamma_{\t_{p_{\blam}-2}}}.\end{equation}
It follows that $$
\frac{\gamma_{\t^{\blam}\sft{o_{\blam}}}}{\gamma_{\t^{\blam}}}=\prod_{k=1}^{p_{\blam}-1}\prod_{i=1}^{a}\prod_{j=1}^{a}\frac{(q\res_{\t^{\blam}}(j)-\varepsilon^{ko_{\blam}}\res_{\t^{\blam}}(i))(\res_{\t^{\blam}}(j)
-q\varepsilon^{ko_{\blam}}\res_{\t^{\blam}}(i))}{(\res_{\t^{\blam}}(j)-\varepsilon^{ko_{\blam}}\res_{\t^{\blam}}(i))^{2}}.$$
	
Since $\varepsilon$ is a primitive $p$th root of unity in $K$, $\varepsilon^{o_{\blam}}$ is a primitive $p_{\blam}$th root of unity in $K$. For any $1\leq k\leq p_{\blam}-1$, $1\leq i,j\leq a$, we can get that
	$$\begin{aligned}
		&\quad \,
\frac{(q\res_{\t^{\blam}}(j)-\varepsilon^{ko_{\blam}}\res_{\t^{\blam}}(i))(\res_{\t^{\blam}}(j)-q\varepsilon^{ko_{\blam}}\res_{\t^{\blam}}(i))}{(\res_{\t^{\blam}}(j)-\varepsilon^{ko_{\blam}}\res_{\t^{\blam}}(i))^{2}}\cdot
\frac{\varepsilon^{2(p_{\blam}-k)o_{\blam}}}{\varepsilon^{2(p_{\blam}-k)o_{\blam}}}\\
&=\frac{(q\res_{\t^{\blam}}(i)-\varepsilon^{(p_{\blam}-k)o_{\blam}}\res_{\t^{\blam}}(j))(\res_{\t^{\blam}}(i)-q\varepsilon^{(p_{\blam}-k)o_{\blam}}\res_{\t^{\blam}}(j))}{(\res_{\t^{\blam}}(i)-\varepsilon^{(p_{\blam}-k)
o_{\blam}}\res_{\t^{\blam}}(j))^{2}}.
	\end{aligned}$$
Combining this with (\ref{gammatk}), we have that
	\begin{equation}\label{epsres2}
		\frac{\gamma_{\t_{k}}}{\gamma_{\t_{k-1}}}=\frac{\gamma_{\t_{p_{\blam}-k}}}{\gamma_{\t_{p_{\blam}-k-1}}}, \quad \forall\, 1\leq k\leq p_{\blam}-1.
	\end{equation}

If $p_{\blam}$ is odd, then we have that $$
\frac{\gamma_{\t^{\blam}\sft{o_{\blam}}}}{\gamma_{\t^{\blam}}}=\prod_{k=1}^{(p_{\blam}-1)/2}\prod_{i=1}^{a}\prod_{j=1}^{a}\frac{(q\res_{\t^{\blam}}(j)-\varepsilon^{ko_{\blam}}\res_{\t^{\blam}}(i))^{2}(\res_{\t^{\blam}}(j)
-q\varepsilon^{ko_{\blam}}\res_{\t^{\blam}}(i))^{2}}{(\res_{\t^{\blam}}(j)-\varepsilon^{ko_{\blam}}\res_{\t^{\blam}}(i))^{4}}.$$
	
If $p_{\blam}$ is even, then $2|p=p_\blam o_\blam$. In particular, $\cha K$ is odd and $2\cdot 1_K$ is invertible in $K$. In this case, we can get that $\varepsilon^{p_{\blam}o_{\blam}/2}=-1$ and hence
	$$\begin{aligned}
\frac{\gamma_{\t^{\blam}\sft{o_{\blam}}}}{\gamma_{\t^{\blam}}}&=\prod_{k=1}^{p_{\blam}/2-1}\prod_{i=1}^{a}\prod_{j=1}^{a}\frac{(q\res_{\t^{\blam}}(j)-\varepsilon^{ko_{\blam}}\res_{\t^{\blam}}(i))^{2}(\res_{\t^{\blam}}(j)
-q\varepsilon^{ko_{\blam}}\res_{\t^{\blam}}(i))^{2}}{(\res_{\t^{\blam}}(j)-\varepsilon^{ko_{\blam}}\res_{\t^{\blam}}(i))^{4}}\\
		&\qquad\times  \prod_{i=1}^{a}\prod_{j=1}^{a}\frac{(q\res_{\t^{\blam}}(j)+\res_{\t^{\blam}}(i))(\res_{\t^{\blam}}(j)+q\res_{\t^{\blam}}(i))}{(\res_{\t^{\blam}}(j)+\res_{\t^{\blam}}(i))^{2}}\\
		&=\prod_{k=1}^{p_{\blam}/2-1}\prod_{i=1}^{a}\prod_{j=1}^{a}\frac{(q\res_{\t^{\blam}}(j)-\varepsilon^{ko_{\blam}}\res_{\t^{\blam}}(i))^{2}(\res_{\t^{\blam}}(j)
-q\varepsilon^{ko_{\blam}}\res_{\t^{\blam}}(i))^{2}}{(\res_{\t^{\blam}}(j)-\varepsilon^{ko_{\blam}}\res_{\t^{\blam}}(i))^{4}}\\
		&\qquad \times\frac{(q+1)^{2a}}{4^{a}}\cdot \prod_{1\leq j<i\leq a}
\frac{(q\res_{\t^{\blam}}(j)+\res_{\t^{\blam}}(i))^{2}(\res_{\t^{\blam}}(j)+q\res_{\t^{\blam}}(i))^{2}}{(\res_{\t^{\blam}}(j)+\res_{\t^{\blam}}(i))^{4}}.
	\end{aligned}$$
	
Therefore, if $p_{\blam}$ is odd, then we set \begin{equation}\label{odddefinition1}
h_{\blam}:=\prod_{k=1}^{(p_{\blam}-1)/2}\prod_{i=1}^{a}\prod_{j=1}^{a}\frac{(q\res_{\t^{\blam}}(j)-\varepsilon^{ko_{\blam}}\res_{\t^{\blam}}(i))(\res_{\t^{\blam}}(j)
-q\varepsilon^{ko_{\blam}}\res_{\t^{\blam}}(i))}{(\res_{\t^{\blam}}(j)-\varepsilon^{ko_{\blam}}\res_{\t^{\blam}}(i))^{2}};\end{equation}
while if $p_{\blam}$ is even, then we set \begin{equation}\label{evendefinition1}
\begin{aligned}
		h_{\blam}&=\prod_{k=1}^{p_{\blam}/2-1}\prod_{i=1}^{a}\prod_{j=1}^{a}\frac{(q\res_{\t^{\blam}}(j)-\varepsilon^{ko_{\blam}}\res_{\t^{\blam}}(i))(\res_{\t^{\blam}}(j)
-q\varepsilon^{ko_{\blam}}\res_{\t^{\blam}}(i))}{(\res_{\t^{\blam}}(j)-\varepsilon^{ko_{\blam}}\res_{\t^{\blam}}(i))^{2}}\\
		&\qquad\times \frac{(q+1)^{a}}{2^{a}}\cdot \prod_{1\leq j<i\leq a} \frac{(q\res_{\t^{\blam}}(j)+\res_{\t^{\blam}}(i))(\res_{\t^{\blam}}(j)+q\res_{\t^{\blam}}(i))}{(\res_{\t^{\blam}}(j)
+\res_{\t^{\blam}}(i))^{2}} .
	\end{aligned}\end{equation}
In both cases, $h_{\blam}\in K^\times$ is invertible and it satisfies that $$\frac{\gamma_{\t^{\blam}\sft{o_{\blam}}}}{\gamma_{\t^{\blam}}}=h_{\blam}^{2}.$$
This completes the proof of the proposition.
\end{proof}

\medskip\noindent
{\bf{Proof of Theorem \ref{mainthm2a}:}} This follows from Proposition \ref{squareProp}.
\qed
\medskip

\begin{dfn}\label{hlaml}
	Suppose that $\blam\in\P_{r,n}$. For any $l\in\Z_{\geq 0}$, we define
	\begin{equation}\label{hlaml1}
h_{\blam,l,0}:=1,\,\,\,h_{\blam,l,1}:=h_{\blam,0,1}\frac{\gamma_{\m_{o_{\blam}}(\tlam\sft{lo_{\blam}})}}{\gamma_{\tlam\sft{lo_{\blam}}}}\in K^{\times}.
	\end{equation}
	
	For any integers $l_{1}\in \Z_{\geq 0}$ and $l_{2}\in\Z_{\geq 1}$, we recursively define
	\begin{equation}\label{hlaml2}
		h_{\blam,l_{1},l_{2}}:=h_{\blam,l_{1},l_{2}-1}h_{\blam,0,1}\frac{\gamma_{\m_{o_{\blam}}(\tlam\sft{(l_{1}+l_{2}-1)o_{\blam}})}}{\gamma_{\tlam\sft{(l_{1}+l_{2}-1)o_{\blam}}}}\in K^{\times}.
	\end{equation}
\end{dfn}

\begin{prop}\label{prophlam}
	Suppose that $\blam\in\P_{r,n}$. Let $l,l_{1},l_{2}\in \Z_{\geq 0}$. Then we have
	\begin{itemize}
		\item[(1)] 	 $h_{\blam,l_{1},l_{2}}=h_{\blam,0,1}^{l_{2}}\prod_{k=0}^{l_{2}-1}\frac{\gamma_{\m_{o_{\blam}}(\tlam\sft{(l_{1}+k)o_{\blam}})}}{\gamma_{\tlam\sft{(l_{1}+k)o_{\blam}}}}$. In particular,
$h_{\blam,0,l}=h_{\blam,0,1}^{l}\prod_{k=1}^{l-1}\frac{\gamma_{\m_{o_{\blam}}(\t^{\blam}\sft{ko_{\blam}})}}{\gamma_{\t^{\blam}\sft{ko_{\blam}}}}$.
		\item[(2)]  $h_{\blam,l_{1},l_{2}}^{2}=\frac{\gamma_{\tlam\sft{(l_{1}+l_{2})o_{\blam}}}}{\gamma_{\tlam\sft{l_{1}o_{\blam}}}}$. In particular,
$h_{\blam,0,l}^{2}=\frac{\gamma_{\t^{\blam}\sft{lo_{\blam}}}}{\gamma_{\t^{\blam}}}$.
	\end{itemize}
\end{prop}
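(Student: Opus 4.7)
The plan is to prove Part (1) by a straightforward induction on $l_{2}$, and then deduce Part (2) by first establishing the case $l_{2}=1$ from Lemma \ref{gtsft} together with Proposition \ref{squareProp}, before combining with Part (1) via a telescoping argument.

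For Part (1), the base cases $l_{2}=0$ and $l_{2}=1$ are immediate from Definition \ref{hlaml}: the former from $h_{\blam,l_{1},0}:=1$ (matching the empty product), the latter directly from (\ref{hlaml1}) with $l$ replaced by $l_{1}$. For the inductive step, assume the formula holds for $l_{2}-1\geq 1$. The recursive definition (\ref{hlaml2}) gives
\begin{equation*}
h_{\blam,l_{1},l_{2}} = h_{\blam,l_{1},l_{2}-1}\,h_{\blam,0,1}\,\frac{\gamma_{\m_{o_{\blam}}(\tlam\sft{(l_{1}+l_{2}-1)o_{\blam}})}}{\gamma_{\tlam\sft{(l_{1}+l_{2}-1)o_{\blam}}}},
\end{equation*}
and substituting the inductive hypothesis produces the claimed product. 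Taking $l_{1}=0$ specializes to the second identity in Part (1).

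For Part (2), the key computational input is the identity (\ref{eqa10}) in Lemma \ref{gtsft}, applied with $k=o_{\blam}$ to the standard $\blam$-tableau $\tlam\sft{l_{1}o_{\blam}}$ (which indeed lies in $\Std(\blam)$, since $\blam\sft{l_{1}o_{\blam}}=\blam$ by the defining property of $o_{\blam}$). Using the additivity $\t\sft{a}\sft{b}=\t\sft{a+b}$ of the shift, so that $\tlam\sft{l_{1}o_{\blam}}\sft{o_{\blam}}=\tlam\sft{(l_{1}+1)o_{\blam}}$, this lemma yields
\begin{equation*}
\frac{\gamma_{\tlam\sft{(l_{1}+1)o_{\blam}}}}{\gamma_{\tlam\sft{l_{1}o_{\blam}}}} = \frac{\gamma_{\tlam\sft{o_{\blam}}}}{\gamma_{\tlam}}\left(\frac{\gamma_{\m_{o_{\blam}}(\tlam\sft{l_{1}o_{\blam}})}}{\gamma_{\tlam\sft{l_{1}o_{\blam}}}}\right)^{2}.
\end{equation*}
By Proposition \ref{squareProp} the first factor on the right equals $h_{\blam,0,1}^{2}$, so in view of (\ref{hlaml1}) the entire right-hand side equals $h_{\blam,l_{1},1}^{2}$. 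This establishes Part (2) when $l_{2}=1$.

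For general $l_{2}$, I telescope and compare with Part (1). Writing
\begin{equation*}
\frac{\gamma_{\tlam\sft{(l_{1}+l_{2})o_{\blam}}}}{\gamma_{\tlam\sft{l_{1}o_{\blam}}}} = \prod_{k=0}^{l_{2}-1}\frac{\gamma_{\tlam\sft{(l_{1}+k+1)o_{\blam}}}}{\gamma_{\tlam\sft{(l_{1}+k)o_{\blam}}}} = \prod_{k=0}^{l_{2}-1} h_{\blam,l_{1}+k,1}^{2},
\end{equation*}
where the last equality applies the already-proven $l_{2}=1$ case to each factor, and observing that Part (1) together with (\ref{hlaml1}) factors $h_{\blam,l_{1},l_{2}}=\prod_{k=0}^{l_{2}-1}h_{\blam,l_{1}+k,1}$, the claim follows by squaring and comparing. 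Setting $l_{1}=0$ recovers the special case $h_{\blam,0,l}^{2}=\gamma_{\tlam\sft{lo_{\blam}}}/\gamma_{\tlam}$. I expect no serious obstacle here: once one recognises that Lemma \ref{gtsft} applies verbatim to any $\blam$-tableau in place of $\tlam$, and that the shift is additive, the whole argument is essentially bookkeeping that turns the one-step recursive definition into a telescoping product.
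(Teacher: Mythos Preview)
Your proof is correct. Part (1) is handled exactly as in the paper, by unwinding the recursion. For Part (2), your route differs from the paper's in a useful way: the paper first reduces the general identity to the special case $h_{\blam,0,l}^{2}=\gamma_{\tlam\sft{lo_{\blam}}}/\gamma_{\tlam}$ via $h_{\blam,l_{1},l_{2}}=h_{\blam,0,l_{1}+l_{2}}/h_{\blam,0,l_{1}}$, and then proves that special case by induction on $l$, applying Lemma~\ref{gtsft} to $\tlam\sft{o_{\blam}}$ with shift $(l-1)o_{\blam}$ and invoking the symmetry identity (\ref{claim1}) from the proof of Theorem~\ref{mainthm1} to swap the indices. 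You instead apply Lemma~\ref{gtsft} directly to $\tlam\sft{l_{1}o_{\blam}}$ with shift $o_{\blam}$, which immediately yields the $l_{2}=1$ case for arbitrary $l_{1}$ and makes the appeal to (\ref{claim1}) unnecessary; the general case then follows by a clean telescoping together with the factorisation $h_{\blam,l_{1},l_{2}}=\prod_{k=0}^{l_{2}-1}h_{\blam,l_{1}+k,1}$ from Part (1). Your argument is therefore slightly more self-contained, while the paper's version highlights the role of the symmetry (\ref{claim1}) that recurs elsewhere in the section.
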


\begin{proof}
By Definition \ref{hlaml} and (\ref{hlaml2}), we can get that	
$$h_{\blam,l_{1},l_{2}}=h_{\blam,l_{1},l_{2}-1}h_{\blam,0,1}\frac{\gamma_{\m_{o_{\blam}}(\tlam\sft{(l_{1}+l_{2}-1)o_{\blam}})}}{\gamma_{\tlam\sft{(l_{1}+l_{2}-1)o_{\blam}}}}=h_{\blam,l_{1},1}h_{\blam,0,1}^{l_{2}-1}\prod_{k=1}^{l_{2}-1}\frac{\gamma_{\m_{o_{\blam}}(\tlam\sft{(l_{1}+k)o_{\blam}})}}{\gamma_{\tlam\sft{(l_{1}+k)o_{\blam}}}}.$$
	By Definition \ref{hlaml}, we have that $h_{\blam,l_{1},1}=h_{\blam,0,1}\frac{\gamma_{\m_{o_{\blam}}(\tlam\sft{l_{1}o_{\blam}})}}{\gamma_{\tlam\sft{l_{1}o_{\blam}}}}$ and hence
	$$\begin{aligned}
		h_{\blam,l_{1},l_{2}}&=h_{\blam,0,1}\frac{\gamma_{\m_{o_{\blam}}(\tlam\sft{l_{1}o_{\blam}})}}{\gamma_{\tlam\sft{l_{1}o_{\blam}}}}\times
h_{\blam,0,1}^{l_{2}-1}\prod_{k=1}^{l_{2}-1}\frac{\gamma_{\m_{o_{\blam}}(\tlam\sft{(l_{1}+k)o_{\blam}})}}{\gamma_{\tlam\sft{(l_{1}+k)o_{\blam}}}}\\
		&=h_{\blam,0,1}^{l_{2}}\prod_{k=0}^{l_{2}-1}\frac{\gamma_{\m_{o_{\blam}}(\tlam\sft{(l_{1}+k)o_{\blam}})}}{\gamma_{\tlam\sft{(l_{1}+k)o_{\blam}}}}.
	\end{aligned}$$
This proves the first statement.

For the second statement, if the second equality  $h_{\blam,0,l}^{2}=\frac{\gamma_{\t^{\blam}\sft{lo_{\blam}}}}{\gamma_{\t^{\blam}}}$ holds, then we can deduce that
$$
h_{\blam,l_{1},l_{2}}^{2}=h_{\blam,0,l_{1}+l_{2}}^{2}/h_{\blam,0,l_{1}}^{2}=\frac{\gamma_{\t^{\blam}\sft{(l_{1}+l_{2})o_{\blam}}}}{\gamma_{\t^{\blam}}}\frac{\gamma_{\t^{\blam}}}{\gamma_{\t^{\blam}\sft{l_{1}o_{\blam}}}}
=\frac{\gamma_{\t^{\blam}\sft{(l_{1}+l_{2})o_{\blam}}}}{\gamma_{\t^{\blam}\sft{l_{1}o_{\blam}}}},$$
as expected. Therefore, it remains to show the second equality $h_{\blam,0,l}^{2}=\frac{\gamma_{\t^{\blam}\sft{lo_{\blam}}}}{\gamma_{\t^{\blam}}}$ holds.
	
We use induction on $l$. If $l=1$, then that second equality follows from Proposition \ref{squareProp}.
	
Assume $h_{\blam,0,l-1}^{2}=\frac{\gamma_{\t^{\blam}\sft{(l-1)o_{\blam}}}}{\gamma_{\t^{\blam}}}$. By (\ref{hlaml2}), we have that
	$$h_{\blam,0,l}:=h_{\blam,0,l-1}h_{\blam,0,1}\frac{\gamma_{\m_{o_{\blam}}(\tlam\sft{(l-1)o_{\blam}})}}{\gamma_{\tlam\sft{(l-1)o_{\blam}}}}.$$
	By assumption, we can get that
	$$h_{\blam,0,l}^{2}=h_{\blam,0,l-1}^{2}
\bigg(h_{\blam,0,1}\frac{\gamma_{\m_{o_{\blam}}(\t^{\blam}\sft{(l-1)o_{\blam}})}}{\gamma_{\t^{\blam}\sft{(l-1)o_{\blam}}}}\bigg)^{2}=\frac{\gamma_{\t^{\blam}\sft{(l-1)o_{\blam}}}}{\gamma_{\t^{\blam}}}h_{\blam,0,1}^{2}\bigg(\frac{\gamma_{\m_{o_{\blam}}(\t^{\blam}\sft{(l-1)o_{\blam}})}}{\gamma_{\t^{\blam}\sft{(l-1)o_{\blam}}}}\bigg)^{2}.$$
	By Lemma \ref{gtsft} and (\ref{claim1}), we can get that
$$
\frac{\gamma_{\t^{\blam}\sft{lo_{\blam}}}}{\gamma_{\t^{\blam}\sft{o_{\blam}}}}=\frac{\gamma_{\t^{\blam}\sft{(l-1)o_{\blam}}}}{\gamma_{\t^{\blam}}}\bigg(\frac{\gamma_{\m_{(l-1)o_{\blam}}(\t^{\blam}\sft{o_{\blam}})}}{\gamma_{\t^{\blam}\sft{o_{\blam}}}}\bigg)^{2}=\frac{\gamma_{\t^{\blam}\sft{(l-1)o_{\blam}}}}{\gamma_{\t^{\blam}}}\bigg(\frac{\gamma_{\m_{o_{\blam}}(\t^{\blam}\sft{(l-1)o_{\blam}})}}{\gamma_{\t^{\blam}\sft{(l-1)o_{\blam}}}}\bigg)^{2}.$$
	
	Combining Proposition \ref{squareProp}, we can deduce that
	
$$h_{\blam,0,l}^{2}=\frac{\gamma_{\t^{\blam}\sft{(l-1)o_{\blam}}}}{\gamma_{\t^{\blam}}}\frac{\gamma_{\t^{\blam}\sft{o_{\blam}}}}{\gamma_{\t^{\blam}}}\bigg(\frac{\gamma_{\m_{(l-1)o_{\blam}}(\t^{\blam}\sft{o_{\blam}})}}{\gamma_{\t^{\blam}\sft{o_{\blam}}}}\bigg)^{2}=\frac{\gamma_{\t^{\blam}\sft{o_{\blam}}}}{\gamma_{\t^{\blam}}}\frac{\gamma_{\t^{\blam}\sft{lo_{\blam}}}}{\gamma_{\t^{\blam}\sft{o_{\blam}}}}=\frac{\gamma_{\t^{\blam}\sft{lo_{\blam}}}}{\gamma_{\t^{\blam}}}.$$
	This completes the proof of the Proposition.
\end{proof}

\begin{lem}\label{sqhlam}
	Suppose that $\blam\in\P_{r,n}$. Then we have
	$$h_{\blam,0,p_{\blam}}=1.$$
\end{lem}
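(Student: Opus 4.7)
The plan is to reduce the claim to a sign question via Proposition \ref{prophlam}(2), and then pin down the sign using the explicit product formula of Proposition \ref{prophlam}(1) combined with the residue-product expressions (\ref{odddefinition1})/(\ref{evendefinition1}) for $h_{\blam,0,1}$.

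First I would apply Proposition \ref{prophlam}(2) with $l=p_{\blam}$ to obtain
$$h_{\blam,0,p_{\blam}}^{2}=\frac{\gamma_{\t^{\blam}\sft{p_{\blam}o_{\blam}}}}{\gamma_{\t^{\blam}}}=\frac{\gamma_{\t^{\blam}\sft{p}}}{\gamma_{\t^{\blam}}}=1,$$
since $p_{\blam}o_{\blam}=p$ and the $\sft{p}$-shift is the identity on multipartitions and standard tableaux (as the tableau decomposition $\t=(\t^{[1]},\dots,\t^{[p]})$ is indexed by $\Z/p\Z$). Hence $h_{\blam,0,p_{\blam}}\in\{+1,-1\}$ and the whole task reduces to determining the sign.

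To pin down the sign, I would apply Proposition \ref{prophlam}(1) with $l_{1}=0$ and $l_{2}=p_{\blam}$, giving
$$h_{\blam,0,p_{\blam}}=h_{\blam,0,1}^{p_{\blam}}\prod_{k=1}^{p_{\blam}-1}\frac{\gamma_{\m_{o_{\blam}}(\t^{\blam}\sft{ko_{\blam}})}}{\gamma_{\t^{\blam}\sft{ko_{\blam}}}}.$$
For each $1\leq k\leq p_{\blam}-1$, I would decompose $d(\t^{\blam}\sft{ko_{\blam}})=w_{ka,n-ka}=x_{k}d_{k}'$ as in Definition \ref{midkt}, with $a:=\sum_{j=1}^{do_{\blam}}|\lam^{(j)}|$, fix a reduced expression for $x_{k}\in\Sym_{(a,n-a)}$, and apply Lemma \ref{gammacoeff} iteratively along this expression. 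This writes each quotient $\gamma_{\m_{o_{\blam}}(\t^{\blam}\sft{ko_{\blam}})}/\gamma_{\t^{\blam}\sft{ko_{\blam}}}$ as an explicit product of $B_{i}$-type ratios of residue-differences $(q\res_{\t^{\blam}}(j)-\veps^{mo_{\blam}}\res_{\t^{\blam}}(i))$ and $(\res_{\t^{\blam}}(j)-q\veps^{mo_{\blam}}\res_{\t^{\blam}}(i))$, directly parallel to the computations behind (\ref{gammres}) and (\ref{gammatk}). Pairing this against the explicit $p_{\blam}$-th power of (\ref{odddefinition1}) or (\ref{evendefinition1}) and regrouping via the involution $m\leftrightarrow p_{\blam}-m$ on $\Z/p_{\blam}\Z$ (using $\veps^{p_{\blam}o_{\blam}}=1$) should collapse the total product to $+1$.

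The main obstacle I expect is the bookkeeping at this final stage, especially in the even-$p_{\blam}$ case: formula (\ref{evendefinition1}) contains the exceptional factor $(q+1)^{a}/2^{a}$ coming from the middle term $\veps^{(p_{\blam}/2)o_{\blam}}=-1$, and one must verify that its $p_{\blam}$-th power combines with the corresponding $\m_{o_{\blam}}$-factors into a genuine square with the correct sign. The identity $h_{\blam,0,p_{\blam}}^{2}=1$ established above serves as a running sanity check: any candidate intermediate expression that fails to square to $1$ flags a computational error, and once the residue calculation has been carried out cleanly the sign is forced to be $+1$.
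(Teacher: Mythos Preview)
Your proposal is correct and follows essentially the same route as the paper: the paper also starts from the product formula of Proposition~\ref{prophlam}(1), expresses each quotient $\gamma_{\t^{\blam}\sft{ko_{\blam}}}/\gamma_{\m_{o_{\blam}}(\t^{\blam}\sft{ko_{\blam}})}$ as a residue product (via the already-established formula (\ref{expressProd1})), and collapses the total product against $h_{\blam,0,1}^{p_{\blam}}$ using the pairing $k\leftrightarrow p_{\blam}-k$, treating odd and even $p_{\blam}$ separately. Your preliminary reduction to $h_{\blam,0,p_{\blam}}\in\{\pm 1\}$ via Proposition~\ref{prophlam}(2) is a useful sanity check but does not shorten the computation; the substantive residue bookkeeping is identical.
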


\begin{proof}
	By Proposition \ref{prophlam}, we have
	$$h_{\blam,0,p_{\blam}}=h_{\blam,0,1}^{p_{\blam}}\prod_{k=1}^{p_{\blam}-1}\frac{\gamma_{\m_{o_{\blam}}(\t^{\blam}\sft{ko_{\blam}})}}{\gamma_{\t^{\blam}\sft{ko_{\blam}}}}. $$
To prove the lemma, it suffices to show \begin{equation}\label{sqhlam1}
		h_{\blam,0,1}^{p_{\blam}}=\prod_{k=1}^{p_{\blam}-1}\frac{\gamma_{\t^{\blam}\sft{ko_{\blam}}}}{\gamma_{\m_{o_{\blam}}(\t^{\blam}\sft{ko_{\blam}})}}.
	\end{equation}\

As before, we set $a:=\sum_{i=1}^{o_{\blam}}|\blam^{[i]}|$. By (\ref{expressProd1}) and noting that $\res_{\t^\blam}(ca+j)=\eps^{co_\blam}\res_{\t^\blam}(j)$ for any $1\leq j\leq a$, we have that (for any $1\leq k\leq
p_{\blam}-1$),
	$$\begin{aligned}
\frac{\gamma_{\t^{\blam}\sft{ko_{\blam}}}}{\gamma_{\m_{o_{\blam}}(\t^{\blam}\sft{ko_{\blam}})}}&=\prod_{j=ka+1}^{n}\prod_{i=1}^{a}\frac{(\res_{\t^{\blam}}(j)-q\res_{\t^{\blam}}(i))(q\res_{\t^{\blam}}(j)-
\res_{\t^{\blam}}(i))}{(\res_{\t^{\blam}}(j)-\res_{\t^{\blam}}(i))^{2}}\\
&=\prod_{c=k}^{p_{\blam}-1}\prod_{j=1}^{a}\prod_{i=1}^{a}\frac{(q\res_{\t^{\blam}}(j)-\eps^{co_{\blam}}\res_{\t^{\blam}}(i))(\res_{\t^{\blam}}(j)-q\eps^{co_{\blam}}\res_{\t^{\blam}}(i))}{(\res_{\t^{\blam}}(j)-\eps^{co_{\blam}}\res_{\t^{\blam}}(i))^{2}}
	\end{aligned}$$
	Since $\eps$ is a primitive $p$-th root of unity, then $\eps^{o_{\blam}}$ is a $p_{\blam}$-th root of unity. Then we can get that
	$$\begin{aligned}
		&\quad
\prod_{c=p_{\blam}-k+1}^{p_{\blam}-1}\prod_{j=1}^{a}\prod_{i=1}^{a}\frac{(q\res_{\t^{\blam}}(j)-\eps^{co_{\blam}}\res_{\t^{\blam}}(i))(\res_{\t^{\blam}}(j)-q\eps^{co_{\blam}}\res_{\t^{\blam}}(i))}{(\res_{\t^{\blam}}(j)-
\eps^{co_{\blam}}\res_{\t^{\blam}}(i))^{2}}\\
&=\prod_{c=1}^{k-1}\prod_{j=1}^{a}\prod_{i=1}^{a}\frac{(q\res_{\t^{\blam}}(j)-\eps^{co_{\blam}}\res_{\t^{\blam}}(i))(\res_{\t^{\blam}}(j)-q\eps^{co_{\blam}}\res_{\t^{\blam}}(i))}{(\res_{\t^{\blam}}(j)-
\eps^{co_{\blam}}\res_{\t^{\blam}}(i))^{2}},
	\end{aligned}$$
	and hence
	$$\begin{aligned}
		\frac{\gamma_{\t^{\blam}\sft{ko_{\blam}}}}{\gamma_{\m_{o_{\blam}}(\t^{\blam}\sft{ko_{\blam}})}}&=\prod_{j=ka+1}^{n}\prod_{i=1}^{a}\frac{(\res_{\t^{\blam}}(j)-q\res_{\t^{\blam}}(i))(q\res_{\t^{\blam}}(j)-
\res_{\t^{\blam}}(i))}{(\res_{\t^{\blam}}(j)-\res_{\t^{\blam}}(i))^{2}}\\		
&=\prod_{c=k}^{p_{\blam}-1}\prod_{j=1}^{a}\prod_{i=1}^{a}\frac{(q\res_{\t^{\blam}}(j)-\eps^{co_{\blam}}\res_{\t^{\blam}}(i))(\res_{\t^{\blam}}(j)-q\eps^{co_{\blam}}\res_{\t^{\blam}}(i))}{(\res_{\t^{\blam}}(j)-
\eps^{co_{\blam}}\res_{\t^{\blam}}(i))^{2}}\\
&=\prod_{c=k}^{p_{\blam}-k}\prod_{j=1}^{a}\prod_{i=1}^{a}\frac{(q\res_{\t^{\blam}}(j)-\eps^{co_{\blam}}\res_{\t^{\blam}}(i))(\res_{\t^{\blam}}(j)-q\eps^{co_{\blam}}\res_{\t^{\blam}}(i))}{(\res_{\t^{\blam}}(j)-
\eps^{co_{\blam}}\res_{\t^{\blam}}(i))^{2}}\\
&\qquad\times\prod_{c=p_{\blam}-k+1}^{p_{\blam}-1}\prod_{j=1}^{a}\prod_{i=1}^{a}\frac{(q\res_{\t^{\blam}}(j)-\eps^{co_{\blam}}\res_{\t^{\blam}}(i))(\res_{\t^{\blam}}(j)-q\eps^{co_{\blam}}\res_{\t^{\blam}}(i))}{(\res_{\t^{\blam}}(j)-
\eps^{co_{\blam}}\res_{\t^{\blam}}(i))^{2}}\\
&=\prod_{c=1}^{p_{\blam}-k}\prod_{j=1}^{a}\prod_{i=1}^{a}\frac{(q\res_{\t^{\blam}}(j)-\eps^{co_{\blam}}\res_{\t^{\blam}}(i))(\res_{\t^{\blam}}(j)-q\eps^{co_{\blam}}\res_{\t^{\blam}}(i))}{(\res_{\t^{\blam}}(j)-
\eps^{co_{\blam}}\res_{\t^{\blam}}(i))^{2}}.
	\end{aligned}$$

For each $1\leq k\leq p_{\blam}-1$, we want to compute the product
	$$\frac{\gamma_{\t^{\blam}\sft{ko_{\blam}}}}{\gamma_{\m_{o_{\blam}}(\t^{\blam}\sft{ko_{\blam}})}}
	\frac{\gamma_{\t^{\blam}\sft{p-ko_{\blam}}}}{\gamma_{\m_{o_{\blam}}(\t^{\blam}\sft{p-ko_{\blam}})}}.$$
Assume that $p_{\blam}$ is odd. In this case, without loss of generality, we can assume that $1\leq k\leq (p_{\blam}-1)/2$. By computation, we can get that
	\begin{equation}\label{sqhlam2}
	\begin{aligned}
	&\quad\, \frac{\gamma_{\t^{\blam}\sft{ko_{\blam}}}}{\gamma_{\m_{o_{\blam}}(\t^{\blam}\sft{ko_{\blam}})}}
	\frac{\gamma_{\t^{\blam}\sft{p-ko_{\blam}}}}{\gamma_{\m_{o_{\blam}}(\t^{\blam}\sft{p-ko_{\blam}})}}\\
	&=\prod_{c=1}^{p_{\blam}-k}\prod_{j=1}^{a}\prod_{i=1}^{a}\frac{(q\res_{\t^{\blam}}(j)-\eps^{co_{\blam}}\res_{\t^{\blam}}(i))(\res_{\t^{\blam}}(j)-q\eps^{co_{\blam}}\res_{\t^{\blam}}(i))}{(\res_{\t^{\blam}}(j)-
\eps^{co_{\blam}}\res_{\t^{\blam}}(i))^{2}}\\
&\qquad\times\prod_{c=p_{\blam}-k}^{p_{\blam}-1}\prod_{j=1}^{a}\prod_{i=1}^{a}\frac{(q\res_{\t^{\blam}}(j)-\eps^{co_{\blam}}\res_{\t^{\blam}}(i))(\res_{\t^{\blam}}(j)-q\eps^{co_{\blam}}\res_{\t^{\blam}}(i))}{(\res_{\t^{\blam}}(j)-
\eps^{co_{\blam}}\res_{\t^{\blam}}(i))^{2}}\\
&=\prod_{c=1}^{p_{\blam}-1}\prod_{j=1}^{a}\prod_{i=1}^{a}\frac{(q\res_{\t^{\blam}}(j)-\eps^{co_{\blam}}\res_{\t^{\blam}}(i))(\res_{\t^{\blam}}(j)-q\eps^{co_{\blam}}\res_{\t^{\blam}}(i))}{(\res_{\t^{\blam}}(j)-
\eps^{co_{\blam}}\res_{\t^{\blam}}(i))^{2}}\\
&\qquad\times\prod_{j=1}^{a}\prod_{i=1}^{a}\frac{(q\res_{\t^{\blam}}(j)-\eps^{p-ko_{\blam}}\res_{\t^{\blam}}(i))(\res_{\t^{\blam}}(j)-q\eps^{p-ko_{\blam}}\res_{\t^{\blam}}(i))}{(\res_{\t^{\blam}}(j)-
\eps^{p-ko_{\blam}}\res_{\t^{\blam}}(i))^{2}}.
\end{aligned}
	\end{equation}
	By (\ref{odddefinition1}),  if $p_{\blam}$ is odd then
	$$h_{\blam,0,1}=\prod_{k=1}^{(p_{\blam}-1)/2}\prod_{i=1}^{a}\prod_{j=1}^{a}\frac{(q\res_{\t^{\blam}}(j)-\varepsilon^{ko_{\blam}}\res_{\t^{\blam}}(i))(\res_{\t^{\blam}}(j)
		-q\varepsilon^{ko_{\blam}}\res_{\t^{\blam}}(i))}{(\res_{\t^{\blam}}(j)-\varepsilon^{ko_{\blam}}\res_{\t^{\blam}}(i))^{2}}.$$
	Then we can deduce that
	$$\frac{\gamma_{\t^{\blam}\sft{ko_{\blam}}}}{\gamma_{\m_{o_{\blam}}(\t^{\blam}\sft{ko_{\blam}})}}
	\frac{\gamma_{\t^{\blam}\sft{p-ko_{\blam}}}}{\gamma_{\m_{o_{\blam}}(\t^{\blam}\sft{p-ko_{\blam}})}}=h_{\blam,0,1}^{2}\prod_{j=1}^{a}\prod_{i=1}^{a}\frac{(q\res_{\t^{\blam}}(j)-
\eps^{p-ko_{\blam}}\res_{\t^{\blam}}(i))(\res_{\t^{\blam}}(j)-q\eps^{p-ko_{\blam}}\res_{\t^{\blam}}(i))}{(\res_{\t^{\blam}}(j)-\eps^{p-ko_{\blam}}\res_{\t^{\blam}}(i))^{2}}.$$
	Since $p_{\blam}$ is odd, we can deduce that
	$$\begin{aligned}
	&\quad \prod_{k=1}^{p_{\blam}-1}\frac{\gamma_{\t^{\blam}\sft{ko_{\blam}}}}{\gamma_{\m_{o_{\blam}}(\t^{\blam}\sft{ko_{\blam}})}}
	=\prod_{k=1}^{(p_{\blam}-1)/2}\frac{\gamma_{\t^{\blam}\sft{ko_{\blam}}}}{\gamma_{\m_{o_{\blam}}(\t^{\blam}\sft{ko_{\blam}})}}
	\frac{\gamma_{\t^{\blam}\sft{p-ko_{\blam}}}}{\gamma_{\m_{o_{\blam}}(\t^{\blam}\sft{p-ko_{\blam}})}}\\
	&=h_{\blam,0,1}^{p_{\blam}-1}\prod_{k=1}^{(p_{\blam}-1)/2}\prod_{j=1}^{a}\prod_{i=1}^{a}\frac{(q\res_{\t^{\blam}}(j)-\eps^{p-ko_{\blam}}\res_{\t^{\blam}}(i))(\res_{\t^{\blam}}(j)-
q\eps^{p-ko_{\blam}}\res_{\t^{\blam}}(i))}{(\res_{\t^{\blam}}(j)-\eps^{p-ko_{\blam}}\res_{\t^{\blam}}(i))^{2}}\\
&=h_{\blam,0,1}^{p_{\blam}-1}\prod_{k=1}^{(p_{\blam}-1)/2}\prod_{j=1}^{a}\prod_{i=1}^{a}\frac{(q\res_{\t^{\blam}}(j)-\eps^{ko_{\blam}}\res_{\t^{\blam}}(i))(\res_{\t^{\blam}}(j)-q\eps^{ko_{\blam}}\res_{\t^{\blam}}(i))}{(\res_{\t^{\blam}}(j)-\eps^{ko_{\blam}}\res_{\t^{\blam}}(i))^{2}}\\
	&=h_{\blam,0,1}^{p_{\blam}}.
	\end{aligned}$$
	This proves the lemma for odd $p_{\blam}$. By the same methods, one can prove the lemma for even $p_{\blam}$.
\end{proof}

\begin{cor}\label{congruence}
Let $\blam\in\P_{r,n}$ and $l_{1},l_{2}\in\Z_{\geq 0}$. For any $l_{1}',l_{2}'\in\Z_{\geq 0}$ such that $l_{1}\equiv l_{1}' \mod p_{\blam}$ and $l_{2}\equiv l_{2}' \mod p_{\blam}$, we have $$h_{\blam,l_{1},l_{2}}=h_{\blam,l_{1}',l_{2}'}.$$
\end{cor}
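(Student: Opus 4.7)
The plan is to reduce the congruence statement to the explicit product formula in Proposition~\ref{prophlam}(1):
\[
h_{\blam,l_{1},l_{2}}=h_{\blam,0,1}^{l_{2}}\prod_{k=0}^{l_{2}-1}\frac{\gamma_{\m_{o_{\blam}}(\tlam\sft{(l_{1}+k)o_{\blam}})}}{\gamma_{\tlam\sft{(l_{1}+k)o_{\blam}}}}.
\]
Once this is in hand, I would verify periodicity separately in each argument modulo $p_{\blam}$.

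For the $l_{1}$ variable I would use that, by the identity $p_{\blam}o_{\blam}=p$ and the convention $\blam^{[t+p]}=\blam^{[t]}$ (so that $\tlam\<z\>=\tlam\<z+p\>$), one has $\tlam\<(l_{1}+k)o_{\blam}\>=\tlam\<(l_{1}'+k)o_{\blam}\>$ whenever $l_{1}\equiv l_{1}'\pmod{p_{\blam}}$. Since the operation $\m_{o_{\blam}}$ depends only on the tableau, every factor in the above product is individually unchanged under $l_{1}\mapsto l_{1}+p_{\blam}$, and by induction under $l_{1}\mapsto l_{1}'$ for any $l_{1}\equiv l_{1}'\pmod{p_{\blam}}$.

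For the $l_{2}$ variable it suffices, by induction, to show the single step $h_{\blam,l_{1},l_{2}+p_{\blam}}=h_{\blam,l_{1},l_{2}}$. Taking the ratio using Proposition~\ref{prophlam}(1) gives
\[
\frac{h_{\blam,l_{1},l_{2}+p_{\blam}}}{h_{\blam,l_{1},l_{2}}}=h_{\blam,0,1}^{p_{\blam}}\prod_{k=l_{2}}^{l_{2}+p_{\blam}-1}\frac{\gamma_{\m_{o_{\blam}}(\tlam\sft{(l_{1}+k)o_{\blam}})}}{\gamma_{\tlam\sft{(l_{1}+k)o_{\blam}}}}.
\]
As $k$ traverses any $p_{\blam}$ consecutive integers, the residues $(l_{1}+k)o_{\blam}\bmod p$ run exactly once through $\{0,o_{\blam},2o_{\blam},\ldots,(p_{\blam}-1)o_{\blam}\}$, so the product equals $\prod_{j=0}^{p_{\blam}-1}\frac{\gamma_{\m_{o_{\blam}}(\tlam\sft{jo_{\blam}})}}{\gamma_{\tlam\sft{jo_{\blam}}}}$. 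Combining with $h_{\blam,0,1}^{p_{\blam}}$ and comparing with Proposition~\ref{prophlam}(1) taken at $(l_{1},l_{2})=(0,p_{\blam})$, the right-hand side equals $h_{\blam,0,p_{\blam}}$, which is $1$ by Lemma~\ref{sqhlam}. This yields $h_{\blam,l_{1},l_{2}+p_{\blam}}=h_{\blam,l_{1},l_{2}}$, and iterating together with the $l_{1}$-periodicity established above proves the corollary.

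The only mildly subtle point is the index-range bookkeeping in the $l_{2}$ step: one must check that the $p_{\blam}$ additional factors introduced when passing from $l_{2}$ to $l_{2}+p_{\blam}$ cover a complete system of residues modulo $p_{\blam}$ (for the multiplier $o_{\blam}$ in the exponent of $\tlam\<\cdot\>$). This is immediate because $\gcd$-considerations play no role: the step-size in $k$ is $1$, so consecutive translates automatically hit every residue exactly once. With this observation the whole argument collapses to Lemma~\ref{sqhlam}, and no new computation with $\gamma$-coefficients is required.
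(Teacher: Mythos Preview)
Your proof is correct and follows essentially the same approach as the paper: both arguments start from the product formula of Proposition~\ref{prophlam}(1), establish $l_{1}$-periodicity directly from $\tlam\sft{(l_{1}+k)o_{\blam}}=\tlam\sft{(l_{1}'+k)o_{\blam}}$, and reduce $l_{2}$-periodicity to Lemma~\ref{sqhlam} by showing that the surplus block of $p_{\blam}$ factors assembles into $h_{\blam,0,p_{\blam}}=1$. The only cosmetic difference is that the paper handles the general jump $l_{2}'=l_{2}+xp_{\blam}$ in one go (obtaining $(h_{\blam,0,p_{\blam}})^{x}$), whereas you iterate the single step $l_{2}\mapsto l_{2}+p_{\blam}$.
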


\begin{proof} We have $\tlam\sft{p_{\blam}o_{\blam}}=\tlam$ and $\tlam\sft{l_{1}o_{\blam}}=\tlam\sft{l_{1}'o_{\blam}}$ if $l_{1}\equiv l_{1}' \mod p_{\blam}$. By Definition \ref{hlaml}, it is straightforward to check that $h_{\blam,l_{1},1}=h_{\blam,l_{1}',1}$ if $l_{1}\equiv l_{1}' \mod p_{\blam}$.
		
	By the first statement of Proposition \ref{prophlam}, we have that
	$$h_{\blam,l_{1}',l_{2}'}=h_{\blam,0,1}^{l_{2}'}\prod_{k=0}^{l_{2}'-1}\frac{\gamma_{\m_{o_{\blam}}(\tlam\sft{(l_{1}'+k)o_{\blam}})}}{\gamma_{\tlam\sft{(l_{1}'+k)o_{\blam}}}}.$$
	Since $l_{1}\equiv l_{1}'\mod p_{\blam}$, we have $l_{1}+k\equiv l_{1}'+k, \forall\, 0\leq k\leq l_{2}'-1$. Then we can get that $\tlam\sft{(l_{1}+k)o_{\blam}}=\tlam\sft{(l_{1}'+k)o_{\blam}},\forall\, 0\leq k\leq l_{2}'-1$.
Without loss of generality, we assume $l'_2\geq l_2$. Hence we can get that
	$$\begin{aligned}
h_{\blam,l_{1}',l_{2}'}&=h_{\blam,0,1}^{l_{2}'}\prod_{k=0}^{l_{2}'-1}\frac{\gamma_{\m_{o_{\blam}}(\tlam\sft{(l_{1}'+k)o_{\blam}})}}{\gamma_{\tlam\sft{(l_{1}'+k)o_{\blam}}}}=h_{\blam,0,1}^{l_{2}'}\prod_{k=0}^{l_{2}'-1}\frac{\gamma_{\m_{o_{\blam}}(\tlam\sft{(l_{1}+k)o_{\blam}})}}{\gamma_{\tlam\sft{(l_{1}+k)o_{\blam}}}}\\
		&=h_{\blam,0,1}^{l_{2}}\prod_{k=0}^{l_{2}-1}\frac{\gamma_{\m_{o_{\blam}}(\tlam\sft{(l_{1}+k)o_{\blam}})}}{\gamma_{\tlam\sft{(l_{1}+k)o_{\blam}}}}\times h_{\blam,0,1}^{l_{2}'-l_{2}}\prod_{k=l_{2}}^{l_{2}'-1}\frac{\gamma_{\m_{o_{\blam}}(\tlam\sft{(l_{1}+k)o_{\blam}})}}{\gamma_{\tlam\sft{(l_{1}+k)o_{\blam}}}}\\
		&=h_{\blam,l_{1},l_{2}}\times h_{\blam,0,1}^{l_{2}'-l_{2}}\prod_{k=l_{2}}^{l_{2}'-1}\frac{\gamma_{\m_{o_{\blam}}(\tlam\sft{(l_{1}+k)o_{\blam}})}}{\gamma_{\tlam\sft{(l_{1}+k)o_{\blam}}}}.
	\end{aligned}$$
	Since $l_{2}\equiv l_{2}'\mod p_{\blam}$, we have $l_{2}'=l_{2}+xp_{\blam}$ for some $x\in\Z_{\geq 0}$. By computation, we can get that
	$$\begin{aligned}
	&\,\,\quad h_{\blam,0,1}^{l_{2}'-l_{2}}\prod_{k=l_{2}}^{l_{2}'-1}\frac{\gamma_{\m_{o_{\blam}}(\tlam\sft{(l_{1}+k)o_{\blam}})}}{\gamma_{\tlam\sft{(l_{1}+k)o_{\blam}}}}
	=h_{\blam,0,1}^{xp_{\blam}}\prod_{k=l_{2}}^{l_{2}+xp_{\blam}-1}\frac{\gamma_{\m_{o_{\blam}}(\tlam\sft{(l_{1}+k)o_{\blam}})}}{\gamma_{\tlam\sft{(l_{1}+k)o_{\blam}}}}\\
	&=h_{\blam,0,1}^{xp_{\blam}}\prod_{c=0}^{x-1}\prod_{k=l_{2}+cp_{\blam}}^{l_{2}+(c+1)p_{\blam}-1}\frac{\gamma_{\m_{o_{\blam}}(\tlam\sft{(l_{1}+k)o_{\blam}})}}{\gamma_{\tlam\sft{(l_{1}+k)o_{\blam}}}}
	=h_{\blam,0,1}^{xp_{\blam}}\prod_{c=0}^{x-1}\prod_{k=cp_{\blam}}^{(c+1)p_{\blam}-1}\frac{\gamma_{\m_{o_{\blam}}(\tlam\sft{(l_{1}+l_{2}+k)o_{\blam}})}}{\gamma_{\tlam\sft{(l_{1}+l_{2}+k)o_{\blam}}}}\\
	&=h_{\blam,0,1}^{xp_{\blam}}\bigg(\prod_{k=1}^{p_{\blam}-1}\frac{\gamma_{\m_{o_{\blam}}(\t^{\blam}\sft{ko_{\blam}})}}{\gamma_{\t^{\blam}\sft{ko_{\blam}}}}\bigg)^{x}.
	\end{aligned}$$
	By Lemma \ref{sqhlam}, we know that $h_{\blam,0,p_{\blam}}=1$, that is
	$$h_{\blam,0,p_{\blam}}=h_{\blam,0,1}^{p_{\blam}}\prod_{k=1}^{p_{\blam}-1}\frac{\gamma_{\m_{o_{\blam}}(\t^{\blam}\sft{ko_{\blam}})}}{\gamma_{\t^{\blam}\sft{ko_{\blam}}}}=1.$$
	It follows that
	$$h_{\blam,0,1}^{l_{2}'-l_{2}}\prod_{k=l_{2}}^{l_{2}'-1}\frac{\gamma_{\m_{o_{\blam}}(\tlam\sft{(l_{1}+k)o_{\blam}})}}{\gamma_{\tlam\sft{(l_{1}+k)o_{\blam}}}}=1$$
	and hence $h_{\blam,l_{1},l_{2}}=h_{\blam,l_{1}',l_{2}'}$.
\end{proof}

\begin{rem}
	By the aforementioned Corollary, the elements $h_{\blam,l_{1},l_{2}}$ can be defined for any $l_{1},l_{2}\in\Z/p_{\blam}\Z$. Henceforth we shall identify $\Z/p_{\blam}\Z$ with $\{0,1,\cdots,p_\blam-1\}$, and use the notation $l_{1},l_{2}\in\Z/p_{\blam}\Z$ to mean $l_{1},l_{2}\in\{0,1,\cdots,p_\blam-1\}$.
\end{rem}

\begin{lem}\label{hlaml1l2}
	Suppose that $\blam\in\P_{r,n}$. Let $ l_{1},l_{2}\in\Z/p_\blam\Z$. Then we have
	$$h_{\blam,0,l_{1}+l_{2}}=h_{\blam,0,l_{1}}h_{\blam,l_{1},l_{2}}.$$
\end{lem}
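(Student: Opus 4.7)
The plan is to compute both sides directly using the closed formula in Proposition \ref{prophlam}(1), treating the easy case $l_1+l_2<p_\blam$ first and then reducing the wrap-around case to it via the normalization $h_{\blam,0,p_\blam}=1$ supplied by Lemma \ref{sqhlam}.

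First I would combine the formulas
$$
h_{\blam,0,l_{1}}=h_{\blam,0,1}^{l_{1}}\prod_{k=0}^{l_{1}-1}\frac{\gamma_{\m_{o_{\blam}}(\tlam\sft{ko_{\blam}})}}{\gamma_{\tlam\sft{ko_{\blam}}}},\qquad
h_{\blam,l_{1},l_{2}}=h_{\blam,0,1}^{l_{2}}\prod_{k=0}^{l_{2}-1}\frac{\gamma_{\m_{o_{\blam}}(\tlam\sft{(l_{1}+k)o_{\blam}})}}{\gamma_{\tlam\sft{(l_{1}+k)o_{\blam}}}}
$$
from Proposition \ref{prophlam}(1). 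Reindexing the second product by $j=l_1+k$ and splicing the two ranges together, the product $h_{\blam,0,l_{1}}h_{\blam,l_{1},l_{2}}$ becomes
$$
h_{\blam,0,1}^{l_{1}+l_{2}}\prod_{k=0}^{l_{1}+l_{2}-1}\frac{\gamma_{\m_{o_{\blam}}(\tlam\sft{ko_{\blam}})}}{\gamma_{\tlam\sft{ko_{\blam}}}}.
$$

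In the easy case $l_1+l_2<p_\blam$, the right-hand side is exactly $h_{\blam,0,l_1+l_2}$ by another application of Proposition \ref{prophlam}(1), and we are done. In the remaining case $l_1+l_2\geq p_\blam$, set $m:=l_1+l_2-p_\blam$, so that $0\leq m<p_\blam$ and $l_1+l_2\equiv m\pmod{p_\blam}$. I would split the index range $\{0,1,\dots,l_1+l_2-1\}$ into the two blocks $\{0,\dots,p_\blam-1\}$ and $\{p_\blam,\dots,p_\blam+m-1\}$. The first block contributes
$$
h_{\blam,0,1}^{p_\blam}\prod_{k=0}^{p_\blam-1}\frac{\gamma_{\m_{o_{\blam}}(\tlam\sft{ko_{\blam}})}}{\gamma_{\tlam\sft{ko_{\blam}}}}=h_{\blam,0,p_\blam},
$$
which equals $1$ by Lemma \ref{sqhlam}. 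For the second block, the periodicity $\tlam\sft{(p_\blam+j)o_\blam}=\tlam\sft{jo_\blam}$ (and hence the corresponding equality for $\m_{o_\blam}$) allows me to reindex $k=p_\blam+j$ with $j=0,\dots,m-1$, turning the residual factor into $h_{\blam,0,m}$. Applying Corollary \ref{congruence} to identify $h_{\blam,0,m}=h_{\blam,0,l_1+l_2}$ (the subscript is read in $\Z/p_\blam\Z$), the identity follows.

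The only mildly delicate point will be the bookkeeping in the wrap-around case: making sure that the periodicity of $\tlam\sft{ko_\blam}$ in $k$ modulo $p_\blam$ matches up with the exponents of $h_{\blam,0,1}$ so that the excess factor collapses exactly to $h_{\blam,0,p_\blam}=1$. Once this is verified the whole argument is essentially bookkeeping; the nontrivial input is Lemma \ref{sqhlam}.
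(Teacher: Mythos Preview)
Your proof is correct and follows essentially the same route as the paper's: both arguments expand via the closed formula of Proposition~\ref{prophlam}(1), splice the two index ranges together, and in the wrap-around case peel off a full block of length $p_\blam$ whose contribution is $h_{\blam,0,p_\blam}=1$ by Lemma~\ref{sqhlam}. The only cosmetic difference is direction: the paper starts from $h_{\blam,0,l_1+l_2}$ and factors it, whereas you start from the product $h_{\blam,0,l_1}h_{\blam,l_1,l_2}$ and simplify; your version is arguably tidier and makes the use of Corollary~\ref{congruence} in the wrap-around case explicit.
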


\begin{proof}
		If $l_{1}+l_{2}\leq p_{\blam}$, by the first statement of Proposition \ref{prophlam}, we can get that
	$$\begin{aligned}
		h_{\blam,0,l_{1}+l_{2}}&=h_{\blam,0,1}^{l_{1}+l_{2}}\prod_{k=0}^{l_{1}+l_{2}-1}\frac{\gamma_{\m_{o_{\blam}}(\tlam\sft{ko_{\blam}})}}{\gamma_{\tlam\sft{ko_{\blam}}}}\\
		&=h_{\blam,0,1}^{l_{1}}\prod_{k=1}^{l_{1}-1}\frac{\gamma_{\m_{o_{\blam}}(\t^{\blam}\sft{ko_{\blam}})}}{\gamma_{\t^{\blam}\sft{ko_{\blam}}}}\times
		h_{\blam,0,1}^{l_{2}}\prod_{k=l_{1}}^{l_{1}+l_{2}-1}\frac{\gamma_{\m_{o_{\blam}}(\t^{\blam}\sft{ko_{\blam}})}}{\gamma_{\t^{\blam}\sft{ko_{\blam}}}}\\
		&=h_{\blam,0,l_{1}}\times h_{\blam,0,1}^{l_{2}}\prod_{k=0}^{l_{2}-1}\frac{\gamma_{\m_{o_{\blam}}(\t^{\blam}\sft{{l_{1}+k}o_{\blam}})}}{\gamma_{\t^{\blam}\sft{(l_{1}+k)o_{\blam}}}}\\
		&=h_{\blam,0,l_{1}}h_{\blam,l_{1},l_{2}}.
	\end{aligned}$$
	
	If $l_{1}+l_{2}>p_{\blam}$, there is a unique $l\in\Z/p_{\blam}\Z$ such that $l_{1}+l_{2}=l+p_{\blam}$. Combining the first statement of Proposition \ref{prophlam} and Lemma \ref{sqhlam}, we can get that
	$$\begin{aligned}
		h_{\blam,0,l_{1}+l_{2}}&=h_{\blam,0,p_{\blam}}h_{\blam,0,l}=h_{\blam,0,p_{\blam}}h_{\blam,0,1}^{l}\prod_{k=0}^{l-1}\frac{\gamma_{\m_{o_{\blam}}(\tlam\sft{ko_{\blam}})}}{\gamma_{\tlam\sft{ko_{\blam}}}}\\
		&=h_{\blam,0,1}^{p_{\blam}}\prod_{k=1}^{p_{\blam}-1}\frac{\gamma_{\m_{o_{\blam}}(\t^{\blam}\sft{ko_{\blam}})}}{\gamma_{\t^{\blam}\sft{ko_{\blam}}}}\times
		h_{\blam,0,1}^{l}\prod_{k=l}^{l-1}\frac{\gamma_{\m_{o_{\blam}}(\t^{\blam}\sft{ko_{\blam}})}}{\gamma_{\t^{\blam}\sft{ko_{\blam}}}}\\
		&=h_{\blam,0,1}^{l_{1}}\prod_{k=1}^{l_{1}-1}\frac{\gamma_{\m_{o_{\blam}}(\t^{\blam}\sft{ko_{\blam}})}}{\gamma_{\t^{\blam}\sft{ko_{\blam}}}}\times h_{\blam,0,1}^{p_{\blam}-l_{1}}\prod_{k=l_{1}}^{p_{\blam}-1}\frac{\gamma_{\m_{o_{\blam}}(\t^{\blam}\sft{ko_{\blam}})}}{\gamma_{\t^{\blam}\sft{ko_{\blam}}}}\times
		h_{\blam,0,1}^{l}\prod_{k=1}^{l-1}\frac{\gamma_{\m_{o_{\blam}}(\t^{\blam}\sft{ko_{\blam}})}}{\gamma_{\t^{\blam}\sft{ko_{\blam}}}}\\
		&=h_{\blam,0,l_{1}}\times h_{\blam,0,1}^{l_{2}}\prod_{k=0}^{l_{2}-1}\frac{\gamma_{\m_{o_{\blam}}(\t^{\blam}\sft{{l_{1}+k}o_{\blam}})}}{\gamma_{\t^{\blam}\sft{(l_{1}+k)o_{\blam}}}}\quad
\text{(by the proof in last paragraph and $p_\blam-l_1+l=l_2$)}\\
		&=h_{\blam,0,l_{1}}h_{\blam,l_{1},l_{2}}.
	\end{aligned}$$
	This proves the lemma.
\end{proof}

\begin{lem}\label{hlamQuo}
	Suppose that $\blam\in\P_{r,n}$. Let $l_{1},l_{2}\in\Z/p_{\blam}\Z$. Then we have that
	$$h_{\blam,l_{1},l_{2}}/h_{\blam,0,l_{2}}=\frac{\gamma_{\m_{l_{2}o_{\blam}}(\tlam\sft{l_{1}o_{\blam}})}}{\gamma_{\tlam\sft{l_{1}o_{\blam}}}}.$$
\end{lem}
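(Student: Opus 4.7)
We prove the identity by induction on $l_2 \in \{0,1,\ldots,p_\blam-1\}$. The base case $l_2=0$ is immediate: by Definition~\ref{hlaml} both $h_{\blam,l_1,0}$ and $h_{\blam,0,0}$ equal $1$, and $\m_0(\tlam\sft{l_1o_\blam}) = \tlam\sft{l_1o_\blam}$ by Remark~\ref{rmkmid}, so both sides are $1$.

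For the inductive step, write $\nu_k := \tlam\sft{ko_\blam}$. Apply the recursive formula~(\ref{hlaml2}) to both $h_{\blam,l_1,l_2}$ and $h_{\blam,0,l_2}$, divide, and substitute the inductive hypothesis $h_{\blam,l_1,l_2-1}/h_{\blam,0,l_2-1} = \gamma_{\m_{(l_2-1)o_\blam}(\nu_{l_1})}/\gamma_{\nu_{l_1}}$. The identity to prove then reduces to the purely rational relation
$$(\ast) \qquad \frac{\gamma_{\m_{l_2o_\blam}(\nu_{l_1})}}{\gamma_{\m_{(l_2-1)o_\blam}(\nu_{l_1})}} \cdot \frac{\gamma_{\m_{o_\blam}(\nu_{l_2-1})}}{\gamma_{\nu_{l_2-1}}} \;=\; \frac{\gamma_{\m_{o_\blam}(\nu_{l_1+l_2-1})}}{\gamma_{\nu_{l_1+l_2-1}}}.$$
As a sanity check, squaring both sides of the lemma and applying Proposition~\ref{prophlam}(2) together with Lemma~\ref{gtsft} (with $\t = \nu_{l_1}$ and $k = l_2 o_\blam$) shows that both $(h_{\blam,l_1,l_2}/h_{\blam,0,l_2})^2$ and $(\gamma_{\m_{l_2 o_\blam}(\nu_{l_1})}/\gamma_{\nu_{l_1}})^2$ evaluate to $\gamma_{\tlam}\gamma_{\nu_{l_1+l_2}}/(\gamma_{\nu_{l_1}}\gamma_{\nu_{l_2}})$, so the lemma already holds up to a sign; the role of $(\ast)$ is to pin the sign down.

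To establish $(\ast)$, set $a := \sum_{i=1}^{o_\blam}|\blam^{[i]}|$. Each ratio of the form $\gamma_{\nu_k}/\gamma_{\m_{m o_\blam}(\nu_k)}$ can be expanded as a double product of rational expressions in the residues $\res_{\tlam}(i), \res_{\tlam}(j)$, in the manner of~(\ref{expressProd1}) and~(\ref{expressProd2}): fix a reduced expression for the associated $d$-permutation $w_{ka,n-ka}$, factor it through the Young subgroup $\Sym_{(ma,n-ma)}$, and apply Lemma~\ref{gammacoeff} (the $B_i(\cdot)$ formula) step by step. The $o_\blam$-periodicity $\res_{\tlam}(ca+j) = \varepsilon^{co_\blam}\res_{\tlam}(j)$ from~(\ref{epsres}) then allows one to translate index ranges by multiples of $a$ and match the two sides of $(\ast)$ factor by factor. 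The main obstacle is precisely this combinatorial bookkeeping: one has to identify how the decompositions of $d(\nu_{l_1})$ relative to $\Sym_{(l_2a,n-l_2a)}$ and $\Sym_{((l_2-1)a,n-(l_2-1)a)}$ differ, pinpoint the elementary transpositions that contribute to $\gamma_{\m_{l_2 o_\blam}(\nu_{l_1})}/\gamma_{\m_{(l_2-1)o_\blam}(\nu_{l_1})}$, and reconcile these contributions with the residue factors appearing on the right-hand side of $(\ast)$ via the $\varepsilon^{o_\blam}$-periodicity.
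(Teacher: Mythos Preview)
Your inductive reduction to the identity $(\ast)$ is correct, and $(\ast)$ is indeed the heart of the matter. However, you do not actually prove $(\ast)$: you outline a direct residue-product computation and then concede that ``the main obstacle is precisely this combinatorial bookkeeping'' without carrying it out. As written, this is a sketch rather than a proof.

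The point you are missing is that $(\ast)$ is a one-line consequence of the composition property of the $R$-coefficients, Proposition~\ref{propRstk}(2). Apply it to $R_{\nu_{l_1}\tlam,\,l_2o_\blam} = R_{\nu_{l_1}\tlam,\,(l_2-1)o_\blam}\cdot R_{\nu_{l_1+l_2-1}\,\nu_{l_2-1},\,o_\blam}$ and expand each factor via the formula $R_{\s\t,k}=\gamma_\s\gamma_{\m_k(\t)}/(\gamma_{\m_k(\s)}\gamma_{\t\sft{k}})$ from Lemma~\ref{snphit}, using $\m_k(\tlam)=\tlam$. After cancelling common factors this is exactly $(\ast)$. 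This is essentially the mechanism the paper uses: rather than inducting, it writes $h_{\blam,l_1,l_2}$ and $h_{\blam,0,l_2}$ as products via Proposition~\ref{prophlam}(1), reinterprets the ratios $\gamma_{\m_{o_\blam}(\cdot)}/\gamma_{(\cdot)}$ as $r$-coefficients, and telescopes the resulting products using Proposition~\ref{propRstk}(2) together with~(\ref{claim1}). Your inductive framework is a perfectly valid alternative organization, but the crucial step---closing the sign ambiguity in $(\ast)$---still requires the $R$-coefficient machinery rather than a raw residue expansion.
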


\begin{proof}
	By the first statement of Proposition \ref{prophlam}, we have that
	$$h_{\blam,l_{1},l_{2}}=h_{\blam,0,1}^{l_{2}}\prod_{k=0}^{l_{2}-1}\frac{\gamma_{\m_{o_{\blam}}(\tlam\sft{(l_{1}+k)o_{\blam}})}}{\gamma_{\tlam\sft{(l_{1}+k)o_{\blam}}}},\quad
h_{\blam,0,l_{2}}=h_{\blam,0,1}^{l_{2}}\prod_{k=1}^{l_{2}-1}\frac{\gamma_{\m_{o_{\blam}}(\t^{\blam}\sft{ko_{\blam}})}}{\gamma_{\t^{\blam}\sft{ko_{\blam}}}}.$$
	By Definition \ref{2dfns}, we can deduce that
	$$h_{\blam,l_{1},l_{2}}=h_{\blam,0,1}^{l_{2}}\prod_{k=0}^{l_{2}-1}r_{\tlam\sft{(l_{1}+k)o_{\blam}},o_{\blam}}^{-1},\quad
h_{\blam,0,l_{2}}=h_{\blam,0,1}^{l_{2}}\prod_{k=1}^{l_{2}-1}r_{\tlam\sft{ko_{\blam}},o_{\blam}}^{-1}.$$
	
	Note that $\m_{x}(\tlam)=\tlam, \forall x\in\Z/p\Z$. By the second statement of Proposition \ref{propRstk}, we have that
	$$R_{\tlam\sft{o_{\blam}}\tlam, (l_{2}-1)o_{\blam}}=\prod_{k=1}^{l_{2}-1}R_{\tlam\sft{ko_{\blam}}\tlam\sft{(k-1)o_{\blam}}, o_{\blam}}.$$
	Combining this with Lemma \ref{snphit}, we can deduce that
	$$r_{\tlam\sft{o_{\blam}},(l_{2}-1)o_{\blam}}\frac{\gamma_{\tlam}}{\gamma_{\tlam\sft{(l_{2}-1)o_{\blam}}}}=\prod_{k=1}^{l_{2}-1}r_{\tlam\sft{ko_{\blam}},
o_{\blam}}\frac{\gamma_{\m_{o_{\blam}}(\tlam\sft{(k-1)o_{\blam}})}}{\gamma_{\tlam\sft{ko_{\blam}}}}.$$
	It follows that
	\begin{equation}\label{rlam1}
		\prod_{k=1}^{l_{2}-1}r_{\tlam\sft{ko_{\blam}}, o_{\blam}}=r_{\tlam\sft{o_{\blam}},(l_{2}-1)o_{\blam}}\prod_{k=1}^{l_{2}-2}\frac{\gamma_{\tlam\sft{ko_{\blam}}}}{\gamma_{\m_{o_{\blam}}(\tlam\sft{(k-1)o_{\blam}})}}.
	\end{equation}
	
	Similarly, by the second statement of Proposition \ref{propRstk}, we have $$R_{\tlam\sft{l_{1}o_{\blam}}\tlam, l_{2}o_{\blam}}=\prod_{k=0}^{l_{2}-1}R_{\tlam\sft{(l_{1}+k)o_{\blam}}\tlam\sft{ko_{\blam}}, o_{\blam}},$$
that is
	\begin{equation}\label{rlam2}
		\prod_{k=0}^{l_{2}-1}r_{\tlam\sft{(l_{1}+k)o_{\blam}},
o_{\blam}}=r_{\tlam\sft{l_{1}o_{\blam}},l_{2}o_{\blam}}\prod_{k=1}^{l_{2}-1}\frac{\gamma_{\tlam\sft{ko_{\blam}}}}{\gamma_{\m_{o_{\blam}}(\tlam\sft{(k-1)o_{\blam}})}}.
	\end{equation}
	Henceforth, combining (\ref{rlam1}) and (\ref{rlam2}), we have that
	
$$h_{\blam,l_{1},l_{2}}/h_{\blam,0,l_{2}}=\frac{r_{\tlam\sft{o_{\blam}},(l_{2}-1)o_{\blam}}}{r_{\tlam\sft{l_{1}o_{\blam}},l_{2}o_{\blam}}}\frac{\gamma_{\m_{o_{\blam}}(\tlam\sft{(l_{2}-1)o_{\blam}})}}{\gamma_{\tlam\sft{(l_{2}-1)o_{\blam}}}}.$$
	By Definition \ref{2dfns} and (\ref{claim1}), we have that
	$$r_{\tlam\sft{o_{\blam}},(l_{2}-1)o_{\blam}}=\frac{\gamma_{\tlam\sft{o_{\blam}}}}{\gamma_{\m_{(l_{2}-1)o_{\blam}}(\tlam\sft{o_{\blam}})}}
	=\frac{\gamma_{\tlam\sft{(l_{2}-1)o_{\blam}}}}{\gamma_{\m_{o_{\blam}}(\tlam\sft{(l_{2}-1)o_{\blam}})}}, \quad
r_{\tlam\sft{l_{1}o_{\blam}},l_{2}o_{\blam}}=\frac{\gamma_{\tlam\sft{l_{1}o_{\blam}}}}{\gamma_{\m_{l_{2}o_{\blam}}(\tlam\sft{l_{1}o_{\blam}})}}.$$
	Then we can get that
	$$h_{\blam,l_{1},l_{2}}/h_{\blam,0,l_{2}}=\frac{\gamma_{\m_{l_{2}o_{\blam}}(\tlam\sft{l_{1}o_{\blam}})}}{\gamma_{\tlam\sft{l_{1}o_{\blam}}}}.$$
This proves the lemma.
\end{proof}

Recall that $\sim_{\sigma}$ is the equivalence relation on $\P_{r,n}$, which is defined in Section 2. $\P_{r,n}^{\sigma}$ is the set of equivalence classes. Then $\blam\in\P_{r,n}^{\sigma}$ means that $\blam\in\P_{r,n}$ is
a fixed representative of an equivalence class. By Remark \ref{rmkmid}, for any standard tableau $\t$, we can define $\m_{k}(\t)$ for any integers $k\in\Z_{\geq 0}$.

\begin{dfn}\label{sqrtt}
Suppose that $\blam\in\P_{r,n}^{\sigma}$ and $\t\in\Std(\blam)$ such that $\t^{-1}(1)\in\big(\blam^{[1]},\cdots,\blam^{[o_{\blam}]}\big)$. For any integers $l,l_{1},l_{2}\in\Z_{\geq 0}$, we define
$$h_{\t}^{\sft{l}}:=h_{\blam,0,l}\frac{\gamma_{\m_{lo_{\blam}}(\t)}}{\gamma_{\t}}\in K^{\times},\quad
h_{\t\sft{l_{1}o_{\blam}}}^{\sft{l_{2}}}:=h_{\t}^{\sft{l_{2}}}\frac{\gamma_{\t}\gamma_{\m_{l_{1}o_{\blam}}(\t\sft{l_{2}o_{\blam}})}}{\gamma_{\m_{l_{1}o_{\blam}}(\t)}\gamma_{\t\sft{l_{2}o_{\blam}}}}\in K^{\times}.$$
\end{dfn}
In other words, we have give the definition of $h_\t^{\<l\>}$ for any $\t\in\Std(\blam)$, $\blam\in\P_{r,n}^{\sigma}$ and integer $l\in\Z_{\geq 0}$, without requiring $\t^{-1}(1)\in\big(\blam^{[1]},\cdots,\blam^{[o_{\blam}]}\big)$.

\begin{rem}
	By Remark \ref{rmkmid}, if non-negative integers $k\equiv k'\mod p$ then $\m_{k}(\t)=\m_{k'}(\t)$ for any standard tableau $\t$. Henceforth, if $\t\in\Std(\blam)$ and $l\equiv l'\mod p_{\blam}$, then $\m_{lo_{\blam}}(\t)=\m_{l'o_{\blam}}(\t)$. It is straightforward to check that $h_{\t}^{\sft{l}}=h_{\t}^{\sft{l'}}$. Similarly, it is easy to check that $h_{\t\sft{l_{1}'o_{\blam}}}^{\sft{l_{2}'}}=h_{\t\sft{l_{1}o_{\blam}}}^{\sft{l_{2}}}$ if $l_{1}\equiv l_{1}'\mod p_{\blam}$ and $l_{2}\equiv l_{2}'\mod p_{\blam}$. Hence, the elements $h_{\t\sft{l_{1}o_{\blam}}}^{\sft{l_{2}}}$ can be defined for any $l_{1},l_{2}\in\Z/p_{\blam}\Z$.
\end{rem}

\begin{cor}\label{plamht}
	Suppose that $\blam\in\P_{r,n}^{\sigma}$ and $\t\in\Std(\blam)$ such that $\t^{-1}(1)\in\big(\blam^{[1]},\cdots,\blam^{[o_{\blam}]}\big)$. For any $l\in\Z_{\geq 0}$, we have
	$$h_{\t\sft{lo_{\blam}}}^{p_{\blam}}=1.$$
\end{cor}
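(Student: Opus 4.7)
The plan is to unfold the iterated definition of $h_{\t\sft{lo_\blam}}^{\sft{p_\blam}}$ (reading the exponent $p_\blam$ as $\sft{p_\blam}$ in line with Definition~\ref{sqrtt}) and observe that every factor collapses to $1$ thanks to the periodicity built into $\sft{\cdot}$ and $\m_k$ together with the already-proved identity $h_{\blam,0,p_\blam}=1$ of Lemma~\ref{sqhlam}. The argument is essentially bookkeeping once those two inputs are in hand.

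First I would record three trivial but crucial consequences of the conventions: since $p_\blam o_\blam = p$, the periodicity in (\ref{langlerangle}) gives $\t\sft{p_\blam o_\blam}=\t\sft{p}=\t$; Remark~\ref{rmkmid} gives $\m_{p_\blam o_\blam}(\t)=\m_{p}(\t)=\t$ for every standard tableau $\t$; and Lemma~\ref{sqhlam} supplies $h_{\blam,0,p_\blam}=1$. Applying the first equation of Definition~\ref{sqrtt} with $l=p_\blam$ (valid because $\t^{-1}(1)\in(\blam^{[1]},\ldots,\blam^{[o_\blam]})$, which is the standing hypothesis of the corollary) then yields
$$h_\t^{\sft{p_\blam}}=h_{\blam,0,p_\blam}\,\frac{\gamma_{\m_{p_\blam o_\blam}(\t)}}{\gamma_\t}=1\cdot\frac{\gamma_\t}{\gamma_\t}=1.$$

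Plugging $l_1=l$ and $l_2=p_\blam$ into the second equation of Definition~\ref{sqrtt}, and using $\t\sft{p_\blam o_\blam}=\t$ together with $\m_{lo_\blam}(\t\sft{p_\blam o_\blam})=\m_{lo_\blam}(\t)$, I obtain
$$h_{\t\sft{lo_\blam}}^{\sft{p_\blam}}=h_\t^{\sft{p_\blam}}\cdot\frac{\gamma_\t\,\gamma_{\m_{lo_\blam}(\t\sft{p_\blam o_\blam})}}{\gamma_{\m_{lo_\blam}(\t)}\,\gamma_{\t\sft{p_\blam o_\blam}}}=1\cdot\frac{\gamma_\t\,\gamma_{\m_{lo_\blam}(\t)}}{\gamma_{\m_{lo_\blam}(\t)}\,\gamma_\t}=1,$$
which is the desired identity.

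There is essentially no mathematical obstacle here: the genuinely non-trivial content — namely that the coefficient $h_{\blam,0,1}^{p_\blam}$ exactly cancels the product $\prod_{k=1}^{p_\blam-1}\gamma_{\m_{o_\blam}(\tlam\sft{ko_\blam})}/\gamma_{\tlam\sft{ko_\blam}}$ — has already been carried out in Lemma~\ref{sqhlam}. The only thing to be careful about is tracking which of the many ``$h$''-symbols is being unfolded at each stage and confirming that the wraparound $\sft{p}=\id$ lines up with the wraparound $\m_p=\id$; both are granted by Remark~\ref{rmkmid} and (\ref{langlerangle}).
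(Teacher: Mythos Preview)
Your proof is correct and follows essentially the same route as the paper's own argument: both first show $h_\t^{\sft{p_\blam}}=1$ by combining $h_{\blam,0,p_\blam}=1$ from Lemma~\ref{sqhlam} with $\m_{p_\blam o_\blam}(\t)=\t$, and then unfold the second formula of Definition~\ref{sqrtt} using $\t\sft{p_\blam o_\blam}=\t$ to obtain the general case. Your interpretation of the exponent as $\sft{p_\blam}$ is also the intended one.
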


\begin{proof}
	By Lemma \ref{sqhlam}, we know that $h_{\blam,0,p_{\blam}}=1$. Note that $\m_{p_{\blam}o_{\blam}}(\t)=\m_{p}(\t)=\t$. We can deduce that
	$$h_{\t}^{\sft{p_{\blam}}}=h_{\blam,0,p_{\blam}}\frac{\gamma_{\m_{p_{\blam}o_{\blam}}(\t)}}{\gamma_{\t}}=1.$$
	Similarly, we can get that
	$$h_{\t\sft{lo_{\blam}}}^{\sft{p_{\blam}}}:=h_{\t}^{\sft{p_{\blam}}}\frac{\gamma_{\t}\gamma_{\m_{lo_{\blam}}(\t\sft{p_{\blam}o_{\blam}})}}{\gamma_{\m_{lo_{\blam}}(\t)}\gamma_{\t\sft{p_{\blam}o_{\blam}}}}=1.$$
This proves the corollary.
\end{proof}

\begin{lem}\label{squareht}
	Suppose that $\blam\in\P_{r,n}^{\sigma}$ and $\t\in\Std(\blam)$ with $1\in\big(\t^{[1]},\cdots,\t^{[o_{\blam}]}\big)$. For any $l, l_{1}, l_{2}\in\Z/p_{\blam}\Z$, we have
	$$(h_{\t}^{\sft{l}})^{2}=\frac{\gamma_{\t\sft{lo_{\blam}}}}{\gamma_{\t}},\quad (h_{\t\sft{l_{1}o_{\blam}}}^{\sft{l_{2}}})^{2}=\frac{\gamma_{\t\sft{(l_{1}+l_{2})o_{\blam}}}}{\gamma_{\t\sft{l_{1}o_{\blam}}}}.$$
\end{lem}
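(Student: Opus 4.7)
The plan is to prove both equalities by direct substitution using the definition of $h_\t^{\<l\>}$ together with Proposition \ref{prophlam}(2) and Lemma \ref{gtsft}. The first equality is essentially immediate from the two ingredients, while the second reduces to the first plus another application of Lemma \ref{gtsft} to a shifted tableau.

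For the first equality, I would start from Definition \ref{sqrtt} to get
\[
(h_\t^{\<l\>})^2 = h_{\blam,0,l}^2\cdot\bigl(\gamma_{\m_{lo_\blam}(\t)}/\gamma_\t\bigr)^2,
\]
then substitute $h_{\blam,0,l}^2 = \gamma_{\t^\blam\<lo_\blam\>}/\gamma_{\t^\blam}$ from Proposition \ref{prophlam}(2), and finally invoke Lemma \ref{gtsft} with $k=lo_\blam$ rearranged as
\[
\gamma_{\t\<lo_\blam\>}/\gamma_\t = \bigl(\gamma_{\t^\blam\<lo_\blam\>}/\gamma_{\t^\blam}\bigr)\cdot\bigl(\gamma_{\m_{lo_\blam}(\t)}/\gamma_\t\bigr)^2,
\]
which delivers the claim. (The assumption $1\in(\t^{[1]},\dots,\t^{[o_\blam]})$ is only used to make $h_\t^{\<l\>}$ well-defined via Definition \ref{sqrtt}; the computation itself does not depend on it.)

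For the second equality, I would expand using Definition \ref{sqrtt}:
\[
(h_{\t\<l_1 o_\blam\>}^{\<l_2\>})^2 = (h_\t^{\<l_2\>})^2\cdot\frac{\gamma_\t^2\,\gamma_{\m_{l_1 o_\blam}(\t\<l_2 o_\blam\>)}^2}{\gamma_{\m_{l_1 o_\blam}(\t)}^2\,\gamma_{\t\<l_2 o_\blam\>}^2},
\]
and substitute the first equality $(h_\t^{\<l_2\>})^2 = \gamma_{\t\<l_2 o_\blam\>}/\gamma_\t$ to obtain
\[
(h_{\t\<l_1 o_\blam\>}^{\<l_2\>})^2 = \frac{\gamma_\t\,\gamma_{\m_{l_1 o_\blam}(\t\<l_2 o_\blam\>)}^2}{\gamma_{\m_{l_1 o_\blam}(\t)}^2\,\gamma_{\t\<l_2 o_\blam\>}}.
\]
To finish, I would apply Lemma \ref{gtsft} twice (with $k=l_1 o_\blam$): once to $\t$ itself, giving $\gamma_{\t\<l_1 o_\blam\>} = \gamma_{\t^\blam\<l_1 o_\blam\>}\gamma_{\m_{l_1 o_\blam}(\t)}^2/(\gamma_{\t^\blam}\gamma_\t)$, and once to the standard $\blam$-tableau $\t\<l_2 o_\blam\>$ (which is a $\blam$-tableau because $\blam\<l_2 o_\blam\>=\blam$ by definition of $o_\blam$), giving $\gamma_{\t\<(l_1+l_2)o_\blam\>} = \gamma_{\t^\blam\<l_1 o_\blam\>}\gamma_{\m_{l_1 o_\blam}(\t\<l_2 o_\blam\>)}^2/(\gamma_{\t^\blam}\gamma_{\t\<l_2 o_\blam\>})$. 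Taking the ratio of these two cancels the $\gamma_{\t^\blam\<l_1 o_\blam\>}/\gamma_{\t^\blam}$ factors and yields exactly the right-hand side displayed above.

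The only place that requires a small verification is the observation $\blam\<l_2 o_\blam\>=\blam$, which legitimizes applying Lemma \ref{gtsft} to $\t\<l_2 o_\blam\>$ with the \emph{same} reference tableau $\t^\blam$; this follows straight from the definition of $o_\blam$ in (\ref{olamb}). Beyond that the argument is a mechanical bookkeeping exercise, so I do not expect any substantial obstacle — the real content of the lemma has already been packaged into Proposition \ref{prophlam}(2) and Lemma \ref{gtsft}.
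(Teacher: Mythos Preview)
Your argument is correct. The first equality is handled identically in the paper. For the second equality, however, your route differs from the paper's and is in fact more economical: the paper first invokes Theorem~\ref{mainthm1} to rewrite
\[
h_{\t\sft{l_{1}o_{\blam}}}^{\sft{l_{2}}}=h_{\t}^{\sft{l_{2}}}\frac{\gamma_{\t}\gamma_{\m_{l_{2}o_{\blam}}(\t\sft{l_{1}o_{\blam}})}}{\gamma_{\m_{l_{2}o_{\blam}}(\t)}\gamma_{\t\sft{l_{1}o_{\blam}}}},
\]
then recognises the squared fraction as $R_{\t\t\sft{l_{1}o_{\blam}},l_{2}o_{\blam}}^{2}$ via Lemma~\ref{snphit} and finishes with Proposition~\ref{propRstk}(1). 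You bypass both Theorem~\ref{mainthm1} and the $R$-coefficient machinery entirely, instead applying Lemma~\ref{gtsft} with $k=l_{1}o_{\blam}$ once to $\t$ and once to the shifted tableau $\t\sft{l_{2}o_{\blam}}\in\Std(\blam)$; the factor $\gamma_{\t^{\blam}\sft{l_{1}o_{\blam}}}/\gamma_{\t^{\blam}}$ cancels in the ratio. The paper's detour through Theorem~\ref{mainthm1} is not needed here, so your proof shows that Lemma~\ref{squareht} is logically independent of that theorem.
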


\begin{proof}
	By Lemma \ref{gtsft} and Definition \ref{sqrtt}, it easy to check that
	$$(h_{\t}^{\sft{l}})^{2}=h_{\blam,0,l}^{2}\bigg(\frac{\gamma_{\m_{lo_{\blam}}(\t)}}{\gamma_{\t}}\bigg)^{2}=\frac{\gamma_{\t\sft{lo_{\blam}}}}{\gamma_{\t}}.$$
	For the second equality, by Theorem \ref{mainthm1}, we have that
$$h_{\t\sft{l_{1}o_{\blam}}}^{\sft{l_{2}}}=h_{\t}^{\sft{l_{2}}}\frac{\gamma_{\t}\gamma_{\m_{l_{2}o_{\blam}}(\t\sft{l_{1}o_{\blam}})}}{\gamma_{\m_{l_{2}o_{\blam}}(\t)}\gamma_{\t\sft{(l_{1}+l_{2})o_{\blam}}}}\frac{\gamma_{\t\sft{(l_{1}+l_{2})o_{\blam}}}}{\gamma_{\t\sft{l_{1}o_{\blam}}}}$$
	and hence
	
$$(h_{\t\sft{l_{1}o_{\blam}}}^{\sft{l_{2}}})^{2}=(h_{\t}^{\sft{l_{2}}})^{2}\bigg(\frac{\gamma_{\t}\gamma_{\m_{l_{2}o_{\blam}}(\t\sft{l_{1}o_{\blam}})}}{\gamma_{\m_{l_{2}o_{\blam}}(\t)}\gamma_{\t\sft{(l_{1}+l_{2})o_{\blam}}}}\bigg)^{2}\bigg(\frac{\gamma_{\t\sft{(l_{1}+l_{2})o_{\blam}}}}{\gamma_{\t\sft{l_{1}o_{\blam}}}}\bigg)^{2}.$$
	Then we can get that
	$$\begin{aligned}
		(h_{\t\sft{l_{1}o_{\blam}}}^{\sft{l_{2}}})^{2}&=\frac{\gamma_{\t\sft{l_{2}o_{\blam}}}}{\gamma_{\t}}R_{\t\t\sft{l_{1}o_{\blam}},
l_{2}o_{\blam}}^{2}\bigg(\frac{\gamma_{\t\sft{(l_{1}+l_{2})o_{\blam}}}}{\gamma_{\t\sft{l_{1}o_{\blam}}}}\bigg)^{2}\\
&=\frac{\gamma_{\t\sft{l_{2}o_{\blam}}}}{\gamma_{\t}}\frac{\gamma_{\t}\gamma_{\t\sft{l_{1}o_{\blam}}}}{\gamma_{\t\sft{l_{2}o_{\blam}}}\gamma_{\t\sft{(l_{1}+l_{2})o_{\blam}}}}\bigg(\frac{\gamma_{\t\sft{(l_{1}+l_{2})o_{\blam}}}}{\gamma_{\t\sft{l_{1}o_{\blam}}}}\bigg)^{2}\\
		&=\frac{\gamma_{\t\sft{(l_{1}+l_{2})o_{\blam}}}}{\gamma_{\t\sft{l_{1}o_{\blam}}}}.
	\end{aligned}$$
This proves the lemma.
\end{proof}


\begin{lem}\label{htkl}
	Suppose that $\blam\in\P_{r,n}^{\sigma}$ and $\t\in\Std(\blam)$ with $1\in\big(\t^{[1]},\cdots,\t^{[o_{\blam}]}\big)$. Let $l_{1},l_{2}\in\Z/p_{\blam}\Z$. Then we have
	$$h_{\t}^{\sft{l_{1}+l_{2}}}=h_{\t}^{\sft{l_{1}}}h_{\t\sft{l_{1}o_{\blam}}}^{\sft{l_{2}}}.$$
\end{lem}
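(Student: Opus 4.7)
My plan is to expand both sides using Definition \ref{sqrtt}, apply Lemmas \ref{hlaml1l2} and \ref{hlamQuo} to strip off the $h_{\blam,*,*}$-coefficients, and then verify the resulting identity using Theorem \ref{mainthm1}, Proposition \ref{propRstk}(2), and Lemma \ref{gtsft}.

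First, I expand $h_\t^{\<l_1\>}h_{\t\<l_1 o_\blam\>}^{\<l_2\>}$ via Definition \ref{sqrtt}. The factor $\gamma_{\m_{l_1 o_\blam}(\t)}$ cancels between the two factors, and after substituting $h_\t^{\<l_i\>} = h_{\blam,0,l_i}\gamma_{\m_{l_i o_\blam}(\t)}/\gamma_\t$ for $i=1,2$, this yields
$$h_\t^{\<l_1\>}h_{\t\<l_1 o_\blam\>}^{\<l_2\>} = h_{\blam,0,l_1}h_{\blam,0,l_2}\frac{\gamma_{\m_{l_2 o_\blam}(\t)}\gamma_{\m_{l_1 o_\blam}(\t\<l_2 o_\blam\>)}}{\gamma_\t\gamma_{\t\<l_2 o_\blam\>}}.$$
On the left-hand side, Lemma \ref{hlaml1l2} gives $h_\t^{\<l_1+l_2\>} = h_{\blam,0,l_1}h_{\blam,l_1,l_2}\gamma_{\m_{(l_1+l_2)o_\blam}(\t)}/\gamma_\t$. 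Cancelling the common factor $h_{\blam,0,l_1}/\gamma_\t$ and invoking Lemma \ref{hlamQuo} to rewrite $h_{\blam,l_1,l_2}/h_{\blam,0,l_2} = \gamma_{\m_{l_2 o_\blam}(\tlam\<l_1 o_\blam\>)}/\gamma_{\tlam\<l_1 o_\blam\>}$, the claim is equivalent (using $r_{\s,k}=\gamma_\s/\gamma_{\m_k(\s)}$ from Definition \ref{2dfns}) to
$$r_{\t,l_2 o_\blam}\,r_{\t\<l_2 o_\blam\>,l_1 o_\blam} = r_{\tlam\<l_1 o_\blam\>,l_2 o_\blam}\,r_{\t,(l_1+l_2)o_\blam}.$$

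Next, Theorem \ref{mainthm1} (with $k\to l_1$, $l\to l_2$) converts the left-hand side to $r_{\t,l_1 o_\blam}\,r_{\t\<l_1 o_\blam\>,l_2 o_\blam}$, so the task becomes proving
$$r_{\t,(l_1+l_2)o_\blam} = \frac{r_{\t,l_1 o_\blam}\,r_{\t\<l_1 o_\blam\>,l_2 o_\blam}}{r_{\tlam\<l_1 o_\blam\>,l_2 o_\blam}}.$$
This follows by applying Proposition \ref{propRstk}(2) to $R_{\tlam\t,(l_1+l_2)o_\blam}$ with the composition $(l_1 o_\blam, l_2 o_\blam)$: expanding via Definition \ref{2dfns} and using $r_{\tlam,k}=1$ gives
$$r_{\t,(l_1+l_2)o_\blam} = \frac{\gamma_{\tlam\<(l_1+l_2)o_\blam\>}\gamma_{\tlam}}{\gamma_{\tlam\<l_1 o_\blam\>}\gamma_{\tlam\<l_2 o_\blam\>}}\,r_{\t,l_1 o_\blam}\,r_{\tlam\<l_1 o_\blam\>,l_2 o_\blam}\,r_{\t\<l_1 o_\blam\>,l_2 o_\blam},$$
and the first equality of Lemma \ref{gtsft} applied to $\tlam\<l_1 o_\blam\>$ with $k = l_2 o_\blam$ yields $r_{\tlam\<l_1 o_\blam\>,l_2 o_\blam}^2 = \gamma_{\tlam\<l_1 o_\blam\>}\gamma_{\tlam\<l_2 o_\blam\>}/(\gamma_{\tlam\<(l_1+l_2)o_\blam\>}\gamma_{\tlam})$, so the displayed prefactor is precisely $1/r_{\tlam\<l_1 o_\blam\>,l_2 o_\blam}^2$, as required.

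The main obstacle is the bookkeeping in the reduction step: the product $h_\t^{\<l_1\>}h_{\t\<l_1 o_\blam\>}^{\<l_2\>}$ produces an expression naturally indexed by the ``reversed'' composition $(l_2,l_1)$, while Lemma \ref{hlamQuo} and Proposition \ref{propRstk}(2) are indexed by $(l_1,l_2)$; Theorem \ref{mainthm1} is exactly the bridge that permits this swap, and without it the reduction stalls at an identity mixing $r_{\t,l_2 o_\blam}$ with $r_{\tlam\<l_1 o_\blam\>,l_2 o_\blam}$ that is not directly accessible from the product formula for $R_{\s\t,k}$.
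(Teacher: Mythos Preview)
Your proof is correct and follows essentially the same route as the paper's: both expand via Definition \ref{sqrtt}, use Lemmas \ref{hlaml1l2} and \ref{hlamQuo} to strip off the $h_{\blam,*,*}$-factors, and reduce to the identity $r_{\t,l_2 o_\blam}\,r_{\t\sft{l_2 o_\blam},l_1 o_\blam} = r_{\tlam\sft{l_1 o_\blam},l_2 o_\blam}\,r_{\t,(l_1+l_2)o_\blam}$. The only difference is in the final step: the paper applies Proposition \ref{propRstk}(2) directly with the composition $(l_2 o_\blam, l_1 o_\blam)$ and then invokes the symmetry (\ref{claim1}) for the $\tlam$-term, whereas you first swap the $\t$-side via Theorem \ref{mainthm1}, decompose in the order $(l_1 o_\blam, l_2 o_\blam)$, and identify the resulting $\gamma_{\tlam}$-prefactor with $r_{\tlam\sft{l_1 o_\blam},l_2 o_\blam}^{-2}$ through Lemma \ref{gtsft}. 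Since Theorem \ref{mainthm1} itself rests on (\ref{claim1}), the two arguments are equivalent in content; your detour through Theorem \ref{mainthm1} is a bit longer but perfectly valid.
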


\begin{proof}
	By Definition \ref{sqrtt}, we have that
	$$h_{\t}^{\sft{l_{1}+l_{2}}}=h_{\blam,0,l_{1}+l_{2}}\frac{\gamma_{\m_{(l_{1}+l_{2})o_{\blam}}(\t)}}{\gamma_{\t}},\quad h_{\t}^{\sft{l_{1}}}=h_{\blam,0,l_{1}}\frac{\gamma_{\m_{l_{1}o_{\blam}}(\t)}}{\gamma_{\t}},$$
	and that
$$h_{\t\sft{l_{1}o_{\blam}}}^{\sft{l_{2}}}=h_{\t}^{\sft{l_{2}}}\frac{\gamma_{\t}\gamma_{\m_{l_{1}o_{\blam}}(\t\sft{l_{2}o_{\blam}})}}{\gamma_{\m_{l_{1}o_{\blam}}(\t)}\gamma_{\t\sft{l_{2}o_{\blam}}}}=h_{\blam,0,l_{2}}\frac{\gamma_{\m_{l_{2}o_{\blam}}(\t)}}{\gamma_{\t}}\frac{\gamma_{\t}\gamma_{\m_{l_{1}o_{\blam}}(\t\sft{l_{2}o_{\blam}})}}{\gamma_{\m_{l_{1}o_{\blam}}(\t)}\gamma_{\t\sft{l_{2}o_{\blam}}}}=h_{\blam,0,l_{2}}\frac{\gamma_{\m_{l_{2}o_{\blam}}(\t)}\gamma_{\m_{l_{1}o_{\blam}}(\t\sft{l_{2}o_{\blam}})}}{\gamma_{\m_{l_{1}o_{\blam}}(\t)}\gamma_{\t\sft{l_{2}o_{\blam}}}}.$$
	By the second statement of Proposition \ref{prophlam} and Lemma \ref{hlamQuo}, we have
	$$h_{\blam,0,l_{1}+l_{2}}=h_{\blam,0,l_{1}}h_{\blam,l_{1},l_{2}}=h_{\blam,0,l_{1}}h_{\blam,0,l_{2}}\frac{\gamma_{\m_{l_{2}o_{\blam}}(\tlam\sft{l_{1}o_{\blam}})}}{\gamma_{\tlam\sft{l_{1}o_{\blam}}}}.$$
	Then we can deduce that $h_{\t}^{\sft{l_{1}+l_{2}}}=h_{\t}^{\sft{l_{1}}}h_{\t\sft{l_{1}o_{\blam}}}^{\sft{l_{2}}}$ if and only if
	$$\frac{\gamma_{\m_{l_{2}o_{\blam}}(\tlam\sft{l_{1}o_{\blam}})}}{\gamma_{\tlam\sft{l_{1}o_{\blam}}}}\frac{\gamma_{\m_{(l_{1}+l_{2})o_{\blam}}(\t)}}{\gamma_{\t}}
	=\frac{\gamma_{\m_{l_{1}o_{\blam}}(\t)}}{\gamma_{\t}}\frac{\gamma_{\m_{l_{2}o_{\blam}}(\t)}\gamma_{\m_{l_{1}o_{\blam}}(\t\sft{l_{2}o_{\blam}})}}{\gamma_{\m_{l_{1}o_{\blam}}(\t)}\gamma_{\t\sft{l_{2}o_{\blam}}}}.$$
	By Definition \ref{2dfns}, it is equivalent to
	$$\frac{\gamma_{\m_{l_{2}o_{\blam}}(\tlam\sft{l_{1}o_{\blam}})}}{\gamma_{\tlam\sft{l_{1}o_{\blam}}}}r_{\t,(l_{1}+l_{2})o_{\blam}}^{-1}=r_{\t,l_{2}o_{\blam}}^{-1}r_{\t\sft{l_{2}o_{\blam}},l_{1}o_{\blam}}^{-1}.$$
	
	Note that $\m_{x}(\tlam)=\tlam, \forall x\in \Z/p\Z$. By the second statement of Proposition \ref{propRstk}, we have that
$R_{\t\tlam,(l_{1}+l_{2})o_{\blam}}=R_{\t\tlam,l_{2}o_{\blam}}R_{\t\sft{l_{2}o_{\blam}}\tlam\sft{l_{2}o_{\blam}},l_{1}o_{\blam}}$ and hence
	$$r_{\t,(l_{1}+l_{2})o_{\blam}}=r_{\t,l_{2}o_{\blam}}r_{\t\sft{l_{2}o_{\blam}},l_{1}o_{\blam}}\frac{\gamma_{\m_{l_{1}o_{\blam}}(\tlam\sft{l_{2}o_{\blam}})}}{\gamma_{\tlam\sft{l_{2}o_{\blam}}}}.$$
	By (\ref{claim1}), we have that
	$$\frac{\gamma_{\m_{l_{1}o_{\blam}}(\tlam\sft{l_{2}o_{\blam}})}}{\gamma_{\tlam\sft{l_{2}o_{\blam}}}}=\frac{\gamma_{\m_{l_{2}o_{\blam}}(\tlam\sft{l_{1}o_{\blam}})}}{\gamma_{\tlam\sft{l_{1}o_{\blam}}}}.$$
This proves the lemma.
\end{proof}

In Definition \ref{sqrtt}, the elements $h_{\t}^{\sft{l}}$ are only defined for certain shapes $\blam\in\P_{r,n}^{\sigma}$. We want to generalize these elements to tableaux of arbitrary shape. The following Lemma makes sure
that our definition will be well-defined.

\begin{lem}
	Suppose that $\blam\in\P_{r,n}^{\sigma}$ and $\t\in\Std(\blam)$ such that $1\in\big(\t^{[1]},\cdots,\t^{[o_{\blam}]}\big)$. Let $l,a\in\Z/p_{\blam}\Z$, and $x\in\Z/p\Z$. Then we have that
\begin{equation}\label{CompatibleEnsure}
h_{\t}^{\sft{l}}\frac{\gamma_{\t}\gamma_{\m_{ao_{\blam}+x}(\t\sft{lo_{\blam}})}}{\gamma_{\m_{ao_{\blam}+x}(\t)}\gamma_{\t\sft{lo_{\blam}}}}=
h_{\t\sft{ao_{\blam}}}^{\sft{l}}\frac{\gamma_{\t\sft{ao_{\blam}}}\gamma_{\m_{x}(\t\sft{(a+l)o_{\blam}})}}{\gamma_{\m_{x}(\t\sft{ao_{\blam}})}\gamma_{\t\sft{(a+l)o_{\blam}}}}.\end{equation}
\end{lem}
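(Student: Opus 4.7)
The plan is to reduce (\ref{CompatibleEnsure}) to a multiplicative identity among the $r_{\u,k}$-coefficients and then derive that identity by combining three applications of Proposition \ref{propRstk}(2).

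\emph{Step 1 (Reduction).} Substituting $h_{\t\sft{ao_\blam}}^{\sft{l}}=h_\t^{\sft{l}}\,\gamma_\t\gamma_{\m_{ao_\blam}(\t\sft{lo_\blam})}/(\gamma_{\m_{ao_\blam}(\t)}\gamma_{\t\sft{lo_\blam}})$ from Definition \ref{sqrtt} into the right-hand side of (\ref{CompatibleEnsure}) and cancelling the common factor $h_\t^{\sft{l}}\gamma_\t/\gamma_{\t\sft{lo_\blam}}$, the claimed equality becomes
\[
\frac{\gamma_{\m_{ao_\blam+x}(\t\sft{lo_\blam})}}{\gamma_{\m_{ao_\blam+x}(\t)}}=\frac{\gamma_{\m_{ao_\blam}(\t\sft{lo_\blam})}}{\gamma_{\m_{ao_\blam}(\t)}}\cdot\frac{\gamma_{\t\sft{ao_\blam}}\gamma_{\m_x(\t\sft{(a+l)o_\blam})}}{\gamma_{\m_x(\t\sft{ao_\blam})}\gamma_{\t\sft{(a+l)o_\blam}}}.
\]
Rewriting each $\gamma$-ratio via $r_{\u,k}=\gamma_\u/\gamma_{\m_k(\u)}$ from Definition \ref{2dfns}, this is in turn equivalent to
\[
r_{\t,ao_\blam+x}\,r_{\t\sft{lo_\blam},ao_\blam}\,r_{\t\sft{(a+l)o_\blam},x}=r_{\t\sft{lo_\blam},ao_\blam+x}\,r_{\t,ao_\blam}\,r_{\t\sft{ao_\blam},x}. \qquad(\star)
\]

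\emph{Step 2 (Three $R$-identities).} Since $o_\blam\mid ao_\blam$, we have $\blam\sft{ao_\blam}=\blam$, so Proposition \ref{propRstk}(2) applies with the length-two composition $(ao_\blam,x)$ of $ao_\blam+x$ to any pair of $\blam$-tableaux. Setting $D:=\gamma_{\t^\blam}\gamma_{\t^\blam\sft{ao_\blam+x}}/(\gamma_{\t^\blam\sft{ao_\blam}}\gamma_{\t^\blam\sft{x}})\in K^\times$ and using the formula $R_{\s\v,k}=(\gamma_{\t^\blam}/\gamma_{\t^\blam\sft{k}})\,r_{\s,k}\,r_{\v,k}$ of Definition \ref{2dfns}, the three pairs $(\s,\v)=(\t,\t\sft{lo_\blam})$, $(\t,\t)$ and $(\t\sft{lo_\blam},\t\sft{lo_\blam})$ yield respectively
\begin{align*}
(A)\ & r_{\t,ao_\blam+x}\,r_{\t\sft{lo_\blam},ao_\blam+x}=D\cdot r_{\t,ao_\blam}r_{\t\sft{lo_\blam},ao_\blam}r_{\t\sft{ao_\blam},x}r_{\t\sft{(a+l)o_\blam},x},\\
(B')\ & r_{\t,ao_\blam+x}^{\,2}=D\cdot\bigl(r_{\t,ao_\blam}\,r_{\t\sft{ao_\blam},x}\bigr)^{2},\\
(C')\ & r_{\t\sft{lo_\blam},ao_\blam+x}^{\,2}=D\cdot\bigl(r_{\t\sft{lo_\blam},ao_\blam}\,r_{\t\sft{(a+l)o_\blam},x}\bigr)^{2}.
\end{align*}

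\emph{Step 3 (Conclusion).} Multiply the left-hand side of $(\star)$ by $r_{\t\sft{lo_\blam},ao_\blam+x}$. Using $(A)$ to rewrite $r_{\t,ao_\blam+x}\,r_{\t\sft{lo_\blam},ao_\blam+x}$, the result equals $D\,r_{\t,ao_\blam}\,r_{\t\sft{ao_\blam},x}\cdot\bigl(r_{\t\sft{lo_\blam},ao_\blam}\,r_{\t\sft{(a+l)o_\blam},x}\bigr)^{2}$, which by $(C')$ equals $r_{\t,ao_\blam}\,r_{\t\sft{ao_\blam},x}\cdot r_{\t\sft{lo_\blam},ao_\blam+x}^{\,2}$; that is, $r_{\t\sft{lo_\blam},ao_\blam+x}$ times the right-hand side of $(\star)$. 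Cancelling the invertible factor $r_{\t\sft{lo_\blam},ao_\blam+x}\in K^\times$ yields $(\star)$.

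The main obstacle I anticipate is that $(B')$ and $(C')$ separately only determine $r_{\t,ao_\blam+x}^{2}$ and $r_{\t\sft{lo_\blam},ao_\blam+x}^{2}$, so a naive ``take square roots'' argument cannot establish $(\star)$ uniformly over $K$, particularly when $D$ fails to be a square or when $\cha K=2$. The trick of multiplying $(\star)$ through by $r_{\t\sft{lo_\blam},ao_\blam+x}$ before invoking $(A)$ and $(C')$ sidesteps this: the mixed identity $(A)$ absorbs the sign ambiguity, and every computation stays inside $K^\times$.
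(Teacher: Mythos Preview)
Your proof is correct and follows essentially the same strategy as the paper: both reduce the claim to the identity $(\star)$ among $r$-coefficients (via Definition~\ref{sqrtt} and Definition~\ref{2dfns}) and then establish $(\star)$ by applying Proposition~\ref{propRstk}(2) to the composition $(ao_\blam,x)$ of $ao_\blam+x$.

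The only difference is in the choice of the second tableau fed into Proposition~\ref{propRstk}(2). The paper pairs $\t$ (respectively $\t\sft{lo_\blam}$) with $\t^\blam$; since $\m_k(\t^\blam)=\t^\blam$ and hence $r_{\t^\blam,k}=1$ for all $k$, this yields the \emph{linear} factorisations
\[
r_{\t,ao_\blam+x}=r_{\t,ao_\blam}\,r_{\t\sft{ao_\blam},x}\cdot\frac{\gamma_{\m_x(\t^\blam\sft{ao_\blam})}}{\gamma_{\t^\blam\sft{ao_\blam}}},\qquad
r_{\t\sft{lo_\blam},ao_\blam+x}=r_{\t\sft{lo_\blam},ao_\blam}\,r_{\t\sft{(l+a)o_\blam},x}\cdot\frac{\gamma_{\m_x(\t^\blam\sft{ao_\blam})}}{\gamma_{\t^\blam\sft{ao_\blam}}},
\]
whose quotient is $(\star)$ immediately. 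Your route, pairing $\t$ with $\t\sft{lo_\blam}$ and with itself, produces the mixed identity $(A)$ and the quadratic identity $(C')$, which you then combine by the nice trick of multiplying through by $r_{\t\sft{lo_\blam},ao_\blam+x}$. This works and has the minor appeal of never invoking $\t^\blam$, but it is slightly less direct; note also that your identity $(B')$ is derived but never used, so you may simply drop it.
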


\begin{proof}
	By Definition \ref{sqrtt}, we have that
	$$h_{\t\sft{ao_{\blam}}}^{\sft{l}}=h_{\t}^{\sft{l}}\frac{\gamma_{\t}\gamma_{\m_{ao_{\blam}}(\t\sft{lo_{\blam}})}}{\gamma_{\m_{ao_{\blam}}(\t)}\gamma_{\t\sft{lo_{\blam}}}}.$$
	By Definition \ref{2dfns}, the left hand side of (\ref{CompatibleEnsure}) is
	$$h_{\t}^{\sft{l}}\frac{\gamma_{\t}\gamma_{\m_{ao_{\blam}+x}(\t\sft{lo_{\blam}})}}{\gamma_{\m_{ao_{\blam}+x}(\t)}\gamma_{\t\sft{lo_{\blam}}}}=h_{\t}^{\sft{l}}\frac{r_{\t,
ao_{\blam}+x}}{r_{\t\sft{lo_{\blam}},ao_{\blam}+x}}$$
	and the right hand side of (\ref{CompatibleEnsure}) is
$$h_{\t}^{\sft{l}}\frac{\gamma_{\t}\gamma_{\m_{ao_{\blam}}(\t\sft{lo_{\blam}})}}{\gamma_{\m_{ao_{\blam}}(\t)}\gamma_{\t\sft{lo_{\blam}}}}\frac{\gamma_{\t\sft{ao_{\blam}}}
\gamma_{\m_{x}(\t\sft{(a+l)o_{\blam}})}}{\gamma_{\m_{x}(\t\sft{ao_{\blam}})}\gamma_{\t\sft{(a+l)o_{\blam}}}}=
h_{\t}^{\sft{l}}\frac{r_{\t,ao_{\blam}}r_{\t\sft{ao_{\blam}},x}}{r_{\t\sft{lo_{\blam}},ao_{\blam}}r_{\t\sft{(a+l)o_{\blam}},x}}.$$
	
	Note that $\m_{k}(\tlam)=\tlam, \forall\, k\in \Z/p\Z$. By the second statement of Proposition \ref{propRstk}, we have $R_{\t\tlam,ao_{\blam}+x}=R_{\t\tlam,ao_{\blam}}R_{\t\sft{ao_{\blam}}\tlam\sft{ao_{\blam}},x}$.
Applying Lemma \ref{snphit}, we can get that
	
$$r_{\t,ao_{\blam}+x}\frac{\gamma_{\tlam}}{\gamma_{\tlam\sft{ao_{\blam}+x}}}=r_{\t,ao_{\blam}}\frac{\gamma_{\tlam}}{\gamma_{\tlam\sft{ao_{\blam}}}}r_{\t\sft{ao_{\blam}},x}
\frac{\gamma_{\m_{x}(\tlam\sft{ao_{\blam}})}}{\gamma_{\tlam\sft{ao_{\blam}+x}}},$$
	and hence
	$$r_{\t,ao_{\blam}+x}=r_{\t,ao_{\blam}}r_{\t\sft{ao_{\blam}},x}\frac{\gamma_{\m_{x}(\tlam\sft{ao_{\blam}})}}{\gamma_{\tlam\sft{ao_{\blam}}}}.$$
	Similarly, considering $R_{\t\sft{lo_{\blam}}\tlam,ao_{\blam}+x}=R_{\t\sft{lo_{\blam}}\tlam,ao_{\blam}}R_{\t\sft{(l+a)o_{\blam}}\tlam\sft{ao_{\blam}},x}$, we can get that
	$$r_{\t\sft{lo_{\blam}},ao_{\blam}+x}=r_{\t\sft{lo_{\blam}},ao_{\blam}}r_{\t\sft{(l+a)o_{\blam}},x}\frac{\gamma_{\m_{x}(\tlam\sft{ao_{\blam}})}}{\gamma_{\tlam\sft{ao_{\blam}}}}.$$
	Henceforth, the right hand side of the equality is
	$$h_{\t}^{\sft{l}}\frac{r_{\t,ao_{\blam}}r_{\t\sft{ao_{\blam}},x}}{r_{\t\sft{lo_{\blam}},ao_{\blam}}r_{\t\sft{(a+l)o_{\blam}},x}}=h_{\t}^{\sft{l}}\frac{r_{\t, ao_{\blam}+x}}{r_{\t\sft{lo_{\blam}},ao_{\blam}+x}}.$$
This proves the lemma.
\end{proof}

By the aforementioned lemma, we can generalize Definition \ref{sqrtt} a litter bit.

\begin{dfn}\label{sqrttx}
	Suppose that $\blam\in\P_{r,n}$ and $\t\in\Std(\blam)$ such that $1\in\big(\t^{[1]},\cdots,\t^{[o_{\blam}]}\big)$. Let $l\in\Z/p_{\blam}\Z$. For any $x\in\Z/p\Z$, we define
	$$h_{\t,x}^{\sft{l}}:=h_{\t}^{\sft{l}}\frac{\gamma_{\t}\gamma_{\m_{x}(\t\sft{lo_{\blam}})}}{\gamma_{\m_{x}(\t)}\gamma_{\t\sft{lo_{\blam}}}}.$$
\end{dfn}

\begin{rem}
	In the aforementioned Definition, if $x=ko_{\blam}$ for some $0\leq k<p_{\blam}$, the element $h_{\t,x}^{\sft{l}}$ is exactly $h_{\t\sft{ko_{\blam}}}^{\sft{l}}$ defined in Definition \ref{sqrtt}.
\end{rem}

\begin{lem}\label{SquareRoots2}
	Suppose that $\blam\in\P_{r,n}$ and $\t\in\Std(\blam)$ such that $1\in\big(\t^{[1]},\cdots,\t^{[o_{\blam}]}\big)$. Let $l\in\Z/p_{\blam}\Z$. For any $x\in\Z/p\Z$, we have that
	$$(h_{\t,x}^{\sft{l}})^{2}=\frac{\gamma_{\t\sft{x+lo_{\blam}}}}{\gamma_{\t\sft{x}}}.$$
\end{lem}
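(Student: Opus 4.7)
The plan is to square the definition of $h_{\t,x}^{\sft{l}}$ directly and then reduce everything to Lemma \ref{squareht} and Lemma \ref{gtsft}. From Definition \ref{sqrttx},
$$(h_{\t,x}^{\sft{l}})^{2}=(h_{\t}^{\sft{l}})^{2}\cdot\frac{\gamma_{\t}^{2}\,\gamma_{\m_{x}(\t\sft{lo_{\blam}})}^{2}}{\gamma_{\m_{x}(\t)}^{2}\,\gamma_{\t\sft{lo_{\blam}}}^{2}},$$
and by Lemma \ref{squareht} the first factor equals $\gamma_{\t\sft{lo_{\blam}}}/\gamma_{\t}$. Substituting gives
$$(h_{\t,x}^{\sft{l}})^{2}=\frac{\gamma_{\t}\,\gamma_{\m_{x}(\t\sft{lo_{\blam}})}^{2}}{\gamma_{\t\sft{lo_{\blam}}}\,\gamma_{\m_{x}(\t)}^{2}}.$$

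Next I would invoke the first equality in (\ref{eqa10}) of Lemma \ref{gtsft}, which (rewritten) says
$$\frac{\gamma_{\s\sft{x}}}{\gamma_{\s}}=\frac{\gamma_{\t^{\blam}\sft{x}}}{\gamma_{\t^{\blam}}}\cdot\bigg(\frac{\gamma_{\m_{x}(\s)}}{\gamma_{\s}}\bigg)^{2}$$
for any $\s\in\Std(\blam)$. Applying this to $\s=\t$ and to $\s=\t\sft{lo_{\blam}}$ (noting that $\t\sft{lo_{\blam}}$ has shape $\blam\sft{lo_{\blam}}=\blam$, since $o_{\blam}\mid lo_{\blam}$, so the same reference tableau $\tlam$ governs both), and then dividing the two identities, the common factor $\gamma_{\t^{\blam}\sft{x}}/\gamma_{\t^{\blam}}$ cancels and one obtains
$$\frac{\gamma_{\t\sft{x+lo_{\blam}}}/\gamma_{\t\sft{lo_{\blam}}}}{\gamma_{\t\sft{x}}/\gamma_{\t}}=\frac{\gamma_{\m_{x}(\t\sft{lo_{\blam}})}^{2}/\gamma_{\t\sft{lo_{\blam}}}^{2}}{\gamma_{\m_{x}(\t)}^{2}/\gamma_{\t}^{2}}.$$
Rearranging this equation yields
$$\frac{\gamma_{\t\sft{x+lo_{\blam}}}}{\gamma_{\t\sft{x}}}=\frac{\gamma_{\t}\,\gamma_{\m_{x}(\t\sft{lo_{\blam}})}^{2}}{\gamma_{\t\sft{lo_{\blam}}}\,\gamma_{\m_{x}(\t)}^{2}},$$
which is exactly the expression we obtained for $(h_{\t,x}^{\sft{l}})^{2}$. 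The lemma follows.

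There is no real obstacle here; the proof is a short two-step calculation. The only subtle point to mention is the compatibility/well-definedness of $\m_{x}(\t\sft{lo_{\blam}})$ used above: since $\blam\sft{lo_{\blam}}=\blam$, the construction $\m_{x}$ applied to $\t\sft{lo_{\blam}}$ is with respect to the same multipartition $\blam$ as for $\m_{x}(\t)$, so Lemma \ref{gtsft} applies verbatim. This is precisely the reason the previous lemma (Equation (\ref{CompatibleEnsure})) was needed to make Definition \ref{sqrttx} legitimate, and we merely reap the benefit here.
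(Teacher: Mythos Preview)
Your proof is correct and follows essentially the same route as the paper's: square Definition~\ref{sqrttx}, apply Lemma~\ref{squareht} for $(h_\t^{\sft{l}})^2$, and reduce the remaining ratio of $\gamma$'s via Lemma~\ref{gtsft}. The only cosmetic difference is that the paper packages your two applications of (\ref{eqa10}) into a single invocation of Proposition~\ref{propRstk}(1) by first recognizing the factor $\frac{\gamma_{\t}\gamma_{\m_{x}(\t\sft{lo_{\blam}})}}{\gamma_{\m_{x}(\t)}\gamma_{\t\sft{lo_{\blam}+x}}}$ as $R_{\t\,\t\sft{lo_{\blam}},x}$ (via Lemma~\ref{snphit}) and then squaring; your direct use of (\ref{eqa10}) is equivalent and arguably a touch more transparent.
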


\begin{proof}
	By Definition \ref{sqrttx}, we have that
$$h_{\t,x}^{\sft{l}}=h_{\t}^{\sft{l}}\frac{\gamma_{\t}\gamma_{\m_{x}(\t\sft{lo_{\blam}})}}{\gamma_{\m_{x}(\t)}\gamma_{\t\sft{lo_{\blam}}}}=h_{\t}^{\sft{l}}\frac{\gamma_{\t}\gamma_{\m_{x}(\t\sft{lo_{\blam}})}}{\gamma_{\m_{x}(\t)}\gamma_{\t\sft{lo_{\blam}+x}}}\frac{\gamma_{\t\sft{lo_{\blam}+x}}}{\gamma_{\t\sft{lo_{\blam}}}}.$$
	By Lemma \ref{snphit} and Lemma \ref{squareht}, we can deduce that
	$$\begin{aligned}
(h_{\t,x}^{\sft{l}})^{2}&=(h_{\t}^{\sft{l}})^{2}\bigg(\frac{\gamma_{\t}\gamma_{\m_{x}(\t\sft{lo_{\blam}})}}{\gamma_{\m_{x}(\t)}\gamma_{\t\sft{lo_{\blam}+x}}}\bigg)^{2}\bigg(\frac{\gamma_{\t\sft{lo_{\blam}+x}}}{\gamma_{\t\sft{lo_{\blam}}}}\bigg)^{2}\\
	&=\frac{\gamma_{\t\sft{lo_{\blam}}}}{\gamma_{\t}}R_{\t\t\sft{lo_{\blam}},x}^{2}\bigg(\frac{\gamma_{\t\sft{lo_{\blam}+x}}}{\gamma_{\t\sft{lo_{\blam}}}}\bigg)^{2}\\
&=\frac{\gamma_{\t\sft{lo_{\blam}}}}{\gamma_{\t}}\frac{\gamma_{\t}\gamma_{\t\sft{lo_{\blam}}}}{\gamma_{\t\sft{x}}\gamma_{\t\sft{lo_{\blam}+x}}}\bigg(\frac{\gamma_{\t\sft{lo_{\blam}+x}}}{\gamma_{\t\sft{lo_{\blam}}}}\bigg)^{2}\\
	&=\frac{\gamma_{\t\sft{lo_{\blam}+x}}}{\gamma_{\t\sft{x}}}.
	\end{aligned}$$
This proves the Lemma.
\end{proof}

\medskip\noindent
{\bf{Proof of Theorem \ref{mainthm2b}:}} This follows from Lemmas \ref{squareht} and \ref{SquareRoots2}.
\qed
\medskip

\begin{lem}\label{htklx}
	Suppose that $\blam\in\P_{r,n}^{\sigma}$ and $\t\in\Std(\blam)$ with $1\in\big(\t^{[1]},\cdots,\t^{[o_{\blam}]}\big)$. Let $l_{1},l_2\in\Z/p_{\blam}\Z$, and $x\in\Z/p\Z$. Then we have
	$$h_{\t,x}^{\sft{l_{1}+l_{2}}}=h_{\t,x}^{\sft{l_{1}}}h_{\t,x+l_{1}o_{\blam}}^{\sft{l_{2}}}.$$
\end{lem}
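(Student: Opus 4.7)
The plan is to reduce this identity to Lemma \ref{htkl} by using the compatibility identity (\ref{CompatibleEnsure}) established just before Definition \ref{sqrttx} to rewrite the second factor of the right-hand side. Specifically, applying that identity with $a = l_{1}$ and $l = l_{2}$ yields (after reading off both sides against Definition \ref{sqrttx}) the clean restatement
$$h_{\t,x+l_{1}o_{\blam}}^{\sft{l_{2}}} \;=\; h_{\t\sft{l_{1}o_{\blam}},\,x}^{\sft{l_{2}}}.$$
Hence the identity to be proved is equivalent to
$$h_{\t,x}^{\sft{l_{1}+l_{2}}} \;=\; h_{\t,x}^{\sft{l_{1}}}\cdot h_{\t\sft{l_{1}o_{\blam}},\,x}^{\sft{l_{2}}},$$
in which the base tableaux are now uniformly $\t$ and $\t\sft{l_{1}o_{\blam}}$, matching the shape of Lemma \ref{htkl}.

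Next, I would expand each of the three factors using Definition \ref{sqrttx}. This writes $h_{\t,x}^{\sft{l_{1}+l_{2}}}$ as $h_{\t}^{\sft{l_{1}+l_{2}}}$ times $\gamma_{\t}\gamma_{\m_{x}(\t\sft{(l_{1}+l_{2})o_{\blam}})}/\bigl(\gamma_{\m_{x}(\t)}\gamma_{\t\sft{(l_{1}+l_{2})o_{\blam}}}\bigr)$, and analogously factorises $h_{\t,x}^{\sft{l_{1}}}$ and $h_{\t\sft{l_{1}o_{\blam}},x}^{\sft{l_{2}}}$ into the products of $h_{\t}^{\sft{l_{1}}}$, respectively $h_{\t\sft{l_{1}o_{\blam}}}^{\sft{l_{2}}}$, with explicit $\gamma$-quotients. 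Applying Lemma \ref{htkl} to replace $h_{\t}^{\sft{l_{1}+l_{2}}}$ by $h_{\t}^{\sft{l_{1}}}h_{\t\sft{l_{1}o_{\blam}}}^{\sft{l_{2}}}$, the scalar prefactors on both sides agree, and the remaining check is purely about the $\gamma$-quotients.

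What remains is bookkeeping: on the right-hand side the factor $\gamma_{\m_{x}(\t\sft{l_{1}o_{\blam}})}$ appearing in the numerator of the $l_{1}$-piece cancels the same factor in the denominator of the $l_{2}$-piece, and similarly $\gamma_{\t\sft{l_{1}o_{\blam}}}$ cancels, telescoping the product into exactly $\gamma_{\t}\gamma_{\m_{x}(\t\sft{(l_{1}+l_{2})o_{\blam}})}/\bigl(\gamma_{\m_{x}(\t)}\gamma_{\t\sft{(l_{1}+l_{2})o_{\blam}}}\bigr)$, which is precisely the $\gamma$-ratio coming from the left-hand side. This closes the identity. The main conceptual point — and really the only one that required work — is the compatibility rewriting in the first step; after that, Lemma \ref{htkl} (itself a consequence of Theorem \ref{mainthm1} via Lemma \ref{hlamQuo} and identity (\ref{claim1})) supplies all the non-trivial multiplicative content, while the remaining $\gamma$-factors telescope trivially. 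I therefore do not expect any genuine obstacle beyond careful bookkeeping.
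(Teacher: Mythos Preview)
Your proof is correct and takes a genuinely different, more economical route than the paper. The paper expands all three factors via Definition \ref{sqrttx}, invokes Lemma \ref{htkl}, rewrites everything in terms of the $r$-coefficients from Definition \ref{2dfns}, and then reduces the claim to the identity
\[
r_{\t,l_{1}o_{\blam}}\,r_{\t\sft{l_{1}o_{\blam}},x}\,r_{\t,x+l_{1}o_{\blam}}^{-1}
\;=\;
r_{\t\sft{l_{2}o_{\blam}},l_{1}o_{\blam}}\,r_{\t\sft{(l_{1}+l_{2})o_{\blam}},x}\,r_{\t\sft{l_{2}o_{\blam}},x+l_{1}o_{\blam}}^{-1},
\]
which it verifies by two separate applications of Proposition \ref{propRstk}(2), showing that both sides equal $\gamma_{\t^{\blam}\sft{l_{1}o_{\blam}}}/\gamma_{\m_{x}(\t^{\blam}\sft{l_{1}o_{\blam}})}$. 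You instead observe that the compatibility identity (\ref{CompatibleEnsure}) already packages exactly this $r$-coefficient manipulation, so that after invoking it and Lemma \ref{htkl} the remaining $\gamma$-ratios telescope by inspection. One caveat: the symbol $h_{\t\sft{l_{1}o_{\blam}},\,x}^{\sft{l_{2}}}$ is not literally defined in the paper (Definition \ref{sqrttx} assumes $1$ sits in the first $o_{\blam}$ blocks of the base tableau), so you should either introduce it explicitly as shorthand for the right-hand side of (\ref{CompatibleEnsure}), or simply substitute that expression directly; the argument is unaffected. Your approach buys a shorter proof with no new computation beyond what has already been established; the paper's approach is more self-contained at this spot but reproves content equivalent to (\ref{CompatibleEnsure}).
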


\begin{proof}
	By Definition \ref{sqrttx}, we have that
	$$h_{\t,x}^{\sft{l_{1}}}=h_{\t}^{\sft{l_{1}}}\frac{\gamma_{\t}\gamma_{\m_{x}(\t\sft{l_{1}o_{\blam}})}}{\gamma_{\m_{x}(\t)}\gamma_{\t\sft{l_{1}o_{\blam}}}},\quad
	h_{\t,x+l_{1}o_{\blam}}^{\sft{l_{2}}}=h_{\t}^{\sft{l_{2}}}\frac{\gamma_{\t}\gamma_{\m_{x+l_{1}o_{\blam}}(\t\sft{l_{2}o_{\blam}})}}{\gamma_{\m_{x+l_{1}o_{\blam}}(\t)}\gamma_{\t\sft{l_{2}o_{\blam}}}}.$$
	Combining Lemma \ref{htkl}, we can get that
	$$
	\begin{aligned}
h_{\t,x}^{\sft{l_{1}+l_{2}}}&=h_{\t}^{\sft{l_{1}+l_{2}}}\frac{\gamma_{\t}\gamma_{\m_{x}(\t\sft{(l_{1}+l_{2})o_{\blam}})}}{\gamma_{\m_{x}(\t)}\gamma_{\t\sft{(l_{1}+l_{2})o_{\blam}}}}=h_{\t}^{\sft{l_{1}}}h_{\t,l_{1}o_{\blam}}^{\sft{l_{2}}}\frac{\gamma_{\t}\gamma_{\m_{x}(\t\sft{(l_{1}+l_{2})o_{\blam}})}}{\gamma_{\m_{x}(\t)}\gamma_{\t\sft{(l_{1}+l_{2})o_{\blam}}}}\\
&=h_{\t}^{\sft{l_{1}}}h_{\t}^{\sft{l_{2}}}\frac{\gamma_{\t}\gamma_{\m_{l_{1}o_{\blam}}(\t\sft{l_{2}o_{\blam}})}}{\gamma_{\m_{l_{1}o_{\blam}}(\t)}\gamma_{\t\sft{l_{2}o_{\blam}}}}\frac{\gamma_{\t}\gamma_{\m_{x}(\t\sft{(l_{1}+l_{2})o_{\blam}})}}{\gamma_{\m_{x}(\t)}\gamma_{\t\sft{(l_{1}+l_{2})o_{\blam}}}}.
	\end{aligned}$$
	By Definition \ref{2dfns}, the aforementioned equalities can be rewritten as
	$$\begin{aligned}
		&h_{\t,x}^{\sft{l_{1}}}=h_{\t}^{\sft{l_{1}}}r_{\t,x}r_{\t\sft{l_{1}o_{\blam}},x}^{-1},\quad
h_{\t,x+l_{1}o_{\blam}}^{\sft{l_{2}}}=h_{\t}^{\sft{l_{2}}}r_{\t,x+l_{1}o_{\blam}}r_{\t\sft{l_{2}o_{\blam}},x+l_{1}o_{\blam}}^{-1},\\
&h_{\t,x}^{\sft{l_{1}+l_{2}}}=h_{\t}^{\sft{l_{1}}}h_{\t}^{\sft{l_{2}}}r_{\t,x}r_{\t,l_{1}o_{\blam}}r_{\t\sft{l_{2}o_{\blam}},l_{1}o_{\blam}}^{-1}r_{\t\sft{(l_{1}+l_{2})o_{\blam}},x}^{-1}.
	\end{aligned}$$
	By computation, we have that $h_{\t,x}^{\sft{l_{1}+l_{2}}}=h_{\t,x}^{\sft{l_{1}}}h_{\t,x+l_{1}o_{\blam}}^{\sft{l_{2}}}$ if and only if
	$$r_{\t,l_{1}o_{\blam}}r_{\t\sft{l_{1}o_{\blam}},x}r_{\t,x+l_{1}o_{\blam}}^{-1}=r_{\t\sft{l_{2}o_{\blam}},l_{1}o_{\blam}}r_{\t\sft{(l_{1}+l_{2})o_{\blam}},x}r_{\t\sft{l_{2}o_{\blam}},x+l_{1}o_{\blam}}^{-1}.$$
	
	By the second statement of Proposition \ref{propRstk}, we have that $$R_{\t\tlam, x+l_{1}o_{\blam}}=R_{\t\tlam,l_{1}o_{\blam}}R_{\t\sft{l_{1}o_{\blam}}\tlam\sft{l_{1}o_{\blam}},x}.$$ Note that $\m_{k}(\tlam)=\tlam,
\forall\,0\leq k\leq p$. Then we can get that
	
$$r_{\t,x+l_{1}o_{\blam}}\frac{\gamma_{\tlam}}{\gamma_{\tlam\sft{x+l_{1}o_{\blam}}}}=r_{\t,l_{1}o_{\blam}}\frac{\gamma_{\tlam}}{\gamma_{\tlam\sft{l_{1}o_{\blam}}}}r_{\t\sft{l_{1}o_{\blam}},x}\frac{\gamma_{\m_{x}(\tlam\sft{l_{1}o_{\blam}})}}{\gamma_{\tlam\sft{x+l_{1}o_{\blam}}}}$$
	and hence
	$$r_{\t,l_{1}o_{\blam}}r_{\t\sft{l_{1}o_{\blam}},x}r_{\t,x+l_{1}o_{\blam}}^{-1}=\frac{\gamma_{\tlam\sft{l_{1}o_{\blam}}}}{\gamma_{\m_{x}(\tlam\sft{l_{1}o_{\blam}})}}.$$
	Similarly, by the second statement of Proposition \ref{propRstk}, we have $$R_{\t\sft{l_{2}o_{\blam}}\tlam,
x+l_{1}o_{\blam}}=R_{\t\sft{l_{2}o_{\blam}}\tlam,l_{1}o_{\blam}}R_{\t\sft{(l_{1}+l_{2})o_{\blam}}\tlam\sft{l_{1}o_{\blam}},x}.$$
	Then we can deduce that
	$$r_{\t\sft{l_{2}o_{\blam}},l_{1}o_{\blam}}r_{\t\sft{(l_{1}+l_{2})o_{\blam}},x}r_{\t\sft{l_{2}o_{\blam}},x+l_{1}o_{\blam}}^{-1}=\frac{\gamma_{\tlam\sft{l_{1}o_{\blam}}}}{\gamma_{\m_{x}(\tlam\sft{l_{1}o_{\blam}})}}.$$
	This completes the proof of the lemma.
\end{proof}

\bigskip

\section{Seminormal bases for $\HH_{r,p,n}$}

In this section, we shall use the main results Theorems \ref{mainthm1}, \ref{mainthm2a} and \ref{mainthm2b} proved in Section 3 to construct seminormal bases for $\HH_{r,p,n}$. In particular, we shall give the proof of the
third and the fourth main results Theorem \ref{mainthm3} and \ref{mainthm4} of this paper.

\begin{dfn}\label{fstk}
	Suppose that (\ref{ssrpn}) holds. Let $\blam\in\P_{r,n}^{\sigma}$ and $\s,\t\in\Std(\blam)$. Let $i,k\in\Z/p_\blam\Z$ and $j\in\Z/p\Z$. We define $$\begin{aligned}
		A_{i,j}^{\s\t}&:=(h_\s^{\<i\>})^{-1}\frac{\gamma_{\t^{\blam}}}{\gamma_{\t^\blam\sft{j}}}\frac{\gamma_{\s\<io_\blam\>}}{\gamma_{\m_j(\s\<io_\blam\>)}}\frac{\gamma_{\t}}{\gamma_{\m_j(\t)}}\in K^\times,\\
		f_{\s\t}^{[k]}&:=\sum_{i\in\Z/p_{\blam}\Z}\sum_{j\in\Z/p\Z}(\varepsilon^{o_{\blam}})^{ki}A_{i,j}^{\s\t}f_{\s\sft{io_{\blam}+j}\t\sft{j}}.\end{aligned} $$
\end{dfn}

\begin{lem}\label{Astij}
	Suppose that (\ref{ssrpn}) holds. Let $\blam\in\P_{r,n}^{\sigma}$ and $\s,\t\in\Std(\blam)$. Let $i\in\Z/p_\blam\Z$ and $j\in\Z/p\Z$.  We have that
	$$
		A_{i,j}^{\s\t}=(h_\s^{\<i\>})^{-1}\frac{\gamma_{\t^{\blam}}}{\gamma_{\t^\blam\sft{j}}}\frac{\gamma_{\s\<io_\blam\>}\gamma_{\t}}{\gamma_{\m_j(\s\<io_\blam\>)}\gamma_{\m_j(\t)}}=
		(h_\s^{\<i\>})^{-1}\frac{\gamma_{\t^{\blam}\sft{j}}}{\gamma_{\t^{\blam}}}\frac{\gamma_{\m_{j}(\s\sft{io_{\blam}})}\gamma_{\m_{j}(\t)}}{\gamma_{\s\sft{io_{\blam}+j}}\gamma_{\t\sft{j}}}=(h_\s^{\<i\>})^{-1}R_{\s\sft{io_{\blam}}\t,j}. $$
If furthermore, $\s^{-1}(1)\in (\blam^{[1]},\cdots,\blam^{[o_\blam]})$, then $$
A_{i,j}^{\s\t}=(h_{\s,j}^{\sft{i}})^{-1}\frac{\gamma_{\t^{\blam}}}{\gamma_{\t^\blam\sft{j}}}\frac{\gamma_{\s}\gamma_{\t}}{\gamma_{\m_{j}(\s)}\gamma_{\m_{j}(\t)}}=
		(h_{\s,j}^{\sft{i}})^{-1}\frac{\gamma_{\t^{\blam}\sft{j}}}{\gamma_{\t^{\blam}}}\frac{\gamma_{\m_{j}(\s)}\gamma_{\m_{j}(\t)}}{\gamma_{\s\sft{j}}\gamma_{\t\sft{j}}}=(h_{\s,j}^{\sft{i}})^{-1}R_{\s\t,j}.
$$
Moreover, $$
	(A_{i,j}^{\s\t})^2=\frac{\gamma_\s\gamma_\t}{\gamma_{\s\<io_\blam+j\>}\gamma_{\t\<j\>}}. $$
\end{lem}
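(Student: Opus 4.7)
The plan is to recognize that $A_{i,j}^{\s\t}$ is essentially a rescaled $R$-coefficient, and then all three claimed rewritings plus the squared identity reduce to routine applications of results already in Section~3.

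First, I would observe that the first equality in the displayed chain is tautological: Definition~\ref{fstk} just writes $\frac{\gamma_{\s\<io_\blam\>}}{\gamma_{\m_j(\s\<io_\blam\>)}}\cdot\frac{\gamma_{\t}}{\gamma_{\m_j(\t)}}$ as two separate fractions, and the first claimed form merely groups them into one. Next, I would compare the resulting expression $\tfrac{\gamma_{\t^{\blam}}}{\gamma_{\t^\blam\sft{j}}}\tfrac{\gamma_{\s\<io_\blam\>}\gamma_\t}{\gamma_{\m_j(\s\<io_\blam\>)}\gamma_{\m_j(\t)}}$ with Definition~\ref{2dfns} (applied with $\s\<io_\blam\>$ in place of $\s$ and $k=j$); this identifies it immediately as $R_{\s\<io_\blam\>\t,j}$, giving the third equality. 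The second equality is then just the alternative form for $R_{\s\<io_\blam\>\t,j}$ furnished by Lemma~\ref{snphit}, combined with the evident identity $\s\<io_\blam\>\<j\> = \s\<io_\blam+j\>$.

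For the "furthermore" portion, under the hypothesis $\s^{-1}(1)\in(\blam^{[1]},\dots,\blam^{[o_\blam]})$, I would compare $h_\s^{\<i\>}$ with $h_{\s,j}^{\<i\>}$ using Definition~\ref{sqrttx}, which gives
$$h_{\s,j}^{\sft{i}}=h_{\s}^{\sft{i}}\frac{\gamma_{\s}\gamma_{\m_{j}(\s\sft{io_{\blam}})}}{\gamma_{\m_{j}(\s)}\gamma_{\s\sft{io_{\blam}}}}.$$
Substituting this into $A_{i,j}^{\s\t}=(h_\s^{\<i\>})^{-1}R_{\s\<io_\blam\>\t,j}$ and using the three expressions for $R_{\s\t,j}$ and $R_{\s\<io_\blam\>\t,j}$ from Lemma~\ref{snphit}, the cross terms cancel and the three claimed forms for $A_{i,j}^{\s\t}$ in terms of $h_{\s,j}^{\<i\>}$ and $R_{\s\t,j}$ drop out.

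Finally, for the squared identity, I would simply square the representation $A_{i,j}^{\s\t}=(h_\s^{\<i\>})^{-1}R_{\s\<io_\blam\>\t,j}$: Proposition~\ref{propRstk}(1) gives $R_{\s\<io_\blam\>\t,j}^2=\frac{\gamma_{\s\<io_\blam\>}\gamma_\t}{\gamma_{\s\<io_\blam+j\>}\gamma_{\t\<j\>}}$, and Lemma~\ref{squareht} (via the extension of Definition~\ref{sqrtt} discussed in the remark after it) gives $(h_\s^{\<i\>})^2=\frac{\gamma_{\s\<io_\blam\>}}{\gamma_\s}$, so the factors $\gamma_{\s\<io_\blam\>}$ cancel and leave $\frac{\gamma_\s\gamma_\t}{\gamma_{\s\<io_\blam+j\>}\gamma_{\t\<j\>}}$, as desired. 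There is no real obstacle here; the only care needed is in matching indices correctly when applying Lemma~\ref{snphit} with the shifted tableau $\s\<io_\blam\>$, since $\m_j$ applied to $\s\<io_\blam\>$ is not the same as $\m_j(\s)\<io_\blam\>$ in general, so one must consistently use the $R$-coefficient identities rather than trying to pull the shift through $\m_j$.
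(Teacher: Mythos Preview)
Your proposal is correct and follows essentially the same approach as the paper's proof: identifying $A_{i,j}^{\s\t}$ with $(h_\s^{\<i\>})^{-1}R_{\s\<io_\blam\>\t,j}$ via Definition~\ref{2dfns} and Lemma~\ref{snphit}, then squaring using Proposition~\ref{propRstk}(1) and Lemma~\ref{squareht}, and handling the ``furthermore'' clause by substituting Definition~\ref{sqrttx}. Your closing caution about not commuting $\m_j$ with the shift $\<io_\blam\>$ is apt and is exactly why the argument must go through the $R$-coefficient identities rather than any direct manipulation.
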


\begin{proof} By Lemma \ref{snphit}, we have that \begin{equation}\label{Aijeqa1} A_{i,j}^{\s\t}=(h_\s^{\<i\>})^{-1}R_{\s\sft{io_{\blam}}\t,j}=(h_\s^{\<i\>})^{-1}\frac{\gamma_{\t^{\blam}\sft{j}}}{\gamma_{\t^{\blam}}}\frac{\gamma_{\m_{j}(\s\sft{io_{\blam}})}
\gamma_{\m_{j}(\t)}}{\gamma_{\s\sft{io_{\blam}+j}}\gamma_{\t\sft{j}}}.\end{equation}
Combining Proposition \ref{propRstk} with (\ref{Aijeqa1}) and Lemma \ref{squareht}, we can deduce that
	$$(A_{i,j}^{\s\t})^2=\frac{\gamma_\s\gamma_\t}{\gamma_{\s\<io_\blam+j\>}\gamma_{\t\<j\>}}.$$

If furthermore, $\s^{-1}(1)\in (\blam^{[1]},\cdots,\blam^{[o_\blam]})$, then by Definition \ref{sqrttx}, we know that
	$$h_{\s,j}^{\sft{i}}:=h_{\s}^{\sft{i}}\cdot \frac{\gamma_{\s}\gamma_{\m_{j}(\s\sft{io_{\blam}})}}{\gamma_{\m_{j}(\s)}\gamma_{\s\sft{io_{\blam}}}}.$$
As a result, we can get that
	$$A_{i,j}^{\s\t}=(h_{\s,j}^{\sft{i}})^{-1}\frac{\gamma_{\t^{\blam}}}{\gamma_{\t^\blam\sft{j}}}\frac{\gamma_{\s}\gamma_{\t}}{\gamma_{\m_{j}(\s)}\gamma_{\m_{j}(\t)}}=(h_{\s,j}^{\sft{i}})^{-1}R_{\s\t,j}=
	(h_{\s,j}^{\sft{i}})^{-1}\frac{\gamma_{\t^{\blam}\sft{j}}}{\gamma_{\t^{\blam}}}\frac{\gamma_{\m_{j}(\s)}\gamma_{\m_{j}(\t)}}{\gamma_{\s\sft{j}}\gamma_{\t\sft{j}}}.$$
This proves the lemma.
\end{proof}

\begin{lem} Suppose that (\ref{ssrpn}) holds. Let $\blam\in\P_{r,n}^{\sigma}$, $\s, \t\in\Std(\blam)$ and $k\in\Z/p_{\blam}\Z$. Then $\sigma(f_{\s\t}^{[k]})=f_{\s\t}^{[k]}$. In particular, $f_{\s\t}^{[k]}\in\HH_{r,p,n}$.
\end{lem}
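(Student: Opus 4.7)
The plan is to compute $\sigma(f_{\s\t}^{[k]})$ termwise, reindex the sum, and reduce $\sigma$-invariance to a clean recursion on the coefficients $A_{i,j}^{\s\t}$. Using Corollary \ref{sigmafst} with exponent $1$, each basis element transforms as $\sigma(f_{\s\sft{io_\blam+j}\t\sft{j}}) = R_{\s\sft{io_\blam+j}\t\sft{j},1}\,f_{\s\sft{io_\blam+j+1}\t\sft{j+1}}$, so
\[
\sigma(f_{\s\t}^{[k]}) = \sum_{i\in\Z/p_\blam\Z}\sum_{j\in\Z/p\Z}(\varepsilon^{o_\blam})^{ki}\,A_{i,j}^{\s\t}\,R_{\s\sft{io_\blam+j}\t\sft{j},1}\,f_{\s\sft{io_\blam+j+1}\t\sft{j+1}}.
\]
After shifting $j \mapsto j-1$ cyclically in $\Z/p\Z$ (the weight $(\varepsilon^{o_\blam})^{ki}$ depends only on $i$, hence is untouched), the identity $\sigma(f_{\s\t}^{[k]}) = f_{\s\t}^{[k]}$ reduces, term by term, to the recursion
\[
A_{i,j}^{\s\t} = A_{i,j-1}^{\s\t}\,R_{\s\sft{io_\blam+j-1}\t\sft{j-1},1},\qquad \forall\, i\in\Z/p_\blam\Z,\ j\in\Z/p\Z.
\]

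For this recursion I would use the compact form $A_{i,j}^{\s\t} = (h_\s^{\sft{i}})^{-1}\,R_{\s\sft{io_\blam}\t,j}$ supplied by Lemma \ref{Astij}. Cancelling the common factor $(h_\s^{\sft{i}})^{-1}$, the claim becomes
\[
R_{\s\sft{io_\blam}\t,j} = R_{\s\sft{io_\blam}\t,j-1}\cdot R_{\s\sft{io_\blam+j-1}\t\sft{j-1},1},
\]
which is precisely Proposition \ref{propRstk}(2) applied to the composition $(j-1,1)$ of $j$. This settles every $j\geq 1$.

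The only subtlety is the wraparound $j=0$, where $j-1=p-1$ in $\Z/p\Z$. Since $R_{\u\v,0}=1$ by definition, $A_{i,0}^{\s\t} = (h_\s^{\sft{i}})^{-1}$; on the other hand, $\sigma^p = \id$ forces $R_{\u\v,p}=1$ for all $\u,\v$, and applying Proposition \ref{propRstk}(2) to the composition $(p-1,1)$ of $p$ yields $R_{\s\sft{io_\blam}\t,p-1}\cdot R_{\s\sft{io_\blam+p-1}\t\sft{p-1},1}=1$. Hence $A_{i,p-1}^{\s\t}\,R_{\s\sft{io_\blam+p-1}\t\sft{p-1},1} = (h_\s^{\sft{i}})^{-1} = A_{i,0}^{\s\t}$, closing the cycle. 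This proves $\sigma(f_{\s\t}^{[k]}) = f_{\s\t}^{[k]}$, and the membership $f_{\s\t}^{[k]} \in \HH_{r,p,n}$ follows from the characterization $\HH_{r,p,n} = \{h\in\HH_{r,n}\mid\sigma(h)=h\}$ recorded at the start of Section 2. All the heavy structural input—Theorem \ref{mainthm1}, the square-roots of Theorems \ref{mainthm2a}--\ref{mainthm2b}, and the multiplicative properties of the $R$-coefficients—is already packaged into Lemma \ref{Astij} and Proposition \ref{propRstk}, so the present verification is a transparent bookkeeping argument with no real obstacle.
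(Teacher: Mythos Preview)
Your proof is correct and follows essentially the same route as the paper's: both reduce $\sigma$-invariance to the recursion $A_{i,j+1}^{\s\t}=A_{i,j}^{\s\t}R_{\s\sft{io_\blam+j}\t\sft{j}}$, rewrite $A_{i,j}^{\s\t}=(h_\s^{\sft{i}})^{-1}R_{\s\sft{io_\blam}\t,j}$ via Lemma \ref{Astij}, and then invoke Proposition \ref{propRstk}(2) on the composition $(j,1)$ (you use $(j-1,1)$ after reindexing, which is the same). Your explicit treatment of the wraparound $j=0$ is a nice touch that the paper leaves implicit, but there is no substantive difference in method.
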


\begin{proof} For any $i\in\Z/p_\blam\Z$ and $j\in\Z/p\Z$, by Definition \ref{fstk},  $$
A_{i,j}^{\s\t}=(h_\s^{\<i\>})^{-1}\frac{\gamma_{\t^{\blam}}}{\gamma_{\t^\blam\sft{j}}}\frac{\gamma_{\s\<io_\blam\>}}{\gamma_{\m_j(\s\<io_\blam\>)}}\frac{\gamma_{\t}}{\gamma_{\m_j(\t)}}.$$	
	
We claim that $A_{i,j+1}^{\s\t}=A_{i,j}^{\s\t}R_{\s\sft{io_{\blam}+j}\t\sft{j}}$. In fact, by Lemma \ref{snphit}, we have
$$R_{\s\sft{io_{\blam}}\t,j}=\frac{\gamma_{\t^{\blam}}}{\gamma_{\t^{\blam}\sft{j}}}r_{\s\sft{io_{\blam}},j}r_{\t,j}=\frac{\gamma_{\t^{\blam}\sft{j}}}{\gamma_{\t^{\blam}}}\frac{\gamma_{\m_{j}(\s\sft{io_{\blam}})}\gamma_{\m_{j}(\t)}}{\gamma_{\s\sft{io_{\blam}+j}}\gamma_{\t\sft{j}}}.$$
Applying Lemma \ref{Astij}, we get that \begin{equation}\label{eqla0}
A_{i,j}^{\s\t}=(h_\s^{\<i\>})^{-1}R_{\s\sft{io_{\blam}}\t,j}. \end{equation}

Consider the composition $\mu=(j,1)$ of the integer $j+1$. By the second statement of Proposition \ref{propRstk}, we know that
\begin{equation}\label{eqla1}
R_{\s\sft{io_{\blam}}\t,j}R_{\s\sft{io_{\blam}+j}\t\sft{j}}=R_{\s\sft{io_{\blam}}\t,j+1}.\end{equation}
Now our claim follows from the definition of $A_{i,j}^{\s\t}$, (\ref{eqla0}) and (\ref{eqla1}).

Finally, applying Corollary \ref{sigmafst} and the above claim, we can get that
$$
\sigma(A_{i,j}^{\s\t}f_{\s\sft{io_{\blam}+j}\t\sft{j}})=A_{i,j}^{\s\t}R_{\s\sft{io_{\blam}+j}\t\sft{j}}f_{\s\sft{io_{\blam}+j+1}\t\sft{j+1}}=
A_{i,j+1}^{\s\t}f_{\s\sft{io_{\blam}+j+1}\t\sft{j+1}},$$
from which the lemma follows.
\end{proof}

\begin{lem}\label{orth}
	Let $\blam, \bmu\in\P_{r,n}^{\sigma}$, $0\leq k<p_{\blam}$, $0\leq l<p_{\bmu}$. Let $\s, \t\in\Std(\blam)$ and $\u, \v\in\Std(\bmu)$ be standard tableaux such that
$\s^{-1}(1),\t^{-1}(1)\in(\blam^{[1]},\cdots,\blam^{[o_{\blam}]})$ and $\u^{-1}(1),\v^{-1}(1)\in(\bmu^{[1]},\cdots,\bmu^{[o_{\bmu}]})$. Then
	$$f_{\s\t}^{[k]}f_{\u\v}^{[l]}=\delta_{\t, \u}\delta_{k,l}p_{\blam}\gamma_{\t}f_{\s\v}^{[k]}.$$
\end{lem}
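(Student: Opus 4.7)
The plan is to expand the product $f_{\s\t}^{[k]}f_{\u\v}^{[l]}$ via the defining sums and evaluate the inner products of seminormal basis elements by Lemma \ref{GammaCoeffi}:
$$ f_{\s\<io_\blam+j\>\t\<j\>}f_{\u\<i'o_\bmu+j'\>\v\<j'\>}=\delta_{\t\<j\>,\,\u\<i'o_\bmu+j'\>}\,\gamma_{\t\<j\>}\,f_{\s\<io_\blam+j\>\v\<j'\>}. $$
For the delta to be non-zero the shapes must match, $\blam\<j\>=\bmu\<j'\>$, which implies $\blam\sim_\sigma\bmu$ and hence $\blam=\bmu$ since both are $\sim_\sigma$-class representatives. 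Minimality of $o_\blam$ then forces $j\equiv j'\pmod{o_\blam}$, so $\t=\u\<i'o_\blam+j'-j\>$. The hypothesis that $\t^{-1}(1)$ and $\u^{-1}(1)$ both lie in $(\blam^{[1]},\ldots,\blam^{[o_\blam]})$ forces this total shift to be $0\pmod p$, collapsing to $\t=\u$ and pinning $i'$ uniquely in terms of $(j-j')/o_\blam$.

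After this reduction, I would reparameterize the surviving indices by $m\in\Z/p_\blam\Z$ with $j=j'+mo_\blam$ and $i'\equiv m\pmod{p_\blam}$, then set $i''=i+m$ so that the shift on $\s$ in the output becomes $\s\<i''o_\blam+j'\>$. The outer character sum over $m$ factors cleanly, and by orthogonality
$$ \sum_{m\in\Z/p_\blam\Z}(\eps^{o_\blam})^{(l-k)m}=p_\blam\,\delta_{k,l}, $$
accounting for the desired factor $p_\blam\delta_{k,l}$. What remains is indexed by $(i'',j')\in\Z/p_\blam\Z\times\Z/p\Z$, matching the defining sum for $f_{\s\v}^{[k]}$ on the right-hand side of the lemma.

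The crux, and the main obstacle, is then to verify the $m$-independent identity
$$ A_{i''-m,\,j'+mo_\blam}^{\s\t}\,A_{m,\,j'}^{\t\v}\,\gamma_{\t\<j'+mo_\blam\>}\;=\;\gamma_\t\,A_{i'',\,j'}^{\s\v} $$
for every admissible $(i'',j',m)$. Using Lemma \ref{Astij} I would rewrite each $A_{i,j}^{\cdot\cdot}$ as an inverse $h$-factor times an $R$-factor, then decompose $R_{\s\t,\,j'+mo_\blam}=R_{\s\t,j'}\,R_{\s\<j'\>\t\<j'\>,\,mo_\blam}$ via Proposition \ref{propRstk}(2), and combine the $R$-factors using the transitivity $R_{\s\t,k}R_{\t\v,k}=(\gamma_\t/\gamma_{\t\<k\>})R_{\s\v,k}$ that follows by applying $\sigma^k$ to $f_{\s\t}f_{\t\v}=\gamma_\t f_{\s\v}$. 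The remaining $h$-bookkeeping is controlled by the multiplicativity $h_{\s,x}^{\<l_1+l_2\>}=h_{\s,x}^{\<l_1\>}h_{\s,x+l_1 o_\blam}^{\<l_2\>}$ of Lemma \ref{htklx}, and the subtle commutation of the shifts $\<j'\>$ and $\<mo_\blam\>$ inside the resulting $r$-ratios is exactly where the symmetry $r_{\t,lo_\blam}r_{\t\<lo_\blam\>,ko_\blam}=r_{\t,ko_\blam}r_{\t\<ko_\blam\>,lo_\blam}$ from Theorem \ref{mainthm1} is essential.
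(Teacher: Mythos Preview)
Your proposal is correct and follows essentially the same route as the paper: expand the double sum, use the position hypothesis on $\t^{-1}(1),\u^{-1}(1)$ to force $\t=\u$ and pin $i'$ to $(j-j')/o_\blam$, extract the $\delta_{k,l}p_\blam$ factor from orthogonality of $p_\blam$th roots of unity, and reduce everything to the coefficient identity $A_{i,j}^{\s\t}A_{a,b}^{\t\v}\gamma_{\t\<j\>}=\gamma_\t A_{i+a,b}^{\s\v}$ (your $m$-independent identity in different notation). The paper proves this identity by unwinding $A$'s into $\gamma$- and $r$-ratios and establishing an explicit intermediate claim (equation (\ref{claimf})) directly from Proposition~\ref{propRstk}(2), whereas you package the same content through the $R$-transitivity $R_{\s\t,k}R_{\t\v,k}=(\gamma_\t/\gamma_{\t\<k\>})R_{\s\v,k}$ together with Lemma~\ref{htklx}; since Lemma~\ref{htklx} already absorbs Theorem~\ref{mainthm1} and the claim (\ref{claimf}), your bookkeeping is slightly more economical, but the substance is identical.
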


\begin{proof} By definition, we have that
	$$	f_{\s\t}^{[k]}f_{\u\v}^{[l]}=\bigg(\sum_{i\in\Z/p_{\blam}\Z}\sum_{j\in\Z/p\Z}(\varepsilon^{o_{\blam}})^{ki}A_{i,j}^{\s\t}f_{\s\sft{io_{\blam}+j}\t\sft{j}}\bigg)
	\cdot\bigg(\sum_{i\in\Z/p_{\bmu}\Z}\sum_{j\in\Z/p\Z}(\varepsilon^{o_{\bmu}})^{li}A_{i,j}^{\u\v}f_{\u\sft{io_{\bmu}+j}\v\sft{j}}\bigg). $$
	If $\t\neq\u$, then our assumption that $\t^{-1}(1)\in(\blam^{[1]},\cdots,\blam^{[o_{\blam}]}), \u^{-1}(1)\in(\bmu^{[1]},\cdots,\bmu^{[o_{\bmu}]})$ implies that $$
	\t\<j\>\neq\u\<io_{\blam}+j\>,\,\,\forall\,i\in\Z/p_\bmu\Z, j\in\Z/p\Z .
	$$
	Thus $f_{\s\sft{io_{\blam}+j}\t\sft{j}}f_{\u\sft{io_{\blam}+j}\v\sft{j}}=0$, and hence $f_{\s\t}^{[k]}f_{\u\v}^{[l]}=0$.
	
	Henceforth, we assume $\t=\u$ and hence $\blam=\bmu$. Then $$
	f_{\s\t}^{[k]}f_{\u\v}^{[l]}=\bigg(\sum_{i\in\Z/p_{\blam}\Z}\sum_{j\in\Z/p\Z}(\varepsilon^{o_{\blam}})^{ki}A_{i,j}^{\s\t}f_{\s\sft{io_{\blam}+j}\t\sft{j}}\bigg)
	\cdot\bigg(\sum_{a\in\Z/p_{\blam}\Z}\sum_{b\in\Z/p\Z}(\varepsilon^{o_{\blam}})^{la}A_{a,b}^{\t\v}f_{\t\sft{ao_{\blam}+b}\v\sft{b}}\bigg).
	$$
	For each $i,a\in\Z/p_{\blam}\Z$ and $j,b\in\Z/p\Z$, we have that
	\begin{equation}\label{Astuv}
		(\varepsilon^{o_{\blam}})^{ki}A_{i,j}^{\s\t}f_{\s\sft{io_{\blam}+j}\t\sft{j}}(\varepsilon^{o_{\blam}})^{la}A_{a,b}^{\t\v}f_{\t\sft{ao_{\blam}+b}\v\sft{b}}
		=(\varepsilon^{o_{\blam}})^{ki+la}\delta_{j,ao_{\blam}+b}A_{i,j}^{\s\t}A_{a,b}^{\t\v}\gamma_{\t\sft{j}}f_{\s\sft{io_{\blam}+j}\v\sft{b}}.
	\end{equation}
	Then (\ref{Astuv}) is nonzero only if $j=ao_{\blam}+b$. In this case, applying Lemma \ref{Astij}, we have that
	$$\begin{aligned}
		A_{i,j}^{\s\t}A_{a,b}^{\t\v}\gamma_{\t\sft{j}}
		&=(h_\s^{\<i\>})^{-1}\frac{\gamma_{\s\<io_\blam\>}\gamma_{\m_j(\t)}}{\gamma_{\m_j(\s\<io_\blam\>)}\gamma_{\t\sft{j}}}
		(h_\t^{\<a\>})^{-1}\frac{\gamma_{\t\<ao_\blam\>}\gamma_{\m_b(\v)}}{\gamma_{\m_b(\t\<ao_\blam\>)}\gamma_{\v\sft{b}}}\gamma_{\t\sft{j}}\\
		&=(h_\s^{\<i\>})^{-1}(h_\t^{\<a\>})^{-1}\frac{\gamma_{\s\<io_\blam\>}\gamma_{\m_j(\t)}}{\gamma_{\m_j(\s\<io_\blam\>)}}
		\frac{\gamma_{\t\<ao_\blam\>}\gamma_{\m_b(\v)}}{\gamma_{\m_b(\t\<ao_\blam\>)}\gamma_{\v\sft{b}}}.
	\end{aligned}$$
	By definition, $h_{\t}^{\sft{a}}=h_{\blam,0,a}\frac{\gamma_{\m_{ao_{\blam}(\t)}}}{\gamma_{\t}}$. It follows that
	$$A_{i,j}^{\s\t}A_{a,b}^{\t\v}\gamma_{\t\sft{j}}=(h_\s^{\<i\>})^{-1}h_{\blam,0,a}^{-1}\frac{\gamma_{\s\<io_\blam\>}\gamma_{\m_b(\v)}}{\gamma_{\m_j(\s\<io_\blam\>)}\gamma_{\v\sft{b}}}
	\frac{\gamma_{\t}}{\gamma_{\m_{ao_{\blam}(\t)}}}\frac{\gamma_{\m_j(\t)}\gamma_{\t\<ao_\blam\>}}{\gamma_{\m_b(\t\<ao_\blam\>)}}.$$
	By Lemma \ref{htkl}, we have that
	$$
	h_{\s}^{\sft{i+a}}=h_{\s}^{\sft{i}}h_{\s\sft{io_{\blam}}}^{\sft{a}}=h_{\s}^{\sft{i}}h_{\s}^{\sft{a}}\frac{\gamma_{\s}\gamma_{\m_{io_{\blam}(\s\sft{ao_{\blam}})}}}{\gamma_{\m_{io_{\blam}(\s)}}
		\gamma_{\s\sft{ao_{\blam}}}}
	=h_{\s}^{\sft{i}}h_{\blam,0,a}\frac{\gamma_{\m_{ao_{\blam}(\s)}}\gamma_{\m_{io_{\blam}(\s\sft{ao_{\blam}})}}}{\gamma_{\m_{io_{\blam}(\s)}}\gamma_{\s\sft{ao_{\blam}}}}.
	$$
	Then we can get that
	
$$A_{i,j}^{\s\t}A_{a,b}^{\t\v}\gamma_{\t\sft{j}}=(h_{\s}^{\sft{i+a}})^{-1}\frac{\gamma_{\m_{ao_{\blam}(\s)}}\gamma_{\m_{io_{\blam}(\s\sft{ao_{\blam}})}}}{\gamma_{\m_{io_{\blam}(\s)}}\gamma_{\s\sft{ao_{\blam}}}}\frac{\gamma_{\s\<io_\blam\>}\gamma_{\m_b(\v)}}{\gamma_{\m_j(\s\<io_\blam\>)}\gamma_{\v\sft{b}}}
	\frac{\gamma_{\t}}{\gamma_{\m_{ao_{\blam}(\t)}}}\frac{\gamma_{\m_j(\t)}\gamma_{\t\<ao_\blam\>}}{\gamma_{\m_b(\t\<ao_\blam\>)}}.$$
	We now claim that \begin{equation}\label{claimf}\frac{\gamma_{\m_{ao_{\blam}(\s)}}\gamma_{\m_{io_{\blam}(\s\sft{ao_{\blam}})}}\gamma_{\s\<io_\blam\>}}{\gamma_{\m_{io_{\blam}(\s)}}
			\gamma_{\s\sft{ao_{\blam}}}\gamma_{\m_j(\s\<io_\blam\>)}}=\frac{\gamma_{\s\sft{(i+a)o_{\blam}}}}{\gamma_{\m_{b}(\s\sft{(i+a)o_{\blam}})}}
		\frac{\gamma_{\m_{b}(\t^{\blam}\sft{ao_{\blam}})}}{\gamma_{\t^{\blam}\sft{ao_{\blam}}}}.\end{equation}
	
	To prove the claim, we first recall that $\m_{k}(\t^{\blam})=\t^{\blam}, \forall\, k\in\Z/p\Z$. By the second statement of Proposition \ref{propRstk}, we have that
	$$R_{\s\t^{\blam},io_{\blam}}R_{\s\sft{io_{\blam}}\t^{\blam}\sft{io_{\blam}},ao_{\blam}+b}=R_{\s\t,(a+i)o_{\blam}+b}=R_{\s\t^{\blam},ao_{\blam}}R_{\s\sft{ao_{\blam}}\t^{\blam}\sft{ao_{\blam}},io_{\blam}}
	R_{\s\sft{(i+a)o_{\blam}}\t^{\blam}\sft{(i+a)o_{\blam}},b}.$$
	Applying Lemma \ref{snphit}, we get that
	$$r_{\s,io_{\blam}}\frac{\gamma_{\t^{\blam}}}{\gamma_{\t^{\blam}\sft{io_{\blam}}}}
	r_{\s\sft{io_{\blam}},ao_{\blam}+b}\frac{\gamma_{\m_{ao_{\blam}+b}(\t^{\blam}\sft{io_{\blam}})}}{\gamma_{\t^{\blam}\sft{(i+a)o_{\blam}+b}}}
	=r_{\s,ao_{\blam}}\frac{\gamma_{\t^{\blam}}}{\gamma_{\t^{\blam}\sft{ao_{\blam}}}}
	r_{\s\sft{ao_{\blam}},io_{\blam}}\frac{\gamma_{\m_{io_{\blam}}(\t^{\blam}\sft{ao_{\blam}})}}{\gamma_{\t^{\blam}\sft{(i+a)o_{\blam}}}}
	r_{\s\sft{(i+a)o_{\blam}},b}\frac{\gamma_{\m_{b}(\t^{\blam}\sft{(i+a)o_{\blam}})}}{\gamma_{\t^{\blam}\sft{(i+a)o_{\blam}+b}}}.$$
	Then we can get that
	
$$r_{\s\sft{(i+a)o_{\blam}},b}=\frac{r_{\s,io_{\blam}}r_{\s\sft{io_{\blam}},ao_{\blam}+b}}{r_{\s,ao_{\blam}}r_{\s\sft{ao_{\blam}},io_{\blam}}}\frac{\gamma_{\m_{ao_{\blam}+b}(\t^{\blam}\sft{io_{\blam}})}\gamma_{\t^{\blam}\sft{ao_{\blam}}}\gamma_{\t^{\blam}\sft{(i+a)o_{\blam}}}}{\gamma_{\t^{\blam}\sft{io_{\blam}}}\gamma_{\m_{io_{\blam}}(\t^{\blam}\sft{ao_{\blam}})}\gamma_{\m_{b}(\t^{\blam}\sft{(i+a)o_{\blam}})}}.$$
	By (\ref{rgamma}), we know that $\frac{\gamma_{\t^{\blam}\sft{ao_{\blam}}}}{\gamma_{\m_{io_{\blam}}(\t^{\blam}\sft{ao_{\blam}})}}=
	\frac{\gamma_{\t^{\blam}\sft{io_{\blam}}}}{\gamma_{\m_{ao_{\blam}}(\t^{\blam}\sft{io_{\blam}})}}$. Then we can get that
	$$\begin{aligned}
		r_{\s\sft{(i+a)o_{\blam}},b}&=\frac{r_{\s,io_{\blam}}r_{\s\sft{io_{\blam}},ao_{\blam}+b}}{r_{\s,ao_{\blam}}r_{\s\sft{ao_{\blam}},io_{\blam}}}
		\frac{\gamma_{\m_{ao_{\blam}+b}(\t^{\blam}\sft{io_{\blam}})}\gamma_{\t^{\blam}\sft{(i+a)o_{\blam}}}}{\gamma_{\m_{ao_{\blam}}(\t^{\blam}\sft{io_{\blam}})}
			\gamma_{\m_{b}(\t^{\blam}\sft{(i+a)o_{\blam}})}}\\
		&=\frac{r_{\s,io_{\blam}}r_{\s\sft{io_{\blam}},ao_{\blam}+b}}{r_{\s,ao_{\blam}}r_{\s\sft{ao_{\blam}},io_{\blam}}}
		\frac{r_{\t^{\blam}\sft{io_{\blam}},ao_{\blam}+b}}{r_{\t^{\blam}\sft{io_{\blam}},ao_{\blam}}r_{\t^{\blam}\sft{(i+a)o_{\blam}},b}}.
	\end{aligned}
	$$
	By Definition \ref{2dfns}, we have that
	
$$\frac{\gamma_{\m_{ao_{\blam}+b}(\t^{\blam}\sft{io_{\blam}})}\gamma_{\t^{\blam}\sft{(i+a)o_{\blam}}}}{\gamma_{\m_{ao_{\blam}}(\t^{\blam}\sft{io_{\blam}})}\gamma_{\m_{b}(\t^{\blam}\sft{(i+a)o_{\blam}})}}=\frac{r_{\t^{\blam}\sft{io_{\blam}},ao_{\blam}}r_{\t^{\blam}\sft{(i+a)o_{\blam}},b}}{r_{\t^{\blam}\sft{io_{\blam}},ao_{\blam}+b}}.$$
	Similarly, by the second statement of Proposition \ref{propRstk}, we have that
	$$R_{\t^{\blam}\sft{io_{\blam}}\t^{\blam},ao_{\blam}+b}=R_{\t^{\blam}\sft{io_{\blam}}\t^{\blam},ao_{\blam}}R_{\t^{\blam}\sft{(i+a)o_{\blam}}\t^{\blam}\sft{ao_{\blam}},b}.$$
	Applying Lemma \ref{snphit}, we can deduce that
	
$$\frac{r_{\t^{\blam}\sft{io_{\blam}},ao_{\blam}}r_{\t^{\blam}\sft{(i+a)o_{\blam}},b}}{r_{\t^{\blam}\sft{io_{\blam}},ao_{\blam}+b}}=\frac{\gamma_{\t^{\blam}\sft{ao_{\blam}}}}{\gamma_{\m_{b}(\t^{\blam}\sft{ao_{\blam}})}}.$$
	
	Using Definition \ref{2dfns} again, it is easy to check that
	$$\frac{\gamma_{\m_{ao_{\blam}(\s)}}\gamma_{\m_{io_{\blam}(\s\sft{ao_{\blam}})}}\gamma_{\s\<io_\blam\>}}{\gamma_{\m_{io_{\blam}(\s)}}\gamma_{\s\sft{ao_{\blam}}}\gamma_{\m_j(\s\<io_\blam\>)}}
	=\frac{r_{\s,io_{\blam}}r_{\s\sft{io_{\blam}},ao_{\blam}+b}}{r_{\s,ao_{\blam}}r_{\s\sft{ao_{\blam}},io_{\blam}}},\quad
\frac{\gamma_{\s\sft{(i+a)o_{\blam}}}}{\gamma_{\m_{b}(\s\sft{(i+a)o_{\blam}})}}=r_{\s\sft{(i+a)o_{\blam}},b}.$$
	Then we can get that
	$$\frac{\gamma_{\m_{ao_{\blam}(\s)}}\gamma_{\m_{io_{\blam}(\s\sft{ao_{\blam}})}}\gamma_{\s\<io_\blam\>}}{\gamma_{\m_{io_{\blam}(\s)}}\gamma_{\s\sft{ao_{\blam}}}\gamma_{\m_j(\s\<io_\blam\>)}}=
	\frac{\gamma_{\s\sft{(i+a)o_{\blam}}}}{\gamma_{\m_{b}(\s\sft{(i+a)o_{\blam}})}}\frac{\gamma_{\m_{b}(\t^{\blam}\sft{ao_{\blam}})}}{\gamma_{\t^{\blam}\sft{ao_{\blam}}}}. $$
	This proves our claim (\ref{claimf}).
	
	As a result, we have
	$$
	A_{i,j}^{\s\t}A_{a,b}^{\u\v}\gamma_{\t\sft{j}}=(h_{\s}^{\sft{i+a}})^{-1}\frac{\gamma_{\s\<(i+a)o_\blam\>}\gamma_{\m_b(\v)}}{\gamma_{\m_b(\s\<(i+a)o_\blam\>)}\gamma_{\v\sft{b}}}
	\times\gamma_{\t}\frac{\gamma_{\m_{b}(\t^{\blam}\sft{ao_{\blam}})}}{\gamma_{\t^{\blam}\sft{ao_{\blam}}}}\frac{\gamma_{\m_{j}(\t)}\gamma_{\t\<ao_\blam\>}}{\gamma_{\m_{ao_{\blam}(\t)}}\gamma_{\m_b(\t\<ao_\blam\>)}}.
	$$
	
	Note that we have assumed that $j=ao_\blam+b$. Now we will compute the quotient $\frac{\gamma_{\m_{ao_{\blam}+b}(\t)}\gamma_{\t\<ao_\blam\>}}{\gamma_{\m_{ao_{\blam}(\t)}}\gamma_{\m_b(\t\<ao_\blam\>)}}$. By Definition
\ref{2dfns}, we have that
	$$\frac{\gamma_{\m_{ao_{\blam}+b}(\t)}\gamma_{\t\<ao_\blam\>}}{\gamma_{\m_{ao_{\blam}(\t)}}\gamma_{\m_b(\t\<ao_\blam\>)}}=\frac{r_{\t,ao_{\blam}}r_{\t\sft{ao_{\blam}},b}}{r_{\t,ao_{\blam}+b}}.$$
	Applying the second statement of Proposition \ref{propRstk}, we can get that
	$R_{\t\t^{\blam},ao_{\blam}+b}=R_{\t\t^{\blam},ao_{\blam}}R_{\t\sft{ao_{\blam}}\t^{\blam}\sft{ao_{\blam}},b}$.
	Applying Lemma \ref{snphit}, we can deduce that
	$$\frac{r_{\t,ao_{\blam}}r_{\t\sft{ao_{\blam}},b}}{r_{\t,ao_{\blam}+b}}=\frac{\gamma_{\t^{\blam}\sft{ao_{\blam}}}}{\gamma_{\m_{b}(\t^{\blam}\sft{ao_{\blam}})}}.$$
	
	In conclusion, if $\t=\u$ and $j=ao_{\blam}+b$, we can deduce that
	$$A_{i,j}^{\s\t}A_{a,b}^{\u\v}\gamma_{\t\sft{j}}=\gamma_{\t}(h_{\s}^{\sft{i+a}})^{-1}\frac{\gamma_{\s\<(i+a)o_\blam\>}\gamma_{\m_b(\v)}}{\gamma_{\m_b(\s\<(i+a)o_\blam\>)}\gamma_{\v\sft{b}}}=\gamma_{\t}A_{i+a,b}^{s,v}.$$
	
	In the product $f_{\s\t}^{[k]}f_{\u\v}^{[l]}$, each $A_{i,j}^{\s\t}f_{\s\sft{io_{\blam}+j}\t\sft{j}}$ appears $p_{\blam}$ times. Then we can deduce that
	$$\begin{aligned}
		f_{\s\t}^{[k]}f_{\t\v}^{[l]}
		&=\bigg(\sum_{i\in\Z/p_{\blam}Z}\sum_{j\in\Z/p\Z}(\varepsilon^{o_{\blam}})^{ki}A_{i,j}^{\s\t}f_{\s\sft{io_{\blam}+j}\t\sft{j}}\bigg)\\
		&\qquad\times\bigg(\sum_{a\in\Z/p_{\blam}\Z}\sum_{b\in\Z/p\Z}(\varepsilon^{o_{\blam}})^{la}A_{a,b}^{\u\v}f_{\u\sft{ao_{\blam}+b}\v\sft{b}}\bigg)\\
		&=\gamma_{\t}\sum_{i,a\in\Z/p_{\blam}\Z}\sum_{b\in\Z/p\Z}(\varepsilon^{o_{\blam}})^{ki+la}A_{i+a,b}^{s,v}f_{\s\sft{(i+a)o_{\blam}+b}\v\sft{b}}\\
		&=\gamma_{\t}\sum_{i\in\Z/p_{\blam}\Z}\sum_{b\in\Z/p\Z}\bigg(\sum_{m=0}^{p_{\blam}-1}(\varepsilon^{o_{\blam}})^{k(p_{\blam}-m)+l(i+m)}\bigg)A_{i,j}^{s,v}f_{\s\sft{io_{\blam}+j}\v\sft{j}}.
	\end{aligned}$$
	
	If $l=k$, the coefficient
	$$\sum_{m=0}^{p_{\blam}-1}(\varepsilon^{o_{\blam}})^{k(p_{\blam}-m)+l(i+m)}=p_{\blam}(\varepsilon^{o_{\blam}})^{li}.$$
	In particular, if $o_{\blam}=o_{\bmu}=p$, that is $p_{\blam}=1$, then $k=l=0$ and hence the Lemma holds. Then we assume that $p_{\blam}>1$.
	
	If $l\neq k$, the coefficient
	$$\sum_{m=0}^{p_{\blam}-1}(\varepsilon^{o_{\blam}})^{k(p_{\blam}-m)+l(i+m)}=(\varepsilon^{o_{\blam}})^{li}\cdot \sum_{m=0}^{p_{\blam}-1}(\varepsilon^{o_{\blam}})^{m(l-k)}.$$
	
	Since $\varepsilon$ is a primitive $p$th root of unity and $p=p_{\blam}o_{\blam}$, $\varepsilon^{o_{\blam}}$ is a primitive $p_{\blam}$th root of unity. Then $(\varepsilon^{o_{\blam}})^{l-k}\neq 1$ is also a
$p_{\blam}$th root of unity because of $0\leq l\neq k\leq p_{\blam}-1$. Hence, $\sum_{m=0}^{p_{\blam}-1}(\varepsilon^{o_{\blam}})^{m(l-k)}=0$.
	This proves the lemma.
\end{proof}

Recall that $\P_{r,n}^{\sigma}$ denotes the set of $\sim_{\sigma}$-equivalence classes in $\P_{r,n}$. Let $[\blam]\in\P_{r,n}^{\sigma}$, where $\blam$ is a multipartition. Define the subsets
$$
\mathscr{B}_{\blam}:=\bigl\{f_{\s\t}^{[k]}\bigm|\text{$\s,\t\in\Std(\blam), \s^{-1}(1),\t^{-1}(1)\in(\blam^{[1]},\cdots,\blam^{[o_{\blam}]}), 0\leq k<p_{\blam}$}\bigr\},$$
and
$$\mathscr{B}_{r,p,n}:=\bigsqcup_{[\blam]\in\P_{r,n}^{\sigma}}\mathscr{B}_{\blam}.$$
Note that $p_\blam |\#\Std(\blam)$. Applying Theorem \ref{Classification}, we can deduce that $$
\#\mathscr{B}_{\blam}=p_{\blam}\biggl(\frac{\#\Std(\blam)}{p_{\blam}}\biggr)^{2}=\sum_{j=1}^{p_{\blam}}(\dim S_j^{\blam})^{2}.$$
Then we have that
$$\begin{aligned}
\#\mathscr{B}_{r,p,n}&=\sum_{[\blam]\in\P_{r,n}^{\sigma}}\sum_{j=1}^{p_{\blam}}\bigl(\dim S_j^{\blam}\bigr)^{2}
=\sum_{[\blam]\in\P_{r,n}^{\sigma}}\bigl(\dim S(\blam)\bigr)^2/p_\blam\\
&=\dim\HH_{r,p,n}=\frac{1}{p}\dim\HH_{r,n}=\frac{1}{p}\sum_{\blam\in\P_{r,n}}\bigl(\dim S(\blam)\bigr)^2.
\end{aligned}
$$

\begin{lem}\label{linInde}
	The elements in $\mathscr{B}_{r,p,n}:=\bigsqcup_{\blam\in\P_{r,n}^{\sigma}}\mathscr{B}_{\blam}\subset \HH_{r,p,n}$ are $K$-linearly independent.
\end{lem}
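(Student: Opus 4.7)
The plan is to expand each $f_{\s\t}^{[k]}$ in the seminormal basis $\{f_{\u\v}\}$ of $\HH_{r,n}$ via Definition \ref{fstk}, and to show that the supports corresponding to distinct indexing triples $(\blam,\s,\t)$ are disjoint; linear independence then reduces to a Vandermonde argument in the parameter $k$.

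The crucial combinatorial assertion is the following: if $\blam\in\P_{r,n}^{\sigma}$ and $\t,\t'\in\Std(\blam)$ both satisfy $\t^{-1}(1),\t'^{-1}(1)\in(\blam^{[1]},\ldots,\blam^{[o_\blam]})$ and $\t\<z\>=\t'$ for some $z\in\Z$, then $z\equiv 0\pmod{p}$, and in particular $\t=\t'$. Indeed, matching shapes first forces $\blam\<z\>=\blam$, hence $z$ is a multiple of $o_\blam$ modulo $p$; writing $\t^{-1}(1)\in\blam^{[c]}$ with $c\in\{1,\ldots,o_\blam\}$, the entry $1$ in $\t\<z\>$ lies in block $c-z\bmod p$, which must also belong to $\{1,\ldots,o_\blam\}$. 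A direct case check shows that among the residues of $z\bmod p$ that are multiples of $o_\blam$, only $z\equiv 0$ keeps the block of $1$ inside $\{1,\ldots,o_\blam\}$. I would then apply this to both coordinates of any putative coincidence $f_{\s\<io_\blam+j\>\t\<j\>}=f_{\s'\<i'o_{\blam'}+j'\>\t'\<j'\>}$: the shape identities force $\blam\sim_{\sigma}\blam'$, hence $\blam=\blam'$ since both are chosen representatives of $\P_{r,n}^{\sigma}$; the combinatorial fact applied to $(\t,\t')$ gives $j=j'$ and $\t=\t'$, while applied to $(\s,\s')$ it gives $i\equiv i'\pmod{p_\blam}$ and $\s=\s'$. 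The same argument taken with $(\s,\t)=(\s',\t')$ also shows that the basis vectors appearing in the expansion of a single $f_{\s\t}^{[k]}$ are pairwise distinct as $(i,j)$ ranges over $\Z/p_\blam\Z\times\Z/p\Z$.

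Given a relation $\sum c_{\s\t,k}^{\blam}f_{\s\t}^{[k]}=0$, the disjointness of supports across distinct triples $(\blam,\s,\t)$ decouples the relation, yielding $\sum_{k=0}^{p_\blam-1}c_{\s\t,k}^{\blam}f_{\s\t}^{[k]}=0$ for each fixed triple. Extracting the coefficient of the distinct basis vector $f_{\s\<io_\blam+j\>\t\<j\>}$ then gives
\[
A_{i,j}^{\s\t}\sum_{k=0}^{p_\blam-1}c_{\s\t,k}^{\blam}(\eps^{o_\blam})^{ki}=0\qquad\text{for all }i,j.
\]
Since $A_{i,j}^{\s\t}\in K^\times$ by Lemma \ref{Astij} and $\eps^{o_\blam}$ is a primitive $p_\blam$-th root of unity in $K$, the Vandermonde matrix $\bigl((\eps^{o_\blam})^{ki}\bigr)_{0\leq i,k<p_\blam}$ is invertible, forcing $c_{\s\t,k}^{\blam}=0$ for every $k$. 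The main obstacle is the combinatorial case check in the second paragraph, which requires tracking simultaneously how the cyclic shift $\<z\>$ acts on the shape $\blam$ (constraint modulo $o_\blam$) and on the block containing the entry $1$ (constraint modulo $p$), and verifying that these two constraints are jointly satisfied only by $z\equiv 0\pmod{p}$.
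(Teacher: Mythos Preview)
Your argument is correct and takes a genuinely different route from the paper. The paper dispatches the lemma in one line by invoking Lemma~\ref{orth}: the multiplication rule $f_{\s\t}^{[k]}f_{\u\v}^{[l]}=\delta_{\t,\u}\delta_{k,l}p_{\blam}\gamma_{\t}f_{\s\v}^{[k]}$ makes the $f_{\s\t}^{[k]}$ into (rescaled) matrix units, and linear independence follows by the standard trick of multiplying a dependence relation on both sides by suitable idempotents $f_{\s\s}^{[k]}$, $f_{\t\t}^{[k]}$ to isolate a single coefficient (together with $p_{\blam}1_K\in K^{\times}$). Your approach instead unfolds Definition~\ref{fstk} in the seminormal basis of $\HH_{r,n}$, proves the supports are disjoint across triples $(\blam,\s,\t)$ via the combinatorial lemma on where the entry~$1$ lands under the cyclic shift, and then handles the parameter $k$ by Vandermonde. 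What your method buys is independence from Lemma~\ref{orth}: you never need the delicate identities on the $h$-coefficients that go into proving the multiplication formula, so your argument is logically lighter. Conversely, the paper's route is shorter once Lemma~\ref{orth} is in hand, and that lemma is needed anyway for Theorem~\ref{mainthm3}. One small remark: your combinatorial claim (that the expansion terms are distinct and the supports are disjoint) is in fact implicitly needed even in the paper's argument, since the matrix-unit trick only concludes $c\cdot f_{\s\t}^{[k]}=0$ and one still has to know $f_{\s\t}^{[k]}\neq 0$; so your disjoint-support observation is a useful complement rather than a detour.
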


\begin{proof} Since $p1_K\in K^\times$, we can deduce that $p_\blam1_K\in K^\times$ for each $\blam$ because $p_\blam|p$. The lemma follows directly from Lemma \ref{orth} because the elements in $\mathscr{B}_{r,p,n}$ are
pairwise orthogonal.
\end{proof}

\medskip
\noindent
{\textbf{Proof of Theorems \ref{mainthm3}:} The second part of the theorem follows from Lemma \ref{orth}. Note that Lemma \ref{linInde} implies that the elements in $\mathscr{B}_{r,p,n}$ are $K$-linearly independent.
It remains to show that $\#\mathscr{B}_{r,p,n}=\dim_K\HH_{r,p,n}$.

Using Theorem \ref{Classification}, it is easy to see that for each $\blam\in\P_{r,n}^{\sigma}$, $$
\#\bigl\{\t\in\Std(\blam)\bigm|\t^{-1}(1)\in(\blam^{[1]},\cdots,\blam^{[o_{\blam}]})\bigr\}=\dim_K S^\blam/p_\blam .
$$
Combining this with the Wedderburn Decomposition Theorem for $\HH_{r,p,n}$ and Theorem \ref{Classification}, we can deduce that $\#\mathscr{B}_{r,p,n}=\dim_K\HH_{r,p,n}$. Hence $\mathscr{B}_{r,p,n}$ must be a $K$-basis of
$\HH_{r,p,n}$.
This proves the first part of Theorem \ref{mainthm3}.
\hfill\qed
\medskip

\medskip
\noindent
{\textbf{Proof of Theorems \ref{mainthm4}:} This follows from Theorems \ref{mainthm3} and Lemma \ref{orth}.
\hfill\qed
\medskip

\begin{dfn} Suppose that (\ref{ssrpn}) holds. We call the set $\mathscr{B}_{r,p,n}$ a seminormal basis of $\HH_{r,p,n}$.
\end{dfn}

\begin{cor}\label{maincor}
Suppose that (\ref{ssrpn}) holds.  Let $\blam\in \P_{r,n}^{\sigma}$ and $\t\in\Std(\blam)$. For any $0\leq k<p_{\blam}$, we define
$$F_{\t}^{[k]}:=f_{\t\t}^{[k]}/(p_{\blam}\gamma_{\t}).$$
Set $F_{\blam}^{[k]}:=\sum\limits_{\substack{\t\in\Std(\blam)\\ \t^{-1}(1)\in(\blam^{[1]},\cdots,\blam^{[o_{\blam}]})}}F_{\t}^{[k]}$. Then
$\bigl\{F_{\blam}^{[k]}\bigm|\blam\in \P_{r,n}^{\sigma}\bigr\}$ gives a complete set of central primitive idempotents of $\HH_{r,p,n}$.
\end{cor}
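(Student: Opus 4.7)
The plan is to verify that the given set of elements forms a complete family of pairwise orthogonal primitive central idempotents, with Theorem \ref{mainthm3} supplying all the needed multiplicative structure.

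First I would establish the idempotent and orthogonality properties at the level of individual tableaux. Applying Theorem \ref{mainthm3} with $\s=\u$, $\t=\v$, and $k=l$ yields $f_{\t\t}^{[k]} f_{\t\t}^{[k]} = p_{\blam} \gamma_{\t}\, f_{\t\t}^{[k]}$, so each $F_{\t}^{[k]} = f_{\t\t}^{[k]}/(p_{\blam} \gamma_{\t})$ is an idempotent; the same rule gives $F_{\t}^{[k]} F_{\u}^{[l]} = 0$ whenever $(\t,k)\neq(\u,l)$. Summing over the restricted set of $\t$ (those with $\t^{-1}(1)\in(\blam^{[1]},\cdots,\blam^{[o_{\blam}]})$) to form $F_{\blam}^{[k]}$, the pairwise orthogonality $F_{\blam}^{[k]} F_{\bmu}^{[l]} = \delta_{\blam,\bmu}\delta_{k,l}\, F_{\blam}^{[k]}$ follows.

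Next I would establish centrality and primitivity. For fixed $(\blam,k)$, Theorem \ref{mainthm3} shows that the renormalised elements $\{f_{\s\t}^{[k]}/(p_{\blam}\gamma_{\t})\}$, with $\s,\t$ satisfying the first-node constraint, form a system of matrix units spanning a subalgebra $A_{\blam,k}\subseteq \HH_{r,p,n}$ whose identity is $F_{\blam}^{[k]}$. Because the basis of Theorem \ref{mainthm3} partitions $\HH_{r,p,n}$ into these blocks and the multiplication rule makes distinct blocks annihilate one another from both sides, $F_{\blam}^{[k]}$ commutes with every basis element and is therefore central; primitivity as a central idempotent then follows from the fact that the center of the matrix algebra $A_{\blam,k}$ is one-dimensional.

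For completeness I would count: by Theorem \ref{Classification}, the number of isomorphism classes of simple $\HH_{r,p,n}$-modules equals $\sum_{\blam\in\P_{r,n}^{\sigma}} p_{\blam}$, which is exactly the cardinality of the proposed set $\{F_{\blam}^{[k]}\}$. Combined with semisimplicity of $\HH_{r,p,n}$ under (\ref{ssrpn}) and the Wedderburn structure theorem, this forces $\sum_{\blam,k} F_{\blam}^{[k]} = 1$, so $\{F_{\blam}^{[k]}\}$ is a complete set of central primitive idempotents. The main obstacle is the careful verification that the block decomposition $\HH_{r,p,n}=\bigoplus_{\blam,k} A_{\blam,k}$ really is a direct sum of two-sided ideals, which reduces to tracking the first-node constraint through the multiplication rule of Theorem \ref{mainthm3} to rule out cross-block products — a short but delicate bookkeeping step.
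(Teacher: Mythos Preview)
Your proposal is correct and follows essentially the same route as the paper: the paper simply records that Corollary~\ref{maincor} (equivalently Theorem~\ref{mainthm4}) follows from Theorem~\ref{mainthm3} and Lemma~\ref{orth}, and your argument spells out exactly how---using the multiplication rule to get matrix units, hence block decomposition, hence centrality and primitivity, with the Wedderburn count from Theorem~\ref{Classification} giving completeness. The bookkeeping step you flag (ruling out cross-block products) is already contained in the $\delta_{\t,\u}\delta_{k,l}$ factor of Lemma~\ref{orth}, since distinct representatives $\blam,\bmu\in\P_{r,n}^{\sigma}$ force $\t\neq\u$ under the first-node constraint.
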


\bigskip\bigskip
\section{The $\sigma$-twisted $k$-centers of $\HH_{r,n}$}

In this section we shall study the center for the cyclotomic Hecke algebra $\HH_{r,p,n}$ in the general case. We shall give the proof of our third main result Theorem \ref{mainthm3} of this paper. Throughout this section,
unless otherwise stated, we {\it{do not}} assume (\ref{ssrpn}) holds. In other words, the cyclotomic Hecke algebra $\HH_{r,p,n}$ may be non-semisimple over $R$ even when $R$ is a field.

Geck and Pfeiffer \cite{GP1} have determined integral bases for the Iwahori-Hecke algebras associated to any finite Weyl groups. In particular, they showed that the dimensions of the centers of those Iwahori-Hecke algebras
are equal to
the number of conjugacy classes of the corresponding finite Weyl groups, which are independent of the characteristic of the ground field. In \cite{HS} the first author of this paper and Lei Shi have generalized this
statement for the
dimension of the center to the cyclotomic Hecke algebra of type $G(r,1,n)$. Therefore, it is natural to ask whether this is true for the center $Z(\HH_{r,p,n})$ of the cyclotomic Hecke algebra $\HH_{r,p,n}$ of type
$G(r,p,n)$. In this
section, we shall give an attempt to answer this question.

Let $R$ be a commutative domain. Let $H$ be an associative symmetric $R$-algebra with a symmetrizing form $\Tr: H\rightarrow R$. We assume that $H$ is finitely generated and free over $R$ and the algebra $H$ equipped with
an automorphism $\tau$. Let $p\in\Z^{\geq 1}$ and assume that $R$ contains a primitive $p$-th root of unity $\veps$. Suppose that $\tau^p$ is an inner automorphism of $H$. That is, there is an invertible element $h_0\in H$
such that $$
\tau^p(h)=h_0^{-1}hh_0,\,\,\,\forall\,h\in H .
$$
By definition, $\tau$ restricts to an automorphism of the center $Z(H)$ of $H$. Moreover, $\tau^p$ restricts to the identity map on the center $Z(H)$ of $H$.

\begin{lem}\label{zHdec1} With the notations as above, we have that $$
Z(H)=\bigoplus_{k\in\Z/p\Z}Z(H)_k,\,\,\,\,Z(H)_k:=\bigl\{z\in Z(H)\bigm|\tau(z)=\veps^k z\bigr\},\,\forall\,k\in\Z/p\Z .
$$
\end{lem}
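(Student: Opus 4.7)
The plan is to produce the decomposition via averaging, using the standard eigenspace projection for a cyclic action of order $p$ on a module where $p$ is invertible.

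First I would note that since $\tau$ is an algebra automorphism of $H$ and $Z(H)$ is characterized intrinsically as the centralizer of $H$ in itself, $\tau$ restricts to an automorphism of $Z(H)$. By the hypothesis $\tau^p(h)=h_0^{-1}hh_0$ for all $h\in H$, any $z\in Z(H)$ satisfies $\tau^p(z)=h_0^{-1}zh_0=z$, so $\tau|_{Z(H)}$ has order dividing $p$.

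Next, since $\varepsilon\in R$ is a primitive $p$-th root of unity, we have $p\cdot 1_R\in R^\times$, and for every $k\in\Z/p\Z$ one defines the $R$-linear operator
\[
e_k:=\frac{1}{p}\sum_{i=0}^{p-1}\varepsilon^{-ki}\tau^i:Z(H)\longrightarrow Z(H).
\]
A direct computation using $\tau^p|_{Z(H)}=\id$ shows $\tau\circ e_k=\varepsilon^{k}e_k$, so $e_k(Z(H))\subseteq Z(H)_k$. Moreover the identity
\[
\sum_{k=0}^{p-1}e_k=\frac{1}{p}\sum_{i=0}^{p-1}\Bigl(\sum_{k=0}^{p-1}\varepsilon^{-ki}\Bigr)\tau^i=\id_{Z(H)}
\]
follows from $\sum_{k=0}^{p-1}\varepsilon^{-ki}=p\delta_{i,0}$. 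Thus every $z\in Z(H)$ decomposes as $z=\sum_{k}e_k(z)$ with $e_k(z)\in Z(H)_k$, giving $Z(H)=\sum_{k\in\Z/p\Z}Z(H)_k$.

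Finally, to show the sum is direct, suppose $\sum_{k=0}^{p-1}z_k=0$ with $z_k\in Z(H)_k$. Applying $\tau^j$ for $j=0,1,\dots,p-1$ yields the system $\sum_{k=0}^{p-1}\varepsilon^{jk}z_k=0$. The coefficient matrix $(\varepsilon^{jk})_{0\leq j,k<p}$ is a Vandermonde matrix in the pairwise distinct elements $1,\varepsilon,\dots,\varepsilon^{p-1}$, whose determinant $\prod_{0\leq j<k<p}(\varepsilon^{k}-\varepsilon^{j})$ is invertible in $R$ by the semisimplicity-type hypothesis on $\varepsilon$ (namely $p\cdot 1_R\in R^{\times}$ and $\varepsilon$ being a primitive $p$-th root of unity). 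Hence $z_k=0$ for every $k$, and the sum is direct. The routine verifications are the identities $\tau\circ e_k=\varepsilon^k e_k$ and the geometric-series identity, neither of which should cause trouble; the only point that might require a comment is the invertibility of the Vandermonde determinant, but this is immediate from the hypothesis on $\varepsilon$. Therefore no serious obstacle is expected.
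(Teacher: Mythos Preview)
Your proof is correct; the paper actually states this lemma without proof, treating it as a standard fact about eigenspace decompositions under a finite-order operator when the order is invertible. Your averaging-projector argument is exactly the expected justification, and the one small point worth making explicit is that each $\varepsilon^k-\varepsilon^j$ (for $j\neq k$) is a unit in $R$ because $\prod_{k=1}^{p-1}(1-\varepsilon^k)=p\in R^\times$, which is what makes the Vandermonde determinant invertible.
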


Suppose that the algebra $H$ is equipped with an automorphism $\sigma$ of order $p$.

\begin{dfn}\label{twist1} Let $k\in\Z/p\Z$. For any $g,h\in H$, we define the $\sigma$-twisted $k$-commutator of $g,h$ to be: $$
[g,h]_k:=gh-h\sigma^k(g). $$
Let $[H,H]^{(k)}$ be the $R$-submodule generated by all the $\sigma$-twisted $k$-commutators $[g,h]_k, g,h\in H$. We define $$
Z(H)^{(k)}:=\bigl\{z\in H\bigm|zh=\sigma^k(h)z,\,\forall\,h\in H\bigr\} .
$$
We call the subspace $Z(H)^{(k)}$ the $\sigma$-twisted $k$-center of the algebra $H$.
\end{dfn}

If $k=0$ then $[H,H]^{(k)}$ is nothing but the usual commutator submodule of $H$, while $Z(H)^{(0)}$ is nothing but the usual center $Z(H)$ of $H$. Since $H$ is a symmetric algebra over $R$. By \cite{GP}, the symmetrizing
form $\Tr$ induces a natural $R$-module isomorphism: $\varphi: H\cong H^*:=\Hom_R(H,R), h\mapsto\varphi(h): g\mapsto\Tr(gh),\,\forall\,g\in H$.
Moreover, $\varphi$ induces an $R$-module isomorphism: \begin{equation}\label{isocoCenter1}
Z(H)\cong\biggl(\frac{H}{[H,H]}\biggr)^* .
\end{equation}

\begin{lem}\label{dual2} Let $k\in\Z/p\Z$ and $h\in H$. Then $\varphi(h)\in\bigl(H/[H,H]^{(k)}\bigr)^{*}$ if and only if $h\in Z(H)^{(k)}$. In particular, there is an $R$-module isomorphism:
\begin{equation}\label{isocoCenter2}
Z(H)^{(k)}\cong\biggl(\frac{H}{[H,H]^{(k)}}\biggr)^* .
\end{equation}
\end{lem}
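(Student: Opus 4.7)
The plan is to run the standard symmetric-algebra duality argument with $\sigma$ twisting the commutator side. The only tools needed are the cyclicity and non-degeneracy of $\Tr$; in particular, no compatibility between $\sigma$ and $\Tr$ should be required.

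First I would fix $h\in H$ and test $\varphi(h)$ on a generic generator $[g_1,g_2]_k=g_1g_2-g_2\sigma^k(g_1)$ of $[H,H]^{(k)}$. Expanding $\Tr\bigl([g_1,g_2]_k\cdot h\bigr)$ into two terms and applying cyclicity of $\Tr$ to rotate $g_1$ past $g_2h$ in the first summand collapses the expression into $\Tr\bigl(g_2\cdot(hg_1-\sigma^k(g_1)h)\bigr)$. Holding $g_1$ fixed and letting $g_2$ range over $H$, non-degeneracy of $\Tr$ forces the factor $hg_1-\sigma^k(g_1)h$ to vanish; letting $g_1$ now vary over $H$ gives exactly the defining relation $hg_1=\sigma^k(g_1)h$ of $Z(H)^{(k)}$ in Definition \ref{twist1}. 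The reverse direction is the same computation read backwards, so the pointwise equivalence $\varphi(h)\in\bigl(H/[H,H]^{(k)}\bigr)^{*}\Longleftrightarrow h\in Z(H)^{(k)}$ drops out.

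Second I would upgrade this to the claimed $R$-module isomorphism. Applying $\Hom_R(-,R)$ to the canonical surjection $H\to H/[H,H]^{(k)}$ identifies $\bigl(H/[H,H]^{(k)}\bigr)^{*}$ with the $R$-submodule of $H^{*}$ consisting of functionals that annihilate $[H,H]^{(k)}$. Since $H$ is symmetric, $\varphi:H\to H^{*}$ is an $R$-module isomorphism, and the first half of the argument says that the preimage of this annihilator under $\varphi$ is precisely $Z(H)^{(k)}$; restricting $\varphi$ then gives the desired isomorphism.

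No serious obstacle is expected: once the cyclic rotation is performed, the whole argument is bookkeeping, and the twist by $\sigma^k$ enters only to produce the matching twist in the definition of $Z(H)^{(k)}$. The only point worth a moment's care is the direction of the twist--verifying that $\sigma^k(g_1)$ (rather than $\sigma^{-k}(g_1)$) ends up on the correct side--but this is forced unambiguously by the sign convention in the definition of $[g,h]_k$.
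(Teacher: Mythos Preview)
Your proposal is correct and matches the paper's proof essentially line for line: both arguments pivot on the single cyclicity identity $\Tr\bigl((zh-\sigma^k(h)z)h'\bigr)=\Tr\bigl(z(hh'-h'\sigma^k(h))\bigr)=\Tr\bigl(z[h,h']_k\bigr)$ and then invoke non-degeneracy of $\Tr$. The paper runs the equivalences starting from $z\in Z(H)^{(k)}$ rather than from $\varphi(z)$ annihilating $[H,H]^{(k)}$, but the content is identical.
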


\begin{proof} For any $z\in H$, we can get that
	$$\begin{aligned}
z\in Z(H)^{(k)} &\Leftrightarrow zh=\sigma^k(h)z, \forall\,h\in H, \\
		&\Leftrightarrow \Tr((zh-\sigma^k(h)z)h')=0, \forall\,h, h'\in H \\
		&\Leftrightarrow\Tr(z(hh'-h'\sigma^k(h)))=0, \forall\,h, h'\in H\Leftrightarrow \varphi(z)\in\bigg(\frac{H}{[H,H]^{(k)}}\bigg)^{*}.
	\end{aligned}$$
	This proves the lemma.
\end{proof}

Now let us return to the cyclotomic Hecke algebras $\HH_{r,n}$ and $\HH_{r,p,n}$. Recall that the algebra $\HH_{r,n}$ is equipped with an automorphisms $\sigma$ of order $p$. By definition, $$
\sigma(T_0)=\veps T_0,\,\,\,\sigma(T_j)=T_j,\,\,\forall\,1\leq j<n .
$$
The subalgebra $\HH_{r,r,n}$ coincides with the set of $\sigma$-fixed points in $\HH_{r,n}$. The inner automorphism $\tau$ induced by $T_0$ (which maps $u$ to $T_0^{-1} uT_0$) restricts to an automorphism $\tau$ of
$\HH_{r,p,n}$ such that $\tau^p$ is an inner automorphism of $\HH_{r,p,n}$.

By definition, for any $k\in\Z/p\Z$, we have $$
Z(\HH_{r,n})^{(k)}=\bigl\{z\in\HH_{r,n}\bigm|zT_{0}=\varepsilon^{k}T_{0}z, zT_{i}=T_{i}z, \forall\, 1\leq i<n\bigr\}.
$$
In particular, $Z(\HH_{r,n})^{(k)}=Z(\HH_{r,n})$. We are mostly interested in the center $Z(\HH_{r,p,n})$ of $\HH_{r,p,n}$. Recall that the automorphism $\tau$ stabilizes the center $Z(\HH_{r,p,n})$ and $\tau^p$ acts as the
identity map on the center $Z(\HH_{r,p,n})$.

\begin{dfn} For each $k\in\Z/p\Z$, we define $$
Z(\HH_{r,p,n})_k:=\bigl\{z\in Z(\HH_{r,p,n})\bigm|\tau(z)=\veps^k z\bigr\}=\bigl\{z\in Z(\HH_{r,p,n})\bigm|zT_0=\veps^k T_0z\bigr\}.
$$
\end{dfn}

By Lemma \ref{zHdec1}, we have \begin{equation}\label{Zdecompositi0}
Z(\HH_{r,p,n})=\bigoplus_{k\in\Z/p\Z}Z(\HH_{r,p,n})_k .
\end{equation}

\begin{dfn} For each $k\in\Z/p\Z$, we define the $\tau$-twisted $k$-center $Z^{\tau,k}(\HH_{r,p,n})$ of $\HH_{r,p,n}$ as $$
Z^{\tau,k}(\HH_{r,p,n}):=\bigl\{ z\in\HH_{r,p,n}\bigm|\tau^{p}(z)=z, zh=\tau^{k}(h)z, \forall\,h\in \HH_{r,p,n}\bigr\}. $$\end{dfn}

In particular, if $k=0$ then $Z^{\tau,0}(\HH_{r,p,n})=Z(\HH_{r,p,n})$. For each $k\in\Z/p\Z$, it is easy to check that $\tau\bigl(Z^{\tau,k}(\HH_{r,p,n})\bigr)\subseteq Z^{\tau,k}(\HH_{r,p,n})$ and $\tau^p$ acts as the
identity map on
$Z^{\tau,k}(\HH_{r,p,n})$. We have the following decomposition:  \begin{equation}\label{Zdecomp1}
Z^{\tau,k}(\HH_{r,p,n})=\bigoplus_{l\in\Z/p\Z}Z^{\tau,k}(\HH_{r,p,n})_{l}, \end{equation}
where $$
Z^{\tau,k}(\HH_{r,p,n})_{l}:=\bigl\{ z\in Z^{\tau,k}(\HH_{r,p,n})\bigm|\tau(z)=\veps^l z\bigr\}=\bigl\{ z\in Z^{\tau,k}(\HH_{r,p,n})\bigm| zT_{0}=\varepsilon^{l}T_{0}z\bigr\},\,\,\,l\in\Z/p\Z,
$$
are $R$-submodules of $Z^{\tau,k}(\HH_{r,p,n})$. In particular, $Z^{\tau,0}(\HH_{r,p,n})_{l}=Z(\HH_{r,p,n})_l$.

\begin{lem}\label{weightdecomp} For each $k\in\Z/p\Z$, there is a $K$-subspaces decomposition: $$
Z(\HH_{r,n})^{(k)}=\bigoplus_{l\in\Z/p\Z}Z^{\tau,l}(\HH_{r,p,n})_{k}T_{0}^{l}. $$
\end{lem}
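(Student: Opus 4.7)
The plan is to exploit the eigenspace decomposition of $\HH_{r,n}$ under $\sigma$ and to translate the defining conditions of $Z(\HH_{r,n})^{(k)}$ into the defining conditions of the summands $Z^{\tau,l}(\HH_{r,p,n})_{k}$.

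First I would record the ambient decomposition
\begin{equation*}
\HH_{r,n}=\bigoplus_{l\in\Z/p\Z}\HH_{r,p,n}T_{0}^{l},
\end{equation*}
which follows because $\sigma$ has order $p$ and $R$ contains a primitive $p$th root of unity $\veps$, so $\HH_{r,n}$ splits into the $\sigma$-eigenspaces $\HH_{r,n}^{(l)}=\{h:\sigma(h)=\veps^{l}h\}$; since $\sigma(T_{0})=\veps T_{0}$ and $T_{0}$ is invertible, right multiplication by $T_{0}^{l}$ identifies $\HH_{r,p,n}=\HH_{r,n}^{(0)}$ with $\HH_{r,n}^{(l)}$, giving the uniqueness of the decomposition $z=\sum_{l}z_{l}T_{0}^{l}$ with $z_{l}\in\HH_{r,p,n}$. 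I would also establish the straightening identity $T_{0}^{l}h=\tau^{-l}(h)T_{0}^{l}$ for every $h\in\HH_{r,n}$, which is immediate from $\tau(h)=T_{0}^{-1}hT_{0}$.

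Next, given $z=\sum_{l}z_{l}T_{0}^{l}\in Z(\HH_{r,n})^{(k)}$, I would translate the two defining relations of $Z(\HH_{r,n})^{(k)}$ separately. The relation $zT_{0}=\veps^{k}T_{0}z$ becomes, after applying the straightening identity and comparing summands in distinct $\sigma$-eigenspaces,
\begin{equation*}
z_{l}=\veps^{k}\tau^{-1}(z_{l}),\quad\text{i.e.}\quad \tau(z_{l})=\veps^{k}z_{l}.
\end{equation*}
In particular $\tau^{l}(z_{l})=\veps^{kl}z_{l}$. The relation $zT_{i}=T_{i}z$ (for $1\le i<n$) becomes $z_{l}\tau^{-l}(T_{i})=T_{i}z_{l}$ for each $l$. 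Applying $\tau^{l}$ and using $\tau^{l}(z_{l})=\veps^{kl}z_{l}$, the scalar $\veps^{kl}$ cancels and one gets
\begin{equation*}
z_{l}T_{i}=\tau^{l}(T_{i})z_{l},\qquad 1\le i<n.
\end{equation*}
Since $\tau(T_{0})=T_{0}$, the first relation $\tau(z_{l})=\veps^{k}z_{l}$ gives $z_{l}T_{0}^{p}=T_{0}^{p}z_{l}=\tau^{l}(T_{0}^{p})z_{l}$, and a short computation combining both relations produces $z_{l}T_{u}=\tau^{l}(T_{u})z_{l}$ where $T_{u}=T_{0}^{-1}T_{1}T_{0}$. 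Since $T_{0}^{p},T_{u},T_{1},\ldots,T_{n-1}$ generate $\HH_{r,p,n}$, this shows $z_{l}h=\tau^{l}(h)z_{l}$ for all $h\in\HH_{r,p,n}$, and taking $h=T_{0}^{p}$ yields $\tau^{p}(z_{l})=z_{l}$ automatically. Thus $z_{l}\in Z^{\tau,l}(\HH_{r,p,n})_{k}$, so $z\in\bigoplus_{l}Z^{\tau,l}(\HH_{r,p,n})_{k}T_{0}^{l}$.

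The reverse inclusion is obtained by reversing these computations: given $y_{l}\in Z^{\tau,l}(\HH_{r,p,n})_{k}$, condition (d) $\tau(y_{l})=\veps^{k}y_{l}$ gives $\sum_{l}y_{l}T_{0}^{l}\cdot T_{0}=\veps^{k}T_{0}\sum_{l}y_{l}T_{0}^{l}$, and the twisted commutation $y_{l}T_{i}=\tau^{l}(T_{i})y_{l}$ combined with $\tau^{l}(y_{l})=\veps^{kl}y_{l}$ reverses to $y_{l}\tau^{-l}(T_{i})=T_{i}y_{l}$, giving $(\sum_{l}y_{l}T_{0}^{l})T_{i}=T_{i}(\sum_{l}y_{l}T_{0}^{l})$. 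Directness of the sum is inherited from the $\sigma$-eigenspace decomposition of $\HH_{r,n}$, since $Z^{\tau,l}(\HH_{r,p,n})_{k}T_{0}^{l}\subseteq\HH_{r,p,n}T_{0}^{l}=\HH_{r,n}^{(l)}$.

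The main obstacle is the coupling between the two defining relations: the $T_{i}$-commutation alone yields $z_{l}\tau^{-l}(T_{i})=T_{i}z_{l}$, which is \emph{not} the twisted commutation defining $Z^{\tau,l}(\HH_{r,p,n})$. The key observation that makes the argument work is that the $T_{0}$-relation supplies exactly the factor $\veps^{kl}$ needed when conjugating by $\tau^{l}$, so that the two relations together collapse to the desired form $z_{l}h=\tau^{l}(h)z_{l}$. Checking closure on the generators $T_{0}^{p}$ and $T_{u}$ of $\HH_{r,p,n}$ (rather than only on $T_{1},\ldots,T_{n-1}$) is a small bookkeeping step that also requires both relations simultaneously.
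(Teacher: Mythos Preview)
Your argument is correct and follows essentially the same route as the paper: decompose $\HH_{r,n}$ into $\sigma$-eigenspaces $\HH_{r,p,n}T_0^l$, then translate the defining conditions of $Z(\HH_{r,n})^{(k)}$ componentwise into those of $Z^{\tau,l}(\HH_{r,p,n})_k$. Your write-up is in fact more careful than the paper's, which compresses the intersection computation into two lines and does not explicitly verify the twisted commutation on the generators $T_0^p$ and $T_u$ (nor the passage from $z_l\tau^{-l}(T_i)=T_iz_l$ to $z_lT_i=\tau^{l}(T_i)z_l$ via the $T_0$-relation, the step you rightly flag as the key coupling).
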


\begin{proof} Note that $\HH_{r,n}$ is a free left $\HH_{r,p,n}$-module with basis $\{T_0^j|j\in\Z/p\Z\}$. Thus as a left $\HH_{r,p,n}$-module, $\HH_{r,n}=\bigoplus_{k=0}^{r-1}\HH_{r,p,n}T_{0}^{k}$. It straightforward to
check
that
	$$Z(\HH_{r,n})^{(k)}=\bigoplus_{l=0}^{r-1}\big(Z(\HH_{r,n})^{(k)}\cap \HH_{r,p,n}T_{0}^{l}\big).$$
	
For each $l\in\Z/p\Z$, we can get that
	$$\begin{aligned}
		Z(\HH_{r,n})^{(k)}\cap \HH_{r,p,n}T_{0}^{l}	&=\bigl\{uT_{0}^{l}\bigm| u\in\HH_{r,p,n}, (uT_{0}^{l})h=\sigma^{k}(h)(uT_{0}^{l}),\,\forall\,h\in\HH_{r,n}\bigr\}\\
		&=\bigl\{uT_{0}^{l}\bigm| u\in\HH_{r,p,n}, uT_{0}=\veps^{k}T_{0}u, uT_{i}=\tau^{k}(T_{i})u, \forall\,1\leq i<n\bigr\}\\
		&=Z^{\tau,l}(\HH_{r,p,n})_{k}T_{0}^{l}.
	\end{aligned}$$
This proves the lemma.
\end{proof}

Suppose that $R=K$ is a field and (\ref{ssrpn}) holds. Then the cyclotomic Hecke algebras $\HH_{r,n}$ and $\HH_{r,p,n}$ are both semisimple over $K$. We now give the proof of Theorem \ref{mainthm5}, which explicitly
describes the $\sigma$-twisted $k$-center of $\HH_{r,n}$.

\medskip
\noindent
{\textbf{Proof of Theorems \ref{mainthm5}:} Without loss of generality, we assume that $k$ is an integer with $0\leq k<p$. For any $z\in Z(\HH_{r,n})^{(k)}$, by Definition \ref{twist1}, we have that $$
zL_{m}=\varepsilon^{k}L_{m}z,\quad \forall\, 1\leq m\leq n. $$
We can write $z=\sum_{\u,\v}r_{\u\v}f_{\u\v}$ as a $K$-linear combination of seminormal bases, where $r_{\u\v}\in K$ for each pair $(\u,\v)$. Then we can get that
$$
L_{m}z=\sum_{\u,\v}r_{\u\v}\res_{\u}(m)f_{\u\v}=\sum_{\u,\v} r_{\u\v}\varepsilon^{-k}\res_{\v}(m)f_{\u\v}=\varepsilon^{-k}zL_{m}, \quad \forall\,1\leq m\leq n.$$
It follows that $ r_{\u\v}\neq 0$ only if $\res_{\u}(m)=\varepsilon^{-k}\res_{\v}(m), \forall\, 1\leq m\leq n$. Applying Lemma \ref{dist} we can deduce that $r_{\u\v}\neq 0$ only if $\u=\v\sft{k}$. Hence we can rewrite
$z=\sum_{\v}r_{\v}f_{\v\sft{k}\v}$, where $r_\v\in K$ for each $\v$.
	
Since $z\in Z(\HH_{r,n})^{(k)}$ and $\sigma(T_i)=T_i$ for any $1\leq i<n$, we have that
	\begin{equation}\label{Tiz}
		T_{i}z=T_{i}\big(\sum_{\v} r_{\v}f_{\v\sft{k}\v}\big)=\big(\sum_{\v} r_{\v}f_{\v\sft{k}\v}\big)T_{i}=zT_{i}, \quad \forall\, 1\leq i< n.
	\end{equation}
Let $1\leq i<n$ and $\v$ be a standard tableau such that $r_\v\neq 0$. If the tableau $\v s_{i}$ is not standard, then either $i,i+1$ both appear in the same row of $\v$ (and $\v\<k\>$ for any $k$), or $i,i+1$ both appear
in the same column of $\v$ (and $\v\<k\>$ for any $k$), in this case, by Proposition \ref{tiact}, both $T_{i}(r_{\v}f_{\v\sft{k}\v})$ and $(r_{\v}f_{\v\sft{k}\v})T_{i}$ are equal to the same scalar multiple of
$f_{\v\sft{k}\v}$.
	
Now assume that the tableau $\t:=\v s_{i}$ is standard. From (\ref{Tiz}), we can deduce that \begin{equation}\label{2TilR}
T_{i}\big(r_{\v}f_{\v\sft{k}\v}+r_{\t}f_{\t\sft{k}\t}\big)=\big(r_{\v}f_{\v\sft{k}\v}+r_{\t}f_{\t\sft{k}\t}\big)T_{i}.\end{equation}
	By computation, we can get that
	\begin{equation}\label{rvAB}
		\begin{aligned}
		&\quad\, \, r_{\v}\bigg( A_{i}(\v\sft{k})f_{\v\sft{k}\v}+B_{i}(\v\sft{k})f_{\t\sft{k}\v}\bigg)
		+r_{\t}\bigg( A_{i}(\t\sft{k})f_{\t\sft{k}\t}+B_{i}(\t\sft{k})f_{\v\sft{k}\t}\bigg)\\
		&=r_{\v}\bigg( A_{i}(\v)f_{\v\sft{k}\v}+B_{i}(\v)f_{\v\sft{k}\t}\bigg)
		+r_{\t}\bigg( A_{i}(\t)f_{\t\sft{k}\t}+B_{i}(\t)f_{\t\sft{k}\v}\bigg),
	\end{aligned}
	\end{equation}
where the coefficients $A_{i}(\v),B_{i}(\v)$ are defined in Proposition \ref{tiact}. Using the fact $\res(\v\sft{k})(m)=\varepsilon^{-k}\res_{\v}(m), \forall\, 1\leq m\leq n$, it is easy to check that
\begin{equation}\label{ALR}
A_{i}(\v\sft{k})=A_{i}(\v).\end{equation}
	
	Comparing the coefficients of (\ref{rvAB}), we can deduce that $r_\t\neq 0$ and \begin{equation}\label{rLR}
r_{\t}=r_{\v}\cdot \frac{B_{i}(\v\sft{k})}{B_{i}(\t)}=r_{\v}\cdot \frac{B_{i}(\v)}{B_{i}(\t\sft{k})}\in K^\times,\quad A_{i}(\t\sft{k})=A_{i}(\t)\in K^\times. \end{equation}
Note that (\ref{2TilR}) is actually equivalent to (\ref{ALR}) and (\ref{rLR}) in this case..
	
	Let $\Shape(\v)=\blam\in\P_{r,n}$ and $a_{k}:=\sum_{i=1}^{k}|\blam^{(i)}|$. Then $\blam\<k\>=\blam$ and hence $o_{\blam}$ divides $k$. We can write $k=lo_{\blam}$, where $0\leq l\leq k$. Let $d(\v)\in\Sym_{n}$ such that
$\t^{\blam}d(\v)=\v$. Then we have that $d(\v)=xd$ and $\ell(d(\v))=\ell(x)+\ell(d)$, where $x\in \Sym_{(a_{k},n-a_{k})}$ and $d\in\mathcal{D}_{(a_{k},n-a_{k}})$. We fix reduced expressions $x=s_{i_{1}}\cdots s_{i_{m}}$ and
$d=s_{i_{m+1}}\cdots s_{i_{N}}$. Then $d(\v)=s_{i_{1}}\cdots s_{i_{m}}s_{i_{m+1}}\cdots s_{i_{N}}$ is also a reduced expression. Set $\t_{j}:=\t^{\blam}s_{i_{1}}\cdots s_{i_{j}}, 0\leq j\leq N$. In particular,
$\t_{0}=\t^{\blam}$ and $\t_{N}=\v$. There is a sequence of standard $\blam$-tableaux
	$$\t^{\blam}=\t_{0}\triangleright \t_{1}\triangleright \cdots\triangleright \t_{m}\triangleright \t_{m+1}\triangleright\cdots\triangleright\t_{N}=\v.$$
	
	Applying $\sft{k}$ to the above sequence, we can get a sequence of standard $\blam\sft{k}$-tableaux
	$$\t^{\blam}\sft{k}=\t_{0}\sft{k}\triangleright \t_{1}\sft{k}\triangleright\cdots\triangleright \t_{m}\sft{k}\triangleleft \t_{m+1}\sft{k}\triangleleft\cdots\triangleleft\t_{N}\sft{k}=\v\sft{k}.$$
Applying (\ref{rLR}), we get that
	$$r_{\v}=r_{\t^{\blam}}\cdot \frac{B_{i_{1}}(\t_{0}\sft{k})}{B_{i_{1}}(\t_{1})}\cdots \frac{B_{i_{m}}(\t_{m-1}\sft{k})}{B_{i_{m}}(\t_{m})}\cdot \frac{B_{i_{m+1}}(\t_{m}\sft{k})}{B_{i_{m+1}}(\t_{m+1})}\cdots
\frac{B_{i_{N}}(\t_{N-1}\sft{k})}{B_{i_{N}}(\t_{N})}.$$

Using the definition of $B_{i}(\s)$ in Proposition \ref{tiact}, it is easy to see that $B_{i_{j}}(\t_{j-1}\sft{k})=1$ for any $1\leq j\leq m$. Now let $m<j\leq N$, then we have $\t_{j-1}\<k\>\triangleleft\t_j\<k\>$ and
$\t_j\triangleright\t_{j+1}$. In this case it follows again from the definition of $B_{i}(\s)$ in Proposition \ref{tiact} that $B_{i_{j}}(\t_{j-1}\sft{k})=B_{i_{j}}(\t_{j})$.
Therefore, we can get that
	$$r_{\v}=r_{\t^{\blam}}\cdot \prod_{j=1}^{m}(B_{i_{j}}(\t_{j}))^{-1}=r_{\t^{\blam}}\frac{\gamma_{\t^{\blam}}}{\gamma_{\m_{k}(\v)}},$$
	where the standard tableau $\m_{k}(\v)$ is defined in Definition \ref{midkt}.
	
	Set $r_{\t^{\blam}}:=\gamma_{\t^{\blam}}^{-1}$. Then we can get $r_{\v}=\gamma_{\m_{k}(\v)}^{-1}$. We define $$
F_{\blam,k}:=\sum_{\v\in\Std(\blam)}\gamma_{\m_{k}(\v)}^{-1}f_{\v\sft{k}\v}. $$
	
By the above discussions, it is straightforward to check that $F_{\blam,k}\in Z(\HH_{r,n})^{(k)}$ and $Z(\HH_{r,n})^{(k)}$is $K$-spanned by $\{F_{\blam,k} | \blam\in\P_{r,n}, \blam\sft{k}=\blam \}$.
It is easy to see that $\{ F_{\blam,k} | \blam\in\P_{r,n}, \blam\sft{k}=\blam \}$ are $K$-linearly independent. This proves the set $\{ F_{\blam,k}|\blam\in\P_{r,n}, \blam\sft{k}=\blam \}$ forms a $K$-basis of the
$\sigma$-twisted $k$-center $Z(\HH_{r,n})^{(k)}$. Since $\m_{0}(\v)=\v$, we see that $F_{\blam,0}$ coincides with the central primitive idempotent $F_{\blam}$ of $\HH_{r,n}$.
\hfill\qed
\medskip

\begin{cor}
	Suppose that $R=K$ is a field and (\ref{ssrpn}) holds. Let $k\in\Z/p\Z$. Then
	$$
\dim_{K}Z(\HH_{r,n})^{(k)}=\#\bigl\{\blam\in\P_{r,n}\bigm|o_{\blam}\,\, divides \,\, k\bigr\}.$$
\end{cor}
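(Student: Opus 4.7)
The plan is to deduce the corollary as an immediate consequence of the just-proved Theorem \ref{mainthm5}. First I would invoke Theorem \ref{mainthm5} directly: it asserts that the set $\{F_{\blam,k}\mid \blam\in\P_{r,n},\,\blam\langle k\rangle=\blam\}$ is a $K$-basis of $Z(\HH_{r,n})^{(k)}$. Hence
$$\dim_{K}Z(\HH_{r,n})^{(k)}=\#\bigl\{\blam\in\P_{r,n}\bigm|\blam\langle k\rangle=\blam\bigr\},$$
so the only remaining task is to translate the stabilizer condition $\blam\langle k\rangle=\blam$ into the divisibility condition $o_{\blam}\mid k$.

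For this, I would unfold the definition (\ref{langlerangle}) of the shift: $\blam\langle k\rangle=\blam$ is equivalent to $\blam^{[k+t]}=\blam^{[t]}$ for every $t\in\Z/p\Z$. Set $S_{\blam}:=\{k\in\Z/p\Z\mid \blam^{[k+t]}=\blam^{[t]}\text{ for all }t\}$. The relation is manifestly closed under addition and negation modulo $p$, so $S_{\blam}$ is a subgroup of the cyclic group $\Z/p\Z$. By the definition (\ref{olamb}), $o_{\blam}$ is the smallest positive integer in $S_{\blam}$, and since every subgroup of $\Z/p\Z$ is cyclic generated by its least positive element, $S_{\blam}$ consists precisely of the multiples of $o_{\blam}$ modulo $p$. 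This yields the equivalence $\blam\langle k\rangle=\blam\iff o_{\blam}\mid k$, which when combined with the basis from Theorem \ref{mainthm5} produces the claimed dimension formula.

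There is no genuine obstacle here: the substantive representation-theoretic content has been absorbed into Theorem \ref{mainthm5}, and what remains is a one-line combinatorial identification using only the subgroup structure of $\Z/p\Z$. The proof will therefore be very short, essentially a two-step chain citing Theorem \ref{mainthm5} and then the subgroup observation above.
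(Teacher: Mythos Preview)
Your proposal is correct and matches the paper's approach: the paper states this corollary immediately after Theorem \ref{mainthm5} with no proof, treating it as an evident consequence, and your two-step chain (invoke the basis from Theorem \ref{mainthm5}, then identify $\blam\langle k\rangle=\blam$ with $o_{\blam}\mid k$ via the subgroup structure of $\Z/p\Z$) is exactly the intended reasoning.
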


Let $K$ be a field and $1\neq\xi\in K^\times$. Suppose that $K$ contains a primitive $p$-th root of unity $\veps$. In particular, this implies that $p1_{K}\in K^\times$ is invertible. Let $Q_1,\cdots,Q_d\in K^\times$.
Let $e\in\Z^{\geq 0}$ be the minimal positive integer $k$ such that $1+\xi+\xi^2+\cdots+\xi^{k-1}=0$ in $K$; or $0$ if no such positive integer $k$ exists. Let $x$ be an indeterminate over $K$. For each $1\leq m\leq r=pd$,
we define $$
\hat{Q}_m:=(x^{jn}+Q_j)\veps^i,\,\,\text{if $m=(i-1)d+j, 1\leq i\leq p, 1\leq j\leq d$.}
$$
Set $\O:=K[x]_{(x)}$. Let $\HH_{r,n}(K)$ be the cyclotomic Hecke algebra of type $G(r,1,n)$ with Hecke parameter $\xi$ and cyclotomic parameters $$
\bigl(\underbrace{Q_1\veps,\cdots,Q_d\veps},\underbrace{Q_1\veps^2,\cdots,Q_d\veps^2},\cdots,\underbrace{Q_1\veps^p,\cdots,Q_d\veps^p}\bigr) .
$$
Let $\K$ be the fraction field of $\O$. For $R\in\{\O,\K\}$, we use $\HH_{r,n}(R)$ to denote the cyclotomic Hecke algebra of type $G(r,1,n)$, which is defined over $R$, and with Hecke parameter $\hat{\xi}:=x+\xi$ and
cyclotomic parameters $$
\bigl(\underbrace{\hat{Q}_1,\cdots,\hat{Q}_d},\underbrace{\hat{Q}_{d+1},\cdots,\hat{Q}_{2d}},\cdots,\underbrace{\hat{Q}_{r-d+1},\cdots,\hat{Q}_r}\bigr) .
$$
Let $\HH_{r,p,n}(K)$ be the subalgebra of  $\HH_{r,n}(K)$ generated by $$T_{0}^{p}, T_{u}:=T_{0}^{-1}T_{1}T_{0}, T_{1}, T_{2}, \cdots, T_{n-1}.$$
Similarly, let $\HH_{r,p,n}(R)$ be the subalgebra of  $\HH_{r,n}(R)$ generated by $$T_{0}^{p}, T_{u}:=T_{0}^{-1}T_{1}T_{0}, T_{1}, T_{2}, \cdots, T_{n-1}.$$ There are natural isomorphisms: $$\begin{aligned}
& K\otimes_{\O}\HH_{r,n}(\O)\cong\HH_{r,n}(K),\quad K\otimes_{\O}\HH_{r,p,n}(\O)\cong\HH_{r,p,n}(K),\\
& \K\otimes_{\O}\HH_{r,n}(\O)\cong\HH_{r,n}(\K),\quad \K\otimes_{\O}\HH_{r,p,n}(\O)\cong\HH_{r,p,n}(\K) .
\end{aligned}
$$
Note that the automorphisms $\sigma, \tau$ of $\HH_{r,n}(K)$ can also be defined on the $R$-algebra $\HH_{r,n}(R)$ for $R\in\{\O,\K\}$. We denote them by $\sigma_R, \tau_R$ respectively.
Applying (\ref{ssrpn}), we can deduce that both $\HH_{r,n}(\K)$ and $\HH_{r,p,n}(\K)$ are split semisimple.

\begin{lem} We have $$
\dim_K Z(\HH_{r,p,n}(K))\geq\dim_{\K}Z(\HH_{r,p,n}(\K))=\rank_{\O}Z(\HH_{r,p,n}(\O)),
$$
and for any $k\in\Z/p\Z$, $$
\dim_K Z(\HH_{r,n}(K))^{(k)}\geq\dim_{\K}Z(\HH_{r,n}(\K))^{(k)}=\rank_{\O}Z(\HH_{r,p,n}(\O))^{(k)}.
$$
\end{lem}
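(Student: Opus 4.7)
The plan is to exploit the fact that $\O=K[x]_{(x)}$ is a discrete valuation ring with residue field $K$ and fraction field $\K$, together with the observation that $\HH_{r,p,n}(\O)$ and $\HH_{r,n}(\O)$ are free $\O$-modules of finite rank equipped with $\O$-linear automorphisms $\sigma_\O$ and $\tau_\O$ that specialize correctly. Both assertions (center and $\sigma$-twisted $k$-center) reduce to the same two ingredients: a flatness statement for the generic specialization, and a purity/semicontinuity statement for the special specialization.

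First I would realize each of $Z(\HH_{r,p,n}(\O))$ and $Z(\HH_{r,n}(\O))^{(k)}$ as the $\O$-kernel of an explicit $\O$-linear map between free $\O$-modules. For the center, pick any $\O$-basis $e_1,\dots,e_N$ of $\HH_{r,p,n}(\O)$ and consider the commutator map
\[
\HH_{r,p,n}(\O)\longrightarrow\HH_{r,p,n}(\O)^{N},\qquad h\mapsto\bigl([h,e_1],\dots,[h,e_N]\bigr);
\]
its kernel is exactly $Z(\HH_{r,p,n}(\O))$. For the $\sigma$-twisted $k$-center, use the analogous map $h\mapsto (hT_i-T_i\sigma_\O^{k}(h))_{i=0}^{n-1}$ on $\HH_{r,n}(\O)$. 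Since $\O$ is a PID, the kernel is a finitely generated torsion-free, hence free, $\O$-module. Now base-change to $\K$: because $\K$ is flat over $\O$, tensoring the defining exact sequence commutes with kernels, giving canonical $\K$-linear isomorphisms $\K\otimes_{\O}Z(\HH_{r,p,n}(\O))\cong Z(\HH_{r,p,n}(\K))$ and $\K\otimes_{\O}Z(\HH_{r,n}(\O))^{(k)}\cong Z(\HH_{r,n}(\K))^{(k)}$. Taking $\K$-dimensions yields the two equalities in the statement.

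For the two inequalities I would use specialization at the maximal ideal $(x)$. Fix an $\O$-basis $z_1,\dots,z_m$ of $Z(\HH_{r,p,n}(\O))$ and reduce modulo $x$ to obtain $\bar z_1,\dots,\bar z_m\in\HH_{r,p,n}(K)$. Each $\bar z_i$ remains central because the relations $[z_i,g]=0$ for all $g\in\HH_{r,p,n}(\O)$ specialize term by term. The key point is that the $\bar z_i$ are $K$-linearly independent, for which one needs that $Z(\HH_{r,p,n}(\O))$ is a pure $\O$-submodule of $\HH_{r,p,n}(\O)$. This is the main (only) subtle step, and I would settle it directly: if $h\in\HH_{r,p,n}(\O)$ satisfies $xh\in Z(\HH_{r,p,n}(\O))$, then $x[h,g]=[xh,g]=0$ for every $g$, and the $\O$-freeness (hence torsion-freeness) of $\HH_{r,p,n}(\O)$ forces $[h,g]=0$, whence $h\in Z(\HH_{r,p,n}(\O))$ already. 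Thus the quotient $\HH_{r,p,n}(\O)/Z(\HH_{r,p,n}(\O))$ is $\O$-torsion-free, so the short exact sequence splits as $\O$-modules and reduction modulo $x$ stays injective on $Z(\HH_{r,p,n}(\O))$. Consequently $\dim_K Z(\HH_{r,p,n}(K))\geq m=\rank_{\O}Z(\HH_{r,p,n}(\O))$. The same purity argument, replacing $[z,g]$ with $zg-\sigma_\O^{k}(g)z$, works verbatim for the twisted centers and yields the second inequality. The crux is thus the purity verification; once established, both chains of (in)equalities follow formally from standard DVR base-change.
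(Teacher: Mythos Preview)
Your proof is correct and follows essentially the same approach as the paper. The paper's argument is terser: it observes that over any base the center (resp.\ twisted center) is the solution space of a fixed homogeneous linear system whose coefficient matrix has entries in $\O$, and then invokes the ``general theory of linear algebra'' that matrix rank can only drop under specialization $\O\to K$, so the null-space dimension can only increase. Your kernel-of-a-linear-map formulation, together with flatness of $\K$ over $\O$ for the equality and the explicit purity verification for the inequality, is exactly a fleshed-out version of that same reasoning.
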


\begin{proof} By the definition of $Z(\HH_{r,p,n})$, $\dim_{\K}Z(\HH_{r,p,n}(\K))$ (resp., $\dim_K Z(\HH_{r,p,n}(K))$) are equal to the $\K$-dimension (resp., the $K$-dimension) of a solution space of a homogeneous linear
system
of equations whose coefficient matrix is defined over $\O$. By general theory of linear algebras, we see that  $$
\dim_K Z(\HH_{r,p,n}(K))\geq\dim_{\K}Z(\HH_{r,p,n}(\K))=\rank_{\O}Z(\HH_{r,p,n}(\O)).
$$
This proves the first part of the lemma. The second part of the lemma can be proved in a similar way.
\end{proof}

\begin{prop}\label{KeyProp} Suppose that for any $k\in\Z/p\Z$, we have $$ \dim_K Z(\HH_{r,n}(K))^{(k)}=\dim_{\K}Z(\HH_{r,n}(\K))^{(k)}=\rank_{\O}Z(\HH_{r,p,n}(\O))^{(k)}.$$
Then we have that $$
\dim_K Z(\HH_{r,p,n}(K))=\dim_{\K}Z(\HH_{r,p,n}(\K))=\rank_{\O}Z(\HH_{r,p,n}(\O)) .
$$
\end{prop}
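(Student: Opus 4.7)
The plan is to reduce the claimed equality about $Z(\HH_{r,p,n})$ to the hypothesised equalities about the $\sigma$-twisted $k$-centers $Z(\HH_{r,n})^{(k)}$ by feeding them through the decomposition in Lemma \ref{weightdecomp}. The key observation is that this decomposition expresses each $Z(\HH_{r,n})^{(k)}$ as a direct sum over $l\in\Z/p\Z$ of the spaces $Z^{\tau,l}(\HH_{r,p,n})_{k}T_0^l$, and then a ``sum of $\geq$'s forced to be an equality'' argument will isolate the $l=0$ summand, which is exactly $Z(\HH_{r,p,n})_k$.

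First I would apply Lemma \ref{weightdecomp} over both the specialised field $K$ and the generic field $\K$ (noting that $T_0$ is invertible, so multiplication by $T_0^l$ is a linear bijection onto each summand) to obtain
$$\dim_F Z(\HH_{r,n}(F))^{(k)}=\sum_{l\in\Z/p\Z}\dim_F Z^{\tau,l}(\HH_{r,p,n}(F))_k,\qquad F\in\{K,\K\}.$$
Next I would invoke the standard semi-continuity argument already used in the lemma immediately preceding this proposition: since each $Z^{\tau,l}(\HH_{r,p,n}(R))_k$ is the solution space of a homogeneous $R$-linear system whose coefficient matrix is defined over $\O$, one obtains
$$\dim_K Z^{\tau,l}(\HH_{r,p,n}(K))_k\ \geq\ \rank_\O Z^{\tau,l}(\HH_{r,p,n}(\O))_k\ =\ \dim_\K Z^{\tau,l}(\HH_{r,p,n}(\K))_k,$$
for every $k,l\in\Z/p\Z$.

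The hypothesis of the proposition says that after summing these inequalities over $l$, the two sides coincide for every $k$, which forces each individual inequality to be an equality. Taking in particular $l=0$ and summing over $k\in\Z/p\Z$, and using the decomposition (\ref{Zdecompositi0}) together with $Z^{\tau,0}(\HH_{r,p,n})_k=Z(\HH_{r,p,n})_k$, we obtain
$$\dim_K Z(\HH_{r,p,n}(K))=\rank_\O Z(\HH_{r,p,n}(\O))=\dim_\K Z(\HH_{r,p,n}(\K)),$$
which is the desired conclusion.

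The main obstacle I expect is bookkeeping rather than conceptual: I must verify that the definition of $Z^{\tau,l}(\HH_{r,p,n}(R))_k$ genuinely is given by an $\O$-linear system that is compatible with base change to both $K$ and $\K$ (so that the semi-continuity step applies termwise to each of the $p^2$ summands), and also that the decomposition of Lemma \ref{weightdecomp} holds verbatim over $\O$ and not merely over the two fields. Once these compatibility statements are in place, the remainder is an entirely formal ``matching bound by bound'' argument driven by the hypothesis.
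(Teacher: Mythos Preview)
Your proposal is correct and follows essentially the same route as the paper: both arguments feed the hypothesis through the decomposition of Lemma \ref{weightdecomp}, use a semi-continuity/base-change bound termwise on the summands $Z^{\tau,l}(\HH_{r,p,n})_k$, and then observe that equality in the sum forces equality in each term, so that taking $l=0$ and summing over $k$ via (\ref{Zdecompositi0}) yields the conclusion. The paper phrases the semi-continuity step as ``$Z(\HH_{r,n}(\O))^{(k)}$ is a pure $\O$-submodule, hence the base-change maps are injective,'' whereas you invoke the linear-system formulation from the preceding lemma, but these are equivalent justifications of the same inequality.
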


\begin{proof} By Lemma \ref{weightdecomp}, there is a $K$-subspaces decomposition: $$
Z(\HH_{r,n}(K))^{(k)}=\bigoplus_{l\in\Z/p\Z}Z^{\tau,l}(\HH_{r,p,n}(K))_{k}T_{0}^{l},\,\,
Z(\HH_{r,n}(\O))^{(k)}=\bigoplus_{l\in\Z/p\Z}Z^{\tau,l}(\HH_{r,p,n}(\O))_{k}T_{0}^{l}.
$$
Since $Z(\HH_{r,n}(\O))^{(k)}$ is a pure $\O$-submodule of $\HH_{r,n}(\O)$, it follows that the canonical maps $$
\psi_1: K\otimes_{\O}Z(\HH_{r,n}(\O))^{(k)}\rightarrow Z(\HH_{r,n}(K))^{(k)},\,\, \psi_2: K\otimes_{\O}Z^{\tau,l}(\HH_{r,p,n}(\O))_{k}T_{0}^{l}\rightarrow Z^{\tau,l}(\HH_{r,p,n}(K))_{k}T_{0}^{l}
$$
are both embedding.

By assumption, $\dim_K Z(\HH_{r,n}(K))^{(k)}=\dim_{\K}Z(\HH_{r,n}(\K))^{(k)}=\rank_{\O}Z(\HH_{r,p,n}(\O))^{(k)}$. It follows that $\psi_1$ is an isomorphism. Applying Lemma \ref{weightdecomp}, we have $$
Z(\HH_{r,n}(\O))^{(k)}=\bigoplus_{l\in\Z/p\Z}Z^{\tau,l}(\HH_{r,p,n}(\O))_{k}T_{0}^{l},\,\, Z(\HH_{r,n}(K))^{(k)}=\bigoplus_{l\in\Z/p\Z}Z^{\tau,l}(\HH_{r,p,n}(K))_{k}T_{0}^{l}.
$$
It follows that the injection $\psi_2$ is an isomorphism for each $k\in\Z/p\Z$. Taking $l=0$, we can deduce that $$
\dim_K Z(\HH_{r,p,n}(K))_k=\rank_\O Z(\HH_{r,p,n}(\O))_k,\,\,\forall\,k\in\Z/p\Z .
$$
Finally, using (\ref{Zdecompositi0}), we prove the proposition.
\end{proof}

\begin{conj}\label{conj1} For any $k\in\Z/p\Z$, we have $$
\dim_K Z(\HH_{r,n}(K))^{(k)}=\dim_{\K}Z(\HH_{r,n}(\K))^{(k)},\,\,  \dim_K Z(\HH_{r,p,n}(K))=\dim_{\K}Z(\HH_{r,p,n}(\K)).
$$
\end{conj}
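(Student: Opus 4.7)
The plan is to establish the first equality in Conjecture \ref{conj1}, concerning the $\sigma$-twisted $k$-centers of $\HH_{r,n}$, for every $k\in\Z/p\Z$; once this is in hand, Proposition \ref{KeyProp} immediately yields the second equality. Since the lower bound $\dim_K Z(\HH_{r,n}(K))^{(k)} \geq \dim_\K Z(\HH_{r,n}(\K))^{(k)}$ is already recorded in the lemma preceding Proposition \ref{KeyProp}, the task reduces to producing a matching upper bound. By Lemma \ref{dual2}, this is equivalent to constructing a $K$-linear spanning set for the twisted cocenter $\HH_{r,n}(K)/[\HH_{r,n}(K),\HH_{r,n}(K)]^{(k)}$ whose cardinality is at most $\#\{\blam\in\P_{r,n} \mid \blam\<k\>=\blam\}$, since Theorem \ref{mainthm5} identifies this number with $\dim_\K Z(\HH_{r,n}(\K))^{(k)}$.

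The strategy generalises the cocenter method used by the first author and Shi in \cite{HS} for the untwisted case. First I would fix the Ariki-Koike monomial basis $\{L_1^{a_1}\cdots L_n^{a_n}T_w \mid 0\leq a_i<r,\ w\in\Sym_n\}$ of $\HH_{r,n}(K)$ and analyse its image in the twisted cocenter. The crucial observation is that $\sigma^k(L_i)=\veps^{k}L_i$, so for any $h$ commuting with $L_i$ one has $L_i^{a}h - \veps^{ka}hL_i^{a} \in [\HH_{r,n},\HH_{r,n}]^{(k)}$; this supplies a twisted version of the cyclicity exploited in \cite{HS}. Combined with a Geck-Pfeiffer-style reduction on the $T_w$-component, which is compatible with the twisted commutator because $\sigma$ fixes every $T_i$ with $i\geq 1$, one should obtain a spanning set indexed by combinatorial data, with careful bookkeeping confining the nonzero classes in the cocenter to those naturally parametrised by multipartitions $\blam$ fixed under the shift $\blam\mapsto\blam\<k\>$.

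The main obstacle will be tracking the vanishing condition $\blam\<k\>=\blam$ sharply enough to match the count given by Theorem \ref{mainthm5}, rather than obtaining only a crude upper bound. My plan is to combine the twisted cyclicity of the $L_i$-monomials with the symmetric and rational properties of the $\gamma$-coefficients developed in Section 3, most notably Theorem \ref{mainthm1} and the third part of Proposition \ref{propRstk}, which encode exactly the $\sigma$-symmetric behaviour of the central objects under the shift. In the semisimple case the nonzero cocenter classes are identified, via Lemma \ref{dual2}, with the basis $\{F_{\blam,k}\}$ produced in Theorem \ref{mainthm5}; I would lift this identification to the generic ring $\O=K[x]_{(x)}$ using the integrality of the $\gamma$-ratios afforded by Section 3, specialise to $K$, and apply Lemma \ref{dual2} at the $K$-level to extract the required upper bound on the cocenter dimension. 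Feeding the resulting first equality into Proposition \ref{KeyProp} then closes the argument for the second equality, proving Conjecture \ref{conj1} along the outline sketched by the authors immediately before Proposition \ref{KeyProp}.
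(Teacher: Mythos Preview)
The statement you are attempting to prove is labeled \emph{Conjecture}~\ref{conj1} in the paper, and the paper does not prove it. What the paper provides is precisely the reduction you describe: Proposition~\ref{KeyProp} shows that the second equality follows from the first, and Remark~\ref{finalrem} records that, via Lemma~\ref{dual2} and Theorem~\ref{mainthm5}, the first equality would follow from constructing a $K$-spanning set of cardinality $\#\{\blam\in\P_{r,n}\mid o_\blam\text{ divides }k\}$ for the twisted cocenter $\HH_{r,n}(K)/[\HH_{r,n}(K),\HH_{r,n}(K)]^{(k)}$. The authors explicitly state that this ``seems very likely'' by adapting the methods of \cite{HS}, but they leave it to future work.

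Your proposal does not go beyond this outline. You correctly identify the reduction and the target combinatorial bound, and you gesture at the Ariki--Koike monomial basis and a twisted version of the cyclicity relation $L_i^a h\equiv \veps^{ka}hL_i^a$ modulo $[\HH_{r,n},\HH_{r,n}]^{(k)}$. But the actual work---the Geck--Pfeiffer-style reduction on the $T_w$ part in the presence of the twist, and the combinatorial bookkeeping that forces the surviving classes to be indexed exactly by $\{\blam\mid\blam\langle k\rangle=\blam\}$---is asserted (``one should obtain'') rather than carried out. The appeal to the $\gamma$-coefficient properties of Section~3 and to lifting from $\O$ is not a substitute: those results concern the semisimple seminormal basis and do not by themselves control the cocenter over arbitrary $K$. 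In short, your proposal is a faithful restatement of the paper's own sketch of an approach, not a proof; the conjecture remains open on both accounts.
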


\begin{rem}\label{finalrem} We remark that Proposition \ref{KeyProp} gives rise to an approach to verify Conjecture \ref{conj1}. In fact, using Lemma \ref{dual2} and Theorem \ref{mainthm5}, it suffices to show that
$\HH_{r,n}/[\HH_{r,n},\HH_{r,n}]^{(k)}$ has dimension less than or equal to $\#\bigl\{\blam\in\P_{r,n}\bigm|\text{$o_{\blam}$ divides $k$}\bigr\}$ for each $k\in\Z/p\Z$. Equivalently, it suffices to construct a $K$-linearly
spanning set with cardinality equal to $\#\bigl\{\blam\in\P_{r,n}\bigm|\text{$o_{\blam}$ divides $k$}\bigr\}$ for the quotient space $\HH_{r,n}/[\HH_{r,n},\HH_{r,n}]^{(k)}$ for each $k\in\Z/p\Z$. It seems very likely that
one can use a similar argument used in \cite{HS} to prove this claim. We shall leave this to a future study.
\end{rem}

\bigskip
\bigskip

\end{document}